\newcommand{\erw}{\mathds{E}}
\newcommand{\e}{\mathrm{e}}
\newcommand{\tr}{\mathrm{tr\,}}
\newcommand{\supp}{\mathrm{supp\,}}
\newcommand{\Ima}{\mathrm{Im\,}}
\newcommand{\Rea}{\mathrm{Re\,}}
\newcommand{\dist}{\mathrm{dist\,}}
\newcommand{\mO}{\mathcal{O}}
\newcommand{\C}{\mathds{C}}
\newcommand{\N}{\mathds{N}}
\newtheorem{thm}{Theorem}[section]
\newtheorem{cor}[thm]{Corollary}
\newtheorem{prop}[thm]{Proposition}
\newtheorem{lem}[thm]{Lemma}
\theoremstyle{definition}
\newtheorem{defn}[thm]{Definition}
\theoremstyle{remark}
\newtheorem{rem}[thm]{Remark}
\theoremstyle{definition}
\theoremstyle{definition}
\theoremstyle{definition}
\newtheorem{hypo}[thm]{Hypothesis}
\numberwithin{equation}{section} 
\title{The precise shape of the eigenvalue intensity for a class of non-selfadjoint operators 
under random perturbations}
\author{Martin Vogel} 
\address[Martin Vogel]{Institut de Math\'ematiques de Bourgogne - UMR 5584 CNRS, Universit\'e de Bourgogne, 
		       Facult\'e des Sciences Mirande, 9 avenue Alain Savary, 
		       BP 47870 21078 Dijon Cedex.}
\email{martin.vogel@u-bourgogne.fr}
\keywords{Non-selfadjoint operators; semiclassical differential operators; random perturbations}
\subjclass[2010]{47A10, 47B80, 47H40, 47A55}
\begin{document}
\begin{abstract}
We consider a non-selfadjoint $h$-differential model operator $P_h$ in the 
semiclassical limit ($h\rightarrow 0$) subject to small random perturbations. 
Furthermore, we let the coupling constant $\delta$ be 
$\e^{-\frac{1}{Ch}}\leq \delta \ll h^{\kappa}$ for constants $C,\kappa>0$ 
suitably large. Let $\Sigma$ be the closure of the range of the principal 
symbol.
Previous results on the same model by Hager, Bordeaux-Montrieux 
and Sj\"ostrand show that if $\delta \gg\e^{-\frac{1}{Ch}}$ there is, with a 
probability close to $1$, a Weyl law for the eigenvalues in the interior of the 
of the pseudospectrum up to a distance 
$\gg\left(-h\ln{\delta h}\right)^{\frac{2}{3}}$ to the boundary of $\Sigma$.
\par
We study the intensity measure of the random point process of eigenvalues 
and prove an $h$-asymptotic formula for the average density of eigenvalues. 
With this we show that there are three distinct regions of different spectral 
behavior in $\Sigma$: 
The interior of the pseudospectrum is solely governed by a Weyl law, 
close to its boundary there is a strong spectral accumulation given by a tunneling 
effect followed by a region where the density decays rapidly. 
  \vskip.5cm
  \par\noindent \textbf{R{\'e}sum{\'e}.} 
Nous consid\'erons un op\'erateur diff\'erentiel non-autoadjoint 
$P_h$ dans la limite semiclassique ($h\rightarrow 0$) soumis 
\`a de petites perturbations al\'eatoires. De plus, nous imposons 
que la constant couplage $\delta$ verifie 
$\e^{-\frac{1}{Ch}}\leq \delta \ll h^{\kappa}$ pour certaines constantes 
$C,\kappa>0$ choisies assez grandes. Soit $\Sigma$ l'adh\'erence de l'image 
du symbole principal de $P_h$. De pr\'ec\'edents r\'esultats 
par Hager, Bordeaux-Montrieux and Sj\"ostrand montrent que, pour le 
m\^eme op\'erateur, si l'on choisit $\delta \gg\e^{-\frac{1}{Ch}}$, 
alors la distribution des valeurs propres est donn\'ee par une 
loi de Weyl jusqu'\`a une distance 
$\gg\left(-h\ln{\delta h}\right)^{\frac{2}{3}}$ du bord de $\Sigma$. 
\par
Dans cet article, nous donnons une formule $h$-asymptotique pour la 
densit\'e moyenne des valeurs propres en \'etduiant le mesure de 
comptage al\'etoire des valeurs propres. 
En \'etudiant cette densit\'e, nous prouvons qu'il y a une loi de 
Weyl \`a l'interieur du $\delta$-pseudospectre, une zone d'accumulation 
des valeurs propres d\^ue \`a un effet tunnel pr\`es du bord du 
pseudospectre suivi par une zone o\`u la densit\'e d\'ecro\^it rapidement.
\end{abstract}
\maketitle
\setcounter{tocdepth}{1}
\tableofcontents
%
%
\section{Introduction}\label{sec:Intro}
Over the last twenty years there has been a growing interest in the spectral 
theory of non-self-adjoint operators, as they appear naturally in many areas, 
for example
  \begin{itemize}
   \item in the solvability theory of linear PDE's given by non-normal operators,
   \item in mathematical physics, for example when studying scattering poles 
         (quantum resonances),
   \item in the spectral analysis of Kramers-Fokker-Planck type operators.
  \end{itemize}
A major difficulty when dealing with non-self-adjoint operators is the fact 
that, as opposed to self-adjoint operators, the norm of the resolvent can be very 
large even far away from the spectrum of the operator. As a consequence the 
spectrum can be highly unstable even under very small perturbations, see for 
example \cite{TrefEmbr,Da07} for a very good overview.
\par
Renewed activity in this field has been sparked by works in numerical analysis 
by L.N.Trefethen and M. Embree, see e.g. \cite{Tr97,TrefEmbr}, which provides a 
significant tool for studying spectral instability: the so-called 
$\varepsilon$-pseudo\-spectrum which consists of the regions where the resolvent is 
large and thus indicates how far the eigenvalues can spread under 
perturbations. Following \cite{TrefEmbr}, it can be defined by
  \begin{defn}
   Let $A$ be an closed linear operator on a Banach space $X$ and let $\varepsilon>0$ be arbitrary. Then, the 
   $\varepsilon$-$pseudospectrum$ $\sigma_{\varepsilon}(A)$ of $A$ is defined by 
    \begin{equation}\label{pssp1}
     \sigma_{\varepsilon}(A) : = \left\{z\in\mathds{C}:~\lVert (z-A)^{-1} \rVert > \frac{1}{\varepsilon} \right\}
			       \cup\sigma(A),
    \end{equation}
   or equivalently
    \begin{equation}\label{pssp}
     \sigma_{\varepsilon}(A)  = \bigcup_{\substack{B\in\mathcal{B}(X) \\ \lVert B\rVert < \varepsilon}}
			      \sigma(A+B),
    \end{equation}
    or equivalently 
    \begin{equation}\label{pssp2}
     z\in\sigma_{\varepsilon} \quad\Longleftrightarrow \quad 
	    z\in\sigma(A)~\text{or}~\exists u\in\mathrm{dom}(A),~\lVert u\rVert = 1~ \text{s.t.:}~
		    \lVert (z-A)u\rVert < \varepsilon.
    \end{equation}
  \end{defn}
The last condition also implicitly defines the so-called \textit{quasimodes} 
or $\varepsilon$-\textit{pseudo}\-\textit{eigenvectors}.
\par 
Highlighted by the works of L.N. Trefethen, M. Embree, E.B. Davies, M. Zworski, J. Sj\"ostrand, 
cf. \cite{TrefEmbr,Tr97,Da07,Da97,Da99,NSjZw,SjAX1002,ZwChrist10}, and many others, spectral 
instability of non-self-adjoint operators has become a popular and important subject. 
\par
In view of \eqref{pssp} it is natural to study the spectrum of such operators 
under small random perturbations. One direction of recent research interest 
has focused on the case of elliptic (pseudo)differential 
operators subject to random perturbations, see for example 
\cite{Ha06,BM,Ha06b,BoSj09,HaSj08,SjAX1002,Sj08,Sj09} and 
\cite{ZwChrist10,DaHa09,Sj13}.
\par
Following this line, we will consider a class of non-self-adjoint semiclassical 
differential operators, introduced by M. Hager \cite{Ha06}, subject to 
random perturbations and we will give a complete description of the density of eigenvalues.
\subsection{Hager's model}\label{sec:HM}
Let $0<h\ll 1$, we consider the semiclassical operator $P_h:L^2(S^1)\rightarrow L^2(S^1)$ as 
defined by Hager in \cite{Ha06} given by
  \begin{equation}\label{eqn:defnModelOperator}
    P_h := hD_x + g(x), \quad D_x := \frac{1}{i}\frac{d}{dx}, \quad S^1=\mathds{R}/2\pi\mathds{Z}
  \end{equation}
where $g\in\mathcal{C}^{\infty}(S^1:\mathds{C})$ is such that $\Ima g$ has exactly two critical 
points, one minimum and one maximum, say in $a$ and $b$, with 
$a < b< a+2\pi$ and $\Ima g(a) < \Ima g(b)$. Without loss of generality we may assume that 
$\Ima g(a) = 0$. The natural domain of $P_h$ is the semiclassical Sobolev space
  \begin{equation*}
   H^1(S^1) := \left\{u\in L^2(S^1):~ \left(\lVert u \rVert^2 + 
				\lVert hD_x u \rVert^2\right)^{\frac{1}{2}}< \infty \right\},
  \end{equation*}
where $\lVert\cdot\rVert$ denotes the $L^2$-norm on $S^1$ if nothing else is specified. We will use the 
standard scalar products on $L^2(S^1)$ and $\mathds{C}^N$ defined by 
  \begin{equation*}
   (f|g) := \int_{S^1} f(x) \overline{g}(x)dx,\quad f,g\in L^2(S^1),
  \end{equation*}
and 
  \begin{equation*}
   (X|Y) := \sum_{i=1}^N X_i \overline{Y}_i, \quad X,Y\in\mathds{C}^N.
  \end{equation*}
We denote the semiclassical principal symbol of $P_h$ by 
  \begin{equation}\label{eqn:SemClPrinSymModlOp}
   p(x,\xi) = \xi + g(x), \quad (x,\xi) \in T^*S^1.
  \end{equation}
The spectrum of $P_h$ is discrete with simple eigenvalues, given by 
  \begin{equation*}
   \sigma(P_h) = \{z\in\mathds{C}:~z=\langle g\rangle +kh,~k\in\mathds{Z}\},
  \end{equation*}
where $\langle g\rangle := (2\pi)^{-1} \int_{S^1} g(y)dy$. Next, consider the equation $z=p(x,\xi)$. It has precisely two 
solutions $\rho_{\pm}:=(x_{\pm},\xi_{\pm})$ where $x_{\pm}$ are given by $\Ima g(x_{\pm})  = \Ima z $, $\pm \Ima g'(x_{\pm}) < 0 $ 
and $\xi_{\pm} = \Rea z - \Rea g(x_{\pm})$. By the natural projection 
$\Pi:\mathds{R}\rightarrow S^1=\mathds{R}/2\pi\mathds{Z}$ and a slight abuse 
of notation we identify the points $x_{\pm}\in S^1$ with points $x_{\pm}\in\mathds{R}$ such that 
$x_- -2\pi < x_+ < x_-$. Furthermore, 
we will identify $S^1$ with the interval $[x_--2\pi,x_-[$. We recall that the 
Poisson bracket of $p$ and $\overline{p}$ is given by 
  \begin{equation*}
   \{p,\overline{p}\} = {p}'_{\xi}\cdot{\overline{p}}'_x - {p}'_x\cdot{\overline{p}}'_{\xi}.
  \end{equation*}
\subsection{Adding a random perturbation}
We are interested in the following random perturbation of $P_h$:
  \begin{equation}\label{eqn:DefPertP}
    P_h^{\delta} := P_h +\delta Q_{\omega} := hD_x + g(x) + \delta Q_{\omega},
  \end{equation}
where $\delta>0$ and $Q_{\omega}$ is an integral operator $L^2(S^1)\rightarrow L^2(S^1)$ of the form
\begin{equation}\label{defn:PertIntOp}
    Q_{\omega} u(x) := \sum\limits_{|j|,|k|\leq \left\lfloor\frac{C_1}{h}\right\rfloor} \alpha_{j,k} (u|e^k)e^j(x).
  \end{equation}
Here $\lfloor x\rfloor :=\max\{n\in\mathds{N}:~x\geq n\}$ for $x\in\mathds{R}$, $C_1>0$ is big enough, 
$e^k(x):=(2\pi)^{-1/2} \e^{ikx}$, $k\in\mathds{Z}$, and $\alpha_{j,k}$ are complex valued 
independent and identically distributed random variables with complex Gaussian distribution 
law $\mathcal{N}_{\mathds{C}}(0,1)$. The following result was observed by W. Bordeaux-Montrieux \cite{BM}.
\begin{prop}\label{lem:truncation}
 Let $h>0$ and let $\lVert  Q_{\omega}\rVert_{HS}$ denote the Hilbert-Schmidt norm 
 of $ Q_{\omega}$. If $C>0$ is large enough, then 
  \begin{equation*}
   \lVert Q_{\omega} \rVert_{HS} \leq \frac{C}{h} 
   \quad\text{with probability} \geq 1 - \e^{-\frac{1}{Ch^2}}.
  \end{equation*}
\end{prop}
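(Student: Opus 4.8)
The plan is to compute $\lVert Q_\omega\rVert_{HS}$ exactly in terms of the Gaussian coefficients and then to apply a standard Cram\'er-type large deviation bound for a sum of independent exponential variables. First, $Q_\omega$ is the integral operator whose Schwartz kernel is
\[
 q_\omega(x,y)=\sum_{|j|,|k|\le N}\alpha_{j,k}\,e^j(x)\,\overline{e^k(y)},\qquad N:=\Big\lfloor\frac{C_1}{h}\Big\rfloor .
\]
Since $\{e^j\}_{j\in\mathds{Z}}$ is an orthonormal basis of $L^2(S^1)$, the functions $e^j\otimes\overline{e^k}$ form an orthonormal system in $L^2(S^1\times S^1)$, so
\[
 \lVert Q_\omega\rVert_{HS}^2=\lVert q_\omega\rVert^2_{L^2(S^1\times S^1)}=\sum_{|j|,|k|\le N}|\alpha_{j,k}|^2=:S_M,\qquad M:=(2N+1)^2 .
\]
With the normalisation $\mathcal{N}_{\mathds{C}}(0,1)$ (density $\pi^{-1}\e^{-|z|^2}$) the variables $|\alpha_{j,k}|^2$ are i.i.d.\ mean-one exponentials (equivalently $\tfrac12\chi^2_2$), so $\erw\big[\e^{\lambda|\alpha_{j,k}|^2}\big]=(1-\lambda)^{-1}$ for $0\le\lambda<1$ and $\erw[S_M]=M$.

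Next, by the exponential Chebyshev inequality, for every $0<\lambda<1$ and $t>0$,
\[
 \prob\big(S_M\ge t\big)\le\e^{-\lambda t}\,\erw\big[\e^{\lambda S_M}\big]=\e^{-\lambda t}(1-\lambda)^{-M}.
\]
If $t>M$ the choice $\lambda=1-M/t\in(0,1)$ is admissible and yields the Cram\'er bound
\[
 \prob\big(S_M\ge t\big)\le\exp\!\big(-M\,I(t/M)\big),\qquad I(a):=a-1-\ln a\ \ (\ge0\text{ for }a\ge1);
\]
alternatively one could simply quote a ready-made $\chi^2$ tail estimate, since $2S_M\sim\chi^2_{2M}$. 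A short computation shows that $M\mapsto M\,I(t/M)$ is non-increasing on $(0,t)$ (its derivative is $-\ln(t/M)$).

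Finally, set $t=C^2/h^2$. For $h$ small we have $M\le(2C_1/h+1)^2\le 9C_1^2/h^2$, so $t>M$ as soon as $C>3C_1$, and by the monotonicity just noted
\[
 \prob\Big(\lVert Q_\omega\rVert_{HS}^2\ge\frac{C^2}{h^2}\Big)\le\exp\!\Big(-\frac{9C_1^2}{h^2}\,I\Big(\frac{C^2}{9C_1^2}\Big)\Big)=\exp\!\Big(-\frac{1}{h^2}\Big(C^2-9C_1^2-9C_1^2\ln\frac{C^2}{9C_1^2}\Big)\Big).
\]
Since $C^2-9C_1^2-9C_1^2\ln(C^2/(9C_1^2))\to+\infty$ as $C\to\infty$, one fixes $C$ so large that this quantity is $\ge 1/C$; the remaining case of $h$ bounded away from $0$ (where $M=\mO(1)$) is even easier. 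For such $C$ this gives $\prob(\lVert Q_\omega\rVert_{HS}\le C/h)\ge1-\e^{-1/(Ch^2)}$, which is the claim. The argument is essentially routine; the only point needing care is the interplay of the two constants in the last step — one must take $C$ large enough relative to $C_1$ so that the Cram\'er rate beats the combinatorial factor $M\asymp h^{-2}$ — together with keeping track of the Gaussian normalisation that fixes the moment generating function used above.
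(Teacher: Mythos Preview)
Your proof is correct. The paper does not actually prove this proposition; it merely attributes the observation to Bordeaux-Montrieux \cite{BM} and records the identity $\lVert Q_\omega\rVert_{HS}^2=\sum|\alpha_{j,k}|^2$ immediately afterwards. Your argument supplies the missing details via the standard Cram\'er/Chernoff bound for a sum of i.i.d.\ exponentials (equivalently, a $\chi^2$ tail), which is exactly the natural route and presumably what \cite{BM} does as well. The only subtlety, which you handle correctly, is that the number of terms $M\asymp h^{-2}$ grows at the same rate as the threshold $t=C^2/h^2$, so one genuinely needs $C$ large relative to $C_1$ to make the rate function positive; a crude union bound would not suffice here.
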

Since $\lVert Q_{\omega} \rVert_{HS}^2 = \sum |\alpha_{j,k}(\omega)|^2$, we 
can also view the above bound as restricting the support of the joint 
probability distribution of the random vector $\alpha=(\alpha_{jk})_{j,k}$ 
to a ball of radius $C/h$. 
Hence, to obtain a bounded perturbation we will 
work from now on in the restricted probability space:
\begin{hypo}[\bf Restriction of random variables]\label{hyp:H5} 
Define $N:=(2\lfloor C_1/h\rfloor + 1)^2$ where $C_1>0$ is as in 
\eqref{defn:PertIntOp}. We assume that for some constant $C>0$ 
  \begin{equation}\label{eq_i33}
    \alpha \in B(0,R)\subset \C^N, \quad R=\frac{C}{h}.
  \end{equation}
\end{hypo}
Furthermore, we assume that the coupling constant $\delta>0$ satisfies 
  \begin{equation}\label{c_1}
   \delta \ll h^{5/2},
  \end{equation}
which implies, for $\alpha\in B(0,R)$, that 
$\delta\lVert Q_{\omega} \rVert_{HS} \leq Ch^{3/2}$. Hence, for 
$\alpha\in B(0,R)$, the operator $Q_{\omega}$ is compact and 
the spectrum of $P_h^{\delta}$ is discrete. 
\\
\par
\noindent\textbf{Zone of spectral instability}
Since in the present work we are in the semiclassical setting, 
we follow \cite{NSjZw} and define
\begin{equation}\label{eq_i6}
     \Sigma:= \overline{p(T^{*}S^1)} \subset \mathds{C},
\end{equation}
where $p$ as in \eqref{eqn:SemClPrinSymModlOp}. 
\par
In the case of (\ref{eqn:defnModelOperator}) and (\ref{eqn:SemClPrinSymModlOp}) 
$p(T^{*}S^1)$ is already closed due to the ellipticity of $P_h$. It has been 
discussed in \cite{NSjZw} (in a more general context) that for any 
$z\in\mathring{\Sigma}$ such that there exists a $\rho(z)\in T^*S^1$ such that 
 \begin{equation}\label{eq_bc}
  \{\Rea p, \Ima p\}(\rho(z)) < 0,
 \end{equation}
there exists an $h^{\infty}$-quasimode, i.e. an $u_h\in L^2(S^1)$ such that 
   \begin{equation*}
      \lVert (P_h-z)u_h\rVert < \mathcal{O}\!\left(h^{\infty}\right)\lVert u_h \rVert.
   \end{equation*}
In addition, $u_h$ is localized to $\rho$, i.e. $\mathrm{WF}_h(u_h) = \{\rho(z)\}$. 
We recall that for $v=v(h)$, $\lVert v\rVert_{L^2}(S^1)= \mO(h^{-N})$, for some 
fixed $N$, the semiclassical wavefront set of $v$, $\mathrm{WF}_h(v)$, is defined by 
\begin{equation*}
 \complement
 \left\{(x,\xi)\in T^*S^1 : \exists 
 a\in\mathcal{S}(T^* S^1),~a(x,\xi)=1,~
 \lVert a^wv\rVert_{L^2}(S^1)=\mO(h^{\infty})
 \right\}
\end{equation*}
where $a^w$ denotes the Weyl quantization of $a$. In the case of \eqref{eqn:SemClPrinSymModlOp} 
we see that \eqref{eq_bc} holds for any $z\in\Omega\Subset\mathring{\Sigma}$. 
We will give more details on the construction of quasimodes for $P_h$ in Section \ref{subse:Quasmod}. 
\\
\par
For $z$ is close to the boundary of $\Sigma$ the situation is different as 
we have a good resolvent estimate on $\partial\Sigma$. Since 
$\{p,\{p,\overline{p}\}\}(\rho)\neq 0$ for all $z_0\in\partial\Sigma$ and all $\rho \in p^{-1}(z_0)$, 
Theorem 1.1 in \cite{Sj_4VM10} implies that there exists a constant $C_0>0$ 
such that for every constant $C_1>0$ there is a constant $C_2>0$ such that for 
$|z-z_0| < C_1 (h\ln \frac{1}{h})^{2/3}$, $h < \frac{1}{C_2}$, the resolvent 
$(P_h-z)^{-1}$ is well defined and satisfies
    \begin{equation}\label{eqn:TubeEmptyA}
     \lVert (P_h-z)^{-1}\rVert < C_0 h^{-\frac{2}{3}}\exp\left(\frac{C_0}{h} 
				 |z-z_0|^ {\frac{3}{2}}\right).
    \end{equation}
This implies for $\alpha$ as in \eqref{c_1} and $\delta=\mO(h^M)$, 
$M=M(C_1,C)>0$ large enough, that  
    \begin{equation}
     \label{eqn:TubeEmpty}
      \sigma(P_h+\delta Q_{\omega})\cap 
      D\left(z_0,C_1\left(h\ln\frac{1}{h}\right)^{2/3}\right) = \emptyset.
    \end{equation}
Thus, there exists a tube of radius $C_1\left(h\ln\frac{1}{h}\right)^{2/3}$ around 
$\partial\Sigma$ void of the spectrum of the perturbed operator $P_h^\delta$. 
Therefore, since we are interested in the eigenvalue distribution of $P_h^{\delta}$, 
we assume from now on implicitly that 
\begin{hypo}[\textbf{Restriction of $\Sigma$}]\label{hyp:H3}
 Let $\Sigma\subset \C$ be as in \eqref{eq_i6}. Then, we let
\begin{align}\label{hyp_Omega}
     &\Omega\Subset\Sigma ~\text{be open, relatively compact with}~
     \mathrm{dist\,}(\Omega,\partial\Sigma) > C\left(h\ln h^{-1} \right)^{2/3} \notag \\
     &\text{for some constant }C>0.
\end{align}
\end{hypo}
%
%
\subsection{Previous results and purpose of this work} 
The operator $P_h$ and small perturbations of it (deterministic and random) have first been 
studied by Hager \cite{Ha06} followed by works by B.-M. \cite{BM} and 
Sj\"ostrand \cite{SjAX1002}. In \cite{Ha06} Hager obtained that the eigenvalue 
distribution of random perturbations of $P_h$ in the interior of $\Sigma$ is given by a 
Weyl law with a probability close to one.
\par
In \cite{BM} Bordeaux-Montrieux extended Hager's result to strips at a distance 
$\gg\left(-h\ln{\delta h}\right)^{\frac{2}{3}}$ to the boundary of $\Sigma$. 
In both cases, the results concern only the interior of the pseudospectrum, thus missing 
an accumulation of eigenvalues effect close to the boundary of the spectrum where the Weyl 
law breaks down. This effect can be seen clearly in numerical simulations for the model 
$P_h$, see Figure \ref{fig_i2b} and \ref{fig_i2c}. It has furthermore been noted in 
numerical simulations for other models, e.g. in the case of Toeplitz quantization considered 
in \cite{ZwChrist10}.
\begin{figure}[ht]
\centering
 \begin{minipage}[b]{0.45\linewidth}
  \centering
  \includegraphics[width=\textwidth]{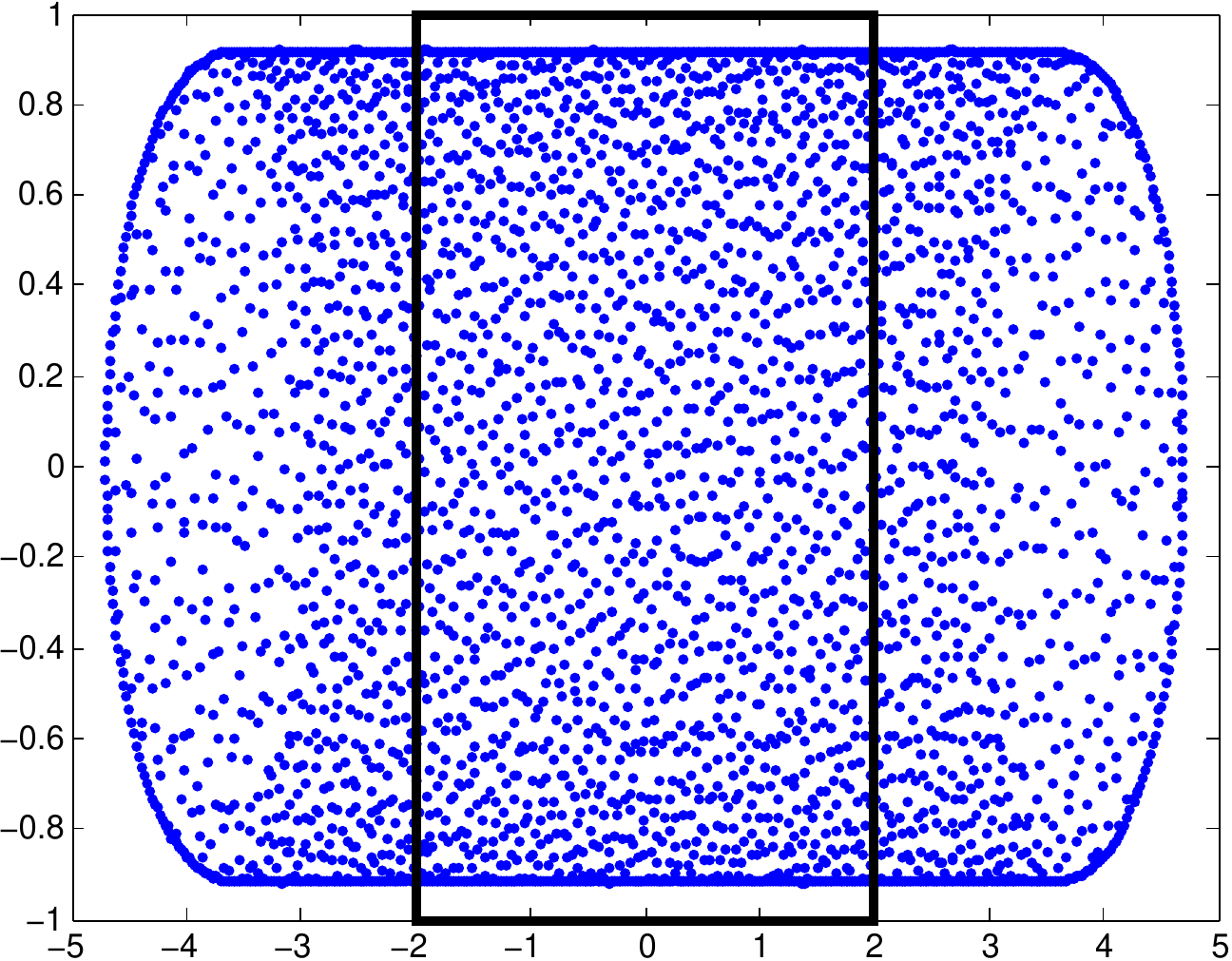}
 \end{minipage}
 \hspace{0.1cm}
 \begin{minipage}[b]{0.45\linewidth}
  \centering 
  \includegraphics[width=\textwidth]{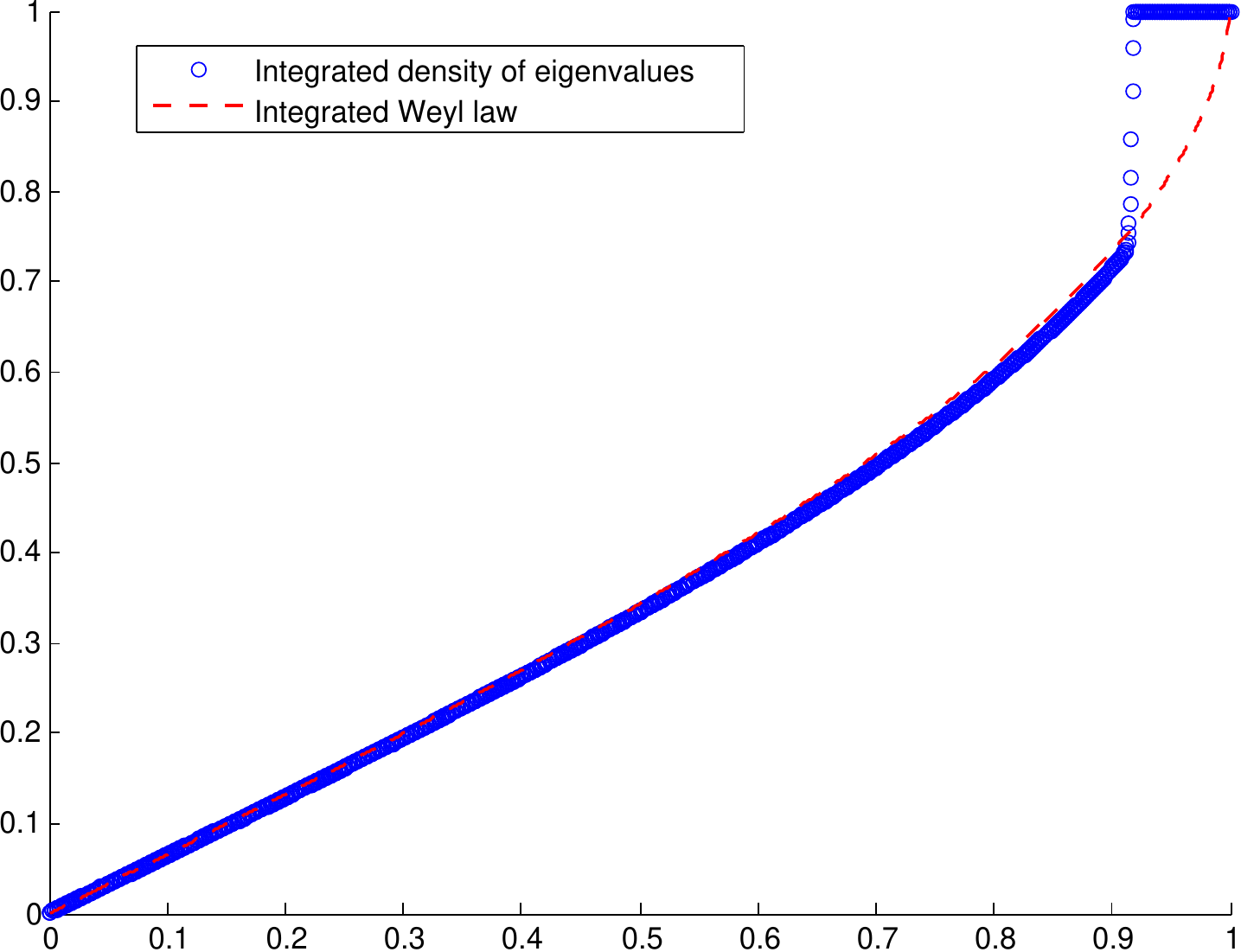}
 \end{minipage}
 \caption{On the left hand side we present the spectrum of the discretization of $hD + \exp(-ix)$ 
	  (approximated by a $3999\times 3999$-matrix) perturbed with a random Gaussian matrix $\delta R$ with 
	  $h=2\cdot 10^{-3}$ and $\delta=2\cdot 10^{-12}$. The black box indicates the region where we count 
	  the number of eigenvalues to obtain the image on the right hand side. There we show the integrated 
	  experimental density of eigenvalues, averaged over $400$ realizations of random Gaussian matrices, 
	  and the integrated Weyl law. We can see clearly a region close to the boundary of the pseudospectrum 
	  where Weyl asymptotics of the eigenvalues breaks down.}
 \label{fig_i2b}
\end{figure}
\begin{figure}[ht]
\centering
 \begin{minipage}[b]{0.45\linewidth}
  \centering
  \includegraphics[width=\textwidth]{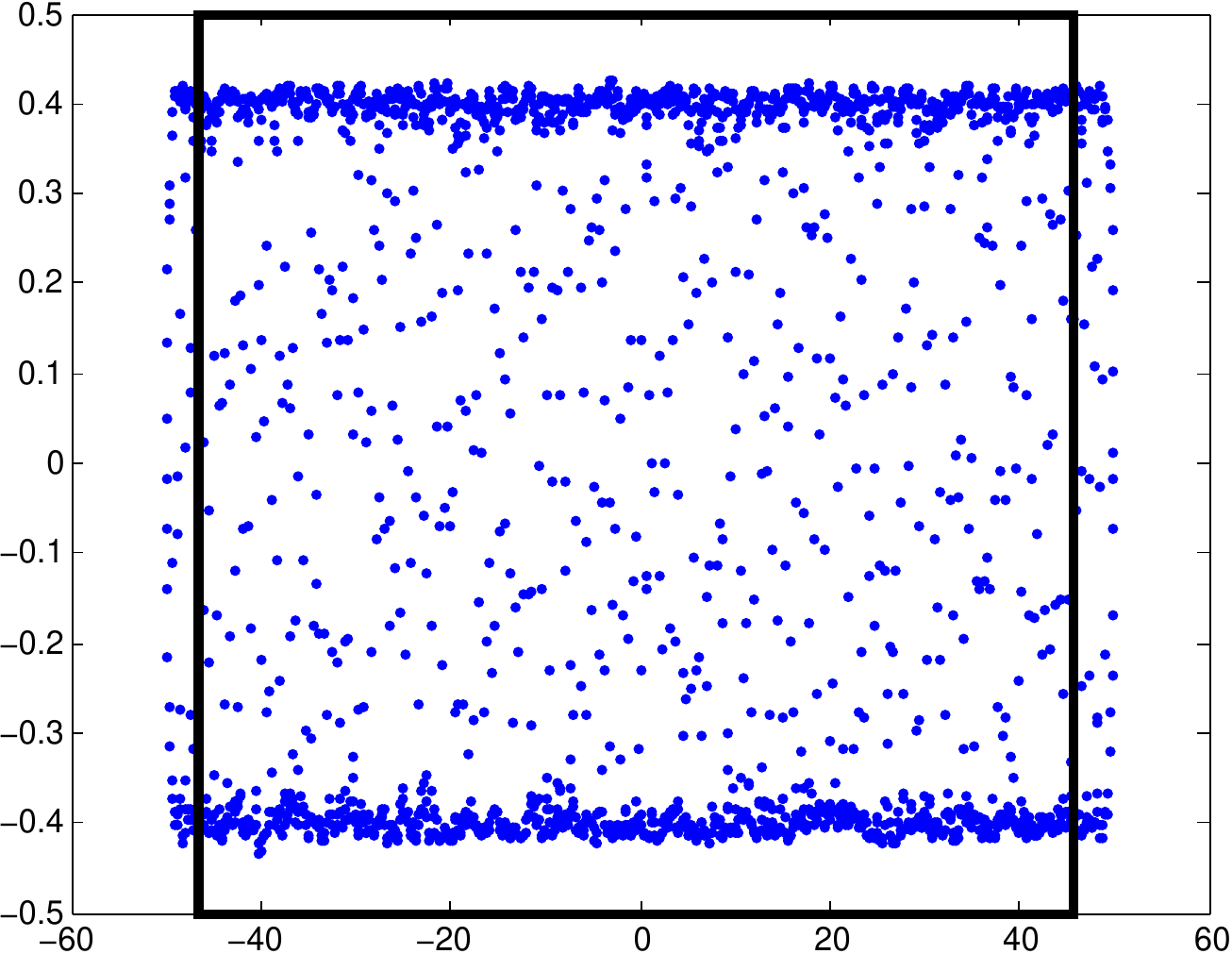}
 \end{minipage}
 \hspace{0.1cm}
 \begin{minipage}[b]{0.45\linewidth}
  \centering 
  \includegraphics[width=\textwidth]{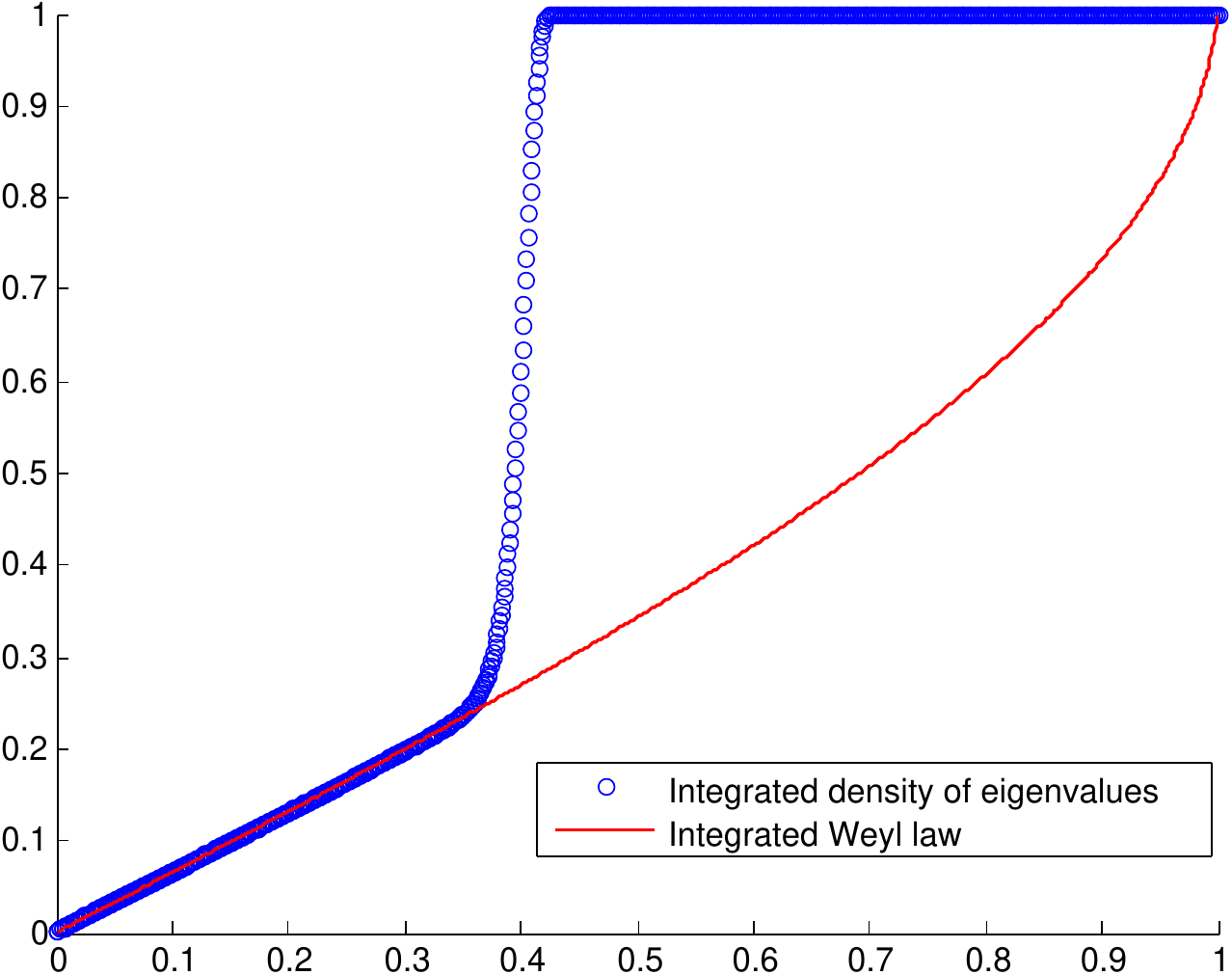}
 \end{minipage}
 \caption{As in Figure \ref{fig_i2b} (using a $1999\times 1999$-matrix for the approximation) with 
	  $h=5\cdot 10^{-2}$ and $\delta=\exp(-1/h)$. Here, the Weyl law breaks down even more 
	  dramatically than in Figure \ref{fig_i2b}.}
 \label{fig_i2c}
\end{figure}
\par
To the best of the author's knowledge, there has never been, until now, a precise
description of this phenomenon. 
This leads to the main question treated in this paper: We want to study the 
distribution of eigenvalues of a random perturbation of the operator $P_h$ 
in the whole of $\Sigma$. In particular this means studying regions where 
the norm of the resolvent of the unperturbed operator $P_h$ is 
much larger, of the same order of magnitude and much smaller than 
the coupling constant $\delta$. 
\\
\\
\noindent\textbf{Outline.} 
The principal aim of this work is to give a detailed description of the average density of 
eigenvalues of the randomly perturbed operator $P_h^{\delta}$.  \par
Section \ref{sec:MainRes} we shall present 
our main results: we shall state an $h$-asymp\-totic formula for the average density of eigenvalues and describe its 
properties. We will show that the spectrum of $P_h^{\delta}$ is be distributed, in average, in a 
band $\subset\Sigma$ whose breadth depends on the strength of the coupling constant. \par
In the interior of this band we will establish a Weyl law for the eigenvalues and show that they exhibit 
a strong accumulation property close to the boundary of this band. 
Outside of this band the average density of eigenvalues decays double exponentially.
\par 
Section \ref{subse:Quasmod} will give constructions of quasimodes 
for $z$ in the interior of $\Sigma$ and close to the boundary $\partial\Sigma$. 
In Section \ref{se:GrushProb} we treat the needed Grushin problems for the operator $P_h$. Section \ref{sec:EstEffectHamil} 
is dedicated to a Grushin problem for the perturbed operator $P_h^{\delta}$ and to its link with 
the symplectic volume of the phase space. 
Section \ref{sec:PrepLemmas} 
will state additional results to prove a formula for the first intensity measure of the random point 
process counting the eigenvalues of $P_h^{\delta}$ which then will be proven in Section \ref{sec:FIntMesForm}. 
Sections \ref{eqn:SecSymplVol} to \ref{sec:BehavDensity} will prove the main results.
\begin{rem}
Throughout this work we shall denote the Lebesgue measure 
on $\mathds{C^d}$ by $L(dz)$; denote $d(z):=\dist(z,\partial\Sigma)$; 
work with the convention that when we write $\mathcal{O}(h)^{-1}$ 
then we mean implicitly that $0 < \mathcal{O}(h) \leq C h$; denote by $f(x) \asymp g(x)$ that 
there exists a constant $C>0$ such that $C^{-1}g(x) \leq f(x) \leq Cg(x)$; write 
$\chi_1(x) \succ \chi_2(x)$, with $\chi_i \in \mathcal{C}_0^{\infty}$, 
if $\supp \chi_2 \subset \complement\supp ( 1- \chi_1)$. 
\end{rem}
%
%
  \noindent\textbf{Acknowledgments.} 
  I would like to thank very warmly my thesis advisor Johannes Sj\"ostrand for 
reading the first draft of this work and for his kind and enthusiastic manner 
in supporting me along the way. I would also like to thank sincerely my thesis 
advisor Fr\'ed\'eric Klopp for his kind and generous support.
%
\section{Main results}\label{sec:MainRes}
We begin by establishing how to choose the strength of the perturbation. For this 
purpose we discuss some estimates on the norm of the resolvent of $P_h$.
\subsection{The coupling $\delta$}
First, we give a description of the imaginary part of the action between 
$\rho_+(z)$ and $\rho_-(z)$. 
\begin{rem} 
 Much of the following is valid for $z\in\Omega\Subset\Sigma$ with 
\begin{equation}\label{hyp_Omega2}
     \Omega\Subset\Sigma ~\text{ open, relatively compact with}~
     \mathrm{dist\,}(\Omega,\partial\Sigma) \gg h^{2/3},
\end{equation}
instead of for $z\in\Omega$ as in Hypothesis \ref{hyp:H3}.
\end{rem}
\begin{defn}\label{defn:ZeroOrderDensity}
 Let $\Omega\Subset\Sigma$ as in \eqref{hyp_Omega2}, let $p$ denote the semiclassical 
 principal symbol of $P_h$ in (\ref{eqn:SemClPrinSymModlOp}) and let 
 $\rho_{\pm}(z)= (x_{\pm}(z),\xi_{\pm})$ be as above. Define
  \begin{equation*}
   S:=\min\left( \Ima \int_{x_+}^{x_-}(z-g(y))dy,\Ima \int_{x_+}^{x_--2\pi}(z-g(y))dy \right).
   \end{equation*}
\end{defn}
\begin{prop}\label{prop:SProp}
 Let $\Omega\Subset\Sigma$ as in \eqref{hyp_Omega2} and let $S(z)$ be as in Definition \ref{defn:ZeroOrderDensity}, 
 then $S(z)$ has the following properties for all $z\in\Omega$: 
  \begin{itemize}
   \item $S(z)$ depends only on $\Ima z$, is continuous and has the zeros $S(\Ima g(a)) = S(\Ima g(b)) =0$;
   \item $S(z)\geq 0$;
   \item for $\Ima z = \langle \Ima g\rangle$ the two integrals defining $S$ are equal; 
	 $S$ has its maximum at $\langle \Ima g\rangle$ and is strictly monotonously decreasing 
	 on the interval $[\langle \Ima g\rangle,\Ima g(b)]$ and stric. monotonously increasing on 
         $[\Ima g(a),\langle \Ima g\rangle]$;
   \item its derivative is piecewise of class 
   $\mathcal{C}^ {\infty}$ with the only discontinuity at $\Ima z = \langle \Ima g\rangle$. 
   Moreover, 
   \begin{align}\label{eqn:zRepS}
   &S(z) = \int_{\langle \Ima g\rangle}^{\Ima z} (\partial_{\Ima z}S)(t)dt+S(\langle \Ima g\rangle), \notag \\
   &(\partial_{\Ima z}S)(t) := \begin{cases}
                x_-(t) - x_+(t), ~\text{if } \Ima z \leq \langle \Ima g\rangle, \\
		 x_-(t)- 2\pi - x_+(t), ~\text{if }\Ima z > \langle \Ima g\rangle.
              \end{cases}
  \end{align}
   \item $S$ has the following asymptotic behavior for $z\in\Omega$ 
	  \begin{equation*}
	      S(z) \asymp d(z)^{\frac{3}{2}},
	 \end{equation*}
	 and 
	 \begin{equation*}
  	     |\partial_{\Ima z}S(z)| \asymp 
  	     d(z)^{\frac{1}{2}}.
 	 \end{equation*}
  \end{itemize}
\end{prop}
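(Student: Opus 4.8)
The plan is to reduce everything to the explicit one-variable representation of $S$ coming from the geometry of the symbol $p(x,\xi)=\xi+g(x)$. First I would observe that for $z=p(x,\xi)$ the two preimages $\rho_\pm(z)=(x_\pm,\xi_\pm)$ have $x_\pm$ determined solely by $\Ima g(x_\pm)=\Ima z$, so the quantities $\Ima\int_{x_+}^{x_-}(z-g(y))\,dy$ and $\Ima\int_{x_+}^{x_- -2\pi}(z-g(y))\,dy$ depend on $z$ only through $\Ima z$; writing $z-g(y)$ and taking imaginary parts, the $\Rea z$ contribution is $\Ima\int(\Rea z)\,dy=0$, so in fact $S(z)=-\min\bigl(\int_{x_+}^{x_-}\Ima g(y)\,dy + (\text{length term}),\dots\bigr)$ up to an elementary rewriting; this gives the dependence on $\Ima z$ alone and the continuity claim. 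For the differentiation I would note that $x_\pm$ are smooth functions of $\Ima z$ away from the critical values $\Ima g(a),\Ima g(b)$ by the implicit function theorem (since $\Ima g'(x_\pm)\neq0$ there, by the sign conditions $\pm\Ima g'(x_\pm)<0$), and differentiate under the integral sign. The boundary terms at $y=x_\pm$ come with the factor $\Ima z-\Ima g(x_\pm)=0$, so they drop out, leaving exactly $(\partial_{\Ima z}S)(t)=x_-(t)-x_+(t)$ (resp.\ $x_-(t)-2\pi-x_+(t)$), which is formula \eqref{eqn:zRepS}; integrating back recovers the first line of \eqref{eqn:zRepS}.

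Next I would establish positivity, the location of the maximum, and monotonicity. At $\Ima z=\langle\Ima g\rangle$ I would show the two competing integrals coincide: their difference is $\Ima\int_{x_-}^{x_- -2\pi}(z-g(y))\,dy=-\int_{x_- -2\pi}^{x_-}\Ima g(y)\,dy+2\pi\,\Ima z=2\pi(\Ima z-\langle\Ima g\rangle)$, which vanishes precisely there; this also shows which of the two integrals is the minimum on each side of $\langle\Ima g\rangle$, hence the case split in \eqref{eqn:zRepS}. Positivity $S\geq0$ and the two zeros at $\Ima g(a),\Ima g(b)$ then follow because at those endpoints $x_+=x_-$ (the two preimages collide at the critical point), so $S=0$ there, and $S$ is concave-like in between: indeed $\partial_{\Ima z}S=x_--x_+>0$ for $\Ima z<\langle\Ima g\rangle$ and $=x_--2\pi-x_+<0$ for $\Ima z>\langle\Ima g\rangle$ (using $x_- -2\pi<x_+<x_-$), which gives strict monotonic increase on $[\Ima g(a),\langle\Ima g\rangle]$, strict decrease on $[\langle\Ima g\rangle,\Ima g(b)]$, the maximum at $\langle\Ima g\rangle$, and $S\geq0$ throughout $\Omega$. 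The piecewise-$\mathcal C^\infty$ regularity of the derivative with its single jump at $\langle\Ima g\rangle$ is immediate from the two smooth branches $x_-(t)-x_+(t)$ and $x_-(t)-2\pi-x_+(t)$ differing by the constant $2\pi$.

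Finally, the asymptotics near $\partial\Sigma$. Here $\partial\Sigma$ corresponds to $\Ima z$ near $\Ima g(a)$ or $\Ima g(b)$, and $d(z)=\dist(z,\partial\Sigma)\asymp|\Ima z-\Ima g(a)|$ (resp.\ $|\Ima z-\Ima g(b)|$) since the boundary of $\Sigma=\overline{p(T^*S^1)}$ is a pair of horizontal-type arcs cut out by $\Ima g(a)\leq\Ima z\leq\Ima g(b)$. Because $a$ (resp.\ $b$) is a nondegenerate critical point of $\Ima g$, near it $\Ima g(x)-\Ima g(a)\asymp (x-a)^2$, so solving $\Ima g(x_\pm)=\Ima z$ gives $x_\pm - a\asymp\pm\,d(z)^{1/2}$, hence $|\partial_{\Ima z}S|=|x_--x_+|\asymp d(z)^{1/2}$; integrating this from the zero of $S$ at the critical value yields $S\asymp d(z)^{3/2}$. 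The main obstacle I anticipate is the bookkeeping around the change of branch at $\langle\Ima g\rangle$ — making sure the $\min$ is correctly identified with the stated branch on each side and that the constant $2\pi$ shift and the sign conventions $x_- -2\pi<x_+<x_-$ are handled consistently — together with pinning down $d(z)\asymp|\Ima z-\Ima g(a/b)|$ rigorously from the shape of $\Sigma$ rather than treating it as obvious; everything else is a routine implicit-function-theorem plus Morse-lemma computation.
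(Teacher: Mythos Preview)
Your argument is correct and complete. The paper does not supply an explicit proof of this proposition; it is stated in Section~2 as a collection of elementary facts about $S$, with the asymptotic estimates $S\asymp d(z)^{3/2}$ and $|x_\pm-a|\asymp d(z)^{1/2}$ later justified in passing by Taylor expansion (see \eqref{eqn:HTscale} and \eqref{eqn:SEstINBox}). Your route---computing the difference of the two defining integrals as $2\pi(\Ima z-\langle\Ima g\rangle)$ to identify the branch, differentiating under the integral sign and noting that the boundary terms vanish because $\Ima z-\Ima g(x_\pm)=0$, reading off monotonicity from the sign of $x_--x_+$ versus $x_--2\pi-x_+$, and using the nondegeneracy of the critical points for the $d(z)^{1/2}$ and $d(z)^{3/2}$ scalings---is exactly the natural proof the paper leaves implicit. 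Your worry about $d(z)\asymp|\Ima z-\Ima g(a/b)|$ is not a real obstacle here: since $p(x,\xi)=\xi+g(x)$ with $\xi\in\mathds{R}$, the set $\Sigma$ is the horizontal strip $\{\Ima g(a)\le\Ima z\le\Ima g(b)\}$, so $\partial\Sigma$ consists of two horizontal lines and the identification is immediate.
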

\begin{rem}
 Note that in (\ref{eqn:zRepS}) we chose to define 
 $\partial_{\Ima z}S(z) := x_-(z) - x_+(z)$ for $\Ima z = \langle \Ima g \rangle$. 
 We will keep this definition throughout this text.
\end{rem}
With the convention $\lVert (P_h - z)^{-1}\rVert = \infty$ for $z\in\sigma(P_h)$ we have the following estimate on the 
resolvent growth of $P_h$: 
\begin{prop}\label{prop:Resolvent}
Let $g(x)$ be as above. For $z\in\mathds{C}$ and $h>0$ define, 
 \begin{align*}
   \Phi(z,h) := 
    \begin{cases}
     -\frac{2\pi i}{h}(z -\langle g\rangle),~\text{if}~ \Ima z < \langle \Ima g\rangle , \\
      \frac{2\pi i}{h}(z -\langle g\rangle),~\text{if}~ \Ima z >\langle \Ima g\rangle,
     \end{cases}
  \end{align*}
 where $\Rea \Phi(z,h) \leq 0$. Then, under the assumptions of Definition 
 \ref{defn:ZeroOrderDensity} we have for $z\in\Omega\Subset\Sigma$ as in 
 \eqref{hyp_Omega2} that
 \begin{align}\label{res_id}
   \lVert (P_h - z)^{-1}\rVert &=  
   \frac{\sqrt{\pi}\left|1 - \e^{\Phi(z,h)}\right|^{-1}\e^{\frac{S(z)}{h}}}
	      {\sqrt{h}\left(\frac{i}{2}\{p,\overline{p}\}(\rho_+)\frac{i}{2}\{\overline{p},p\}(\rho_-)\right)^{\frac{1}{4}}}
	      (1 + \mO(h)) \\
        &\asymp
	     \frac{\e^{\frac{S(z)}{h}}}{\sqrt{h}\,d(z)^{1/4}}, ~ 
	     \text{for}~ |\Ima z - \langle\Ima g\rangle |>1/C, ~C\gg 1, \notag
  \end{align}
  where $\left|1 - \e^{\Phi(z,h)}\right| = 0$ if and only if 
  $z\in\sigma(P_h)$. Moreover, 
  \begin{equation*}
    \left|1 - \e^{\Phi(z,h)}\right| = 
    1  + \mO\!\left(\e^{-\frac{2\pi}{h}|\Ima z -\langle \Ima g\rangle|}\right) .
  \end{equation*}
\end{prop}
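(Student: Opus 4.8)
The plan is to compute the resolvent norm $\lVert (P_h-z)^{-1}\rVert$ by explicitly solving the inhomogeneous equation $(P_h - z)u = v$ on $S^1$ and reading off the norm of the solution operator. Writing $P_h - z = hD_x + g(x) - z$, the equation $(hD_x + g(x) - z)u = v$ is a first-order linear ODE whose solution is given by an explicit integrating-factor formula: setting $\phi(x) := \frac{i}{h}\int_{x_0}^x (g(y)-z)\,dy$, any local solution is $u(x) = \e^{-\phi(x)}\left(c + \frac{i}{h}\int_{x_0}^x \e^{\phi(y)} v(y)\,dy\right)$, and the periodicity condition on $S^1$ fixes the constant $c$ (uniquely, unless the periodicity factor $\e^{-\phi(2\pi)}$ equals $1$, which is exactly the condition $z\in\sigma(P_h)$). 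This yields $(P_h-z)^{-1}$ as an integral operator with an explicit kernel, and $\left|1 - \e^{\Phi(z,h)}\right|$ appears precisely as $|1 - \e^{-\phi(2\pi)}|$; here $\Phi(z,h) = \pm\frac{2\pi i}{h}(z - \langle g\rangle)$ with the sign chosen so that $\Rea\Phi \le 0$, matching the two cases $\Ima z \gtrless \langle\Ima g\rangle$. The elementary estimate $\left|1-\e^{\Phi}\right| = 1 + \mO(\e^{-\frac{2\pi}{h}|\Ima z - \langle\Ima g\rangle|})$ then follows because $\Rea\Phi = -2\pi|\Ima z - \langle\Ima g\rangle|/h$.

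Next I would estimate the operator norm of this integral operator in $L^2(S^1)$. The kernel, up to the periodicity normalization $(1-\e^{\Phi})^{-1}$, is $K(x,y) \sim \frac{i}{h}\e^{-\phi(x)+\phi(y)}$ on the appropriate triangular region; since $\Rea(-\phi(x)+\phi(y)) = \frac{1}{h}\int_y^x \Ima(g(t)-z)\,dt$ and $\Ima g$ has a single minimum at $a$ and maximum at $b$, the weight is largest when $x$ is near $x_-$ (where $\Ima g$ crosses $\Ima z$ from above, so $\Ima(g-z)$ changes sign making the integral maximal) and $y$ is near $x_+$. The dominant contribution to the norm comes from a rank-one-like term localized near $(x,y) = (x_-, x_+)$, and a stationary-phase / Laplace-type analysis of the Gaussian-type integrals in $x$ around $x_-$ and in $y$ around $x_+$ produces the prefactor: the second derivatives of $\Ima g$ at $x_\pm$ are exactly $\mp\Ima g'(x_\pm)$-type quantities, which translate into the Poisson brackets $\{p,\overline p\}(\rho_\pm)$ (recall $\{p,\overline p\} = p'_\xi \overline p'_x - p'_x \overline p'_\xi = -2i\Ima(g')$ up to constants, so the $\frac14$-power of the product $\frac{i}{2}\{p,\overline p\}(\rho_+)\cdot\frac{i}{2}\{\overline p,p\}(\rho_-)$ appears). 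Integrating $\e^{\frac{2}{h}(\Ima\int_{x_+}^{x_-}(\Ima z - \Ima g))}$ against the Gaussian bumps gives the two factors of $\sqrt{h}$ (one from each Laplace integration, one of which lands in the numerator and one contributes to the $h^{-1}$ prefactor, netting $h^{-1/2}$), and the exponent $\frac{S(z)}{h}$ is identified via Definition \ref{defn:ZeroOrderDensity}, with the $\min$ over the two arcs reflecting that on $S^1$ one may go from $x_+$ to $x_-$ or to $x_- - 2\pi$. One has to verify that the non-dominant parts of the kernel contribute only $(1+\mO(h))$ relative error, which is where the separation $|\Ima z - \langle\Ima g\rangle| > 1/C$ and the nondegeneracy of the critical points of $\Ima g$ are used.

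Finally, for the $\asymp$ statement in the second line of \eqref{res_id}, I would invoke Proposition \ref{prop:SProp}: $\left|1-\e^{\Phi}\right| = 1 + \mO(\e^{-\frac{2\pi}{h}|\Ima z-\langle\Ima g\rangle|}) \asymp 1$ when $|\Ima z - \langle\Ima g\rangle| > 1/C$, and the Poisson-bracket prefactor $\left(\frac{i}{2}\{p,\overline p\}(\rho_+)\frac{i}{2}\{\overline p,p\}(\rho_-)\right)^{1/4}$ is comparable to $d(z)^{1/4}$ because each Poisson bracket $\{p,\overline p\}(\rho_\pm)$ vanishes to first order as $z \to \partial\Sigma$ (the critical points $x_\pm$ coalesce at $a$ or $b$ where $\Ima g' = 0$), matching $|\partial_{\Ima z}S(z)| \asymp d(z)^{1/2}$; combined with $\e^{S(z)/h}$ this gives $\e^{S(z)/h} / (\sqrt h\, d(z)^{1/4})$.

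I expect the main obstacle to be the rigorous justification of the Laplace-type asymptotics for the operator norm — that is, showing the kernel's dominant contribution really is asymptotically rank one with the claimed prefactor, controlling the off-diagonal and non-critical regions uniformly in $z \in \Omega$ (in particular as $d(z) \to 0$ at the allowed rate $\gg h^{2/3}$), and matching the geometric quantities (second derivatives of $\Ima g$ at $x_\pm$) to the Poisson brackets with the correct constants. This is essentially the content of Hager's original resolvent analysis in \cite{Ha06}, and I would structure the argument to reduce to and sharpen that computation.
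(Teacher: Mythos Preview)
Your approach is correct in principle but takes a genuinely different route from the paper. You propose to write down the explicit integral kernel of $(P_h-z)^{-1}$ by solving the first-order ODE and then extract the $L^2\!\to\! L^2$ norm via a Laplace-type analysis of the kernel near $(x,y)=(x_-,x_+)$. This works and is essentially Hager's original computation, but the rank-one reduction and the uniform control of the non-critical regions (the part you flag as the ``main obstacle'') require real work.

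The paper avoids all of this with a one-line observation: since $Q(z)=(P_h-z)^*(P_h-z)$ is self-adjoint with smallest eigenvalue $t_0^2(z)$, one has
\[
\lVert (P_h-z)^{-1}\rVert \;=\; t_0(z)^{-1} \;=\; |\alpha_0(z)|^{-1} \;=\; |E_{-+}(z)|^{-1},
\]
where the last two equalities come from the Grushin problem of Proposition~\ref{propGrushUnpertOp}. Then Proposition~\ref{cor:E-+Repres} already gives $|E_{-+}(z)| = V(z,h)\,|1-\e^{\Phi(z,h)}|\,\e^{-S(z)/h}(1+\mathcal{O}(\e^{-\asymp\eta^{3/2}/h}))$, and substituting the expression \eqref{eqn:Prefac2} for $V$ gives \eqref{res_id} immediately. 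The $\asymp$ statement follows from $|\{p,\overline{p}\}(\rho_\pm)| \asymp \sqrt{\eta}$.

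What each approach buys: the paper's route is essentially free once the Grushin machinery and the comparison $E_{-+}\sim E_{-+}^H$ are in place (and those are needed anyway for the rest of the paper). Your direct approach is self-contained and conceptually transparent, but duplicates effort, since Hager's explicit formula for $E_{-+}^H$ (Proposition~\ref{prop:GrushHager}) already encodes exactly the Laplace asymptotics you describe. If you want to keep your strategy, the cleanest way to finish is to recognise that the approximate rank-one structure of the kernel is precisely the statement $t_0^2 \ll t_1^2$ (Proposition~\ref{prop:QDistFirstEig}), so the operator norm is $t_0^{-1}$ up to the spectral gap, and then your Laplace computation is literally the evaluation of $|E_{-+}^H|$.
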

This proposition will be proven in Section \ref{susec:10.1}. 
The growth of the norm of the resolvent away from the line $\Ima z = \langle \Ima g\rangle$ 
is exponential and determined by the function $S(z)$. 
It will be very useful to write the coupling constant $\delta$ as follows:
\begin{defn}\label{defn:CoupConstDel}
 For $h>0$, define 
        \begin{equation*}
	   \delta := \delta(h) := \sqrt{h}\e^{-\frac{\epsilon_0(h)}{h}}
	\end{equation*}
        with $ \left(\kappa - \frac{1}{2}\right)h\ln (h^{-1}) +Ch \leq \epsilon_0(h) < S(\langle \Ima g\rangle)$ 
        for some $\kappa>0$ and $C>0$ large and where the last inequality is uniform in $h>0$. 
        This is equivalent to the bounds 
	\begin{equation*}
	 \sqrt{h}\e^{-\frac{S(\langle \Ima g\rangle)}{h}} <  \delta \ll h^{\kappa}.
	\end{equation*}
\end{defn}
\begin{rem}
 The upper bound on $\varepsilon_0(h)$ has been chosen in order to produce 
 eigenvalues sufficiently far away from the 
 line $\Ima z = \langle\Ima g\rangle$ where we find $\sigma(P_h)$. 
 The lower bound on $\varepsilon_0(h)$ is needed because we want to consider small random 
 perturbations with respect to $P_h$ (cf. \eqref{c_1}).
\end{rem}
\subsection{Auxiliary operator.}\label{sec:AuxOpe} To describe the elements of the average 
density of eigenvalues, it will be very useful to introduce the following operators 
which have already been used in the study of the spectrum of $P_h^{\delta}$ by 
Sj\"ostrand \cite{SjAX1002}. For the readers convenience, we will give a short 
overview: \par 
Let $z\in\C$ and we define the $z$-dependent elliptic self-adjoint operators 
$Q(z), \tilde{Q}(z):  L^2(S^1) \rightarrow L^2(S^1)$ by
  \begin{align}\label{eqn:defnSAOpQ}
   Q(z):=(P_h-z)^*(P_h-z),\quad \tilde{Q}(z):=(P_h-z)(P_h-z)^*
  \end{align}
with domains $\mathcal{D}(Q(z)),\mathcal{D}(\tilde{Q}(z))=H^2(S^1)$. Since $S^1$ is compact and these are 
elliptic, non-negative, self-adjoint operators their spectra are discrete and contained 
in the interval $[0,\infty[$. Since
  \begin{equation*}
   Q(z)u = 0 \Rightarrow (P_h-z)u=0
  \end{equation*}
it follows that $\mathcal{N}(Q(z)) = \mathcal{N}(P_h-z)$ and $\mathcal{N}(\tilde{Q}(z)) = \mathcal{N}((P_h-z)^*)$. 
Furthermore, if $\lambda\neq 0$ is an eigenvalue of $Q(z)$ with corresponding eigenvector $e_{\lambda}$ we see 
that $f_{\lambda}:=(P_h-z)e_{\lambda}$ is an eigenvector of $\tilde{Q}(z)$ with the eigenvalue $\lambda$. Similarly, 
every non-vanishing eigenvalue of $\tilde{Q}(z)$ is an eigenvalue of $Q(z)$ and moreover, since $P_h-z$, $(P_h-z)^*$ 
are Fredholm operators of index $0$ we see that 
  \begin{equation*}
   \text{dim}\,\mathcal{N}(P_h-z)=\text{dim}\,\mathcal{N}((P_h-z)^*).
  \end{equation*}
Hence the spectra of $Q(z)$ and $\tilde{Q}(z)$ are equal
  \begin{equation}\label{eqn:auxillaryOpEigVals}
   \sigma(Q(z))=\sigma(\tilde{Q}(z))=\{t_0^2,t_1^2,\dots\},~0\leq t_j\nearrow\infty.
  \end{equation}
We will show in Proposition \ref{prop:QDistFirstEig} that for $z\in\Omega\Subset\Sigma$ as in \eqref{hyp_Omega2}
   \begin{equation}\label{e_t0}
    t_0^2(z) \leq
      \mathcal{O}\!\left(d(z)^{\frac{1}{2}}h\e^{-\frac{2S}{h}}\right), 
      \quad
      t_1^2(z)\geq \frac{d(z)^{\frac{1}{2}}h}{\mO(1)}.
   \end{equation}
Now consider the orthonormal basis of $L^2(S^1)$ 
\begin{equation}\label{def:e0,e1,...}
 \{e_0,e_1,\dots\}
\end{equation}
consisting of the eigenfunctions of $Q(z)$.  By the previous observations we have 
 \begin{align*}
  (P_h-z)(P_h-z)^*(P_h-z)e_j &= t_j^2(P_h-z)e_j.
 \end{align*}
Thus defining $f_0$ to be the normalized eigenvector of $\widetilde{Q}$ corresponding to 
the eigenvalue $t_0^2$ and the vectors $f_j\in L^2(S^1)$, for $j\in\mathds{N}$, 
as the normalization of $(P_h-z)e_j$ such that 
  \begin{equation}\label{eqn:RelationEjFj}
   (P_h-z)e_j=\alpha_jf_j, \quad (P_h-z)^*f_j=\beta_je_j \quad\text{with} ~\alpha_j\beta_j = t_j^2,
  \end{equation}
yields an orthonormal basis of $L^2(S^1)$
\begin{equation}\label{def:f0,f1,...}
 \{f_0,f_1,\dots\}
\end{equation}
consisting of the eigenfunctions of $\tilde{Q}(z)$. 
Since 
  \begin{equation*}
   \alpha_j = ((P_h-z)e_j|f_j) = (e_j|(P_h-z)^*f_j) = \overline{\beta}_j
  \end{equation*}
we can conclude that $\alpha_j\overline{\alpha}_j=t_j^2$. \\
\par 
It is clear from \eqref{e_t0}, \eqref{eqn:RelationEjFj} that $e_0(z)$ (resp. $f_0(z)$) is and 
exponentially accurate quasimode for the $P_h-z$ (resp. $(P_h-z)^*$). We show in Section 
\ref{subse:Quasmod} that is localized to $\rho_+(z)$ (resp. $\rho_-(z)$). We will prove 
in the Sections \ref{susec:tunnel} and \ref{suse:EstEffHam} the 
following two formulas for tunneling effect: 
\begin{prop}\label{prop:TunnelEffect}
 Let $z\in\Omega \Subset\Sigma$ as in \eqref{hyp_Omega2} and let $e_0$ and $f_0$ be as in (\ref{def:e0,e1,...}) 
 and in (\ref{def:f0,f1,...}). Furthermore, let $S$ be as in Definition 
 \ref{defn:ZeroOrderDensity} and let $p$ and $\rho_{\pm}$ be as in Section 
 \ref{sec:Intro}. Let $h^{\frac{2}{3}} \ll d(z)$, then for all $z\in\Omega$ with 
 $|\Ima z - \langle\Ima g\rangle| > 1/C$, $C\gg 1$, 
 \begin{align*}
   |(e_0|f_0)| =
   \frac{\left(\frac{i}{2}\{p,\overline{p}\}(\rho_+)\frac{i}{2}\{\overline{p},p\}(\rho_-)\right)^{\frac{1}{4}}}
      {\sqrt{\pi h}}|\partial_{\Ima z}S|
      \left(1 + K(z;h)\right)
    \e^{-\frac{S}{h}},
  \end{align*}
  where $K(z;h)$ depends smoothly on $z$ and satisfies for all $\beta\in\mathds{N}^2$
  \begin{align*}
  \partial_{z\overline{z}}^{\beta} 
   K(z;h) = \mathcal{O}\!\left(d(z)^{\frac{|\beta|}{2}-\frac{3}{4}}h^{-|\beta|+\frac{1}{2}}\right).
  \end{align*}
\end{prop}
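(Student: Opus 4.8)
The plan is to compute $(e_0|f_0)$ via the explicit WKB quasimodes for $(P_h-z)$ and $(P_h-z)^*$ localized at $\rho_+(z)$ and $\rho_-(z)$ respectively, which are constructed in Section~\ref{subse:Quasmod}. Since $P_h=hD_x+g(x)$ is a first-order operator, the exact quasimode equation $(hD_x+g(x)-z)u=0$ can be integrated explicitly: modulo normalization, $e_0(x)$ is (up to an exponentially small error and a cutoff) the Gaussian-type wavepacket $h^{-1/4}c_+(h)\,\e^{\frac{i}{h}\varphi_+(x)}$, where $\varphi_+'(x)=z-g(x)$ and the stationary phase/localization is at $x=x_+(z)$; likewise $f_0(x)$ corresponds to the phase $\varphi_-$ with $\varphi_-'(x)=\overline{z-g(x)}$ localized at $x=x_-(z)$. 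I would first recall these normalized expressions together with the values of the amplitudes, which are governed by the second derivatives $\Ima g''(x_\pm)$, i.e.\ (by a short computation relating $\{p,\overline p\}$ to $g'$) by the Poisson brackets $\tfrac{i}{2}\{p,\overline p\}(\rho_\pm)$. This is exactly the source of the prefactor $\bigl(\tfrac{i}{2}\{p,\overline p\}(\rho_+)\tfrac{i}{2}\{\overline p,p\}(\rho_-)\bigr)^{1/4}/\sqrt{\pi h}$ in the statement.

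Next I would evaluate the overlap integral $(e_0|f_0)=\int_{S^1} e_0(x)\overline{f_0(x)}\,dx$. The phase of the integrand is $\tfrac{1}{h}(\varphi_+(x)-\overline{\varphi_-(x)})$; its imaginary part is, up to a constant, the action function appearing in Definition~\ref{defn:ZeroOrderDensity}, so that the modulus of the integrand is of size $\e^{-S(z)/h}$ times a slowly varying factor. The key geometric point is that there are two homotopy classes of paths from $x_+$ to $x_-$ on $S^1$ (going ``left'' or ``right'' around the circle), corresponding to the two integrals in the definition of $S$; the minimum over these two is what appears, and the identification $x_- - 2\pi < x_+ < x_-$ fixed in Section~\ref{sec:HM} is what singles out the relevant branch. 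I would then apply stationary (or rather non-stationary, since the phase has no real critical point once $|\Ima z-\langle\Ima g\rangle|>1/C$) phase / a Laplace-type asymptotic expansion: the integral localizes near whichever of $x_+,x_-$ gives the dominant contribution, and a one-dimensional Laplace expansion yields the leading term $\propto |\partial_{\Ima z}S|^{-?}$ — more precisely, using the formula $\partial_{\Ima z}S = x_-(z)-x_+(z)$ (or its $2\pi$-shift) from \eqref{eqn:zRepS}, the width of the overlap region and the Jacobian produce exactly the factor $|\partial_{\Ima z}S|$ displayed, and the condition $h^{2/3}\ll d(z)$ guarantees that the two localization points are well separated on the scale $\sqrt h$ relative to $d(z)^{1/2}\asymp|\partial_{\Ima z}S|$, so the asymptotic expansion is valid and its error controlled.

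To get the remainder estimate on $K(z;h)$ I would carry the stationary phase / Laplace expansion to all orders, keeping track of $z,\overline z$ derivatives. Each term in the expansion is a polynomial in the reciprocals of the relevant scales; differentiating in $z,\overline z$ costs powers of $d(z)^{-1/2}$ (since $x_\pm(z)$, hence all the phase data, depend on $z$ with derivatives of size $\asymp d(z)^{-1/2}$, by Proposition~\ref{prop:SProp}), while each successive term in the semiclassical expansion gains a factor $h/(\text{scale})^2\asymp h\,d(z)^{-1}$; combining these bookkeeping rules gives precisely $\partial^\beta_{z\overline z}K = \mO(d(z)^{|\beta|/2-3/4}h^{-|\beta|+1/2})$. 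I expect the main obstacle to be the \emph{uniformity} of the asymptotics in the regime $h^{2/3}\ll d(z)$: as $z$ approaches $\partial\Sigma$ the two critical points $x_\pm$ coalesce (at $\Ima z=\Ima g(a)$ or $\Ima g(b)$) and the Laplace expansion degenerates, so one must track how all constants depend on $d(z)$ and verify that the claimed error $K$ is genuinely $o(1)$ throughout; this is where the hypotheses $h^{2/3}\ll d(z)$ and $|\Ima z-\langle\Ima g\rangle|>1/C$ (which keeps us away from the point where the two branches of $S$ meet and $\partial_{\Ima z}S$ is continuous but the expansion would need rearranging) are used decisively. The comparison with Proposition~\ref{prop:Resolvent} — where $\lVert(P_h-z)^{-1}\rVert = t_0(z)^{-1}$ and $t_0 = \alpha_0 = |(P_h-z)e_0|$, combined with $(e_0|f_0)$ controlling how much $e_0$ sees the left kernel — provides a useful consistency check on the prefactor.
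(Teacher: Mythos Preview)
Your overall strategy---replace $e_0,f_0$ by the WKB quasimodes and compute the overlap integral---is the same as the paper's, and your identification of the normalization constants with the Poisson brackets is correct. But the heart of your argument, the ``Laplace-type asymptotic expansion,'' rests on a misconception about the integrand.

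The combined phase is \emph{constant} in $x$. Indeed
\[
\overline{\phi_-(x)}=\overline{\int_{x_-}^{x}\overline{(z-g(y))}\,dy}
=\int_{x_-}^{x}(z-g(y))\,dy,
\]
so $\phi_+(x)-\overline{\phi_-(x)}=\int_{x_+}^{x_-}(z-g(y))\,dy$ (or the $2\pi$-shifted version, depending on which arc of $S^1$ you are on), which does not depend on $x$. Hence there is no stationary-phase or Laplace analysis to perform: the overlap integral on each arc is simply
\[
R\,\e^{\frac{i}{h}\int_{x_+}^{x_-}(z-g)}\int\chi_e(x)\chi_f(x)\,dx,
\]
and the remaining integral of the two cutoffs equals the arc length $x_--x_+$ (respectively $x_+-(x_--2\pi)$) up to an $\mathcal{O}(\sqrt h)$ boundary contribution. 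This is exactly how the factor $|\partial_{\Ima z}S|=|x_--x_+|$ appears---purely as a length, with no Jacobian involved---and why the second arc contributes the extra $\e^{\Phi(z,h)}$ term that is exponentially small once $|\Ima z-\langle\Ima g\rangle|>1/C$. Your planned Laplace expansion would find no critical point and no decaying Gaussian; the integral simply does not localize.

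Once you make this correction the error analysis also simplifies considerably: there is no semiclassical expansion in the overlap integral, so the remainder $K$ comes only from (i) the $\mathcal{O}(\sqrt h)$ edge contributions of $\int\chi_e\chi_f$, (ii) the higher-order terms in the normalization constants $a,b$, and (iii) the approximation $\|e_0-e_{wkb}\|,\|f_0-f_{wkb}\|=\mathcal{O}(\e^{-S/h})$ (Proposition~\ref{quasimodes}). Differentiating these three sources in $z,\overline z$ and, for $d(z)$ small, passing through the rescaling $x=\sqrt{\eta}\,\widetilde x$, $\widetilde h=h\eta^{-3/2}$ of Section~\ref{suse:QuasmodCB} yields the stated bounds on $\partial_{z\overline z}^\beta K$.
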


\begin{prop}\label{prop:TunnelEffect2}
 Under the same assumptions as in Proposition \ref{prop:TunnelEffect}, 
 let $\chi\in\mathcal{C}^{\infty}_0(S^1)$ with $\chi\equiv 1$ in a small open neighborhood of 
 $\overline{\{x_-(z):~z\in\Omega\}}$. Then, for $h^{\frac{2}{3}} \ll d(z)$, 
 \begin{align*}
  |([P_h,\chi]e_0|f_0)| = \sqrt{h}\left(\frac{\frac{i}{2}\{p,\overline{p}\}(\rho_+)
	  \frac{i}{2}\{\overline{p},p\}(\rho_-)}{\pi^2}\right)^{\frac{1}{4}}
   \left(1 + K(z;h)\right) \e^{-\frac{S}{h}},
    \end{align*}
  where $K(z;h)$ depends smoothly on $z$ and satisfies for all $\beta\in\mathds{N}^2$ 
  \begin{equation*}
  \partial_{z\overline{z}}^{\beta}K(z;h)= 
      \mathcal{O}\!\left(d(z)^{\frac{|\beta|-3}{2}}h^{1-(|\beta|)}\right).
  \end{equation*}
\end{prop}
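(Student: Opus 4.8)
The plan is to compute the matrix element $([P_h,\chi]e_0|f_0)$ by reducing it to a quantity already controlled in Proposition \ref{prop:TunnelEffect}, exploiting the structure $[P_h,\chi] = [hD_x,\chi] = \frac{h}{i}\chi'(x)$. First I would write
\begin{equation*}
 ([P_h,\chi]e_0|f_0) = \frac{h}{i}\int_{S^1} \chi'(x)\, e_0(x)\,\overline{f_0(x)}\, dx,
\end{equation*}
so the problem becomes a stationary-phase evaluation of this integral against the known WKB asymptotics of $e_0$ and $f_0$. Recall from Section \ref{subse:Quasmod} (which we are allowed to invoke) that $e_0$ is an exponentially accurate quasimode for $P_h-z$ localized at $\rho_+(z)$, with a WKB representation $e_0(x) \sim h^{-1/4} a(x;h)\e^{i\phi_+(x)/h}$ near $x_+$, and similarly $f_0(x) \sim h^{-1/4} b(x;h)\e^{i\phi_-(x)/h}$ near $x_-$, where $\phi_\pm$ solve the relevant eikonal equations $p(x,\phi_+') = z$ and $\overline{p}(x,\phi_-') = \bar z$, and the amplitudes $a,b$ are fixed by transport equations together with the $L^2$-normalizations. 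The key geometric point is that $\chi'$ is supported away from $x_-$ (since $\chi\equiv 1$ near $x_-$), so the product $\chi' e_0 \overline{f_0}$ lives in the region between $x_+$ and $x_-$ where both WKB forms are the global continuations of the local ones, and the phase $\phi_+(x) - \overline{\phi_-(x)}$ has no critical point there — making this a non-stationary (boundary-term) type contribution rather than a true stationary-phase one.

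Next I would relate this to $(e_0|f_0)$ directly. The cleanest route is to observe that the identity $(P_h - z)e_0 = \alpha_0 f_0$ with $|\alpha_0| = t_0 = \mathcal{O}(d(z)^{1/4}h^{1/2}\e^{-S/h})$ (by \eqref{e_t0}) forces $e_0$ to be nearly in the kernel, and then to integrate by parts. Writing $\chi = \chi_+ + \chi_-$ with $\chi_- \equiv 1$ near $x_-$ and $\supp\chi_+$ near $x_+$ is not quite it; rather I would use that on $\supp\chi'$, which sits strictly between $x_+$ and $x_-$, the quasimode $e_0$ already satisfies $(P_h-z)e_0 = \mathcal{O}(h^\infty)$ pointwise in the classically forbidden interpolation, so $e_0 \approx \e^{i\psi(x)/h}$ times a transported amplitude, where $\Ima\psi$ is monotone and governs the exponential decay; the total decay rate accumulated from $x_+$ to $x_-$ is exactly $S/h$. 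Concretely, one shows
\begin{equation*}
 ([P_h,\chi]e_0|f_0) = (P_h\chi e_0|f_0) - (\chi P_h e_0 | f_0) = (\chi e_0 | P_h^* f_0) - z(\chi e_0|f_0) - (\chi(P_h-z)e_0|f_0),
\end{equation*}
and the last term is $\mathcal{O}(t_0)$, i.e. negligible relative to the claimed size $\sqrt{h}\,d^{1/4}\e^{-S/h}$ only after noting $t_0$ carries an extra $h^{1/2}$; but the first two terms combine via $P_h^*f_0 = \bar z f_0 + \mathcal{O}(h^\infty)$ into $(\bar z - z)(\chi e_0|f_0)$, which is not obviously the right thing either. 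So in practice the honest computation is the direct stationary-phase/non-stationary analysis of $\int \chi' e_0\overline{f_0}$: substitute the WKB forms, note that $\chi' e_0\overline{f_0} = \chi'(x) c(x;h)\e^{(i\phi_+(x) - i\overline{\phi_-(x)})/h}$ with $\Ima(\phi_+ - \overline{\phi_-})(x) \ge 0$ vanishing only at its minimum, and extract the leading contribution.

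The main obstacle, and the technical heart of the matter, is getting the constant and the error structure exactly right. The WKB amplitudes of $e_0$ and $f_0$ are tied to the $L^2$-normalization, which in turn (through the harmonic-oscillator-type Gaussian concentration at $x_\pm$) produces the factors $(\frac{i}{2}\{p,\overline p\}(\rho_+))^{1/4}$ and $(\frac{i}{2}\{\overline p,p\}(\rho_-))^{1/4}$ — these are the second derivatives of $\Ima\phi_\pm$ at the turning points. Then one must track how the $\frac{h}{i}\chi'$ prefactor interacts with an integration by parts that moves $\chi'$ onto the WKB phase: since $\phi_+' - \overline{\phi_-'} \asymp d(z)^{1/2} \neq 0$ on $\supp\chi'$, each integration by parts gains a factor $h/d(z)^{1/2}$, and the leading term is a boundary-type term that evaluates to the value of the integrand at the point where $\Ima(\phi_+ - \overline\phi_-)$ is minimized, namely at the endpoints of the transport — this is precisely where the $\e^{-S/h}$ comes from, with $S = \min$ of the two action integrals as in Definition \ref{defn:ZeroOrderDensity}. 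Comparing carefully with the computation behind Proposition \ref{prop:TunnelEffect}, where $|(e_0|f_0)|$ carries an extra factor $|\partial_{\Ima z}S| \asymp d(z)^{1/2}$ relative to the present statement, one sees that the $[P_h,\chi]$ insertion effectively \emph{removes} this $|\partial_{\Ima z}S|$ factor (it is the derivative of the WKB phase difference, which the commutator brings down) and replaces the $1/\sqrt{\pi h}$ by $\sqrt{h}/(\pi^2)^{1/4}\cdot(\text{Poisson brackets})^{1/4}$ — so the bookkeeping of powers of $h$, powers of $d(z)$, and the precise combination of Poisson brackets is where all the care must go. The error term $K(z;h) = \mathcal{O}(d(z)^{(|\beta|-3)/2}h^{1-|\beta|})$ with its holomorphic-derivative bounds then follows from the smooth dependence of the eikonal and transport solutions on $z$ together with the quantitative stationary-phase/non-stationary expansion, differentiated under the integral sign; establishing these $\partial_{z\bar z}^\beta$ bounds uniformly for $h^{2/3}\ll d(z)$ is the routine-but-lengthy part that I would organize as a separate lemma on the $z$-dependence of the WKB data.
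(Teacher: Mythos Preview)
Your algebraic route in the middle paragraph is in fact the paper's proof, but you abandoned it because of two computational slips. First, the term you call ``the last term'', namely $(\chi(P_h-z)e_0|f_0)$, is not negligible: since $(P_h-z)e_0=\alpha_0 f_0$ it equals $\alpha_0(\chi f_0|f_0)$, and because $f_0$ is concentrated near $x_-$ where $\chi\equiv 1$ this is $\alpha_0(1+\mathcal{O}(\e^{-1/Ch}))$. Its size $|\alpha_0|=t_0\asymp \sqrt{h}\,d(z)^{1/4}\e^{-S/h}$ is exactly the quantity you are trying to compute, not a remainder. Second, the sesquilinearity gives $(\chi e_0|\bar z f_0)=z(\chi e_0|f_0)$, not $\bar z(\chi e_0|f_0)$; with the correct sign the two $z(\chi e_0|f_0)$ terms cancel identically, and what survives from $(\chi e_0|P_h^*f_0)$ is only $\alpha_0(\chi e_0|e_0)=\mathcal{O}(\alpha_0\e^{-1/Ch})$, since $e_0$ is concentrated near $x_+\notin\supp\chi$. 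Putting these together,
\[
([P_h,\chi]e_0|f_0)=\alpha_0(\chi e_0|e_0)-\alpha_0(\chi f_0|f_0)=-\alpha_0\bigl(1+\mathcal{O}(\e^{-\eta^{3/2}/Ch})\bigr),
\]
which is precisely the identity the paper derives (equation \eqref{eqn:alphaEFFTUNN}, organized slightly differently via $(1-\chi)+\chi$). The explicit leading constant and the $K(z;h)$ bounds then follow immediately from $|\alpha_0|=|E_{-+}(z)|$ together with the formula for $|E_{-+}|$ in Proposition~\ref{cor:E-+Repres}; no direct stationary-phase analysis of $\int\chi' e_0\overline{f_0}$ is needed.

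So the gap is not conceptual but arithmetical: you had the right decomposition, misidentified which piece is dominant, and then retreated to a harder computation you did not carry out. Fix the two slips above and your middle paragraph \emph{is} the proof.
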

\subsection{Average density of eigenvalues.}
Let $P_h^{\delta}$ be as in (\ref{eqn:DefPertP}), then we define the point process
  \begin{equation}\label{defn:PP}
   \Xi := \sum_{z\in\sigma(P_h^{\delta})}\delta_z,
  \end{equation}
where the zeros are counted according to their multiplicities and $\delta_z$ denotes the Dirac-measure in $z$. 
$\Xi$ is a well-defined random measure (cf. for example \cite{DaJo}) since, for $h>0$ small 
enough, $P_h^{\delta}$ is a random operator with discrete spectrum. 
To obtain an $h$-asymptotic formula for the average density of eigenvalues, we are interested in 
intensity measure of $\Xi$.
\begin{rem}
 Such an approach is more classical in the study of zeros of 
random polynomials and Gaussian analytic functions; we refer 
the reader to the works of B. Shiffman and S. Zelditch 
\cite{SZ03,ShZe08,ShZe99,Sh08}, M. Sodin \cite{So00} an the book 
\cite{FourPeop09} by J. Hough, M. Krishnapur, Y. Peres and 
B. Vir\'ag.
\end{rem}
The main result giving the average density of eigenvalues of $P_h^{\delta}$ is the following:
\begin{thm}\label{thm:ModelFirstIntensity}
Let $\Omega\Subset\Sigma$ be as in Hypothesis \ref{hyp:H3}. Let $C > 0$ be as in \eqref{eq_i33} 
and let $C_1>0$ as in (\ref{defn:PertIntOp}) such that $C-C_1>0$ is large enough. 
Let $\delta>0$ as in Definition \ref{defn:CoupConstDel} with $\kappa >0$ large enough.  
Define $N:=(2\lfloor C_1/h\rfloor + 1)^2$ and let $B(0,R)\subset\mathds{C}^N$ be the ball of radius 
$R:=Ch^{-1 }$ centered at zero. 
Then, there exists a $C_2>0$ such that for $h>0$ small enough and for 
all $\varphi\in\mathcal{C}_0(\Omega)$
 \begin{align}\label{eqn:thmModFirstIntens}
  \erw\big[~&\Xi(\varphi)\mathds{1}_{B(0,R)}~\big]  
   = 
  \int \varphi(z) D(z,h,\delta) L(dz)  + \mathcal{O}\!\left(\e^{-\frac{C_2}{h^2}}\right),
 \end{align}
with the density
 \begin{equation}\label{eqn:density}
   D(z,h,\delta)=\frac{1+ \mathcal{O}\!\left(\delta h^{-\frac{3}{2}}d(z)^{-1/4}\right)}{\pi} 
	     \Psi(z;h,\delta)\exp\{-\Theta(z;h,\delta)\},
  \end{equation}
which depends smoothly on $z$ and is independent of $\varphi$. Moreover, 
$\Psi(z;h,\delta) =\Psi_1(z;h) + \Psi_2(z;h,\delta)$ and 
for $z\in\Omega$ with $d(z) \gg (h\ln h^{-1})^{2/3}$ 
  \begin{align}\label{eqn:thm1_dens}
   &\Psi_1(z;h) = \frac{1}{h}\left\{\frac{i}{\{p,\overline{p}\}(\rho_+(z))} 
	+ \frac{i}{\{\overline{p},p\}(\rho_-(z))} \right\}
   + \mathcal{O}\!\left(d(z)^{-2}\right),
   \notag\\
   &\Psi_2(z;h,\delta) = \frac{\left| (e_0|f_0) \right|^2}{\delta^2}
   \left(1 + \mathcal{O}\!\left(d(z)^{-3/4}h^{1/2}\right)\right),
      \notag \\
    &\Theta(z;h,\delta) = \frac{\left|([P_h,\chi]e_0|f_0) 
	  +\mathcal{O}\!\left(d(z)^{-1/4}h^{-5/2}\delta^2\right) \right|^2}
	  {\delta^2(1+\mO(h^{\infty}))}
	  \left(1 +\mathcal{O}\!\left(\e^{-\frac{\asymp d(z)^{3/2}}{h}}\right)\right).
  \end{align}
Furthermore, in (\ref{eqn:thmModFirstIntens}), $\mathcal{O}\!\left(\e^{-\frac{C_2}{h^2}}\right)$ means 
$\langle T_h,\varphi\rangle$ where $T_h\in\mathcal{D}'(\mathds{C})$ such that 
  \begin{equation*}
   |\langle T_h,\varphi\rangle|\leq C \lVert\varphi\rVert_{\infty}\e^{-\frac{C_2}{h^2}}
  \end{equation*}
for all $\varphi\in\mathcal{C}_0(\Omega)$ where $C$ is independent 
of $h$, $\delta$, $\eta$ and $\varphi$. \par 
\end{thm}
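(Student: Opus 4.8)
The plan is to express the intensity measure $\erw[\Xi(\varphi)\mathds{1}_{B(0,R)}]$ via a Grushin (Schur complement) reduction of $P_h^\delta - z$, following the strategy of Sj\"ostrand \cite{SjAX1002}. First I would set up the Grushin problem for the unperturbed operator $P_h - z$ using the quasimode data $e_0(z), f_0(z)$ from Section \ref{sec:AuxOpe}: the $2\times 2$-type effective Hamiltonian $E^\delta_{-+}(z)$ for $P_h + \delta Q_\omega - z$ is, to leading order, a scalar $E^0_{-+}(z) + \delta(Q_\omega e_0 | f_0) + \mO(\delta^2)$, and $z \in \sigma(P_h^\delta)$ if and only if $E^\delta_{-+}(z) = 0$. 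The point process $\Xi$ is thus the zero set of the random holomorphic-in-the-appropriate-sense function $z \mapsto E^\delta_{-+}(z)$, and one applies a complex/real version of the Kac–Rice formula: $\erw[\Xi(\varphi)] = \int \varphi(z)\, \erw\big[\, |\partial_z E^\delta_{-+}|^2 \,\big|\, E^\delta_{-+} = 0\,\big]\, p_{E^\delta_{-+}}(0)\, L(dz)$, where $p_{E^\delta_{-+}}$ is the density of $E^\delta_{-+}(z)$ at $0$. Because $Q_\omega$ is Gaussian and $\alpha$ is restricted to $B(0,R)$, $E^\delta_{-+}(z)$ is (up to the truncation) a complex Gaussian with explicitly computable mean and variance, so both factors in the Kac–Rice formula can be evaluated.

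The mean of $E^\delta_{-+}(z)$ is governed by $E^0_{-+}(z)$, whose modulus is comparable to $t_0(z) \asymp \sqrt{h}\, d(z)^{1/4} \e^{-S/h}$; the relevant derivative data produces the quantity $([P_h,\chi]e_0|f_0)$ through an integration-by-parts / commutator computation relating $\partial_z E^0_{-+}$ to the localization of $e_0, f_0$ at $\rho_\pm(z)$. This is where Propositions \ref{prop:TunnelEffect} and \ref{prop:TunnelEffect2} enter, giving the precise tunneling asymptotics for $|(e_0|f_0)|$ and $|([P_h,\chi]e_0|f_0)|$. The variance of the Gaussian $E^\delta_{-+}(z)$ is $\asymp \delta^2 \lVert (\,\cdot\,e_0|f_0)\rVert^2$ over the coefficient space, which after summing the Fourier modes $|j|,|k|\le \lfloor C_1/h\rfloor$ yields the symplectic-volume term — this is the content of Section \ref{sec:EstEffectHamil} and produces $\Psi_1(z;h) = \frac1h\{ i/\{p,\bar p\}(\rho_+) + i/\{\bar p,p\}(\rho_-)\} + \mO(d(z)^{-2})$. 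Assembling: the Gaussian density at $0$ contributes the Gaussian weight $\exp\{-|E^0_{-+} + \mO(\delta^2)|^2 / (\delta^2 \sigma^2)\}$, which is exactly $\exp\{-\Theta(z;h,\delta)\}$ once one inserts the asymptotics for $E^0_{-+}$ in terms of $([P_h,\chi]e_0|f_0)$; and the conditional second-moment factor contributes $\Psi = \Psi_1 + \Psi_2$, where $\Psi_2 = |(e_0|f_0)|^2/\delta^2$ comes from the "deterministic" part of $\partial_z E^0_{-+}$ surviving the conditioning. The error terms $\mO(\delta h^{-3/2} d(z)^{-1/4})$ and the rest track the $\mO(\delta^2)$ corrections in the effective Hamiltonian and the $\mO(h)$ errors in the Grushin construction.

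The truncation to $B(0,R)$ is handled by noting that outside $B(0,R)$ the Gaussian measure has mass $\le \e^{-1/(Ch^2)}$ (Proposition \ref{lem:truncation}), so replacing the full Gaussian by the truncated one in the Kac–Rice computation introduces only the $\mO(\e^{-C_2/h^2})$ distributional error $T_h$; the bound $|\langle T_h,\varphi\rangle| \le C\lVert\varphi\rVert_\infty \e^{-C_2/h^2}$ follows since $\Xi$ counts at most $\mO(h^{-N})$ eigenvalues in $\Omega$ with overwhelming probability and the discarded event has tiny measure, the product being controlled by the condition $C - C_1 > 0$ large. The main obstacle I expect is the precise computation of the conditional second moment $\erw[|\partial_z E^\delta_{-+}|^2 \mid E^\delta_{-+} = 0]$ with all error terms uniform in $z$ down to $d(z) \asymp (h\ln h^{-1})^{2/3}$: one must control $\partial_z$ acting on the whole Grushin apparatus (the operators $R_\pm(z)$, the basis vectors $e_0(z), f_0(z)$) and show the off-diagonal Gaussian correlations between $E^\delta_{-+}$ and $\partial_z E^\delta_{-+}$ are negligible at the required order — this is what forces the delicate smooth-dependence estimates on $K(z;h)$ in Propositions \ref{prop:TunnelEffect} and \ref{prop:TunnelEffect2}, and it is the technical heart tying the symplectic-volume geometry to the random matrix computation.
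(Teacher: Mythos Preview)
Your overall architecture is correct and matches the paper's strategy closely: Grushin reduction to a scalar effective Hamiltonian $E^\delta_{-+}(z)$, counting its zeros, integrating over the Gaussian randomness, and identifying the three pieces $\Psi_1$, $\Psi_2$, $\Theta$ with, respectively, the symplectic volume, the tunneling amplitude $|(e_0|f_0)|^2/\delta^2$, and the Gaussian weight coming from $|E_{-+}|^2/\delta^2$. The identification of $([P_h,\chi]e_0|f_0)$ with $E_{-+}$ up to exponentially small errors is also exactly how the paper obtains the stated form of $\Theta$.

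There are, however, two points where your proposal diverges from what the paper actually does, and the first is a genuine gap. You treat $E^\delta_{-+}(z)$ as a complex Gaussian in $\alpha$ and invoke Kac--Rice directly. But $E^\delta_{-+}(z,\alpha) = E_{-+}(z) - \delta X(z)\cdot\alpha - \delta T(z;\alpha)$ where $T(z;\alpha) = \sum_{n\ge 1}(-\delta)^n E_-Q_\omega(EQ_\omega)^n E_+$ is the full Neumann tail: this is \emph{polynomial of all orders} in $\alpha$, not affine, so $E^\delta_{-+}$ is not Gaussian and the standard Kac--Rice machinery for Gaussian fields does not apply as stated. The paper circumvents this by (i) replacing Kac--Rice with a mollified counting lemma (Lemma~\ref{lem:approxdelta}) valid for any smooth function satisfying a $\bar\partial$-equation of the form $\partial_{\bar z}g + (\partial_{\bar z}F)g = 0$, and (ii) performing an explicit change of variables $\alpha_1 \mapsto \beta := E^\delta_{-+}(z;\alpha_1,\alpha')$ in the $\alpha$-integral, justified by a tailored implicit function theorem (Proposition~\ref{cor:globalInvFunThm}) that absorbs the nonlinear $T$ into the Jacobian. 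This is what produces the clean $\mO(\delta^2 \eta^{-1/4}h^{-5/2})$ corrections you see in $\Theta$; a purely Gaussian computation would not generate them with the right structure.

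Second, a smaller point of method: rather than computing a conditional expectation $\erw[\,|\partial_z E^\delta_{-+}|^2 \mid E^\delta_{-+}=0\,]$, the paper chooses an orthonormal basis of $\C^N$ with $e_1 = \overline{X}/\|X\|$ and $e_2$ in the span of $\overline{X},\overline{\partial_z X}$, so that after the change of variables $\alpha_1\mapsto\beta$ the integrand factorizes and the remaining Gaussian integral in $(\alpha_2,\alpha^\perp)$ is computed directly (with the ball truncation handled by an explicit tail bound, Lemma~\ref{lem:FirstMomentHelpLemmaGaussError}). This is where $\Psi_1 = \|\partial_z X\|^2 - |(\partial_z X|X)|^2/\|X\|^2$ and $\Psi_2$ emerge separately. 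Finally, the passage from the local result (Proposition~\ref{prop:formulaFor1Intensity}, valid on each $\Omega^a_\eta$) to the global statement on $\Omega$ is done by a dyadic partition of unity in $\eta_j = C^{-j}$, which is how the $d(z)$-dependent error terms are assembled; you did not mention this localization step, and it is needed to make the estimates uniform down to $d(z)\gg (h\ln h^{-1})^{2/3}$.
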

Let us give some comments on this result. The dominant part of the 
density of eigenvalues $D$ consists of three parts: the first, $\Psi_1$, 
is up to a small error the Lebesgue density of $p_*(d\xi\wedge dx)$, 
where $d\xi\wedge dx$ is the symplectic form on $T^*S^1$ and $p$ as 
in (\ref{eqn:SemClPrinSymModlOp}). We prove in Proposition 
\ref{cor:1MomSymplVol} that 
  \begin{equation*}
   \frac{1}{h}\left\{\frac{i}{\{p,\overline{p}\}(\rho_+(z))} 
	+ \frac{i}{\{\overline{p},p\}(\rho_-(z))} \right\}L(dz)
	=\frac{1}{2h}p_*(d\xi\wedge dx).
  \end{equation*}
The second part, $\Psi_2$, is given by a tunneling effect. Inside the 
$(h\delta)$-pseudo\-spectrum its contribution vanishes in the error term 
of $\Psi_1$. However, close to the boundary of the $\delta$-pseudospectrum 
$\Psi_2$ becomes of order $h^{-2}$ and thus yields a higher density of 
eigenvalues. This can be seen by comparing the more explicit formula for 
$\Psi_2$ given in Proposition \ref{prop:DensExplicit} with the expression 
for the norm of the resolvent of $P_h$ given in Proposition \ref{prop:Resolvent}. 
More details on the form of $\Psi_2$ in this zone will be given in Proposition 
\ref{prop:WedgePoisson}. \par 
The third part, $\exp\{-\Theta\}$, is also given by a tunneling effect and it 
plays the role of a cut-off function which exhibits double exponential decay 
outside the $\delta$-pseudospectrum and is close to $1$ inside. This will be 
made more precise in Section \ref{suse:MainResPropDens}.
\par
We have the following explicit formulas for these functions and their growth 
properties:
\begin{prop}\label{prop:DensExplicit}
Under the assumptions of Definition \ref{defn:ZeroOrderDensity} 
and Theorem \ref{thm:ModelFirstIntensity}, define for $h>0$ and $\delta>0$ the functions
  \begin{align*}
    \Theta^0(z;h,\delta) :=
     \frac{h\left(\frac{i}{2}\{p,\overline{p}\}(\rho_+)
	  \frac{i}{2}\{\overline{p},p\}(\rho_-)\right)^{\frac{1}{2}}}{\pi}\frac{\e^{-\frac{2S}{h}}}{\delta^2}.
  \end{align*}
 Then, for $|\Ima z - \langle\Ima g\rangle| > 1/C$, $C\gg 1$, 
 \begin{align}\label{eqn:thmModFirstIntensDensity}
   &\Psi_2(z;h,\delta)=
     \frac{\left(\frac{i}{2}\{p,\overline{p}\}(\rho_+)\frac{i}{2}\{\overline{p},p\}(\rho_-)\right)^{\frac{1}{2}}}
	{\pi h\delta^2\exp\{\frac{2S}{h}\}}|\partial_{\Ima z}S(z)|^2
      \left(1 + \mathcal{O}\!\left(\frac{h^{1/2}}{d(z)^{\frac{3}{4}}}\right)\right)
    \notag \\
   &\Theta(z;h,\delta) = \Theta^0(z;h,\delta)
   \left(1 +\mathcal{O}\!\left(\frac{h^{\frac{3}{2}}}{d(z)^{\frac{1}{4}}}\right)\right)
   + \mathcal{O}\!\left(\frac{d(z)^{\frac{1}{4}}\delta }{h^{2}}+ 
   \frac{\delta^2 }{d(z)^{\frac{1}{2}}h^{5}}\right).
  \end{align}
  The estimates in (\ref{eqn:thmModFirstIntensDensity}) are stable under 
application of $d(z)^{-\frac{|\beta|}{2}}h^{|\beta|}\partial_{z\overline{z}}^{\beta}$, 
for $\beta\in\N^2$.
\end{prop}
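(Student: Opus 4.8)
The plan is to derive the two formulas in \eqref{eqn:thmModFirstIntensDensity} by substituting the explicit expressions for $|(e_0|f_0)|$ and $|([P_h,\chi]e_0|f_0)|$ from Propositions \ref{prop:TunnelEffect} and \ref{prop:TunnelEffect2} into the formulas for $\Psi_2$ and $\Theta$ in \eqref{eqn:thm1_dens}. Let me abbreviate $J(z):=\left(\frac{i}{2}\{p,\overline{p}\}(\rho_+)\frac{i}{2}\{\overline{p},p\}(\rho_-)\right)^{1/2}$, which is real, positive and bounded below by $\asymp d(z)^{1/2}$ (using $\{p,\overline p\}(\rho_\pm)\asymp d(z)^{1/2}$, as in the proof of Proposition \ref{prop:SProp}). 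First I would treat $\Psi_2$. By Proposition \ref{prop:TunnelEffect}, $|(e_0|f_0)|^2 = \frac{J(z)}{\pi h}|\partial_{\Ima z}S|^2(1+K(z;h))^2\e^{-2S/h}$, where the $\beta=0$ bound on $K$ gives $K(z;h)=\mO(d(z)^{-3/4}h^{1/2})$, so $(1+K)^2 = 1 + \mO(d(z)^{-3/4}h^{1/2})$. Dividing by $\delta^2$ and absorbing the extra factor $(1+\mO(d(z)^{-3/4}h^{1/2}))$ from \eqref{eqn:thm1_dens} into the error yields exactly the stated formula for $\Psi_2$, since the product of two factors of the form $1+\mO(d(z)^{-3/4}h^{1/2})$ is again of that form (note $d(z)^{-3/4}h^{1/2}\ll 1$ in the regime $d(z)\gg (h\ln h^{-1})^{2/3}$).

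Next I would treat $\Theta$. Write $T:=([P_h,\chi]e_0|f_0)$; by Proposition \ref{prop:TunnelEffect2} we have $|T| = \sqrt{h}\,\frac{J(z)^{1/2}}{\pi^{1/2}}\cdot\frac{1}{?}$—more precisely $|T|^2 = h\,\frac{J(z)}{\pi}(1+\widetilde K)^2\e^{-2S/h}$ with $\widetilde K=\mO(d(z)^{-3/2}h)$ from the $\beta=0$ case of that proposition. Comparing with the definition of $\Theta^0$, one sees $\Theta^0 = |T|^2/\delta^2$ up to the factor $(1+\widetilde K)^{-2} = 1+\mO(d(z)^{-3/2}h)$; I note $d(z)^{-3/2}h \le d(z)^{-1/4}h^{3/2}$ precisely when $d(z)\ge h^{? }$, but in fact the cleaner bookkeeping is to keep the error as $\mO(d(z)^{-3/2}h)$ and observe it is dominated by the claimed $\mO(d(z)^{-1/4}h^{3/2})$ in the range $d(z)\gg (h\ln h^{-1})^{2/3}$; I should double-check this inequality, as it is the one genuinely arithmetic point. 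Then I expand the square in the numerator of $\Theta$ in \eqref{eqn:thm1_dens}:
\begin{equation*}
  \left|T + \mO(d(z)^{-1/4}h^{-5/2}\delta^2)\right|^2 = |T|^2 + 2\Rea\!\left(\overline T\cdot\mO(d(z)^{-1/4}h^{-5/2}\delta^2)\right) + \mO(d(z)^{-1/2}h^{-5}\delta^4).
\end{equation*}
Dividing by $\delta^2$, the last term contributes $\mO(d(z)^{-1/2}h^{-5}\delta^2)$, matching one of the stated error terms. For the cross term, $|T|\le \mO(h^{1/2}e^{-S/h})\le \mO(h^{1/2})$ (using $S\ge 0$), so $|T|\cdot d(z)^{-1/4}h^{-5/2}\delta^2/\delta^2 = \mO(d(z)^{-1/4}h^{-2}\delta^2/\delta)$—wait, the $\delta^2$ is already divided out by the $\delta^2$ in the denominator of $\Theta$, giving $\mO(d(z)^{-1/4}h^{-2}|T|) = \mO(d(z)^{-1/4}h^{-3/2})$; but the claim has $\mO(d(z)^{1/4}\delta h^{-2})$. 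This forces me to retain the $e^{-S/h}$ in $|T|$: indeed $|T|/\delta^2 \asymp h^{1/2}e^{-S/h}/\delta^2$, and since $\delta = \sqrt h\, e^{-\epsilon_0/h}$ with $\epsilon_0 < S(\langle\Ima g\rangle)$—and more to the point $\epsilon_0 \le S(z)$ can fail—the honest route is: cross term $/\delta^2 = 2\Rea(\overline T)\cdot\mO(d(z)^{-1/4}h^{-5/2}) = \mO(d(z)^{-1/4}h^{-5/2}\cdot h^{1/2}e^{-S/h}) = \mO(d(z)^{-1/4}h^{-2}e^{-S/h})$, and then I bound $e^{-S/h}\le \delta^2 h^{-1}e^{?}$... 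The robust statement is $e^{-S/h}\cdot h = \Theta^0\delta^2/J(z)(1+\ldots)$, so $e^{-S/h} = \mO(\Theta^0\delta^2 d(z)^{-1/2}h^{-1})$; substituting gives the cross term bounded by $\mO(\Theta^0)\cdot\mO(d(z)^{-3/4}h^{-3}\delta^2)$, which is absorbed into $\Theta^0(1+\ldots)$ provided $d(z)^{-3/4}h^{-3}\delta^2 \ll 1$ — true for $\kappa$ large. The remaining genuinely additive pieces are then exactly $\mO(d(z)^{1/4}\delta h^{-2})$ and $\mO(d(z)^{-1/2}h^{-5}\delta^2)$ after re-expressing; I will organize this so that whichever of $\Theta^0$ or $\delta$-powers dominates is handled uniformly.

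Finally, the stability under $d(z)^{-|\beta|/2}h^{|\beta|}\partial_{z\bar z}^\beta$ follows because every ingredient has this scaling: $S(z)$ satisfies $\partial_{z\bar z}^\beta S = \mO(d(z)^{3/2-|\beta|/2})$ (from Proposition \ref{prop:SProp} and the chain rule, noting $\partial_{\Ima z}S\asymp d(z)^{1/2}$ and each further derivative costs $d(z)^{-1/2}$), so $\partial_{z\bar z}^\beta(S/h) = \mO(d(z)^{-1/2}h^{-1})^{|\beta|}\cdot d(z)$ in the relevant sense and the Faà di Bruno expansion of $e^{-2S/h}$ keeps the claimed form; likewise $J(z)$, $|\partial_{\Ima z}S|^2$, and the remainders $K,\widetilde K$ all carry derivative bounds of the stated homogeneity, as asserted in Propositions \ref{prop:TunnelEffect} and \ref{prop:TunnelEffect2}. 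The one real obstacle is the bookkeeping in the $\Theta$ computation — keeping track of which error terms are multiplicative on $\Theta^0$ versus additive, and in particular not dropping the exponential factor $e^{-S/h}$ prematurely when estimating the cross term, since $S(z)$ is not bounded below by $\epsilon_0(h)$ throughout $\Omega$; I would do this by always re-expressing bare exponentials through $\Theta^0$ before comparing magnitudes.
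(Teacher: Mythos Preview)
Your treatment of $\Psi_2$ is fine and matches the paper: substitute the formula for $|(e_0|f_0)|$ (Proposition~\ref{prop:TunnelEffect}, equivalently Proposition~\ref{prop:AbsSPe0f0}) into the expression for $\Psi_2$ in \eqref{eqn:thm1_dens} and collect the multiplicative errors. The derivative stability likewise follows from the derivative bounds already recorded in Propositions~\ref{prop:TunnelEffect} and~\ref{prop:TunnelEffect2}, exactly as you say.

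For $\Theta$, however, there is a genuine error in your cross-term bookkeeping. You write ``$e^{-S/h} = \mO(\Theta^0\delta^2 d(z)^{-1/2}h^{-1})$'', but unpacking this gives $\Theta^0\delta^2 d(z)^{-1/2}h^{-1} \asymp e^{-2S/h}$, so you are asserting $e^{-S/h}=\mO(e^{-2S/h})$, i.e.\ $e^{S/h}=\mO(1)$, which fails whenever $S>0$. The ``re-express through $\Theta^0$'' idea cannot work directly because $e^{-S/h}$ is the \emph{square root} of the quantity $e^{-2S/h}$ appearing in $\Theta^0$, not a multiple of it. (Your earlier worry about whether $d(z)^{-3/2}h\le d(z)^{-1/4}h^{3/2}$ is also well founded: that inequality is equivalent to $d(z)^{5/4}\ge h^{-1/2}$, which never holds for $d(z)\le 1$.)

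The paper handles the cross term by a clean dichotomy instead. Starting from $\Theta = |E_{-+}(z)+\mO(d(z)^{-1/4}\delta^2 h^{-5/2})|^2/(\delta^2\lVert X\rVert^2)$ and inserting $|E_{-+}(z)|=V(z,h)\,e^{-S/h}(1+\mO(e^{-\asymp d(z)^{3/2}/h}))$ from Proposition~\ref{cor:E-+Repres}, the cross term divided by $\delta^2$ is $\mO\!\big(d(z)^{1/4}h^{-2}e^{-S/h}\big)$. Now split:
\begin{itemize}
\item If $e^{-S/h}\le\delta$ (equivalently $e^{-2S/h}\delta^{-2}\le 1$), the cross term is bounded by $\mO(d(z)^{1/4}h^{-2}\delta)$, which is the first additive error in the statement.
\item If $e^{-S/h}\ge\delta$, write $e^{-S/h}=e^{-2S/h}\cdot e^{S/h}\le e^{-2S/h}/\delta$, so the cross term is $\mO\!\big(d(z)^{1/4}h^{-2}\delta\cdot e^{-2S/h}\delta^{-2}\big)$; using $\delta\ll h^{\kappa}$ with $\kappa$ large, this is $\Theta^0\cdot\mO(d(z)^{-1/4}h^{\kappa-3})$, hence absorbed into the multiplicative error on $\Theta^0$.
\end{itemize}
This is the missing idea: you cannot uniformly re-express $e^{-S/h}$ through $\Theta^0$, but you \emph{can} bound it by $\delta$ on one side of the pseudospectral boundary and by $\delta\cdot\Theta^0\cdot\mO(d(z)^{-1/2}h^{-1})$ on the other, and then treat the two contributions separately. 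With this split in place, the rest of your outline goes through.
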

\begin{prop}\label{prop:GrowPropDens}
 Under the assumptions of Definition \ref{defn:ZeroOrderDensity} and 
 Theorem \ref{thm:ModelFirstIntensity} we have that
 \begin{align*}
   \frac{i}{2}\{p,\overline{p}\}(\rho_+)
	  \frac{i}{2}\{\overline{p},p\}(\rho_-)
	  \asymp  d(z), ~
   \frac{i}{\{p,\overline{p}\}(\rho_+(z))} 
	+ \frac{i}{\{\overline{p},p\}(\rho_-(z))}  \asymp \frac{1}{\sqrt{d(z)}}
  \end{align*}
and
  \begin{equation*}
   \Psi_2(z;h,\delta) \asymp \frac{(d(z))^{3/2}\e^{-\frac{2S}{h}}}{h\delta^2}, \quad 
   \Theta^0(z;h,\delta)\asymp h\sqrt{d(z)}\left|1 - \e^{\Phi(z,h)}\right|\frac{\e^{-\frac{2S}{h}}}{\delta^2}.
  \end{equation*}
\end{prop}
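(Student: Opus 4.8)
The plan is to obtain all four asymptotics by feeding the $S$-asymptotics of Proposition \ref{prop:SProp} and the resolvent identity of Proposition \ref{prop:Resolvent} into the explicit formulas already derived for $\Psi_2$ and $\Theta^0$. First I would record the Poisson bracket asymptotics. By the definition $\{p,\overline p\}=p'_\xi\overline p{}'_x-p'_x\overline p{}'_\xi$ and $p(x,\xi)=\xi+g(x)$ one computes $\tfrac{i}{2}\{p,\overline p\}=\Ima g'(x)$, evaluated at $x=x_\pm(z)$. Near $\partial\Sigma$ the two solutions $x_\pm(z)$ of $\Ima g(x)=\Ima z$ collide at the critical point ($a$ or $b$) of $\Ima g$, where $\Ima g'$ vanishes to first order; a Taylor expansion around that critical point together with $\Ima g(x_\pm)-\Ima g(\text{crit})\asymp d(z)$ (which is the content of $S\asymp d(z)^{3/2}$ and its proof) gives $|\Ima g'(x_\pm(z))|\asymp d(z)^{1/2}$, each factor with a definite sign by the condition $\pm\Ima g'(x_\pm)<0$. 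Multiplying the two factors yields $\tfrac i2\{p,\overline p\}(\rho_+)\,\tfrac i2\{\overline p,p\}(\rho_-)\asymp d(z)$, and since both terms $i/\{p,\overline p\}(\rho_+)$ and $i/\{\overline p,p\}(\rho_-)$ are positive and $\asymp d(z)^{-1/2}$, their sum is $\asymp d(z)^{-1/2}$ as well.

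Next I would substitute into $\Psi_2$. Proposition \ref{prop:DensExplicit} gives
\[
 \Psi_2(z;h,\delta)=\frac{\bigl(\tfrac i2\{p,\overline p\}(\rho_+)\tfrac i2\{\overline p,p\}(\rho_-)\bigr)^{1/2}}{\pi h\delta^2\e^{2S/h}}|\partial_{\Ima z}S(z)|^2\bigl(1+\mathcal O(h^{1/2}d(z)^{-3/4})\bigr).
\]
Using the bracket asymptotic just proved, $\bigl(\tfrac i2\{p,\overline p\}(\rho_+)\tfrac i2\{\overline p,p\}(\rho_-)\bigr)^{1/2}\asymp d(z)^{1/2}$, and the bound $|\partial_{\Ima z}S(z)|\asymp d(z)^{1/2}$ from Proposition \ref{prop:SProp}, so $|\partial_{\Ima z}S|^2\asymp d(z)$; the relative error $\mathcal O(h^{1/2}d(z)^{-3/4})$ is $o(1)$ under the standing hypothesis $d(z)\gg(h\ln h^{-1})^{2/3}$ and so is absorbed into the $\asymp$. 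Collecting powers of $d(z)$ gives $d(z)^{1/2}\cdot d(z)=d(z)^{3/2}$, hence $\Psi_2(z;h,\delta)\asymp d(z)^{3/2}\e^{-2S/h}/(h\delta^2)$.

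For $\Theta^0$ I would compare its defining formula
\[
 \Theta^0(z;h,\delta)=\frac{h\bigl(\tfrac i2\{p,\overline p\}(\rho_+)\tfrac i2\{\overline p,p\}(\rho_-)\bigr)^{1/2}}{\pi}\frac{\e^{-2S/h}}{\delta^2}
\]
with the resolvent identity \eqref{res_id}. From Proposition \ref{prop:Resolvent}, $\lVert(P_h-z)^{-1}\rVert=\sqrt\pi\,|1-\e^{\Phi(z,h)}|^{-1}\e^{S/h}h^{-1/2}\bigl(\tfrac i2\{p,\overline p\}(\rho_+)\tfrac i2\{\overline p,p\}(\rho_-)\bigr)^{-1/4}(1+\mathcal O(h))$, so
\[
 \bigl(\tfrac i2\{p,\overline p\}(\rho_+)\tfrac i2\{\overline p,p\}(\rho_-)\bigr)^{1/2}=\pi\,\lVert(P_h-z)^{-1}\rVert^{-2}\,|1-\e^{\Phi(z,h)}|^{-2}\,\e^{2S/h}\,h^{-1}(1+\mathcal O(h)).
\]
Rewriting $\Theta^0$ with this, or more simply rewriting the bracket factor as $d(z)^{1/2}\cdot(1+\mathcal O(h))$ by the first asymptotic, I can present it either way; the stated form $\Theta^0\asymp h\sqrt{d(z)}\,|1-\e^{\Phi(z,h)}|\,\e^{-2S/h}/\delta^2$ then follows once one notes $|1-\e^{\Phi(z,h)}|=1+\mathcal O(\e^{-(2\pi/h)|\Ima z-\langle\Ima g\rangle|})$ is bounded above and (away from $\sigma(P_h)$) below, so it may be carried along as an honest factor. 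I expect the only genuinely delicate point to be the two-sided bound $|\Ima g'(x_\pm(z))|\asymp d(z)^{1/2}$ uniformly for $z\in\Omega$, i.e. making precise that the collision of $x_+$ and $x_-$ at the nondegenerate critical points of $\Ima g$ is quadratic with constants uniform in $z$; this is essentially the same computation underlying $S\asymp d(z)^{3/2}$ in Proposition \ref{prop:SProp}, so I would cite that proof rather than redo it.
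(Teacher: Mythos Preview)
Your proposal is correct and follows essentially the same route as the paper: the key input is $|\{p,\overline p\}(\rho_\pm)|\asymp\sqrt{d(z)}$ (which the paper records as $|\{p,\overline p\}(\rho_\pm)|\asymp\sqrt\eta$ in Proposition~\ref{lem:1MomSymplVol} and (\ref{eqn:TaylorPoissonBracket}), with $\eta\asymp d(z)$), and then one just reads off the asymptotics of $\Psi_2$ and $\Theta^0$ from the explicit formulas of Proposition~\ref{prop:DensExplicit} together with $|\partial_{\Ima z}S|\asymp d(z)^{1/2}$ from Proposition~\ref{prop:SProp}. Your detour through the resolvent identity \eqref{res_id} for $\Theta^0$ is unnecessary---the definition of $\Theta^0$ in Proposition~\ref{prop:DensExplicit} plus the bracket asymptotic already gives $\Theta^0\asymp h\sqrt{d(z)}\,\e^{-2S/h}/\delta^2$ directly, and the factor $|1-\e^{\Phi}|$ in the statement is $\asymp 1$ away from the spectral line (as you yourself note at the end)---but it does no harm.
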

$\phantom{[.}$\\
In the next Subsection we will explain the asymptotic properties of the density appearing 
in (\ref{eqn:thmModFirstIntens}).
\subsection{Properties of the average density of eigenvalues and its integral with 
respect to $\Ima z$}\label{suse:MainResPropDens} 
It will be sufficient for our purposes to consider rectangular subsets of $\Sigma$: 
for $c<d$ define
  \begin{equation}\label{def:stripSigma}
   \Sigma_{c,d}:=\left\{z\in\overline{\Sigma}~\Big|~\min\limits_{x\in S^1}\Ima g(x)\leq 
	      \Ima z \leq \max\limits_{x\in S^1}\Ima g(x), ~ c <\Rea z< d\right\}.
  \end{equation}
Roughly speaking, there exist three regions in $\Sigma$: 
 \begin{align*}
  & (1)\quad z\in\Sigma_w \subset\Sigma ~\Longleftrightarrow~ \lVert (P_h-z)^{-1} \rVert \gg (h\delta)^{-1}, \notag \\
  & (2)\quad z\in\Sigma_r \subset\Sigma ~\Longleftrightarrow~ \lVert (P_h-z)^{-1} \rVert \asymp \delta^{-1}, \notag \\
  & (3)\quad z\in\Sigma_v \subset\Sigma ~\Longleftrightarrow~ \lVert (P_h-z)^{-1} \rVert \ll \delta^{-1},
 \end{align*} 
which depend on the strength of the coupling constant $\delta>0$. In $\Sigma_w$, the average density is of order $h^{-1}$ 
and is governed by the symplectic volume and thus yielding a Weyl law. 
In $\Sigma_r$, the average density spikes and is of order $h^{-2}$ and 
is equal to the symplectic volume plus the function $\Psi_2$ yielding in total a Poisson-type distribution. 
In $\Sigma_v$, the average density is rapidly decaying and is void of eigenvalues with a high probability, since 
\begin{equation*}
   \Theta \asymp \lVert (P_h-z)^{-1} \rVert^{-2}\delta^{-2}
  \end{equation*}
which follows from Proposition \ref{prop:TunnelEffect2} and Proposition
\ref{prop:Resolvent}.
\par
\begin{figure}[ht]
\includegraphics[width=0.7\textwidth]{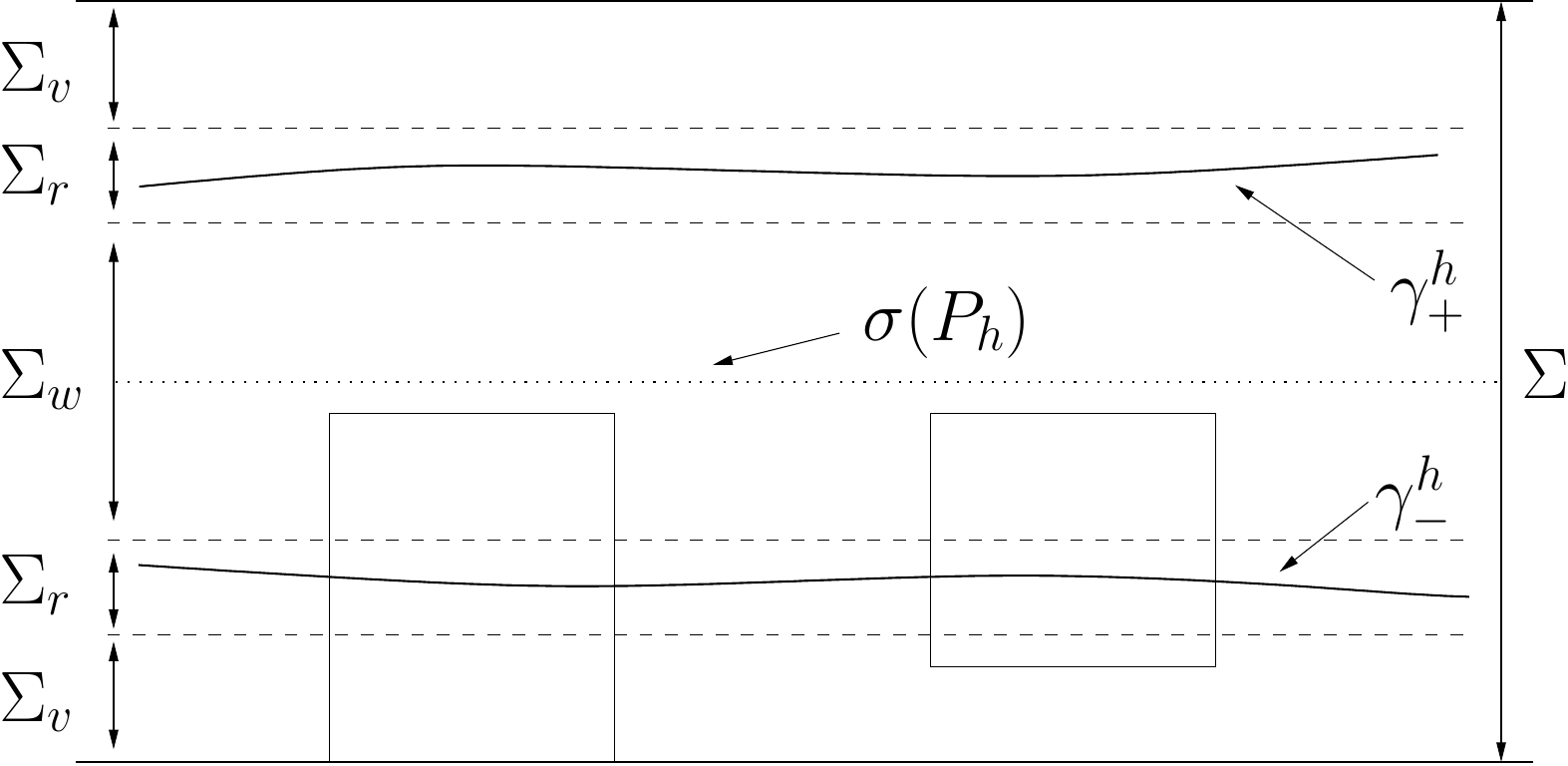}
\caption{The three zones in $\Sigma$ with a schematic representation of $\gamma_{\pm}^h$. 
The two boxes indicate zones where the integrated densities are equal up to a small error.}\label{zones}
\end{figure}
We will prove that there exist two smooth curves, $\Gamma_{\pm}^h$, close to the boundary of 
the $\delta$-pseudo\-spectrum of $P_h^{\delta}$, along which the average density of eigenvalues obtains 
its local maxima. Note that this is still inside the $(Ch^{-1}\delta)$-pseudo\-spectrum of $P_h^{\delta}$ 
(cf Hypothesis \ref{hyp:H5}) since pseudospectra are nested (meaning that 
$\sigma_{\varepsilon_1}(P_h^{\delta})\subset \sigma_{\varepsilon_2}(P_h^{\delta})$ for 
$\varepsilon_1 < \varepsilon_2$).
\begin{prop}\label{prop:the2curves}
 Let $z\in\Omega\Subset\Sigma_{c,d}$ as in Hypothesis \ref{hyp:H3} with $\Sigma_{c,d}$ as 
 in (\ref{def:stripSigma}), let $S(z)$ be as in Definition \ref{defn:ZeroOrderDensity} and 
 let $t_0^2(z)$ be as in (\ref{eqn:auxillaryOpEigVals}). Let $\delta>0$ and $\varepsilon_0(h)$ 
 be as in Definition \ref{defn:CoupConstDel} with $\kappa > 0$ large enough. Moreover, let 
 $D(z,h,\delta)$ be the average density of eigenvalues of the operator of $P_h^{\delta}$ 
 given in Theorem \ref{thm:ModelFirstIntensity}. Then,
  \begin{enumerate}
   \item for $0<h\ll 1$, there exist numbers $y_{\pm}(h)$ such that $\varepsilon_0(h)=S(y_{\pm}(h))$ with 
      \begin{align*}
	\frac{1}{C}(h\ln h^{-1})^{\frac{2}{3}} &\ll y_{-}(h) < \langle \Ima g\rangle - ch\ln h^{-1} \notag \\
	&<\langle \Ima g\rangle + ch\ln h^{-1} < y_{+}(h) \ll \Ima g(b) - \frac{1}{C}(h\ln h^{-1})^{\frac{2}{3}},
      \end{align*}
     for $c>1$. Furthermore, 
       \begin{align*}
        y_{-}(h), (\Ima g(b) - y_+(h)) \asymp (\varepsilon_0(h))^{2/3};
       \end{align*}
   \item there exists $h_0>0$ and a family of smooth curves, indexed by $h\in]h_0,0[$,
	\begin{equation*}
	 \gamma_{\pm}^h: ~ ]c,d[ \longrightarrow \mathds{C} 
	 ~\text{with}~ \Rea \gamma_{\pm}^h(t) =t
	\end{equation*}
	such that
	\begin{equation*}
	  |t_0(\gamma_{\pm}^h(t))| = \delta.
	\end{equation*}
	Moreover,
	\begin{equation*}
	  \lVert (P_h -\gamma_{\pm}^h(t))^{-1}\rVert 
	 = \delta^{-1},
	\end{equation*}
	and 
	\begin{equation*}
	  \Ima \gamma^h_{\pm}(t) = y_{\pm}(\varepsilon_0(h))
	  \left(1 + \mathcal{O}\left(\frac{h}{\varepsilon_0(h)}\right)\right).
	\end{equation*}
	Furthermore, there exists a constant $C>0$ such that 
      \begin{equation*}
	  \frac{d\Ima \gamma_{\pm}^h}{d t}(t) 
	  = \mathcal{O}\!\left(\exp\left[-\frac{\varepsilon_0(h)}{Ch}\right]\right).
	\end{equation*}
  \item there  exists $h_0>0$ and a family of smooth curves, indexed by $h\in]h_0,0[$,
       \begin{equation*}
	 \Gamma_{\pm}^h: ~ ]c,d[ \longrightarrow \mathds{C},~ \Rea \Gamma_{\pm}^h(t) = t,
	\end{equation*}
	with $\Gamma_-\subset\{\Ima z <  \langle \Ima g\rangle\}$ and 
	$\Gamma_+\subset\{\Ima z >  \langle \Ima g\rangle\}$, along which 
	$\Ima z \mapsto D(z,h)$ takes its local 
	maxima on the vertical line $\Rea z = \mathrm{const.}$ and 
	  \begin{align*}
	   \frac{d}{dt}\Ima\Gamma_{\pm}^h(t) = 
	   \mO\!\left(\frac{h^{4}}{\varepsilon_0(h)^{4}}\right).
	  \end{align*}
	Moreover, for all $c < t < d$
	\begin{equation*}
	  |\Gamma_{\pm}^h(t) - \gamma_{\pm}^h(t)|
	  \leq \mathcal{O}\left(\frac{h^{5}}{\varepsilon_0(h)^{13/3}}\right).
	  \end{equation*}	
  \end{enumerate}
\end{prop}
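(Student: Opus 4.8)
The plan is to establish the three parts in sequence, using the asymptotic properties of $S$ from Proposition~\ref{prop:SProp} together with the explicit density formulas from Proposition~\ref{prop:DensExplicit} and Proposition~\ref{prop:GrowPropDens}. For part (1), I would start from the definition $\varepsilon_0(h) = S(y_\pm(h))$ and invoke the monotonicity statement in Proposition~\ref{prop:SProp}: $S$ is continuous, strictly decreasing on $[\langle\Ima g\rangle, \Ima g(b)]$ and strictly increasing on $[\Ima g(a),\langle\Ima g\rangle]$, with $S(\langle\Ima g\rangle)$ its unique maximum. Since $\varepsilon_0(h) < S(\langle\Ima g\rangle)$ by Definition~\ref{defn:CoupConstDel}, there is exactly one solution $y_+(h)$ in the decreasing branch and one $y_-(h)$ in the increasing branch. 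The two-sided bounds on $y_\pm(h)$ then follow by feeding the lower bound $\varepsilon_0(h) \geq (\kappa-\tfrac12)h\ln h^{-1} + Ch$ and the asymptotics $S(z) \asymp d(z)^{3/2}$ into the inverse function: $d(y_\pm(h))^{3/2} \asymp \varepsilon_0(h)$ gives $d(y_\pm(h)) \asymp \varepsilon_0(h)^{2/3}$, which simultaneously yields $y_-(h) \asymp \varepsilon_0(h)^{2/3}$, $\Ima g(b) - y_+(h) \asymp \varepsilon_0(h)^{2/3}$, and — using $\varepsilon_0(h) \gg h\ln h^{-1}$ — the separation from $\langle\Ima g\rangle$ by $\gg ch\ln h^{-1}$ and the lower bound $\gg \tfrac1C(h\ln h^{-1})^{2/3}$ near the endpoints.

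For part (2), I would use Proposition~\ref{prop:Resolvent}: the norm $\lVert(P_h-z)^{-1}\rVert$ is, up to a $(1+\mO(h))$ factor, an explicit function of $z$ that for fixed $\Rea z = t$ depends on $\Ima z$ essentially through $\e^{S(z)/h}/(\sqrt h\, d(z)^{1/4})$. Setting this equal to $\delta^{-1} = h^{-1/2}\e^{\varepsilon_0(h)/h}$ and taking logarithms reduces the equation $\lVert(P_h-z)^{-1}\rVert = \delta^{-1}$ to $S(z) = \varepsilon_0(h) + \mO(h\ln d(z)^{-1}) + \mO(h^2)$, a small perturbation of $S(z) = \varepsilon_0(h)$. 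Since $S$ depends only on $\Ima z$ with $\partial_{\Ima z}S \asymp d(z)^{1/2} \asymp \varepsilon_0(h)^{1/3} \neq 0$ near $y_\pm(h)$, the implicit function theorem (in the two pieces where $S$ is smooth) produces, for each $t \in\, ]c,d[$, a unique solution $\Ima\gamma_\pm^h(t)$ with $\Rea\gamma_\pm^h(t) = t$, smooth in $t$, and $\Ima\gamma_\pm^h(t) = y_\pm(\varepsilon_0(h))(1 + \mO(h/\varepsilon_0(h)))$ by dividing the $\mO(h\ln(\cdot))$ error by $\partial_{\Ima z}S$. The identity $|t_0(\gamma_\pm^h(t))| = \delta$ follows because $\lVert(P_h-z)^{-1}\rVert = t_0(z)^{-1}$ (the smallest singular value). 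For the derivative bound, since $S$ is independent of $\Rea z$, the $t$-dependence of $\Ima\gamma_\pm^h$ comes entirely from the lower-order $z$-dependent corrections in the resolvent formula — the $|1-\e^{\Phi(z,h)}|^{-1}$ factor and the $\mO(h)$ error — each of which is $\mO(\e^{-\varepsilon_0(h)/(Ch)})$ in the relevant range (using that $|\Ima z - \langle\Ima g\rangle| \asymp \varepsilon_0(h)^{2/3} \gg h$ so $\e^{-2\pi|\Ima z - \langle\Ima g\rangle|/h}$ is exponentially small); dividing by $\partial_{\Ima z}S \asymp \varepsilon_0(h)^{1/3}$ keeps it exponentially small.

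For part (3), the local maxima of $\Ima z \mapsto D(z,h)$ are the critical points of $\log D$, i.e. solutions of $\partial_{\Ima z}(\log\Psi - \Theta) = 0$. Using the explicit forms $\Psi_2 \asymp d(z)^{3/2}\e^{-2S/h}/(h\delta^2)$ and $\Theta \asymp \Theta^0 \asymp h\sqrt{d(z)}\,\e^{-2S/h}/\delta^2$ from Proposition~\ref{prop:GrowPropDens}, one sees $D$ is dominated by $\Psi_2 e^{-\Theta}$ in the transition region; writing $u = \e^{-2S(z)/h}/\delta^2$ and differentiating, the critical point equation becomes to leading order $\partial_{\Ima z}\big(\tfrac32\log d - \tfrac{2S}{h}\big) = \partial_{\Ima z}\Theta^0$, whose leading balance is $-\tfrac{2}{h}\partial_{\Ima z}S \approx -\Theta^0 \cdot \tfrac2h\partial_{\Ima z}S$, forcing $\Theta^0 \approx 1$, i.e. $e^{-2S/h}/\delta^2 \asymp 1/(h\sqrt{d})$, i.e. $S(z) \approx \varepsilon_0(h) + \tfrac h2\log(\cdot)$ — again a small perturbation of $S = \varepsilon_0(h)$. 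The implicit function theorem then gives the curves $\Gamma_\pm^h$; dividing the lower-order discrepancy between the critical-point equation and $S = \varepsilon_0(h)$ (which is $\mO(h^4/\varepsilon_0^{?})$ after accounting for the error terms in \eqref{eqn:thmModFirstIntensDensity} differentiated via the stated stability under $d(z)^{-|\beta|/2}h^{|\beta|}\partial^\beta_{z\bar z}$) by $\partial_{\Ima z}S \asymp \varepsilon_0^{1/3}$ yields $\tfrac{d}{dt}\Ima\Gamma_\pm^h = \mO(h^4/\varepsilon_0(h)^4)$, and comparing the two critical equations — that of $\gamma_\pm^h$ ($S = \varepsilon_0 + \mO(h\ln d^{-1})$) with that of $\Gamma_\pm^h$ — and dividing their difference by $\partial_{\Ima z}S$ gives $|\Gamma_\pm^h(t) - \gamma_\pm^h(t)| = \mO(h^5/\varepsilon_0(h)^{13/3})$.

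\medskip

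\noindent The main obstacle I anticipate is part (3): isolating the genuine leading-order balance in $\partial_{\Ima z}(\log\Psi - \Theta) = 0$ and showing it has a \emph{unique} solution near $y_\pm(h)$ (so the "local maxima" are well-defined curves rather than a messy set), which requires carefully tracking the many error terms in \eqref{eqn:thm1_dens} and \eqref{eqn:thmModFirstIntensDensity}, verifying the stated stability under differentiation, and checking that all of $\Psi_1$, the error in $\Psi_2$, and the error in $\Theta$ are genuinely subdominant after one $\partial_{\Ima z}$ so that the second derivative of $\log D$ is strictly negative there. The bookkeeping to get the sharp exponents $h^4/\varepsilon_0^4$ and $h^5/\varepsilon_0^{13/3}$ — as opposed to merely "small" — is the delicate part; parts (1) and (2) are comparatively routine applications of monotonicity and the implicit function theorem.
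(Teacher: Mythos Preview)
Your outline for parts~(1) and~(2) matches the paper's argument (Lemma~\ref{prop:ExZone2}) closely, but there is one concrete error in~(2). You write that the $\mO(h)$ error in the resolvent formula of Proposition~\ref{prop:Resolvent} is itself $\mO(\e^{-\varepsilon_0(h)/(Ch)})$. It is not: that term is genuinely of size $h$. What makes $\partial_{\Rea z}|t_0(z)|$ exponentially small is a different mechanism --- the prefactor $V(z,h)=(\tfrac{i}{2}\{p,\overline p\}(\rho_+)\tfrac{i}{2}\{\overline p,p\}(\rho_-))^{1/4}(h/\pi)^{1/2}(1+\mO(h\eta^{-3/2}))$ is \emph{independent of $\Rea z$}, because it arises from stationary phase in the normalization integrals whose phase $\Ima\phi_\pm$ depends on $\Ima z$ only. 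The paper works with $|E_{-+}(z)|$ via Proposition~\ref{cor:E-+Repres} rather than the resolvent formula, which makes this structure transparent: the only $\Rea z$-dependence in $|E_{-+}(z)|=V(z,h)\e^{-S/h}|1-\e^{\Phi}|(1+\mO(\e^{-\asymp\eta^{3/2}/h}))$ sits in $|1-\e^{\Phi}|$ and in the exponentially small error, both of which have $\partial_{\Rea z}$ of order $\mO(\e^{-\asymp\eta^{3/2}/h})$. Dividing by $\partial_{\Ima z}|E_{-+}|\asymp -h^{-1/2}\eta^{3/4}\e^{-S/h}$ then gives the derivative bound. Your implicit-function-theorem step and the mean-value argument for $\Ima\gamma^h_\pm=y_\pm(1+\mO(h/\varepsilon_0))$ are otherwise exactly what the paper does.

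For part~(3), the paper (Lemma~\ref{prop:NoName}) takes a somewhat different route from your proposed implicit function theorem on $\partial_{\Ima z}\log D=0$. After expanding $\partial_{\Ima z}(\Psi\e^{-\Theta})=0$ using Lemma~\ref{lem:DerPsiThet}, the paper factors the equation as $G(z)+l(1-l)=0$, where $l:=h(\tfrac i2\{p,\overline p\}(\rho_+)\tfrac i2\{\overline p,p\}(\rho_-))^{1/2}\pi^{-1}\e^{-2S/h}\delta^{-2}(1+\mO(\eta^{-3/2}h))$ is essentially $\Theta^0$, and $G\asymp h^2/\eta^3$ collects all lower-order contributions (including $\partial_{\Ima z}\Psi_1$). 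This forces $l=1+\mO(h^2/\eta^3)$. One then sets $t:=(\eta^3/h^2)(l-1)$, so the equation becomes $t=\widetilde G(t,\Rea z)$ with $\partial_t\widetilde G=\mO(h^2/\eta^3)$, and the \emph{Banach fixed point theorem} --- started at $t(\gamma_-^h)=0$ (i.e.\ $l(\gamma_-^h)=1$, which is precisely the defining relation $|E_{-+}(\gamma_-^h)|=\delta$) --- yields a unique fixed point $t^*_-(\Rea z)$ with $|t^*_--0|=\mO(h^2/\eta^3)$. The sharp exponents then fall out mechanically: $d(\Ima z)/dt\asymp -\eta^{-7/2}h^3$ gives $|\Ima\Gamma_-^h-\Ima\gamma_-^h|=\mO(\eta^{-13/2}h^5)$, and $dt^*_-/d\Rea z=\mO(\eta^{1/2}h^{-1})$ gives $d\Ima\Gamma_-^h/d\Rea z=\mO(\eta^{-6}h^4)$; substituting $\eta\asymp\varepsilon_0^{2/3}$ yields the stated powers of $\varepsilon_0$. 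Your implicit-function approach should in principle work too, but the fixed-point formulation handles uniqueness, the distance to $\gamma_\pm^h$, and the derivative bound in one stroke, and avoids having to verify a sign on the second derivative of $D$ directly --- which, as you anticipated, is where most of the delicate bookkeeping hides.
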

With respect to the above described curves we prove the following properties of 
the average density of eigenvalues:
\begin{prop}\label{prop:DensityProperties}
 Let $ d\xi\wedge dx$ be the symplectic form on $T^*S^1$ and $p$ as in (\ref{eqn:SemClPrinSymModlOp}). 
 Let $\varepsilon_0=\varepsilon_0(h)$ be as in Definition \ref{defn:CoupConstDel}. Then, under 
 the assumptions of Theorem \ref{thm:ModelFirstIntensity} there exist $\alpha, \beta > 0$ such that 
  \begin{enumerate} 
   \item for $z\in\Sigma_{c,d}$ with 
	  \begin{align*}
		  \Ima\gamma_{-}(\Rea z)
		  + \alpha \frac{h}{\varepsilon_0^{1/3}}\ln\frac{\varepsilon_0^{1/3}}{h}
 		  \leq\Ima z \leq \Ima\gamma_{+}(\Rea z)-
 		  \alpha \frac{h}{\varepsilon_0^{1/3}}\ln\frac{\varepsilon_0^{1/3}}{h}
 		  \end{align*}
         we have that 
 		\begin{align*}
 		 D(z;h,\delta)L(dz)
 		 = \frac{1}{2\pi h} p_*(d\xi\wedge dx) + \mO\left(d(z)^{-2}\right)L(dz),
 		\end{align*}
	where $D(z;h,\delta)$ is the average density of eigenvalues of the operator $P_h^{\delta}$ given in 
	Theorem \ref{thm:ModelFirstIntensity}.
  \item for \begin{align*}
		  \Omega_1(\beta) := 
		  \bigg\{z\in\Sigma_{c,d}~\Big| ~ &\Ima\gamma_{-}(\Rea z)-\frac{h}{\varepsilon_0^{1/3}}
		  \ln \left( \beta\ln\frac{\varepsilon_0^{1/3}}{h}\right) \notag \\
 		  &\leq\Ima z \leq \Ima\gamma_{+}(\Rea z)+\frac{h}{\varepsilon_0^{1/3}}
		  \ln \left( \beta\ln\frac{\varepsilon_0^{1/3}}{h}\right)\bigg\}.
 		  \end{align*}
 	we have that 	  
 	\begin{align*}
 	\int\limits_{z\in\Omega_1(\beta)}
 	D(z;h,\delta) L(dz)
 		 = \int\limits_{\Sigma_{c,d}}\frac{p_*(d\xi\wedge dx)}{2\pi h} 
 		   + \mO\left(\varepsilon_0^{-\frac{2}{3}}\right) 		 
 	\end{align*}  
  \item for all $\varepsilon>0$ and all 
	  $\Omega(\varepsilon)\Subset\Sigma_{c,d}\backslash\Omega_2(\beta,\varepsilon)$ satisfying 
	  Hypothesis \ref{hyp:H3}, where 
	  \begin{align*}
		  \Omega_2(\beta,\varepsilon) := 
		  \bigg\{ z\in&\Sigma_{c,d}~\Big| ~\Ima\gamma_{-}(\Rea z)-\frac{h}{\varepsilon_0^{1/3}}
		  \ln  \left(\beta\ln\frac{\varepsilon_0^{1/3}}{h}\right)
		  - \varepsilon \notag \\
 		  &\leq\Ima z \leq \Ima\gamma_{+}(\Rea z)+\frac{h}{\varepsilon_0^{1/3}}
		  \ln  \left(\beta\ln\frac{\varepsilon_0^{1/3}}{h}\right)+\varepsilon\bigg\},
 	\end{align*}
 	we have that 
 		  \begin{equation*}
 		   \int_{\Omega(\varepsilon) }
		   D(z;h,\delta) L(dz)
		   =\mathcal{O}\!\left(\exp\left\{-\e^{\frac{\varepsilon}{Ch}}\right\}\right).
 		  \end{equation*}
  \end{enumerate}
\end{prop}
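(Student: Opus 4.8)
\textbf{Proof proposal for Proposition \ref{prop:DensityProperties}.}

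The plan is to treat the three statements as a direct application of the explicit formulas for $\Psi_1$, $\Psi_2$ and $\Theta$ in Theorem \ref{thm:ModelFirstIntensity} together with the growth bounds of Proposition \ref{prop:DensExplicit} and Proposition \ref{prop:GrowPropDens}. The unifying observation is that, by Proposition \ref{prop:Resolvent} and Definition \ref{defn:CoupConstDel}, one has $\delta^{-1}\lVert(P_h-z)^{-1}\rVert = \e^{(S(z)-\varepsilon_0(h))/h}(1+\mO(h))$ up to lower-order factors, so that the three zones in \eqref{def:stripSigma} and the curves $\gamma_\pm^h$ of Proposition \ref{prop:the2curves} are governed by the sign and size of $S(z)-\varepsilon_0(h)$. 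Since $S$ depends only on $\Ima z$ and is strictly monotone on each side of $\langle\Ima g\rangle$ with $\partial_{\Ima z}S(z)\asymp d(z)^{1/2}$ (Proposition \ref{prop:SProp}), the vertical displacement $\Ima z - \Ima\gamma_\pm^h(\Rea z)$ is comparable to $(S(z)-\varepsilon_0(h))/\partial_{\Ima z}S \asymp (S(z)-\varepsilon_0(h))\varepsilon_0^{-1/3}$; this is the dictionary converting the region descriptions in (1)--(3) into statements about $S(z)-\varepsilon_0(h)$.

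For statement (1), in the range indicated we have $\Ima z$ bounded away from $\Ima\gamma_\pm^h$ by at least $\alpha\frac{h}{\varepsilon_0^{1/3}}\ln\frac{\varepsilon_0^{1/3}}{h}$, which by the dictionary above forces $S(z)-\varepsilon_0(h)\leq -\alpha h\ln\frac{\varepsilon_0^{1/3}}{h}(1+o(1))$; I would then read off from Proposition \ref{prop:GrowPropDens} that $\Psi_2 \asymp d(z)^{3/2}h^{-1}\delta^{-2}\e^{-2S/h} \asymp d(z)^{3/2}h^{-2}\e^{-2(S-\varepsilon_0)/h}$, which by the choice of $\alpha$ large is $\mO(d(z)^{-2})$ (the polynomial factors in $d$ and $h^{-1}$ being absorbed by the exponentially small $\e^{-2(S-\varepsilon_0)/h}$, using $\varepsilon_0 \geq (\kappa-\tfrac12)h\ln h^{-1}$ with $\kappa$ large). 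Similarly $\Theta \asymp h\sqrt{d(z)}|1-\e^{\Phi}|\e^{-2S/h}\delta^{-2}$ is exponentially small there, so $\exp\{-\Theta\} = 1 + \mO(h^\infty)$, and $\Psi_1$ contributes exactly $\frac{1}{2h}p_*(d\xi\wedge dx)$ up to $\mO(d(z)^{-2})$ by Proposition \ref{cor:1MomSymplVol}; combining these through \eqref{eqn:density} gives the claim. For statement (2) the idea is to integrate: write $\int_{\Omega_1(\beta)}D\,L(dz) = \int_{\Sigma_{c,d}}\Psi_1 \cdot(2\pi h)^{-1}(\dots) + (\text{boundary correction})$, where the part of $\Sigma_{c,d}$ outside $\Omega_1(\beta)$ contributes $\mO(\varepsilon_0^{-2/3})$ because there $D$ is double-exponentially small by part (3)'s estimate applied with $\varepsilon = 0$, and the thin layer near $\gamma_\pm^h$ where $\Psi_2$ spikes to order $h^{-2}$ has width $\mO(\frac{h}{\varepsilon_0^{1/3}}\ln\frac{\varepsilon_0^{1/3}}{h})$ in $\Ima z$, so its $\Psi_2$-contribution is $\mO(h^{-2}\cdot\frac{h}{\varepsilon_0^{1/3}}\ln(\cdot))$ times the extra width $\mO(h/\varepsilon_0^{1/3})$ coming from the saddle, which one checks is $\mO(\varepsilon_0^{-2/3})$ after a Laplace-type estimate of $\int \Psi_2\,\e^{-\Theta}$ along the normal direction; the Weyl term over all of $\Sigma_{c,d}$ is $\int_{\Sigma_{c,d}}(2\pi h)^{-1}p_*(d\xi\wedge dx)$.

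For statement (3), on $\Sigma_{c,d}\setminus\Omega_2(\beta,\varepsilon)$ the point $z$ lies at vertical distance $\geq \varepsilon$ beyond the already-shifted curves, so by the dictionary $S(z)-\varepsilon_0(h) \geq c\,\varepsilon\,\varepsilon_0^{1/3}$ for some $c>0$ (using $\partial_{\Ima z}S \asymp \varepsilon_0^{1/3}$ near $\gamma_\pm^h$), hence $\Theta \asymp \e^{-2S/h}\delta^{-2}h\sqrt{d} |1-\e^\Phi| \asymp \e^{2(S-\varepsilon_0)/h}(\dots) \geq \e^{c\varepsilon\varepsilon_0^{1/3}/h}$ up to polynomial factors, i.e. $\Theta \gg \e^{\varepsilon/(C'h)}$ for a suitable $C'$; then $D \leq \mO(1)\Psi\,\e^{-\Theta} = \mO(\exp\{-\e^{\varepsilon/(Ch)}\})$ since $\Psi = \mO(h^{-2}\e^{2(S-\varepsilon_0)/h})$ is at most singly exponential in $1/h$ and is dominated by the double-exponential $\e^{-\Theta}$. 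Integrating over the bounded region $\Omega(\varepsilon)$, whose area is $\mO(1)$, preserves the bound $\mO(\exp\{-\e^{\varepsilon/(Ch)}\})$. The main obstacle I anticipate is statement (2): getting the error in the integral down to exactly $\mO(\varepsilon_0^{-2/3})$ requires a careful Laplace-type analysis of the spike of $\Psi_2\e^{-\Theta}$ across the curves $\gamma_\pm^h$ — one must show that although $\Psi_2$ reaches size $\asymp h^{-2}$ on $\gamma_\pm^h$, the cutoff $\e^{-\Theta}$ together with the exponential decay of $\Psi_2$ away from $\gamma_\pm^h$ confines the excess mass (beyond the Weyl term) to $\mO(\varepsilon_0^{-2/3})$, and that the tails outside $\Omega_1(\beta)$ are genuinely negligible with the stated choice of the constant $\beta$. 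The stability of all error estimates under $d(z)^{-|\beta|/2}h^{|\beta|}\partial^\beta_{z\bar z}$ from Proposition \ref{prop:DensExplicit} is what licenses differentiating the density to locate the maxima, but for this proposition only the size estimates are needed.
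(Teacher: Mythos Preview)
Your treatment of parts (1) and (3) is essentially the paper's: reduce to the size of $S(z)-\varepsilon_0(h)$ via Proposition \ref{prop:SProp} and Definition \ref{defn:CoupConstDel}, then read off the behaviour of $\Psi_2$ and $\Theta$ from Proposition \ref{prop:GrowPropDens}. (In (3) you have a cancelling pair of sign slips: outside $\Omega_2$ one has $\varepsilon_0-S\geq c\,\varepsilon\,\varepsilon_0^{1/3}$, and $\Theta\asymp\sqrt{d(z)}\,\e^{2(\varepsilon_0-S)/h}$; your conclusion is nonetheless correct.)

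For part (2), however, there is a genuine gap. The claim is not that the spike contributes little, but that the total mass of $D$ over $\Omega_1(\beta)$ equals the Weyl mass over \emph{all} of $\Sigma_{c,d}$ up to $\mO(\varepsilon_0^{-2/3})$; in other words, the spike of $\Psi_2\e^{-\Theta}$ must exactly compensate the Weyl mass that is missing between $\gamma_\pm^h$ and $\partial\Sigma$. A Laplace-type size estimate of the spike cannot by itself produce such a cancellation, and your sketch (``height $h^{-2}$ times width $h\varepsilon_0^{-1/3}$ times an extra width'') does not match this structure. The paper's device is an \emph{approximate primitive}: using $\Psi_1=\frac{1}{2h}\partial_{\Ima z}^2S+\mO(d(z)^{-2})$ and, from Proposition \ref{prop:DensExplicit} and Lemma \ref{lem:DerPsiThet}, the identity $2h\,\Psi_2=(\partial_{\Ima z}S)(-\partial_{\Ima z}\Theta)+\text{(small)}$, one gets
\[
D(z;h,\delta)=\frac{1}{2\pi h}\,\partial_{\Ima z}\!\Big[(\partial_{\Ima z}S(z))\,\e^{-\Theta(z;h,\delta)}\Big]+R(z;h,\delta)\,\e^{-\Theta},
\]
with $R=\mO(d(z)^{-2})+\mO(d(z)^{3/4}h^{-1/2}\e^{-2S/h}\delta^{-2})$. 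Integrating in $\Ima z$ reduces the main term to the boundary values $\frac{1}{2\pi h}(x_--x_+)\e^{-\Theta}$: at the inner endpoint $\alpha_1$ one has $\e^{-\Theta}=1+\mO(h^{\infty})$ and $(x_--x_+)(\alpha_1)$ is precisely the Weyl primitive over $[0,\alpha_1]$, while at the outer endpoint $\beta_2$ the factor $\e^{-\Theta}$ is made $o(1)$ by the choice of $\beta$. The remainder $R\e^{-\Theta}$ integrates to $\mO(\varepsilon_0^{-2/3})$. This primitive identity is the step you are missing; once you have it, (2) and (3) follow by evaluating the boundary terms with the bounds (\ref{eqn:ThetaEstimates1})--(\ref{eqn:ThetaEstimates2}) that you already derived.
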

Proposition \ref{prop:DensityProperties} makes more precise the rough description of the behavior of the average density 
of eigenvalues, given at the beginning of this section: Point $1.$ tells us that 
in the interior of the $\delta$-pseudospectrum, 
up to a distance of order $h\ln\frac{1}{h}$ to the curves $\gamma_{\pm}^h$ (see Figure \ref{zones}), 
the density is given by a Weyl law. Assertion $2.$ tells us that the eigenvalues accumulate strongly 
in the close vicinity of these curves such that when integrating the density in the box 
$\Omega_1\Subset\Sigma_{c,d}$ the number of eigenvalues is given (up to small error) by the 
integrated Weyl density in all of $\Sigma_{c,d}$ (cf Figure \ref{zones}). This augmented density 
can be seen as the accumulated eigenvalues which would have been given by a Weyl law in the region 
from $\gamma_{\pm}^h$ up to the boundary $\partial\Sigma$ (see also Figures \ref{fig_i31} and 
\ref{fig_i3} for an example).
\par
The last point of the proposition tells us that outside of a strip of the form of $\Omega_1$ 
the density decays double-exponentially. 
\subsubsection{The density in the zone of spectral accumulation}
We give a finer description of the density of eigenvalues close to its local maxima 
at $\Gamma_{\pm}^h$: 
\begin{prop}\label{prop:WedgePoisson}
 Assume the hypothesis of Theorem \ref{thm:ModelFirstIntensity}. Let $S(z)$ be 
 as in Definition \ref{defn:ZeroOrderDensity} and let $\Psi_{2}(z,h,\delta)$ and 
 $\Theta(z;h,\delta)$ be as in Theorem \ref{thm:ModelFirstIntensity}. 
 Then for $|\Ima z - \langle\Ima g\rangle| > 1/C$ with $C \gg 1$ large enough,
  \begin{equation*}
   \Psi_2\e^{-\Theta}
   = 
   \left[\frac{|\partial_{\Ima z}S|^2}{h^2}\Theta
   \left(1+\mO\left(d(z)^{-3/4}h^{1/2}\right)\right)
   +
   \mathcal{O}\!\left(d(z)^{5/4}\right)\right]
   \e^{-\Theta}.
  \end{equation*}
\end{prop}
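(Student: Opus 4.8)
The plan is to derive the identity by plugging the explicit formulas of Proposition~\ref{prop:DensExplicit} into one another; the factor $\e^{-\Theta}$ plays no role in the computation and can simply be carried along, so it suffices to prove the corresponding identity for $\Psi_2$ alone and then multiply through by $\e^{-\Theta}$. Throughout I would use that $d(z)=\dist(z,\partial\Sigma)=\mO(1)$ (since $\Sigma$ is a strip of finite width and $\overline\Omega$ is compact) and that by hypothesis $d(z)\gg h^{2/3}$, so that in particular $h^{1/2}d(z)^{-3/4}\ll 1$ and $h\,d(z)^{1/2}\le 1$ for $h$ small.

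First I would rewrite the two lines of \eqref{eqn:thmModFirstIntensDensity} in a form convenient for substitution. Abbreviating $J(z):=\bigl(\tfrac{i}{2}\{p,\overline p\}(\rho_+)\tfrac{i}{2}\{\overline p,p\}(\rho_-)\bigr)^{1/2}$, the definition of $\Theta^0$ gives $\Theta^0/h^2=J\,\e^{-2S/h}/(\pi h\delta^2)$, and comparing with the first line of \eqref{eqn:thmModFirstIntensDensity} one sees that it states precisely
\[
 \Psi_2=\frac{|\partial_{\Ima z}S|^2}{h^2}\,\Theta^0\Bigl(1+\mO\bigl(h^{1/2}d(z)^{-3/4}\bigr)\Bigr).
\]
The second line of \eqref{eqn:thmModFirstIntensDensity} has the shape $\Theta=\Theta^0(1+a)+b$ with $a=\mO(h^{3/2}d(z)^{-1/4})$ and $b=\mO\bigl(d(z)^{1/4}\delta h^{-2}+\delta^2 d(z)^{-1/2}h^{-5}\bigr)$; since $a\to 0$ this inverts to $\Theta^0=\Theta(1+\mO(a))+\mO(b)$.

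Now I would substitute this expression for $\Theta^0$ into the displayed formula for $\Psi_2$. The two multiplicative errors $1+\mO(h^{3/2}d(z)^{-1/4})$ and $1+\mO(h^{1/2}d(z)^{-3/4})$ combine into a single $1+\mO(h^{1/2}d(z)^{-3/4})$, because $h\,d(z)^{1/2}\le 1$ forces $h^{3/2}d(z)^{-1/4}\le h^{1/2}d(z)^{-3/4}$, and the term coming from $\mO(b)$ is unaffected up to a factor $1+\mO(1)$. This yields
\[
 \Psi_2=\frac{|\partial_{\Ima z}S|^2}{h^2}\,\Theta\Bigl(1+\mO\bigl(h^{1/2}d(z)^{-3/4}\bigr)\Bigr)
 +\frac{|\partial_{\Ima z}S|^2}{h^2}\,\mO\bigl(d(z)^{1/4}\delta h^{-2}+\delta^2 d(z)^{-1/2}h^{-5}\bigr).
\]
For the last term I would invoke $|\partial_{\Ima z}S(z)|\asymp d(z)^{1/2}$ from Proposition~\ref{prop:SProp}, which turns it into $\mO\bigl(d(z)^{5/4}\delta h^{-4}+\delta^2 d(z)^{1/2}h^{-7}\bigr)$. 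Then, using $\delta\ll h^{\kappa}$ with $\kappa$ large (Definition~\ref{defn:CoupConstDel}) together with $d(z)^{-3/4}\ll h^{-1/2}$, one has $\delta h^{-4}=\mO(1)$ and $\delta^2 d(z)^{-3/4}h^{-7}=\mO(\delta^2 h^{-15/2})=\mO(1)$, so this remainder is $\mO(d(z)^{5/4})$. Multiplying both sides by $\e^{-\Theta}$ gives the asserted identity.

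The argument is purely a matter of tracking error terms; the only substantive point is checking that the two smallness requirements $\delta h^{-4}\to 0$ and $\delta^2 h^{-15/2}\to 0$ used at the end are already guaranteed by the standing assumption that $\kappa$ be large enough (they hold once $\kappa>4$), so that no new constraint on the parameters is introduced.
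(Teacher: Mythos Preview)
Your argument is correct and follows essentially the same route as the paper: both use the explicit formulas from Proposition~\ref{prop:DensExplicit} for $\Psi_2$ and $\Theta$, recognize that the leading part of $\Psi_2$ is exactly $\frac{|\partial_{\Ima z}S|^2}{h^2}\Theta^0$, and then absorb the discrepancy between $\Theta^0$ and $\Theta$ together with the additive remainder into the two error terms of the statement. The paper presents this by directly bounding the difference $\Psi_2-\frac{|\partial_{\Ima z}S|^2}{h^2}\Theta$, whereas you first invert $\Theta=\Theta^0(1+a)+b$ and substitute; these are equivalent manipulations, and your tracking of the final $\mO(d(z)^{5/4})$ term (via $\delta h^{-4}=\mO(1)$ and $\delta^2 h^{-15/2}=\mO(1)$, both covered by $\kappa$ large) is in fact more explicit than the paper's terse ``which implies the result''.
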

Let us give some remarks on this result. First, we see that we can approximate 
the second part of the density of eigenvalues by Poisson distribution 
scaled by the monotone function $\partial_{\Ima z} S(z)$. Second, since 
$\Theta \asymp \lVert (P_h-z)^{-2} \rVert^{-1}\delta^{-2}$, we see that 
the effects of the second part of the density vanish in the error term 
of $\Psi_1$ as long as $\lVert (P_h-z)^{-1} \rVert \gg \delta^{-1}$.
However, for $\lVert (P_h-z)^{-1} \rVert \asymp \delta^{-1}$ it is of order 
$\mO(d(z)h^{-2})$ and dominates the Weyl term.
\subsection{Example: Numerical simulations for $hD+\e^{-ix}$}\label{suse:Example1}
To illustrate our results we look at the discretization of $P_h=hD+\e^{-ix}$ in Fourier 
space which is approximated by the $(2N+1)\times(2N+1)$-matrix $H=hD + E$, 
$N\in\mathds{N}$, where $D$ and $E$ are defined by 
  \begin{equation*}
    D_{j,k} := \begin{cases}
		  j \quad \text{if } j=k, \\
		  0 \quad \text{else}
		 \end{cases}
  \quad \text{and} \quad
    E_{j,k} := \begin{cases}
		  1 \quad \text{if } k=j+1, \\
		  0 \quad \text{else},
		 \end{cases}
  \end{equation*}
where $j,k\in\{-N,-N+1,\dots,N\}$. Let $R$ be a $(2N+1)\times(2N+1)$ random matrix, 
where the entries $R_{j,k}$ are independent and identically distributed complex Gaussian 
random variables, $R_{j,k}\sim\mathcal{N}_{\mathds{C}}(0,1)$. For $h>0$ and $\delta>0$ as 
in Theorem \ref{thm:ModelFirstIntensity}, we let MATLAB calculate the spectrum $\sigma(H+\delta R)$. Since 
here $g(x)=\e^{-ix}$ (cf. \eqref{eqn:defnModelOperator}), it follows that in this case $\Sigma$ is given 
by $\{z\in\C;~|\Ima z|\leq 1\}$ (cf. \eqref{eq_i6}).
We are going to perform our numerical experiments for the following two cases: \\ 
\paragraph{Polynomially small (in $h$) coupling $\delta$} We set the above parameters to be $h=2\cdot 10^{-3}$, 
       $\delta=2\cdot 10^{-12}\approx 0.1\cdot h^4$ and $N=1999$. Figure \ref{fig_i31} shows 
       the spectrum of $H+\delta R$ computed by MATLAB.
       \begin{figure}[ht]
         \centering
         \begin{minipage}[b]{0.49\linewidth}
         \centering
         \includegraphics[width=\textwidth]{fig10.pdf}
        \end{minipage}
        \hspace{0cm}
        \begin{minipage}[b]{0.49\linewidth}
         \centering 
         \includegraphics[width=\textwidth]{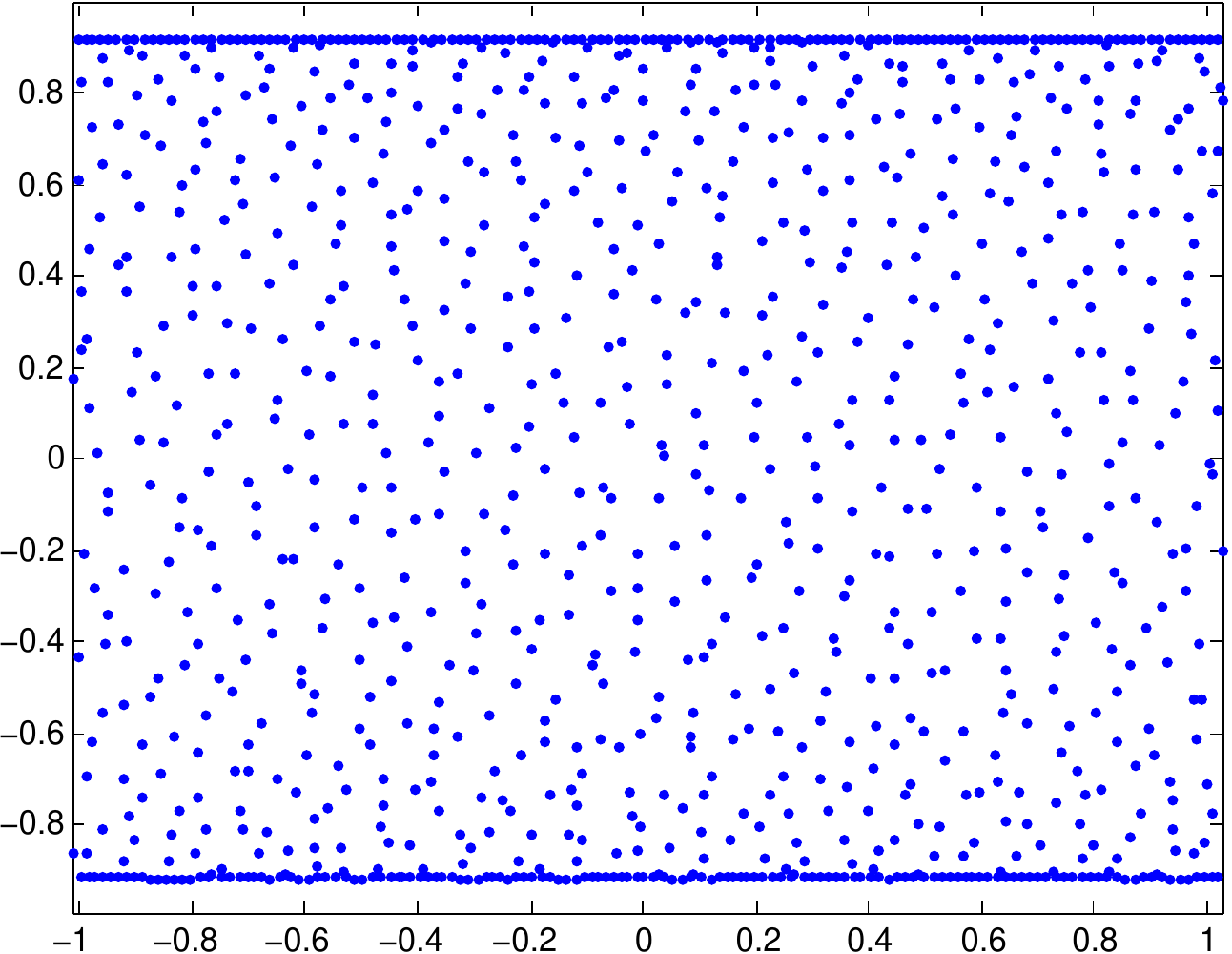}
        \end{minipage}
        \caption{On the left hand side we present the spectrum of the discretization of $hD + \exp(-ix)$ 
       	  (approximated by a $3999\times 3999$-matrix) perturbed with a random Gaussian matrix $\delta R$ with 
       	  $h=2\cdot 10^{-3}$ and $\delta=2\cdot 10^{-12}$. The black box indicates the region where we count 
       	  the number of eigenvalues to obtain Figure\ref{fig_i3}. The right hand side is a magnification of the 
       	  central part of the spectrum depicted on the left hand side.}
        \label{fig_i31}
       \end{figure}
\begin{figure}[ht]
 \begin{minipage}[b]{0.49\linewidth}
  \centering
  \includegraphics[width=\textwidth]{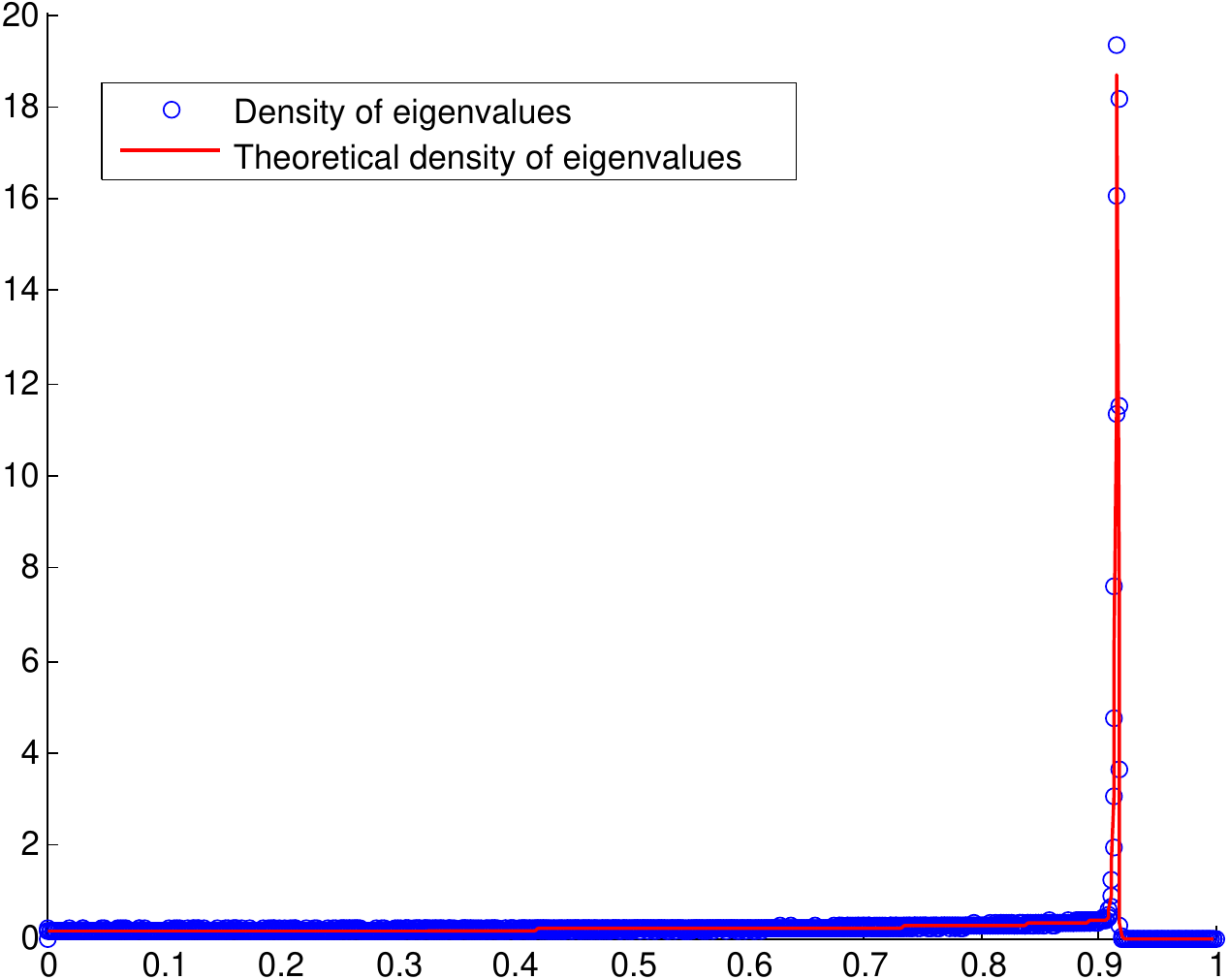}
 \end{minipage}
 \hspace{0cm}
 \begin{minipage}[b]{0.49\linewidth}
  \centering 
  \includegraphics[width=\textwidth]{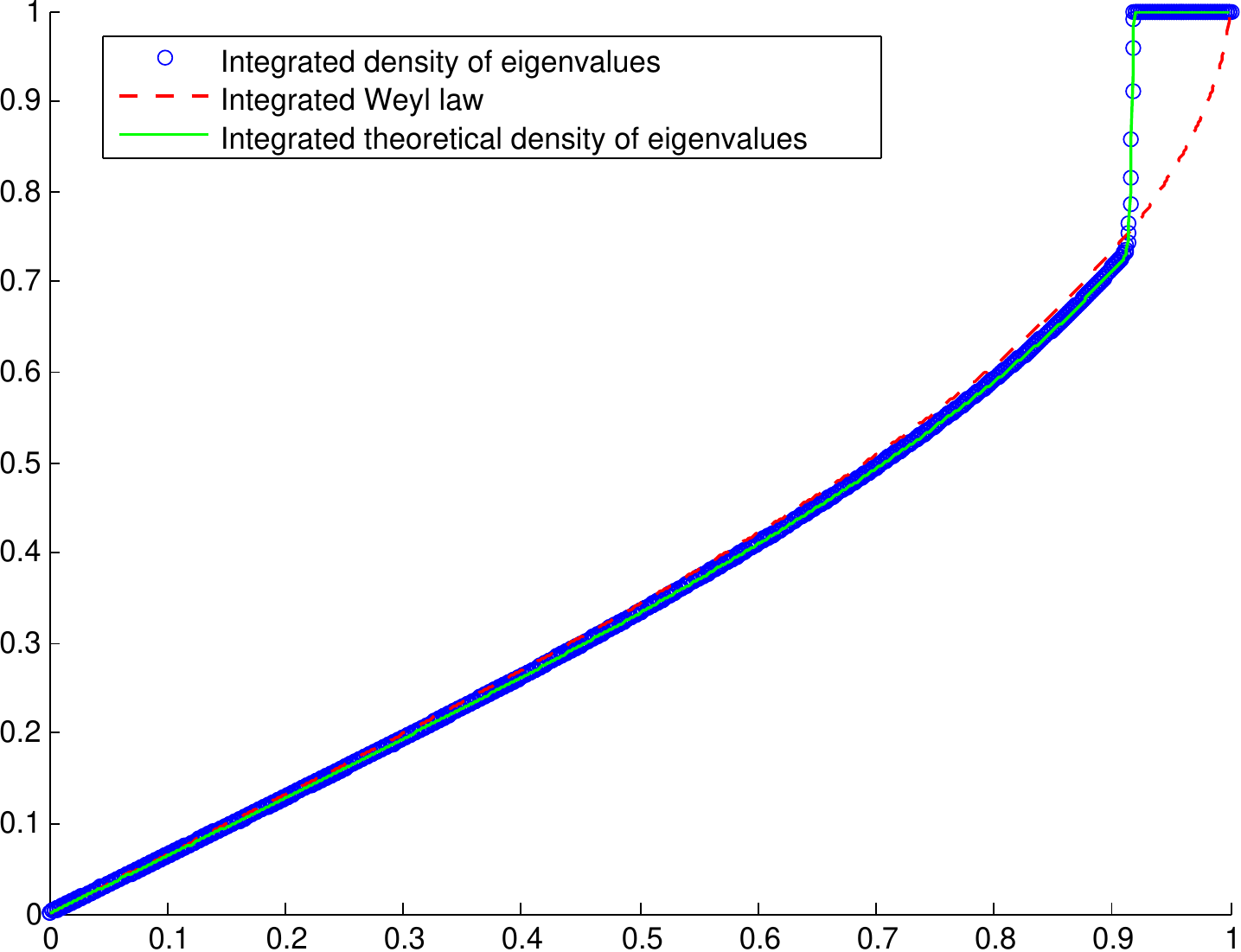}
 \end{minipage}
 \caption{On the left hand side we compare the experimental and the theoretical 
	  (cf. Theorem \ref{thm:ModelFirstIntensity}) density of eigenvalues. On the right hand side 
	  we compare the experimental and the theoretical integrated density of eigenvalues 
	  with the integrated Weyl law. Here $h=2\cdot 10^{-3}$ and $\delta =2\cdot 10^{-12}$.}
  \label{fig_i3}
\end{figure}
       The black box indicates the region where we count the number of eigenvalues to obtain the density of eigenvalues 
       presented in Figure \ref{fig_i3}. 
       Outside this box the influence from the boundary effects from our $N$-dimensional matrix are too 
       strong. Figure \ref{fig_i3} compares the experimental (given by counting the number of eigenvalues 
       in the black box restricted to $\Ima z\geq 0$ and averaging over $400$ realizations of random Gaussian 
       matrices) and the theoretical (cf Theorem \ref{thm:ModelFirstIntensity}) density and integrated density of eigenvalues.  
\\
\paragraph{Exponentially small (in $h$) coupling $\delta$} We set the above parameters to be $h=5\cdot 10^{-2}$, 
       $\delta=\exp(-1/h)$ and $N=1000$. Figure \ref{fig_i5} shows 
       the spectrum of $H+\delta R$ computed by MATLAB. 
       \begin{figure}[ht]
        \begin{minipage}[b]{0.49\linewidth}
         \centering
         \includegraphics[width=\textwidth]{fig13.pdf}
        \end{minipage}
        \hspace{0cm}
        \begin{minipage}[b]{0.49\linewidth}
         \centering 
         \includegraphics[width=\textwidth]{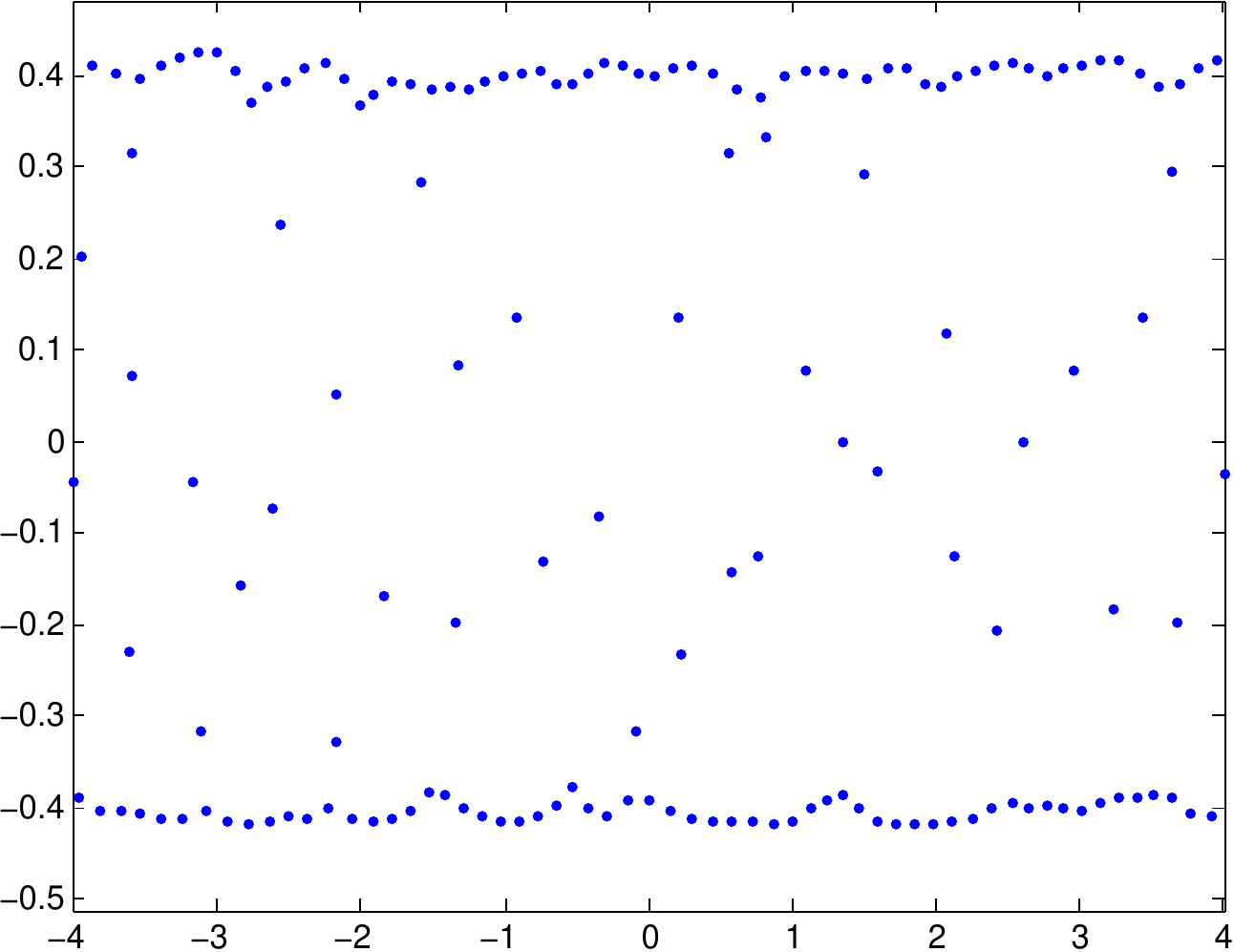}
        \end{minipage}
        \caption{On the left hand side we present the spectrum of the discretization of $hD + \exp(-ix)$ 
       	  (approximated by a $1999\times 1999$-matrix) perturbed with a random Gaussian matrix $\delta R$ with 
       	  $h=5\cdot 10^{-2}$ and $\delta=\exp(-1/h)$. The black box indicates the region where we count 
       	  the number of eigenvalues to obtain Figure \ref{fig_i4}. The right hand side is a magnification of the 
       	  central part of the spectrum depicted on the left hand side.}
        \label{fig_i5}
       \end{figure} 
       \begin{figure}[ht]
 \begin{minipage}[b]{0.49\linewidth}
  \centering
  \includegraphics[width=\textwidth]{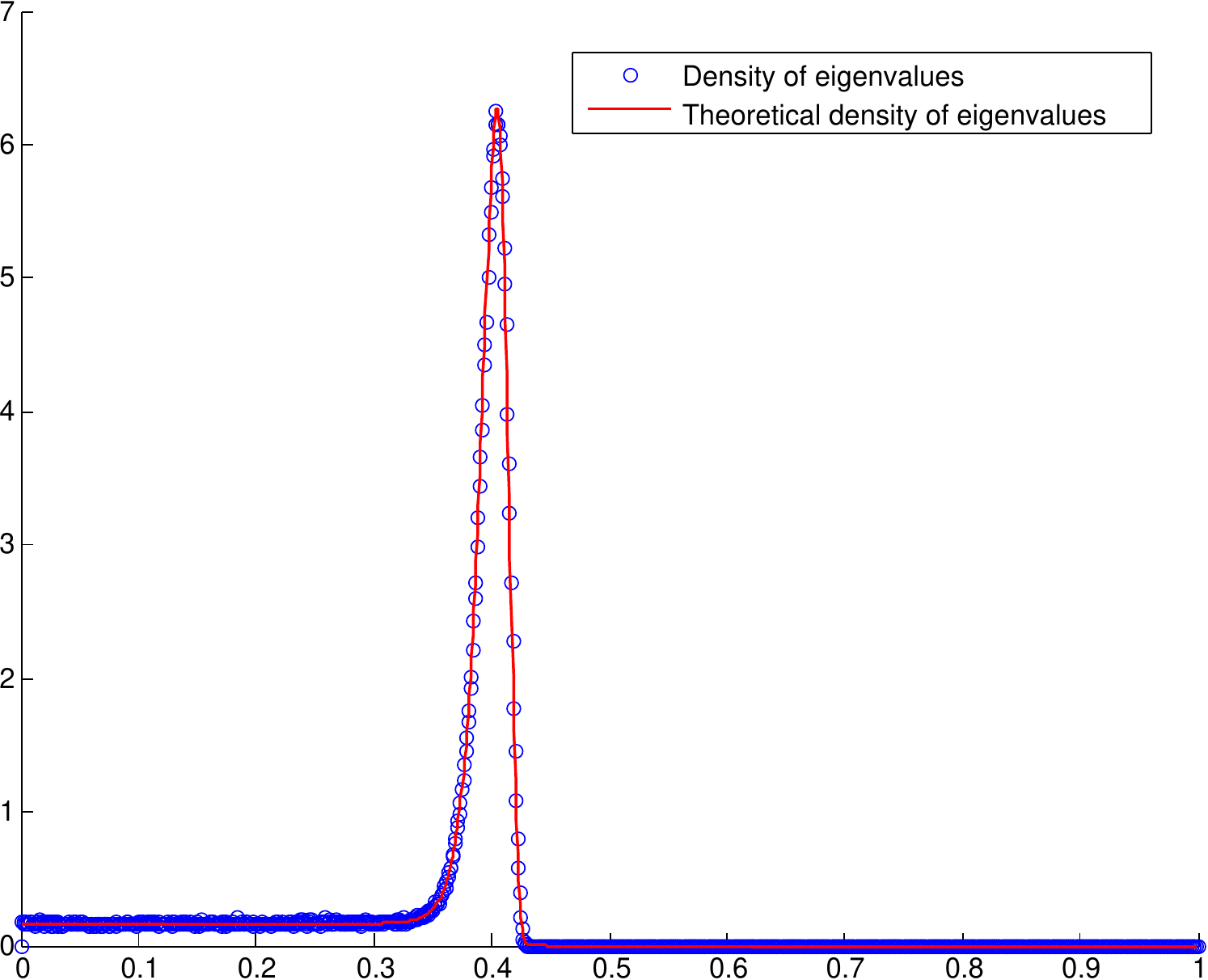}
 \end{minipage}
 \hspace{0cm}
 \begin{minipage}[b]{0.49\linewidth}
  \centering 
  \includegraphics[width=\textwidth]{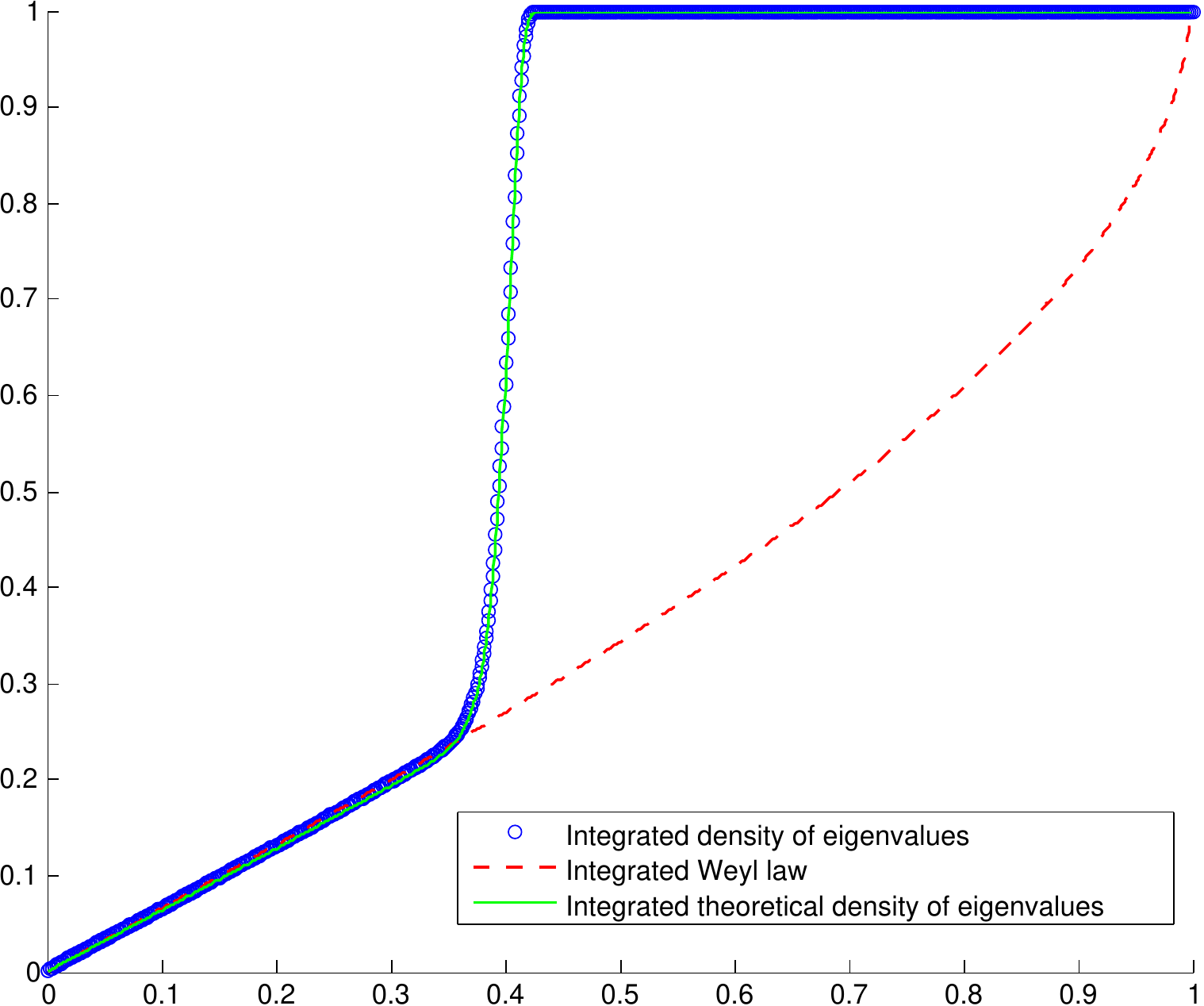}
 \end{minipage}
 \caption{Experimental (each point represents the mean, over $1000$ realizations, number 
	  of eigenvalues in a small box) vs predicted eigenvalue density (i.e. the principal 
	  terms of the average eigenvalue density given in Theorem \ref{thm:ModelFirstIntensity}) for 
	  $h=2\cdot 10^{-3}$ and $\delta =2\cdot 10^{-12}$.}
  \label{fig_i4}
\end{figure}
       Similar to the above, the black box indicates the region where we count the 
       number of eigenvalues to obtain the density of eigenvalues presented in Figure
       \ref{fig_i4}. This figure compares the experimental (given by counting the number of eigenvalues 
       in the black box restricted to $\Ima z\geq 0$ and averaging over $400$ 
       realizations of random Gaussian matrices) and the theoretical 
       (cf Theorem \ref{thm:ModelFirstIntensity}) density and integrated density of eigenvalues.
\\
\par
The Figures \ref {fig_i31}, \ref{fig_i3}, \ref{fig_i5} and \ref{fig_i4} confirm the 
theoretical result presented in Theorem \ref{thm:ModelFirstIntensity} since the green lines, 
representing the plotted average density of eigenvalues given by Theorem 
\ref{thm:ModelFirstIntensity}, match perfectly the experimentally obtained density of eigenvalues. 
Furthermore, these figures show the three zones described in Section \ref{suse:MainResPropDens} 
(see also Proposition \ref{prop:DensityProperties}): 
\\
\par
The \textit{first zone}, is in the middle of the spectrum (cf. Figures \ref{fig_i31}, \ref{fig_i5}) corresponding 
to the zone where $\lVert (P_h-z)^{-1} \rVert \gg (h\delta)^{-1}$. There we see 
roughly an aequidistribution of points at distance $~\sqrt{h}$. The right hand side of Figures 
\ref{fig_i3} and \ref{fig_i4} shows that the number of eigenvalues in this zone is given 
by a \textit{Weyl law}, as predicted by Proposition \ref{prop:DensityProperties}.
\par
When comparing Figure \ref{fig_i3} and \ref{fig_i4} we can see clearly that the Weyl law breaks down earlier 
when the coupling constant $\delta$ gets smaller. Indeed, when $\delta>0$ is exponentially 
small in $h>0$, the break down happens well in the interior of $\Sigma$, precisely as predicted by Proposition 
\ref{prop:DensityProperties}.
\par
Another important property of this zone is that there is an increase in the density of the spectral 
points as we approach the boundary of $\Sigma$, see Figure \ref{fig_i3}. This is due to the 
fact that the density given by the Weyl law becomes more and more singular as we approach 
$\partial\Sigma$ (cf. Proposition \ref{prop:GrowPropDens}). 
\\
\par
We will find the \textit{second zone} by moving closer to the \textquotedblleft edge{\textquotedblright} 
of the spectrum, see Figure \ref{fig_i31} and \ref{fig_i5}. It can be characterized as the zone where 
$\lVert (P_h-z)^{-1} \rVert \asymp \delta^{-1}$. Figures \ref{fig_i3} and \ref{fig_i4} show that there 
is a strong accumulation of the spectrum close to the boundary of the pseudospectrum. Furthermore, we 
see in the image on the right hand side of Figure \ref{fig_i31} and of Figure \ref{fig_i5} that the zone 
of accumulation of eigenvalues is in a small tube around roughly a straight line. 
This is exactly as predicted by Proposition \ref{prop:the2curves} and Proposition \ref{prop:WedgePoisson}. 
Finally, let us remark that when looking at the Figures \ref{fig_i31} and \ref{fig_i5}, we note that in this zone the 
average distance between eigenvalues is much closer than in the first zone.
\\
\par
The \textit{third zone} is between the spectral edge and the boundary of $\Sigma$ where we find no spectrum at all. It can 
be characterized as the zone where $\lVert (P_h-z)^{-1} \rVert \ll \delta^{-1}$, a \textit{void region} as described in 
Proposition \ref{prop:DensityProperties} (cf. Figures \ref{fig_i3} and \ref{fig_i4}).
\par
Let us stress again that as $\delta$ gets smaller the zone of accumulation moves further into 
the interior of $\Sigma$, thus diminishing the zone determined by the Weyl law and increasing the zone 
void of eigenvalues. This effect is most drastic in the case of $\delta$ being exponentially small in $h$, 
see Figure \ref{fig_i4}.
\section{Quasimodes}\label{subse:Quasmod}
The purpose of this section is to construct quasimodes for 
$P_h - z$ for $z\in\Omega\Subset\Sigma$ with 
\begin{align}\label{hyp_Omega_a}
     &\Omega\Subset\Sigma ~\text{is open, relatively compact with}~
     \mathrm{dist\,}(\Omega,\partial\Sigma) > Ch^{2/3} \notag \\
     &\text{for some constant }C>0.
\end{align}
We will in particular always assume that this assumption on 
$\Omega\Subset\Sigma$ is satisfied, if nothing else is specified. 
\par
We make the distinction between the following two cases:
  \begin{description}
   \item[Quasimodes in the interior of $\Sigma$] We consider $z$ being in the \textit{interior} of $\Sigma$, i.e. 
      $z\in\Omega_i\Subset\mathring{\Sigma}$ such that there exists a 
   constant $C_{\Omega_i}>0$ such that 
	\begin{equation*}
	    \mathrm{dist\,}(\Omega_i,\partial\Sigma) > \frac{1}{C_{\Omega_i}}.
	\end{equation*}
       In this case, following the approach of Hager \cite{Ha06}, we can 
       find quasimodes by a WKB construction for the operator $(P_h - z)$; 
  \item[Quasimodes close to the boundary $\Sigma$] We consider $z$ being \textit{close} to the boundary of $\Sigma$, i.e. 
        $z\in\Omega\cap(\Omega_{\eta}^{a}\cup\Omega_{\eta}^{b})$ where, following the notation used in \cite{BM}, we define 
        for some constant $C>0$
	\begin{align}\label{def:etaOmega}
	  &\Omega_{\eta}^{a} := \left\{z\in\mathds{C}:~ \frac{\eta}{C} \leq \Ima z \leq C\eta \right\}, \notag \\
	  &\Omega_{\eta}^{b} := \left\{z\in\mathds{C}:~ \frac{\eta}{C} \leq (\Ima g(b) - \Ima z) \leq C\eta \right\},  
	\end{align}
    with $ h^{2/3} \ll \eta \leq \mathrm{const.}$ (recall from Section \ref{sec:HM} that $\Ima g(a) =0$). 
    The precise value of the above constant $C>0$ is not important 
    for the obtained asymptotic results. We will only consider the case $z\in\Omega_{\eta}^a$ since 
    $z\in\Omega_{\eta}^b$ can be treated the same way. We may follow the approach 
    of Bordeaux-Montrieux \cite{BM} and find quasimodes by a WKB construction for the rescaled operator 
    \begin{equation}\label{eqn:RescaledOp}
     \widetilde{P}_{\widetilde{h}} - \widetilde{z} 
     := \frac{h}{\eta^{3/2}}D_{\widetilde{x}} +\frac{g(\sqrt{\eta}\widetilde{x})}{\eta} - \frac{z}{\eta}
     :=\widetilde{h}D_{\widetilde{x}} +\widetilde{g}(\widetilde{x}) - \widetilde{z},
    \end{equation}
  with the rescaling
    \begin{equation*}
     S^1 \ni x = \sqrt{\eta}\widetilde{x} \quad \text{and} \quad \widetilde{h} := \frac{h}{\eta^{3/2}}.
    \end{equation*}
  Note that in this case demanding $\widetilde{h} \ll 1 $ implies the condition $ h^{2/3} \ll \eta$. 
  The rescaling is motivated by analyzing the Taylor expansion of $\Ima g(x)$ around the critical 
  point $a$ yielding that for $\Ima z \rightarrow 0$
    \begin{equation}\label{eqn:HTscale}
     |x_{\pm}(z) - a| \asymp \sqrt{\eta},
    \end{equation}
  where $x_{\pm}(z)$ are as in Section \ref{sec:Intro}. This shows that the rescaling shifts the problem 
  of constructing quasimodes for $z$ close to the boundary of $\Sigma$ to constructing quasimodes for $z$ 
  well in the interior of the range of the semiclassical principal symbol of the new operator 
  $\widetilde{P}_{\widetilde{h}}$. \par
  \begin{rem}
   Throughout this text we shall work with the convention that when writing an estimate, e.g. 
   $\mO\!\left(\delta^q \eta^{r} h^{s} \right)$ or $ A \asymp \eta^{r} h^{s}$, we implicitly set $\eta=1$ when 
   $\dist(z,\partial\Sigma) > 1/C$ but keep $\eta$ when $z\in\Omega_{\eta}^{a}$.  
   \end{rem}
   Let us note, that by Taylor expansion we may deduce that $S=S(z)$, as defined in Definition 
  \ref{defn:ZeroOrderDensity}, satisfies
    \begin{equation}\label{eqn:SEstINBox}
     S(z) \asymp \eta^{3/2}
    \end{equation}
  \end{description}
\subsection{Quasimodes for the interior of $\Sigma$}\label{suse:QuasmodInt}
\begin{defn}\label{def:Quasimodes}
Let $z\in\Omega_i \Subset \mathring{\Sigma}$ and let $x_-,x_+$ be as in the introduction. Let 
$\psi\in\mathcal{C}^{\infty}_0(\mathds{R})$ with $\supp\psi \subset ]0,1[$ and $\int\psi(x)dx = 1$.
Define $\chi_e\in\mathcal{C}^{\infty}_0(]x_--2\pi,x_-[)$ and $\chi_f\in\mathcal{C}^{\infty}_0(]x_+,x_++2\pi[)$ by
  \begin{align}\label{eqn:defCutOffWKB}
   \chi_e(x,z;h) &:=  \int_{-\infty}^xh^{-\frac{1}{2}}\left\{\psi\left(\frac{y-x_-+2\pi}{\sqrt{h}}\right)-\psi\left(\frac{x_--y}{\sqrt{h}}\right)\right\}dy, \notag \\
   \chi_f(x,z;h) &:=  \int_{-\infty}^xh^{-\frac{1}{2}}\left\{\psi\left(\frac{y-x_+}{\sqrt{h}}\right)-\psi\left(\frac{x_++2\pi-y}{\sqrt{h}}\right)\right\}dy.
  \end{align}
Furthermore, define for $x\in ]x_--2\pi,x_-[$
  \begin{equation*}
   \phi_+(x;z) := \int_{x_+}^x \left(z - g(y)\right) dy,
  \end{equation*}
and for $x\in ]x_+,x_++2\pi[$
  \begin{equation*}
    \phi_-(x;z) := \int_{x_-}^x \overline{\left(z - g(y)\right)} dy.
  \end{equation*}
\end{defn} 
Consider the $L^2(S^1)$-normalized quasimodes
  \begin{equation}\label{eqn:DefEWKB}
   e_{wkb}(x,z;h) := h^{-\frac{1}{4}}a(z;h)\chi_e(x,z;h)e^{\frac{i}{h}\phi_+(x;z)} 
   \in \mathcal{C}^{\infty}_0(]x_- -2\pi,x_-[)
  \end{equation}
and 
  \begin{equation}\label{eqn:DefFWKB}
   f_{wkb}(x,z;h) := h^{-\frac{1}{4}}b(z;h)\chi_f(x,z;h)e^{\frac{i}{h}\phi_-(x;z)} 
   \in  \mathcal{C}^{\infty}_0(]x_+,x_++2\pi[)
  \end{equation}
where $a(h;z)$ and $b(h;z)$ are normalization factors obtained by the stationary phase 
method. Thus, $a(h;z) \sim a_0(z) + ha_1(z) + \cdots \neq 0$ and $b(h;z) \sim b_0(z) + hb_1(z) + \cdots \neq 0$
depend smoothly on $z$ such that all derivatives with respect to $z$ and $\overline{z}$ are bounded when $h\rightarrow 0$. \par
The quasimodes $e_{wkb}$ and $f_{wkb}$ are WKB approximate null solutions to $(P_h-z)$ and $(P_h-z)^*$ where 
$a(z;h)$ and $b(z;h)$ are the asymptotic expansions of the normalization coefficients and it is 
easy to see that for all $\beta\in\N^2$
  \begin{equation}\label{d_coef1}
    \partial_{z\overline{z}}^{\beta}a(z,h), 
    \partial_{z\overline{z}}^{\beta}b(z,h)
    =
    \mO(h^{-|\beta|}).
  \end{equation}
\begin{lem}\label{lem:AssympA0B0BySPM}
   \begin{equation}\label{eqn:AssympA0B0}
   a_0 = \left(\frac{-\Ima g'(x_+)}{\pi}\right)^{\frac{1}{4}},~\text{and} ~ b_0 = \left(\frac{\Ima g'(x_-)}{\pi}\right)^{\frac{1}{4}}.
  \end{equation}
\end{lem}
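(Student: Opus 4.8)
The plan is to read off $a_0$ and $b_0$ by imposing the $L^2(S^1)$-normalization of the WKB states \eqref{eqn:DefEWKB} and \eqref{eqn:DefFWKB} and evaluating the resulting integrals by Laplace's method. Since $a(z;h)\sim a_0(z)+ha_1(z)+\cdots$ and $b(z;h)\sim b_0(z)+hb_1(z)+\cdots$ are by construction the asymptotic expansions of the coefficients that make $e_{wkb}$ and $f_{wkb}$ exactly normalized, it is enough to extract the leading term of each normalizing factor.

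First I would write, using $\big|e^{\frac{i}{h}\phi_+(x;z)}\big|^2=e^{-\frac{2}{h}\Ima\phi_+(x;z)}$,
\begin{equation*}
 \lVert e_{wkb}(\cdot,z;h)\rVert^2
 = h^{-1/2}\,|a(z;h)|^2\int_{x_--2\pi}^{x_-}\chi_e(x,z;h)^2\,e^{-\frac{2}{h}\Ima\phi_+(x;z)}\,dx .
\end{equation*}
Since $\partial_x\Ima\phi_+(x;z)=\Ima z-\Ima g(x)$ has $x_+$ as its only zero in $]x_--2\pi,x_-[$, and is negative on $]x_--2\pi,x_+[$ and positive on $]x_+,x_-[$ (because the maximum of $\Ima g$ lies in $]x_--2\pi,x_+[$ and its minimum $a$ in $]x_+,x_-[$), the function $x\mapsto\Ima\phi_+(x;z)$ is $\geq 0$ on the interval, vanishes only at $x_+$, has there a nondegenerate minimum with $\partial_x^2\Ima\phi_+(x_+;z)=-\Ima g'(x_+)>0$ (recall $\Ima g'(x_+)<0$), and is bounded below by a positive constant off any fixed neighborhood of $x_+$, uniformly for $z\in\Omega_i$. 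Moreover $\supp\big(1-\chi_e(\cdot,z;h)\big)$ lies within $\mO(\sqrt h)$ of the endpoints $x_-,x_--2\pi$, so $\chi_e(\cdot,z;h)\equiv 1$ near $x_+$, and the part of the integral away from $x_+$ is $\mO(e^{-1/(Ch)})$.

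Laplace's method at the nondegenerate minimum $x_+$ then gives
\begin{equation*}
 \int_{x_--2\pi}^{x_-}\chi_e(x,z;h)^2\,e^{-\frac{2}{h}\Ima\phi_+(x;z)}\,dx
 = \left(\frac{\pi h}{-\Ima g'(x_+)}\right)^{1/2}\big(1+\mO(h)\big),
\end{equation*}
the correction being $\mO(h)$ rather than $\mO(\sqrt h)$ because the phase is smooth with a nondegenerate critical point (and the amplitude is $\equiv 1$ there). Hence $\lVert e_{wkb}\rVert^2=|a(z;h)|^2\big(\pi/(-\Ima g'(x_+))\big)^{1/2}(1+\mO(h))$, and imposing $\lVert e_{wkb}\rVert=1$ and comparing leading terms yields $|a_0(z)|^2=\big(-\Ima g'(x_+)/\pi\big)^{1/2}$; with the convention that the leading coefficient is positive this is $a_0=\big(-\Ima g'(x_+)/\pi\big)^{1/4}$.

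The computation for $f_{wkb}$ is identical with the roles of $x_+$ and $x_-$ exchanged: $\Ima\phi_-(x;z)=\int_{x_-}^x\big(\Ima g(y)-\Ima z\big)\,dy$ is $\geq 0$ on $]x_+,x_++2\pi[$ with a single nondegenerate minimum (value $0$) at $x_-$, where $\partial_x^2\Ima\phi_-(x_-;z)=\Ima g'(x_-)>0$, and $\chi_f\equiv 1$ near $x_-$, so the same estimate gives $\lVert f_{wkb}\rVert^2=|b(z;h)|^2\big(\pi/\Ima g'(x_-)\big)^{1/2}(1+\mO(h))$ and therefore $b_0=\big(\Ima g'(x_-)/\pi\big)^{1/4}$. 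The one step deserving genuine care is the global sign analysis of $\Ima\phi_\pm$ on the whole interval, which is what turns the Gaussian expansion at the critical point into a valid asymptotic for the full normalization integral; the remaining bookkeeping is routine.
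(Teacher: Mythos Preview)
Your proof is correct and follows essentially the same approach as the paper: compute the normalization integral by the stationary phase (Laplace) method at the unique nondegenerate critical point $x_+$ (resp.\ $x_-$). Your version is in fact a bit more careful than the paper's, which simply asserts that $x_+$ is the unique critical point of the phase on $\supp\chi_e$ and applies stationary phase directly, whereas you supply the global sign analysis of $\Ima\phi_\pm$ that justifies why the contribution away from the critical point is exponentially small.
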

\begin{proof}
We will show the proof only for $a_0^i$ since the statement for $b_0^i$ 
can be achieved by analogous steps. We are interested in the integral 
\begin{equation*}
   I_h:=h^{-\frac{1}{2}}\int \chi_e(x,z;h)^2e^{\frac{-\Phi(x;z)}{h}} dx,
  \end{equation*}
where
  \begin{equation*}
    i\phi_+(x;z) - i \overline{\phi_+(x;z)} = -2\Ima \int_{x_+(z)}^x (z-g(y))dy = -\Phi(x;z).
  \end{equation*}
On the support of $\chi_e$ the phase $\Phi(x;z)$ has the unique critical point $x=x_+(z)$ which is non-degenerate since 
$\partial^2_{xx}\Phi(x_+(z);z) = -2\Ima g'(x_+(z))> 0 $. Thus, the stationary phase method yields that 
    \begin{equation*}
    I_h = \left(\frac{\pi}{-\Ima g'(x_+(z))}\right)^{\frac{1}{2}}+   \mathcal{O}(h). \qedhere
  \end{equation*}
\end{proof} 
By the natural projection $\Pi:\mathds{R} \rightarrow S^1$ as in Section \ref{sec:Intro} 
we can identify 
  \begin{equation*}
   \mathcal{C}^{\infty}_0(]x_+,x_++2\pi[) = \{u\in\mathcal{C}^{\infty}(S^1):~ x_+\notin \supp u\}
  \end{equation*}
and 
 \begin{equation*}
   \mathcal{C}^{\infty}_0(]x_- -2\pi,x_-[) = \{u\in\mathcal{C}^{\infty}(S^1):~ x_-\notin \supp u\},
  \end{equation*}
with the slight abuse of notation that on the right hand side $x_{\pm}\in\mathds{R}$ and 
on the left hand side $x_{\pm}\in S^1$. This identification permits us to define 
$e_{wkb}(x,z;h), f_{wkb}(x,z;h)$ on $\mathcal{C}^{\infty}(S^1)$.

\subsection{Quasimodes close to the boundary of $\Sigma$}\label{suse:QuasmodCB}
Now let $z\in\Omega_{\eta}^{a}$. Following \cite{BM}, we 
shall construct quasimodes for the operator $P_h-z$ by looking at the rescaled 
operator $\widetilde{P}_{\widetilde{h}} - \widetilde{z}$ as defined in (\ref{eqn:RescaledOp}). \par 

Let us first note that $\frac{i}{h}\phi_+(x;z)$ and $\frac{i}{h}\phi_-(x;z)$ have the following 
behavior under the rescaling described at the beginning of this section: 
  \begin{equation}\label{eqn:phaseScaling}
   \frac{i}{h}\phi_+(x;z) = \frac{i}{h}\int_{x_+}^x \left(z - g(y)\right) dy 
   = \frac{i}{ \widetilde{h}}\int_{\widetilde{x}_+}^{\widetilde{x}} \left(\widetilde{z} - \widetilde{g}(\widetilde{y})\right) d\widetilde{y}
   =:\frac{i}{\widetilde{h}}\widetilde{\phi}_+(\widetilde{x};\widetilde{z})
  \end{equation}
and analogously for $\frac{i}{h}\phi_-(x;z)$. Taylor expansion shows us that the 
rescaled phase functions $\widetilde{\phi}_{\pm}(\widetilde{x};\widetilde{z})$ have for 
$z\in\Omega_{\eta}^{a}$ a non-degenerate critical point $\widetilde{x}_{\pm}(\widetilde{z})$ 
which satisfy the relation 
\begin{equation}\label{eq:relCrtiP}
 x_{\pm}(z) = \sqrt{\eta}\widetilde{x}_{\pm}(\widetilde{z}).
\end{equation}
It is easy to see that locally
   \begin{equation*}
    (\widetilde{P}_{\widetilde{h}} - \widetilde{z})\e^{\frac{i}{\widetilde{h}}\widetilde{\phi}_+(\widetilde{x};z)} = 0, 
  \end{equation*}
Thus, the natural choice of quasimodes for $z\in\Omega\cap\Omega_{\eta}^{a}$ in the rescaled 
variables is 
  \begin{prop}\label{prop:QuasmodCB}
 Let $\Omega\Subset\Sigma$, $z\in\Omega\cap\Omega_{\eta}^{a}$ and set $\widetilde{h}:=\frac{h}{\eta^{3/2}}$. Then 
 there exist functions
  \begin{equation*}
   a^{\eta}(\widetilde{h};\widetilde{z}) \sim a_0^{\eta}(\widetilde{z}) +\widetilde{h}a_1^{\eta}(\widetilde{z}) + \cdots \neq 0, \quad 
   b^{\eta}(\widetilde{h};\widetilde{z}) \sim b_0^{\eta}(\widetilde{z}) +\widetilde{h}b_1^{\eta}(\widetilde{z}) + \cdots \neq 0,
  \end{equation*}
depending smoothly on $\widetilde{z}$ such that all $\widetilde{z}$- and $\overline{\widetilde{z}}$-derivatives 
remain bounded as $h\rightarrow 0$ and $h^{\frac{2}{3}}< \eta \rightarrow 0$, such that 
  \begin{align*}
   &e_{wkb}^{\eta}(\widetilde{x},\widetilde{z};\widetilde{h}) 
   := \left(\widetilde{h}\eta\right)^{-\frac{1}{4}}a^{\eta}(\widetilde{z};\widetilde{h})
   \chi_e(\widetilde{x},\widetilde{z};\widetilde{h})e^{\frac{i}{\widetilde{h}}\widetilde{\phi}_+(\widetilde{x};\widetilde{z})} ~\text{and}~
   \notag \\
   &f_{wkb}^{\eta}(\widetilde{x},\widetilde{z};\widetilde{h}) 
   := \left(\widetilde{h}\eta\right)^{-\frac{1}{4}}b^{\eta}(\widetilde{z};\widetilde{h})
   \chi_f(\widetilde{x},\widetilde{z};\widetilde{h})e^{\frac{i}{\widetilde{h}}\widetilde{\phi}_-(\widetilde{x};\widetilde{z})},
  \end{align*}
where $\chi_{e,f}$ are as in Definition \ref{def:Quasimodes}, 
are $L^2(S^1/\sqrt{\eta},\sqrt{\eta}d\widetilde{x})$-normalized. Furthermore, 
  \begin{align*}
   &a_0^{\eta}(\widetilde{z})= \left(\frac{|\Ima g''(a)(\widetilde{x}_+(\widetilde{z})-a/\sqrt{\eta})(1+o(1))|}{\pi}\right)^{\frac{1}{4}}, \quad 
    z\in\Omega_{\eta}^{a}, \notag \\
   &b_0^{\eta}(\widetilde{z}) = 
    \left(\frac{|\Ima g''(a)(\widetilde{x}_-(\widetilde{z})-a/\sqrt{\eta})(1+o(1))|}{\pi}\right)^{\frac{1}{4}}, 
      \quad z\in\Omega_{\eta}^{a}.
  \end{align*}
  
\end{prop}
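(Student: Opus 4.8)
The plan is to carry out a rescaled WKB construction exactly paralleling Section~\ref{suse:QuasmodInt}, but for the operator $\widetilde{P}_{\widetilde{h}}$ in place of $P_h$, and then transport everything back to $L^2(S^1)$ via the change of variables $x=\sqrt{\eta}\,\widetilde{x}$. First I would check that, by the Taylor expansion of $\Ima g$ at the critical point $a$ (using $\Ima g(a)=0$, $\Ima g'(a)=0$, $\Ima g''(a)>0$), the rescaled symbol $\widetilde p(\widetilde x,\widetilde\xi)=\widetilde\xi+\widetilde g(\widetilde x)$ has the property that $\widetilde z$ lies well inside the interior of the range of $\widetilde p$, uniformly for $z\in\Omega_\eta^a$ and $h^{2/3}\ll\eta\to 0$; this is the content of \eqref{eqn:HTscale} and \eqref{eq:relCrtiP}, and it guarantees that the phase $\widetilde\phi_+(\widetilde x;\widetilde z)=\int_{\widetilde x_+}^{\widetilde x}(\widetilde z-\widetilde g(\widetilde y))\,d\widetilde y$ has a unique non-degenerate critical point $\widetilde x_+(\widetilde z)$ on the support of $\chi_e$ with $\partial^2_{\widetilde x\widetilde x}(-\widetilde\Phi)(\widetilde x_+) = -2\,\Ima\widetilde g'(\widetilde x_+) > 0$, and similarly for $\widetilde\phi_-$.

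Next I would verify that $(\widetilde P_{\widetilde h}-\widetilde z)e^{\frac{i}{\widetilde h}\widetilde\phi_+} = 0$ exactly (as already noted in the excerpt), so that after multiplying by the cut-off $\chi_e$ and the amplitude $a^\eta(\widetilde z;\widetilde h)$ the only error comes from $[\widetilde h D_{\widetilde x},\chi_e]$, which is supported where $\chi_e$ is not locally constant, i.e.\ at distance $\sqrt{\widetilde h}$ from $x_-$ in the rescaled variable — away from the critical point $\widetilde x_+$ — so the phase is non-stationary there and the contribution is $\mO(\widetilde h^\infty)$ in the appropriate norm. The normalization is then fixed by the stationary phase method applied to
\begin{equation*}
  \widetilde I_{\widetilde h}:=\widetilde h^{-\frac12}\int \chi_e(\widetilde x,\widetilde z;\widetilde h)^2 e^{-\frac{\widetilde\Phi(\widetilde x;\widetilde z)}{\widetilde h}}\,d\widetilde x
  = \Bigl(\frac{\pi}{-\Ima\widetilde g'(\widetilde x_+(\widetilde z))}\Bigr)^{1/2}+\mO(\widetilde h),
\end{equation*}
exactly as in Lemma~\ref{lem:AssympA0B0BySPM}, which produces $a_0^\eta(\widetilde z)=(-\Ima\widetilde g'(\widetilde x_+)/\pi)^{1/4}$ together with the full asymptotic expansion $a^\eta\sim a_0^\eta+\widetilde h a_1^\eta+\cdots$; the key point is that $\widetilde g(\widetilde x)=g(\sqrt\eta\widetilde x)/\eta$ gives $\Ima\widetilde g'(\widetilde x)=\eta^{-1/2}\Ima g'(\sqrt\eta\widetilde x)$, and Taylor-expanding $\Ima g'$ at $a$ yields $\Ima g'(\sqrt\eta\widetilde x) = \Ima g''(a)(\sqrt\eta\widetilde x - a) + o(\sqrt\eta\widetilde x - a)$, whence
\begin{equation*}
  -\Ima\widetilde g'(\widetilde x_+(\widetilde z)) = \bigl|\Ima g''(a)\bigr|\,\bigl|\widetilde x_+(\widetilde z)-a/\sqrt\eta\bigr|\,(1+o(1)),
\end{equation*}
giving the stated formula for $a_0^\eta$; the computation for $b_0^\eta$ at $\widetilde x_-(\widetilde z)$ is identical.

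The last bookkeeping step is the normalization claim. The functions $e_{wkb},f_{wkb}$ on $S^1$ correspond under $x=\sqrt\eta\,\widetilde x$ to functions on $S^1/\sqrt\eta$, and the $L^2(S^1)$ inner product pulls back to $L^2(S^1/\sqrt\eta,\sqrt\eta\,d\widetilde x)$; the prefactor $(\widetilde h\eta)^{-1/4}$ is precisely what is needed so that $\|e_{wkb}^\eta\|^2 = (\widetilde h\eta)^{-1/2}|a^\eta|^2\,\widetilde I_{\widetilde h}\cdot\sqrt\eta/\sqrt\eta$ — more carefully, the $\eta^{-1/4}$ absorbs the Jacobian $\sqrt\eta$ from the measure while the $\widetilde h^{-1/4}$ plays the same role as $h^{-1/4}$ in \eqref{eqn:DefEWKB} — so that $a^\eta$ is indeed a genuine $L^2$-normalization coefficient. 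Finally, smoothness of $a^\eta,b^\eta$ in $\widetilde z$ with $\widetilde z$- and $\overline{\widetilde z}$-derivatives bounded as $h\to 0$ and $h^{2/3}<\eta\to 0$ follows because these coefficients are built out of the critical values and Hessians of the smooth phase $\widetilde\phi_\pm$, which depend smoothly on $\widetilde z$ uniformly in the allowed range — this uniformity is exactly where the hypothesis $z\in\Omega_\eta^a$ keeps $\widetilde x_\pm$ bounded away from degeneracy. I expect the main obstacle to be making the uniformity in $\eta$ (equivalently in $\widetilde h$) genuinely clean: one must be careful that all the $o(1)$ and $\mO(\widetilde h)$ remainders coming from stationary phase are uniform over the family of rescaled problems parametrized by $\eta$, which amounts to checking that the rescaled critical points $\widetilde x_\pm(\widetilde z)$ and the quantity $\widetilde x_\pm(\widetilde z)-a/\sqrt\eta$ stay in a fixed compact set bounded away from $0$, a fact that follows from \eqref{eqn:HTscale} but needs to be stated carefully.
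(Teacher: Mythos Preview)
Your proposal is correct and follows essentially the same approach as the paper: apply the stationary phase method to the normalization integral in the rescaled variables, then Taylor-expand $\Ima g'$ around the critical point $a$ to obtain the explicit formula for $a_0^\eta$. The only cosmetic difference is that the paper computes the normalization integral in two ways---once in the original variables (getting $c_0(z)=(\pi/(-\Ima g'(x_+)))^{1/2}$) and once after the change of variables $x=\sqrt{\eta}\,\widetilde x$---and compares the two expansions to read off $\widetilde c_k(\widetilde z)=\eta^{3k/2+1/4}c_k(z)$, whereas you compute the rescaled Hessian $-\Ima\widetilde g'(\widetilde x_+)=\eta^{-1/2}(-\Ima g'(x_+))$ directly via the chain rule; both routes are equivalent and yield the same leading term after Taylor expansion.
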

\begin{rem}
 In Proposition \ref{prop:QuasmodCB}, we stated the 
 Taylor expansion of the first order terms of $a^{\eta}(z;h)$ 
 and $b^{\eta}(z;h)$. However, note that we have 
  \begin{equation*}
   a_0(z)=\left( \frac{-\Ima g'(x_+(z))}{\pi}\right)^{\frac{1}{4}} 
   = 
   \eta^{\frac{1}{8}}a_0^{\eta}(\widetilde{z}),
  \end{equation*}
 where $a_0$ is the first order term of the normalization coefficient 
 $a$ of the quasimode $e_{wkb}$; see Lemma \ref{lem:AssympA0B0BySPM}.
 Similar for $b_0^{\eta}$. 
\end{rem}
\begin{proof}
We shall consider the proof only for the case of $e_{wkb}^{\eta}$ since the case of $f_{wkb}^{\eta}$ is the same. \par
By \eqref{eq:relCrtiP}, \eqref{eqn:defCutOffWKB} one computes that
  \begin{equation*}
   \chi_e(\sqrt{\eta}\widetilde{x},z;h/\eta^{1/2}) = 
   \chi_e(\widetilde{x},\widetilde{z};\widetilde{h})
  \end{equation*}
Consider $\lVert \chi_e(\cdot,z;h/\eta^{1/2})e^{\frac{i}{h}\phi_+(\cdot,z)}\rVert^2_{L^2(S^1)}$ and 
perform the change of variables $x =\sqrt{\eta}\widetilde{x}$. Hence, 
  \begin{equation}\label{BCquasmodeAsympExp}
   \int \chi_e\left(x,z;\frac{h}{\eta^{\frac{1}{2}}}\right)^2 e^{-\frac{2}{h}\Ima \phi_+(x,z)}dx = 
   \eta^{\frac{1}{2}}\int \chi_e\left(\widetilde{x},\widetilde{z};\widetilde{h}\right)^2 
   e^{-\frac{2}{\widetilde{h}}\Ima \int_{\widetilde{x}_{+}}^{\widetilde{x}}(\widetilde{z}-\widetilde{g}(\widetilde{y}))d\widetilde{y} }d\widetilde{x}.
  \end{equation}
The stationary phase method yields that 
(\ref{BCquasmodeAsympExp})$\sim \sqrt{\eta}\widetilde{h}^{\frac{1}{2}}(\widetilde{c}_0(\widetilde{z}) + \widetilde{h}\widetilde{c}_1(\widetilde{z}) + \dots)$, 
where the $\widetilde{c}_j(\widetilde{z})$ depend smoothly on $\widetilde{z}$ such that all $\widetilde{z}$- and $\overline{\widetilde{z}}$-derivatives 
remain bounded as $h\rightarrow 0$ and $h^{\frac{2}{3}}< \eta \rightarrow 0$. \par
On the other hand, the stationary phase method applied to $\lVert \chi_e e^{\frac{i}{h}\phi_+}\rVert^2$ 
(compare with Section \ref{suse:QuasmodInt}) yields that 
  \begin{equation*}
   \lVert \chi_e(\cdot,z;h)e^{\frac{i}{h}\phi_+(\cdot;z)}\rVert^2_{L^2(S^1)}\sim h^{\frac{1}{2}}(c_0(z) + hc_1(z) + \dots)
  \end{equation*}
with 
  \begin{equation*}
   c_0(z) = \left(\frac{\pi}{-\Ima g'(x_+(z))}\right)^{\frac{1}{2}}.
  \end{equation*}
Since $ \chi_e(x,z;h) \equiv \chi_e(x,z;h/\eta^{1/2})$ locally around $x_+(z)$, we may conclude that 
for all $k\in\mathds{N}_0$
  \begin{equation*}
   \widetilde{c}_k(\widetilde{z}) = \eta^{\frac{3k}{2}+\frac{1}{4}}c_j(z).
  \end{equation*}
In particular, the Taylor expansion around the critical point $a$ yields that 
  \begin{equation*}
   \widetilde{c}_0(\widetilde{z}) =\left(\frac{\pi}{|\Ima g''(a)(\widetilde{x}_+(\widetilde{z})-a/\sqrt{\eta})(1+o(1))|}\right)^{\frac{1}{2}}, \quad 
     ~ z\in\Omega_{\eta}^{a}.
   \end{equation*}
Thus, we conclude the statement of the proposition.
\end{proof}
Considering the above describe quasimodes in the original variable $x\in S^{1}$ leads to the following 
\begin{defn}\label{def:quasmodCB}
 Let $\Omega\Subset\Sigma$, $z\in\Omega\cap\Omega_{\eta}^{a}$ and set $\widetilde{h}:=\frac{h}{\eta^{3/2}}$. 
 Then define 
 \begin{align*}
   &e_{wkb}^{\eta}(x,z;h) 
   := \left(\frac{h}{\eta^{1/2}}\right)^{-\frac{1}{4}}a^{\eta}(\widetilde{z};\widetilde{h})
   \chi_e^{\eta}(x,z;h/\eta^{1/2})e^{\frac{i}{h}\phi_+(x;z)} ~\text{and}~
   \notag \\
   &f_{wkb}^{\eta}(x,z;h) 
   := \left(\frac{h}{\eta^{1/2}}\right)^{-\frac{1}{4}}b^{\eta}(\widetilde{z};\widetilde{h})
   \chi_f^{\eta}(x,z;h/\eta^{1/2})e^{\frac{i}{h}\phi_-(x;z)},
  \end{align*}
where $\chi_{e,f}^{\eta}(x,z;h/\eta^{1/2}) = \chi_{e,f}(x,z;h/\eta^{1/2})$. We choose this notation 
to make the distinctions between the two cases $z\in\Omega_i$ and $z\in\Omega_{\eta}^{a}$ more 
apparent. 
\end{defn}
\subsection{Approximation of the eigenfunctions of $Q(z)$ and $\widetilde{Q}(z)$} 
Recall $Q$ and $\widetilde{Q}$ given in Section \ref{sec:AuxOpe}. We will use the above 
defined quasimodes to prove estimates on the lowest eigenvalue of $Q$, $t_0^2$. Furthermore, 
we will give estimates on the approximation of the eigenfunctions $e_0$ and $f_0$ by 
the quasimodes $e_{wkb}$ and $f_{wkb}$.
We will prove an extended version of a result in \cite[Sec. 7.2 and 7.4]{SjAX1002}.
\begin{prop}\label{prop:QDistFirstEig}
 Let $z\in\Omega\Subset\Sigma$ and let $S=S(z)$ be defined as in Definition \ref{defn:ZeroOrderDensity}. 
 Then, for $h^{\frac{2}{3}} \ll \eta \leq C $
	  \begin{equation*}
	   t_0^2(z)\leq\mathcal{O}\!\left(\eta^{\frac{1}{2}}h\e^{-\frac{2S}{h}}\right).
	  \end{equation*}
	 Furthermore, there exists a constant $C>0$, uniform in $z$, such that 
	  \begin{equation*}
	   t_1^2-t_0^2\geq \frac{\eta^{\frac{1}{2}}h}{C}
	  \end{equation*}
	 for $h>0$ small enough.
\end{prop}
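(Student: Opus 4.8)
The plan is to use the WKB quasimodes constructed above as trial functions for the quadratic form of $Q(z)=(P_h-z)^*(P_h-z)$, and to combine this with a lower bound argument using Weyl/min-max principles. I would treat the two cases $z\in\Omega_i\Subset\mathring\Sigma$ (with $\eta=1$) and $z\in\Omega\cap(\Omega_\eta^a\cup\Omega_\eta^b)$ in parallel, since the rescaled operator $\widetilde P_{\widetilde h}-\widetilde z$ reduces the boundary case to an interior-type estimate for $\widetilde h=h\eta^{-3/2}$; I would carry out the details for $z\in\Omega_\eta^a$ and note the other cases are analogous.

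\textbf{Upper bound on $t_0^2$.} First I would show that the $L^2$-normalized quasimode $e_{wkb}^\eta$ from Definition \ref{def:quasmodCB} satisfies $\lVert(P_h-z)e_{wkb}^\eta\rVert^2 \leq \mathcal O(\eta^{1/2}h\,\e^{-2S/h})$. The point is that $(P_h-z)$ annihilates $a^\eta\e^{\frac i h\phi_+}$ exactly (it is a genuine null solution of the first-order ODE, modulo the asymptotic nature of $a^\eta$), so $(P_h-z)e_{wkb}^\eta = h[D_x,\chi_e^\eta]\,(\text{amplitude})\,\e^{\frac i h\phi_+}$ up to terms from the asymptotic expansion of $a^\eta$; the commutator $[D_x,\chi_e^\eta]=\tfrac1i(\chi_e^\eta)'$ is supported where $\chi_e^\eta$ jumps, i.e. near $x=x_\pm$, away from the critical point $x_+$ of $\Ima\phi_+$. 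There $\e^{-\frac2h\Ima\phi_+}$ is exponentially smaller than its value at $x_+$: at the endpoint near $x_-$ one picks up precisely $\e^{-2S/h}$ by Definition \ref{defn:ZeroOrderDensity}, and at the other endpoint the contribution is even smaller (or matched) because of the $2\pi$-periodicity and the definition of $S$ as a minimum of the two integrals. Estimating $\lVert(\chi_e^\eta)'\rVert$ (which is $\asymp h^{-1/2}$ in the rescaled variable, contributing the extra $h$ after squaring and multiplying by the width $\sqrt h$) and tracking the $\eta$-powers through the rescaling of Proposition \ref{prop:QuasmodCB}, one gets $\lVert(P_h-z)e_{wkb}^\eta\rVert^2\leq\mathcal O(\eta^{1/2}h\,\e^{-2S/h})$. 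Since $t_0^2=\inf_{\lVert u\rVert=1}\lVert(P_h-z)u\rVert^2 = \inf\mathrm{spec}\,Q(z)$, this gives the first claim.

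\textbf{Spectral gap $t_1^2-t_0^2$.} For the lower bound I would argue that, away from a one-dimensional space, $Q(z)$ is bounded below by $\asymp \eta^{1/2}h$. By the min-max principle, $t_1^2 = \sup_{\dim V=1}\inf_{u\perp V,\,\lVert u\rVert=1}\lVert(P_h-z)u\rVert^2$, so it suffices to exhibit a one-dimensional $V$ (namely the span of $e_{wkb}^\eta$, or equivalently of $e_0$) such that $\lVert(P_h-z)u\rVert^2\geq \eta^{1/2}h/C$ for all normalized $u\perp V$. Here one uses that $p(x,\xi)-z=\xi+g(x)-z$ vanishes only at the two points $\rho_\pm(z)\in T^*S^1$, with $\{\,\overline p,p\,\}(\rho_\pm)\neq 0$ (the operator is of principal type with a nondegenerate characteristic set), and $\rho_+$ is the ``good'' direction where $\{\Rea p,\Ima p\}<0$: microlocally near $\rho_+$ there is exactly one quasimode (accounting for $e_{wkb}^\eta$), and everywhere else $(P_h-z)$ is elliptic or has the wrong sign of the commutator, giving a subelliptic-type estimate $\lVert(P_h-z)u\rVert\gtrsim (\eta^{1/2}h)^{1/2}\lVert u\rVert$ for $u$ microlocally supported off $\rho_+$. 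The factor $\eta^{1/2}h$ is the natural semiclassical gain in the rescaled picture ($\widetilde h$ in the $\widetilde P_{\widetilde h}$ variables, then $\widetilde h\eta^{3/2}=h$ times the $\eta^{1/2}$ from the Hessian $\Ima g''(a)$ vanishing linearly — this is exactly \eqref{e_t0} and matches $t_1^2\geq d(z)^{1/2}h/\mathcal O(1)$ via $d(z)\asymp\eta$). Since $t_0^2$ is super-exponentially small compared to $\eta^{1/2}h$, $t_1^2-t_0^2\geq t_1^2 - o(1)\cdot\eta^{1/2}h \geq \eta^{1/2}h/C$.

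\textbf{Main obstacle.} The delicate part is the lower bound on $t_1^2$: making rigorous the claim ``there is only one quasimode near $\rho_+$'' requires a careful microlocal partition of unity together with the sharp Gårding / subelliptic estimate adapted to the rescaled operator $\widetilde P_{\widetilde h}-\widetilde z$, uniformly as $h^{2/3}\ll\eta\to 0$. One must check that all error terms and constants in the subelliptic estimate are uniform in $\eta$ under the rescaling, and that the single WKB quasimode $e_{wkb}^\eta$ genuinely spans the approximate kernel (no second low mode from the boundary $x_\pm$ of $S^1$, which is why the cutoffs $\chi_e,\chi_f$ and the localization properties from Proposition \ref{prop:QuasmodCB} are essential). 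This is essentially an extension of the estimates in \cite[Sec.~7.2, 7.4]{SjAX1002} to the $\eta$-dependent regime, and that uniformity is where the real work lies.
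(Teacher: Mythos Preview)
Your approach is correct and coincides with the paper's: the upper bound on $t_0^2$ comes from plugging the WKB quasimode $e_{wkb}^{(\eta)}$ into the quadratic form of $Q(z)$ (the paper computes $r=Qe_{wkb}$ instead of $\lVert(P_h-z)e_{wkb}\rVert^2$ directly, but the content is identical), the boundary case is handled by the rescaling $\widetilde h=h\eta^{-3/2}$ just as you do, and the gap $t_1^2-t_0^2\geq \eta^{1/2}h/C$ is deferred to \cite[Sec.~7.1--7.2]{SjAX1002} together with \cite[Prop.~4.3.5]{BM} via the same rescaling. One small slip: the support of $(\chi_e^\eta)'$ is near the two endpoints $x_-$ and $x_--2\pi$ of the interval (the same point on $S^1$), not ``near $x_\pm$''; your subsequent sentence shows you have this right, so it is only notation.
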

\begin{rem}
 The case $\dist(z,\partial\Sigma)>1/C$ has been proven in \cite[Sec. 7.1]{SjAX1002}. Since it will be useful
 further on we shall give a proof of the statement and indicate how to deduce the statement in 
 the case of $z\in\Omega\cap\Omega_{\eta}^{a}$.
\end{rem}
\begin{proof} 
Let us first suppose that $z\in\Omega_i$ (cf. Section \ref{subse:Quasmod}). 
Recall the definition of the self-adjoint 
operator $Q(z)$ given in (\ref{eqn:defnSAOpQ}) and define
  \begin{equation}\label{eqn:DefR}
    r:=r(x,z;h):=Q(z) e_{wkb}(x;z).
  \end{equation}
Recall, by (\ref{eqn:DefEWKB}), that $e_{wkb}(x;z)= h^{-\frac{1}{4}}a(z;h)\chi_e(x,z;h)e^{\frac{i}{h}\phi_+(x;z)}$. 
Since $x_-(z)$ is smooth in $z$ and all its $z$- and $\overline{z}$-derivatives are independent of 
$h$, it follows from (\ref{eqn:defCutOffWKB}) that for all $\alpha\in\mathds{N}^3\backslash\{0\}$ 
  \begin{equation}\label{eqn:higherzzbarderchi_e}
    \partial_{z\overline{z}x}^{\alpha} \chi_e(x,z;h) = \mathcal{O}\!\left(h^{-\frac{|\alpha|}{2}}\right),
  \end{equation}
with support in $X_-:=]x_- - 2\pi, x_- -2\pi+ h^{1/2}[\cup ]x_- - h^{1/2}, x_-[$.
One computes that 
  \begin{align}\label{eqn:QuasModRFullRepres}
   &(P_h-z)^*(P_h-z)e_{wkb}(x;z)=  \\
   &a(z;h) \frac{h^{\frac{3}{4}}}{i}
	    \left\{
		   \frac{h}{i}\partial^2_{xx}\chi_e(x,z;h)
		    + \partial_{x}\chi_e(x,z;h)\left(\partial_x\phi_+ +\overline{g(x)-z}\right)
	    \right\}e^{\frac{i}{h} \phi_+}.\notag
  \end{align}
where $\phi_+=\phi_+(x;z)$. Since for $x\in X_-$
  \begin{equation}\label{eqn:4_5_1}
   \partial_x\phi_+(x;z)+\overline{g(x)-z} = z - g(x) + \overline{g(x)-z} = -2i\Ima \left(g(x)-z\right)
    = \mathcal{O}\!\left(h^{\frac{1}{2}}\right),
  \end{equation}
it follows from (\ref{eqn:higherzzbarderchi_e}), (\ref{eqn:QuasModRFullRepres}) that
  \begin{equation}\label{eqn:QuasModProofrEstimate}
   r=Q(z) e_{wkb}(x;z) = \mathcal{O}\!\left(h^{\frac{3}{4}}\right)e^{\frac{i}{h} \phi_+(x;z)},
  \end{equation}
which has its support in $X_-$. Thus, one computes that 
 \begin{equation}\label{eqn:NormEstimateR}
   \lVert r \rVert ^2 = \mathcal{O}\!\left(h^{2}\e^{-\frac{2S}{h}}\right),
  \end{equation}
and, since $Q$ is self-adjoint, it follows that 
$t_0^2(z) =\mathcal{O}\!\left(h\e^{-\frac{2S(z)}{h}}\right)$.
\par
The proof of the desired statement about $t_1^2(z) - t_0^2(z)$ for $z\in\Omega_i$ 
can be found in the proof of Proposition 7.2 in \cite[Sec. 7.1]{SjAX1002}.\\ 
\par
Suppose now that $z\in\Omega\cap\Omega_{\eta}^{a}$. The desired statement follows by 
a rescaling argument. Recall (\ref{eqn:RescaledOp}) and, using the quasimodes $e_{wkb}^{\eta}(x;z)$, 
note that 
  \begin{equation*}
   t_0^2(Q(z)) = t_0^2(\eta^2 (\widetilde{P}_{\widetilde{h}} - \widetilde{z} )^*(\widetilde{P}_{\widetilde{h}} - \widetilde{z})) 
   = \mathcal{O}\left(\eta^{2}\widetilde{h} e^{-\frac{2\widetilde{S}(\widetilde{z})}{\widetilde{h}}}\right),
  \end{equation*}
where $\widetilde{S}$ is defined in the obvious way via $\widetilde{\phi}_+$ and 
  \begin{equation}\label{eqn:Sinvar}
   \frac{\widetilde{S}(\widetilde{z})}{\widetilde{h}} = \frac{S(z)}{h}.
  \end{equation}  
Hence, 
  \begin{equation}\label{eqn:lemQuasModrNorm_CB}
   t_0^2(z) = \mathcal{O}\left(h\eta^{1/2} e^{-\frac{2S(z)}{h}}\right).
  \end{equation}
The estimate on $t_1^2(z) - t_0^2(z)$ in the case $z\in\Omega\cap\Omega_{\eta}^{a}$ can be deduced 
as well by a rescaling argument: note that 
$t_1^2(Q(z)) = t_1^2(\eta^2 (\widetilde{P}_{\widetilde{h}} - \widetilde{z} )^*(\widetilde{P}_{\widetilde{h}} - \widetilde{z}))$. 
The statement then follows by performing the same steps of the proof of Proposition 7.2 in \cite[Sec. 7.1]{SjAX1002} in 
the rescaled space $L^2(S^1/\sqrt(\eta),\sqrt{\eta}d\widetilde{x})$ and using the quasimode $e_{wkb}^{\eta}(x;z)$ together 
with the estimate given in Proposition 4.3.5 in \cite{BM}.
\end{proof}
\begin{prop}\label{prop:E0T0Smooth}
Let $z\in\Omega \Subset\Sigma$. Then the eigenvalue $t_0^2(z)$ is a smooth function of $z$ and 
the eigenfunctions $e_0(z)$ and $f_0(z)$ can be chosen to have the same property. 
\end{prop}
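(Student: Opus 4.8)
The plan is to realize $t_0^2(z)$ and the spectral projection onto its eigenspace through a Riesz contour integral, and then to read off the desired regularity from the smooth dependence of $Q(z)$ on $z$. First I would record the structural facts about the family $z\mapsto Q(z)$. Writing $Q(z)=(P_h-z)^*(P_h-z)=P_h^*P_h-\overline z\,P_h-z\,P_h^*+|z|^2$, one sees that $Q(z)$ is a family of self-adjoint operators with the fixed domain $H^2(S^1)$ and compact resolvent, such that $z\mapsto Q(z)u$ is polynomial in $(z,\overline z)$ --- hence $\mathcal C^\infty$ --- for each fixed $u\in H^2(S^1)$; thus $(Q(z))_z$ is a smooth (in fact real-analytic) family of type (A). By Proposition \ref{prop:QDistFirstEig} one has, for $h$ small, $t_0^2(z)\le\mO(\eta^{1/2}h\,\e^{-2S/h})$ and $t_1^2(z)-t_0^2(z)\ge\eta^{1/2}h/C$; in particular $t_0^2(z)<t_1^2(z)$, so $t_0^2(z)$ is a \emph{simple} eigenvalue of $Q(z)$, and it is separated from $\sigma(Q(z))\setminus\{t_0^2(z)\}$ by a spectral gap that, for fixed $h$, is bounded below locally uniformly in $z\in\Omega$.

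Next I would fix $z_0\in\Omega$, choose a small circle $\gamma\subset\C$ enclosing $t_0^2(z_0)$ and no other point of $\sigma(Q(z_0))$ (e.g.\ $\gamma=\partial D(t_0^2(z_0),\rho)$ with $\rho$ a fixed small fraction of the gap at $z_0$), and use the smoothness of $z\mapsto Q(z)$ together with a Neumann-series estimate to see that $(\zeta-Q(z))^{-1}\in\mathcal L(L^2(S^1))$ is well defined and $\mathcal C^\infty$ in $z$ on a neighbourhood $U$ of $z_0$, uniformly for $\zeta\in\gamma$, with $t_0^2(z)$ the only eigenvalue of $Q(z)$ inside $\gamma$. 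Then
\[
  \Pi(z):=\frac{1}{2\pi i}\oint_{\gamma}(\zeta-Q(z))^{-1}\,d\zeta
\]
is a rank-one projection depending smoothly on $z\in U$ as an $\mathcal L(L^2(S^1))$-valued map, with range the $t_0^2(z)$-eigenspace; and since $Q(z)\Pi(z)=\frac{1}{2\pi i}\oint_\gamma\zeta(\zeta-Q(z))^{-1}\,d\zeta$ is a finite-rank operator smooth in $z$, we obtain $t_0^2(z)=\tr\!\big(Q(z)\Pi(z)\big)\in\mathcal C^\infty(U)$. For the eigenfunction, because $\Pi(z_0)e_0(z_0)=e_0(z_0)\ne0$ the vector $\Pi(z)e_0(z_0)$ stays nonzero near $z_0$, so $e_0(z):=\Pi(z)e_0(z_0)/\lVert\Pi(z)e_0(z_0)\rVert$ is a smooth normalized eigenvector family; running the identical construction with $\widetilde Q(z)$ in place of $Q(z)$ (same eigenvalues and same gap by \eqref{eqn:auxillaryOpEigVals}) yields a smooth normalized family $f_0(z)$, and where $t_0(z)\ne0$ one may instead set $f_0(z)=(P_h-z)e_0(z)/\lVert(P_h-z)e_0(z)\rVert$, the two prescriptions differing by a smooth unimodular factor, so the phases can be adjusted to keep the relations \eqref{eqn:RelationEjFj} intact. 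Since smoothness is a local property and $\Omega$ is a planar domain --- over which every complex line bundle is trivial --- these local choices patch to globally smooth $z\mapsto t_0^2(z),\,e_0(z),\,f_0(z)$ on $\Omega$.

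The argument is soft: the only genuine input is the spectral gap from Proposition \ref{prop:QDistFirstEig}, which forces $t_0^2(z)$ to be simple and isolated so that the Riesz projection has constant unit rank. The one point needing a word of care is the (harmless) passage from local smooth sections to a global one, handled by the triviality of line bundles over planar domains noted above; everything else is the routine analytic perturbation theory recalled in the first paragraph.
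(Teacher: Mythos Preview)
Your proof is correct and follows essentially the same route as the paper: use the spectral gap from Proposition~\ref{prop:QDistFirstEig} to isolate $t_0^2(z)$ inside a fixed contour, form the Riesz projection $\Pi(z)$, read off smoothness of $t_0^2(z)$ via the trace, and obtain a smooth eigenvector by normalizing the projection of a seed vector. The only noteworthy difference is the choice of seed: the paper projects the globally defined, $z$-smooth quasimode $e_{wkb}$ (resp.\ $e_{wkb}^\eta$ near $\partial\Sigma$), which immediately yields a global smooth section and makes your line-bundle patching argument unnecessary.
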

\begin{proof}
Let us suppose first that $z\in\Omega_i$. The operator $Q(z)$ is bounded in 
$H^2(S^1)\rightarrow L^2(S^1)$ and in norm real-analytic in $z$ since for $z_0\in\Omega$ 
  \begin{equation}\label{eqn:expansionQinZ}
   Q(z) = Q(z_0) -(P-z_0)^*( z-z_0 ) - (P-z_0)(\overline{z-z_0}) + |z-z_0|^2.
  \end{equation}
Let $\zeta$ be in the resolvent set $\rho(Q(z))$ of $Q(z)$ and consider the resolvent
  \begin{equation*}
   R(\zeta,Q(z)):=(\zeta - Q(z))^{-1}.
  \end{equation*}
By \cite[II - \S 1.3]{Kato} we know that the resolvent depends locally analytically on the 
variables $\zeta$ and $z$. More precisely if $\zeta_0\notin\sigma(Q(z_0))$ for $z_0\in\Omega$ then 
$R(\zeta,Q(z))$ is holomorphic in $\zeta$ and real-analytic in 
$z$ in a small neighborhood of $\zeta_0$ and in a small neighborhood of $z_0$. 
\begin{rem}
 The proof in \cite[II - \S 1.3]{Kato} is given in the case of finite dimensional spaces. However, it can be extended directly 
 to bounded operators on Banach spaces. 
\end{rem}
By \cite[IV - \S 3.5]{Kato} we know that the simple eigenvalue $t_0^2(z)$ depends continuously on $Q(z)$. 
Thus, by Proposition \ref{prop:QDistFirstEig} and the continuity of $t_0^2(z)$ there exists, for $h>0$ small enough, 
a constant $D>0$ such that for all $z$ in a neighborhood of a point $z_0\in\Omega$ 
  \begin{equation*}
   t_1^2(z) > \frac{h}{D}.
  \end{equation*}
Define $\gamma$ to be the positively oriented circle of radius $h/(2D)$ centered at 
$0$ and consider the spectral projection of $Q(z)$ onto the eigenspace 
associated with $t_0^2(z)$
  \begin{equation*}
   \Pi_{t_0^2}(z) = \frac{1}{2\pi i} \int_{\gamma} R(\zeta,Q(z)) d\zeta.
  \end{equation*}
Since the resolvent $R(\zeta,Q(z))$ is smooth in $z$ it follows that $\Pi_{t_0^2}(z)$ is smooth in $z$. Now 
set $e(x,z)$ to be a smooth quasimode for $P_h-z$ for $z\in\Omega_i$ as in Section \ref{subse:Quasmod} which 
depends smoothly on $z$. Thus, by setting 
  \begin{equation*}
   e_0(x,z,h) = \frac{\Pi_{t_0^2}(z)e_{wkb}(x,z,h)}{\lVert \Pi_{t_0^2}(z)e_{wkb}(-,z,h)\rVert},
  \end{equation*}
we deduce that also $e_0(x,z)$ depends smoothly on $z$. 
The statement for $f_0(z)$ follows by performing the same argument for $\widetilde{Q}(z)$ instead of $Q(z)$ and 
with the quasimode $f_{wkb}$. \par 
Using that $\Pi_{t_0^2}(z)$ and $Q(z)$ are smooth and that the operator $\Pi_{t_0^2}Q\Pi_{t_0^2}$ has finite rank 
we see by 
  \begin{equation*}
   t_0^2(z) = \tr\left(\Pi_{t_0^2}(z)Q(z)\Pi_{t_0^2}(z)\right)
  \end{equation*}
that $t_0^2(z)$ is smooth. \\ \par
In the case of $z\in\Omega\cap\Omega_{\eta}^{a}$ for $h^{2/3} < \eta < \text{const.}$ 
we follow the exact same steps as above, mutandi mutandis. We take the estimate 
$t_1^2(z) > \frac{h\sqrt{\eta}}{D}$ for $z$ in a neighborhood of a fixed $z_0\in\Omega\cap\Omega_{\eta}^{a}$ (following from 
Proposition \ref{prop:QDistFirstEig}) and thus we pick, as above, $\widetilde{\gamma}$ to be the positively 
oriented circle of radius $h\sqrt{\eta}/(2D)$ centered at $0$. Hence, for $z\in\Omega\cap\Omega_{\eta}^{a,b}$
  \begin{equation*}
    \Pi_{t_0^2}(z) = \frac{1}{2\pi i} \int_{\widetilde{\gamma}} R(\zeta,Q(z)) d\zeta, \quad 
     e_0(x,z,h) = \frac{\Pi_{t_0^2}(z)e_{wkb}^{\eta}(x,z,h)}{\lVert \Pi_{t_0^2}(z)e_{wkb}^{\eta}(-,z,h)\rVert}.
  \end{equation*}
Following the same arguments as above we conclude the statement of the proposition also 
in the case of $z\in\Omega\cap\Omega_{\eta}^{a}$.
\end{proof}

\begin{prop}\label{quasimodes}
Let $z\in\Omega \Subset\Sigma$ and let $e_0$ and $f_0$ be the eigenfunctions of the operators $Q$ and $\widetilde{Q}$ with 
respect to their smallest eigenvalue (as in Section \ref{suse:GrushForUnPert}). Let $S=S(z)$ be defined as in 
Definition \ref{defn:ZeroOrderDensity}. Then 
  \begin{itemize}
   \item for $z\in\Omega$ with $\dist(\Omega,\partial\Sigma)>1/C$ 
	 and for all $\beta\in\mathds{N}^2$ 
	  \begin{equation}\label{lem:QuasModEstimate2}
	      \lVert\partial_{z\overline{z}}^{\beta}( e_0 - e_{wkb}) \rVert, ~
 	      \lVert \partial_{z\overline{z}}^{\beta}( f_0 - f_{wkb}) \rVert 
 	      = \mathcal{O}\!\left(h^{-|\beta|}e^{-\frac{S}{h}}\right).
 	    \end{equation}
 	  Furthermore, the various $z$- and $\overline{z}$-derivatives of 
	  $e_0$, $f_0$, $e_{wkb}$ and $f_{wkb}$ have at most temperate growth in $1/h$, more precisely 
	  for all $\beta\in\mathds{N}^2$
	    \begin{equation}\label{lem:QuasModEstimate3}
	         \lVert \partial_{z\overline{z}}^{\beta} e_{wkb} \rVert, ~
	         \lVert \partial_{z\overline{z}}^{\beta} f_{wkb} \rVert, ~
	         \lVert \partial_{z\overline{z}}^{\beta} e_0 \rVert, ~ \lVert \partial_{z\overline{z}}^{\beta} f_0 \rVert 
	  	 =\mathcal{O}\!\left(h^{-|\beta|}\right);
  	    \end{equation}
   \item for $h^{2/3} \ll \eta < const.$, $z\in\Omega\cap\Omega_{\eta}^{a}$ and for all $\beta\in\mathds{N}^2$
	  \begin{equation}\label{lem:QuasModEstimate2_CB} 
	      \lVert \partial_{z\overline{z}}^{\beta} (e_0 -  e_{wkb}^{\eta}) \rVert, ~
 	      \lVert \partial_{z\overline{z}}^{\beta} (f_0 -  f_{wkb}^{\eta}) \rVert 
		  = \mathcal{O}\!\left(\eta^{\frac{|\beta|}{2}}h^{-|\beta|}e^{-\frac{S}{h}}\right).
 	    \end{equation}
 	  Furthermore, the various $z$- and $\overline{z}$-derivatives of 
	  $e_0$, $f_0$, $e_{wkb}^{\eta}$ and $f_{wkb}^{\eta}$ have at most temperate growth in $\sqrt{\eta}/h$, more precisely
	    \begin{equation}\label{lem:QuasModEstimate3_CB}
	         \lVert \partial_{z\overline{z}}^{\beta} e_{wkb}^{\eta} \rVert, ~
	         \lVert \partial_{z\overline{z}}^{\beta} f_{wkb}^{\eta}  \rVert, ~
	         \lVert \partial_{z\overline{z}}^{\beta} e_0 \rVert, ~ \lVert \partial_{z\overline{z}}^{\beta} f_0 \rVert 
	  	 =\mathcal{O}\!\left(\eta^{\frac{|\beta|}{2}}h^{-|\beta|}\right)
  	    \end{equation}
	  for all $\beta\in\mathds{N}^2$.
\end{itemize}
\end{prop}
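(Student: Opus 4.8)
The plan is to prove the interior statements \eqref{lem:QuasModEstimate2}, \eqref{lem:QuasModEstimate3} first and then to reduce the boundary estimates \eqref{lem:QuasModEstimate2_CB}, \eqref{lem:QuasModEstimate3_CB} to them by the rescaling $x=\sqrt{\eta}\widetilde{x}$, $\widetilde{h}=h\eta^{-3/2}$, exactly as in the proof of Proposition \ref{prop:QDistFirstEig}: there $Q(z)=\eta^2(\widetilde{P}_{\widetilde{h}}-\widetilde{z})^*(\widetilde{P}_{\widetilde{h}}-\widetilde{z})$, the quasimode $e_{wkb}^{\eta}$ (resp. $f_{wkb}^{\eta}$) of Definition \ref{def:quasmodCB} becomes the interior quasimode for $\widetilde{P}_{\widetilde{h}}$ at parameter $\widetilde h\ll 1$ with $\widetilde z$ ranging over a fixed relatively compact subset of the interior of the range of the principal symbol of $\widetilde{P}_{\widetilde{h}}$, uniformly as $h^{2/3}\ll\eta\to 0$. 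Since $\partial_{z\overline z}^{\beta}=\eta^{-|\beta|}\partial_{\widetilde z\overline{\widetilde z}}^{\beta}$, the two $L^2$-norms differ by a harmless factor $\eta^{1/4}$, $\widetilde h^{-|\beta|}=h^{-|\beta|}\eta^{3|\beta|/2}$, and $\widetilde S/\widetilde h=S/h$ by \eqref{eqn:Sinvar}, an interior bound with the \emph{sharp} prefactor $\widetilde h^{-|\beta|}$ transforms into exactly the claimed $\eta^{|\beta|/2}h^{-|\beta|}$; the point to check is that the constants in the interior estimates below depend only on a lower bound for $\dist(\widetilde z,\partial\Sigma)$-analogue and on $\mathcal C^{\infty}$-bounds for $\widetilde g$, which are uniform in $\eta$ (this is the same uniformity already invoked in Proposition \ref{prop:QDistFirstEig}, together with \cite{BM}). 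For the interior case, the case $\beta=0$ is Sj\"ostrand's argument: with $\Pi_0:=\Pi_{t_0^2}(z)$ (as in Proposition \ref{prop:E0T0Smooth}), $\Pi_0$ commutes with $Q(z)$ and $Q(z)|_{\mathrm{Ran}(1-\Pi_0)}$ has spectrum in $[t_1^2,\infty)$ with $t_1^2\geq h/C$ by Proposition \ref{prop:QDistFirstEig}, so from $Q(z)e_{wkb}=r$ with $\lVert r\rVert=\mO(h\e^{-S/h})$ (this is \eqref{eqn:QuasModProofrEstimate}--\eqref{eqn:NormEstimateR}) we get $\lVert(1-\Pi_0)e_{wkb}\rVert\leq t_1^{-2}\lVert r\rVert=\mO(\e^{-S/h})$, and since $e_0=\Pi_0 e_{wkb}/\lVert\Pi_0 e_{wkb}\rVert$ and $\lVert e_{wkb}\rVert=1$ this gives $\lVert e_0-e_{wkb}\rVert=\mO(\e^{-S/h})$; likewise for $f_0,f_{wkb}$ using $\widetilde Q(z)$.

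For the derivatives I would first establish $\lVert\partial_{z\overline z}^{\beta}e_{wkb}\rVert=\mO(h^{-|\beta|})$ (and similarly for $f_{wkb}$) directly from $e_{wkb}=h^{-1/4}a(z;h)\chi_e(x,z;h)\e^{i\phi_+(x;z)/h}$: derivatives falling on $a$ cost $h^{-1}$ each by \eqref{d_coef1}, derivatives on $\chi_e$ cost $h^{-1/2}$ each by \eqref{eqn:higherzzbarderchi_e} but are supported in $X_-$ where the weight is $\mO(\e^{-S/h})$, so they only contribute exponentially small terms, and derivatives on the exponential produce factors $\tfrac{i}{h}\partial_{z\overline z}^{\alpha}\phi_+$ with $\partial_{z\overline z}^{\alpha}\phi_+$ bounded on $\supp\chi_e$ (e.g.\ $\partial_z\phi_+=(x-x_+)-\xi_+\partial_z x_+$ with $\partial_z x_+=(2i\Ima g'(x_+))^{-1}=\mO(1)$ in the interior), so the dominant contribution is $\mO(h^{-|\beta|})\lvert e_{wkb}\rvert$. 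Next I control the resolvent $R(\zeta):=(\zeta-Q(z))^{-1}$ on the contour $\gamma=\{\lvert\zeta\rvert=h/(2D)\}$ of Proposition \ref{prop:E0T0Smooth}: there $\lVert R(\zeta)\rVert=\mO(h^{-1})$, and from the intertwining relations $Q(z)(P_h-z)^*=(P_h-z)^*\widetilde Q(z)$, $\widetilde Q(z)(P_h-z)=(P_h-z)Q(z)$ together with semiclassical elliptic regularity one gets $\lVert R(\zeta)(P_h-z)\rVert$, $\lVert R(\zeta)(P_h-z)^*\rVert=\mO(h^{-1})$ on $\gamma$. Since by \eqref{eqn:expansionQinZ} the operator $Q(z)$ is a polynomial of degree two in $(z,\overline z)$ with $\partial_z Q=-(P_h-z)^*$, $\partial_{\overline z}Q=-(P_h-z)$, $\partial_z\partial_{\overline z}Q=\mathrm{Id}$ and higher derivatives zero, differentiating $R$ produces finitely many chains of resolvents interspersed with factors $(P_h-z)^{(*)}$ or $\mathrm{Id}$, each $\mO(h^{-1-|\beta|})$ on $\gamma$ by the above; hence $\lVert\partial_{z\overline z}^{\beta}\Pi_0\rVert=\lVert(2\pi i)^{-1}\oint_\gamma\partial_{z\overline z}^{\beta}R(\zeta)\,d\zeta\rVert=\mO(h^{-|\beta|})$, and then $\lVert\partial_{z\overline z}^{\beta}e_0\rVert=\mO(h^{-|\beta|})$ from $e_0=\Pi_0 e_{wkb}/\lVert\Pi_0 e_{wkb}\rVert$, $\lVert\Pi_0 e_{wkb}\rVert\asymp 1$, and Leibniz; symmetrically for $f_0$. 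This gives \eqref{lem:QuasModEstimate3}.

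For \eqref{lem:QuasModEstimate2} in the interior I would use the identity, valid on $\gamma$ because $(\zeta-Q(z))e_{wkb}=\zeta e_{wkb}-r$ with $r=Q(z)e_{wkb}$,
\begin{equation*}
 v:=(1-\Pi_0)e_{wkb}=e_{wkb}-\Pi_0 e_{wkb}=-\frac{1}{2\pi i}\oint_\gamma\frac{R(\zeta)r}{\zeta}\,d\zeta .
\end{equation*}
From the explicit formula \eqref{eqn:QuasModRFullRepres} for $r$, whose support lies in $X_-$ where the weight is $\mO(\e^{-S/h})$ and whose prefactor carries the extra power $h^{3/4}$, together with \eqref{eqn:4_5_1} and the fact (as for $e_{wkb}$) that each $z$- or $\overline z$-derivative costs at most $h^{-1}$, one gets $\lVert\partial_{z\overline z}^{\gamma}r\rVert=\mO(h^{1-|\gamma|}\e^{-S/h})$. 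Combining this with $\lVert\partial_{z\overline z}^{\gamma}R(\zeta)\rVert=\mO(h^{-1-|\gamma|})$ on $\gamma$, $\lvert\gamma\rvert\asymp h$ and $\lvert\zeta\rvert^{-1}\asymp h^{-1}$ yields, by Leibniz inside the contour integral, $\lVert\partial_{z\overline z}^{\beta}v\rVert=\mO(h^{-|\beta|}\e^{-S/h})$. Since $\lVert\Pi_0 e_{wkb}\rVert^2=1-\lVert v\rVert^2$, the function $\lVert\Pi_0 e_{wkb}\rVert^{-1}-1$ and all its $z,\overline z$-derivatives are $\mO(h^{-|\beta|}\e^{-2S/h})$, and writing $e_0-e_{wkb}=-v+(\lVert\Pi_0 e_{wkb}\rVert^{-1}-1)(e_{wkb}-v)$ and applying Leibniz together with \eqref{lem:QuasModEstimate3} for $e_{wkb}$ gives $\lVert\partial_{z\overline z}^{\beta}(e_0-e_{wkb})\rVert=\mO(h^{-|\beta|}\e^{-S/h})$; the same argument with $\widetilde Q(z)$ handles $f_0-f_{wkb}$. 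I expect the main obstacle to be precisely keeping the power of $h$ sharp in the estimate for $v$ (rather than obtaining some $\mO(h^{-M}\e^{-S/h})$ with $M>|\beta|$): after undoing the rescaling a loss in the power of $h$ becomes a loss in the exponential rate $\e^{-S/h}$, which is not allowed by \eqref{lem:QuasModEstimate2_CB}, and this is what forces the bookkeeping via the exact resolvent identity above rather than a cruder spectral-gap argument.
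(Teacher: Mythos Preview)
Your approach is essentially the same as the paper's: the paper also represents $e_0-e_{wkb}$ through the contour integral $(2\pi i)^{-1}\oint_\gamma\zeta^{-1}(\zeta-Q(z))^{-1}r\,d\zeta$, proves $\lVert\partial_{z\overline z}^{\beta}r\rVert=\mathcal{O}(h^{1-|\beta|}\e^{-S/h})$ and $\lVert\partial_{z\overline z}^{\beta}e_{wkb}\rVert=\mathcal{O}(h^{-|\beta|})$ exactly as you outline, differentiates the resolvent via the chain expansion coming from $\partial_zQ=-(P_h-z)^*$, $\partial_{\overline z}Q=-(P_h-z)$, $\partial_z\partial_{\overline z}Q=1$, and then rescales for the boundary case. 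The one substantive difference is that the paper obtains the sharper intermediate bound $\lVert\partial_{z\overline z}^{\beta}\Pi_{t_0^2}\rVert=\mathcal{O}(h^{-|\beta|/2})$ (rather than your $\mathcal{O}(h^{-|\beta|})$) by proving the elementary ``half--power'' estimate $\lVert(P_h-z)^{(*)}(Q-\lambda)^{-1}\rVert=\mathcal{O}(h^{-1/2})$ directly from $\lVert(P_h-z)u\rVert^2=(Qu\mid u)\le\lVert(Q-\lambda)u\rVert\lVert u\rVert+|\lambda|\lVert u\rVert^2$; this replaces your somewhat vague appeal to ``intertwining plus semiclassical elliptic regularity'' and in fact gives $\mathcal{O}(h^{-1/2})$ rather than the $\mathcal{O}(h^{-1})$ you claim. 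Since in the Leibniz expansion for $\partial_{z\overline z}^{\beta}v$ the worst term always has all derivatives landing on $r$, the extra sharpness does not affect the final bound, and both routes yield \eqref{lem:QuasModEstimate2}--\eqref{lem:QuasModEstimate3_CB}.
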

\begin{rem}
Let us recall that
  \begin{itemize}
   \item for $z\in\Omega \Subset \mathring{\Sigma}$, in the case where $\Omega$ is independent of $h>0$ and has 
         a positive distance to the boundary of $\Sigma$ we have $1/C \leq S \leq C$ for some constant $C>0$. 
         Thus, we may formulate the corresponding estimates of Proposition \ref{quasimodes} uniformly in $z$;
  \item for $h^{2/3} \ll \eta < const.$ and $z\in\Omega\cap\Omega_{\eta}^{a}$ (\ref{eqn:SEstINBox}) 
        implies estimates uniform in $z$ but $\eta$ dependent. 
  \end{itemize}
\end{rem}
\noindent This implies the following
\begin{cor}\label{QMEstimUnifZ}
 Under the assumptions of Proposition \ref{quasimodes}, 
\begin{itemize}
   \item for $z\in\Omega_i$ there exists a constant $C>0$ such that for all $\beta\in\mathds{N}^2$ 
	      \begin{equation}\label{cor:QuasModEstimate2}
	    \lVert \partial_{z\overline{z}}^{\beta}( e_0 - e_{wkb}) \rVert, ~
 	      \lVert \partial_{z\overline{z}}^{\beta}( f_0 - f_{wkb}) \rVert 
 	      = \mathcal{O}\!\left(h^{-|\beta|}\e^{-\frac{1}{Ch}}\right);
	    \end{equation}
   \item for $h^{2/3}\ll \eta < const.$, $z\in\Omega\cap\Omega_{\eta}^{a,b}$ and for all $\beta\in\mathds{N}^2$ 
	      \begin{equation}\label{cor:QuasModEstimate2_CB}
	     \lVert \partial_{z\overline{z}}^{\beta} (e_0 -  e_{wkb}^{\eta}) \rVert, ~
 	      \lVert \partial_{z\overline{z}}^{\beta} (f_0 -  f_{wkb}^{\eta}) \rVert 
		  = \mathcal{O}\!\left(\eta^{\frac{|\beta|}{2}}h^{-|\beta|}\e^{-\frac{\asymp\eta^{3/2}}{h}}\right).
	    \end{equation}
\end{itemize}
\end{cor}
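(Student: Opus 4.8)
The plan is to read the corollary off from Proposition \ref{quasimodes}, simply by substituting the known size of $S(z)$ on the two regions in question; there is no new analytic content.

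For the first assertion, take $z\in\Omega_i$. By definition $\Omega_i\Subset\mathring{\Sigma}$ is independent of $h$ and satisfies $\dist(\Omega_i,\partial\Sigma)>1/C_{\Omega_i}$, so $d(z)\asymp 1$ on $\Omega_i$, and Proposition \ref{prop:SProp} gives $S(z)\asymp d(z)^{3/2}\asymp 1$; in particular $S(z)\geq 1/C$ for some $C>0$ uniformly on $\Omega_i$. As remarked right after Proposition \ref{quasimodes}, in this situation the estimate \eqref{lem:QuasModEstimate2} holds uniformly in $z\in\Omega_i$; inserting $\e^{-S(z)/h}\leq\e^{-1/(Ch)}$ there gives \eqref{cor:QuasModEstimate2}.

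For the second assertion, take $z\in\Omega\cap\Omega_{\eta}^{a}$ with $h^{2/3}\ll\eta\leq\mathrm{const.}$; the case $z\in\Omega_{\eta}^{b}$ is handled identically by the symmetry already used in Section \ref{subse:Quasmod}. The Taylor-expansion estimate \eqref{eqn:SEstINBox} gives $S(z)\asymp\eta^{3/2}$, again uniformly in $z$ (for $\eta$ fixed), and by the second bullet of Proposition \ref{quasimodes} the bound \eqref{lem:QuasModEstimate2_CB} is likewise uniform in $z$ on $\Omega\cap\Omega_{\eta}^{a}$ for fixed $\eta$. Inserting $S(z)\asymp\eta^{3/2}$ into \eqref{lem:QuasModEstimate2_CB} turns $\mathcal{O}(\eta^{|\beta|/2}h^{-|\beta|}\e^{-S/h})$ into $\mathcal{O}(\eta^{|\beta|/2}h^{-|\beta|}\e^{-\asymp\eta^{3/2}/h})$, which is exactly \eqref{cor:QuasModEstimate2_CB}.

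The only point that needs a little care — and the closest thing to an obstacle — is the bookkeeping of implicit constants, i.e. checking that the $\mathcal{O}$-symbols in \eqref{cor:QuasModEstimate2} and \eqref{cor:QuasModEstimate2_CB} really are uniform in $z$ over $\Omega_i$, respectively $\Omega\cap\Omega_{\eta}^{a,b}$. This is immediate here since, as noted in the remark preceding the corollary, the bounds in Proposition \ref{quasimodes} become uniform in $z$ once $S(z)$ is pinned to size $\asymp 1$ (interior) or $\asymp\eta^{3/2}$ (boundary region), which is precisely what Proposition \ref{prop:SProp} and \eqref{eqn:SEstINBox} supply.
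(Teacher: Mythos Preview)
Your argument is correct and matches the paper's own reasoning: the corollary is stated immediately after the remark that $S(z)\asymp 1$ on $\Omega_i$ and $S(z)\asymp\eta^{3/2}$ on $\Omega\cap\Omega_{\eta}^{a,b}$, and the paper simply says ``This implies the following'' without further proof. Your write-up just makes explicit the substitution of these bounds into \eqref{lem:QuasModEstimate2} and \eqref{lem:QuasModEstimate2_CB}.
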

\begin{rem}
 The proof of Proposition \ref{quasimodes} is unfortunately somewhat long 
 and technical and we have split it into several lemmas. Furthermore, we 
 will only be discussing the results for $e_{wkb}(z)$, $e_{wkb}^{\eta}(z)$ and $e_0(z)$, 
 since the others can be obtained similarly.
\end{rem}
\begin{lem}\label{lem:QuasmodProof1}
 Let $\Omega\Subset\Sigma$ such that $\dist(\Omega,\partial\Sigma)>1/C$. For $z\in\Omega$ define 
 $r:=r(x,z;h):=Q(z) e_{wkb}(x;z)$ as in (\ref{eqn:DefR}). Then, for all $\beta\in\mathds{N}^2$, 
 $\supp \partial_{z\overline{z}}^{\beta} r \subset ]x_- - 2\pi, x_- -2\pi+ h^{1/2}[\cup ]x_- - h^{1/2}, x_-[$ 
 and
  \begin{equation*}
  \lVert \partial_{z\overline{z}}^{\beta}r \rVert  
		      = \mathcal{O}\!\left(h^{1-|\beta|}\e^{-\frac{S}{h}}\right).
  \end{equation*}
\end{lem}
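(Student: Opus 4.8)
The plan is to compute $r = Q(z)e_{wkb}(x;z)$ explicitly using the representation already derived in \eqref{eqn:QuasModRFullRepres}, then differentiate it in $z$ and $\overline{z}$ and estimate the resulting norms. First I would recall that, by \eqref{eqn:DefEWKB}, $e_{wkb}(x;z) = h^{-1/4}a(z;h)\chi_e(x,z;h)e^{\frac{i}{h}\phi_+(x;z)}$, and that the phase $\phi_+$ is chosen so that $(hD_x + g(x) - z)e^{\frac{i}{h}\phi_+} = 0$; hence the only contributions to $(P_h-z)e_{wkb}$ come from derivatives falling on $a(z;h)$ (none, since $a$ is $x$-independent) and on the cutoff $\chi_e$. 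Applying $(P_h-z)^*$ to the result and using $\partial_x\phi_+ = z - g(x)$ together with \eqref{eqn:4_5_1}, i.e. $\partial_x\phi_+ + \overline{g(x)-z} = -2i\Ima(g(x)-z) = \mO(h^{1/2})$ on $X_- := \,]x_--2\pi, x_--2\pi+h^{1/2}[\,\cup\,]x_--h^{1/2},x_-[$, one recovers the identity \eqref{eqn:QuasModRFullRepres} and reads off that $r$ is supported in $X_-$ and equals $\mO(h^{3/4})e^{\frac{i}{h}\phi_+(x;z)}$ there.

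Next I would establish the support statement for the derivatives: since $\partial_x\chi_e$ and $\partial_x^2\chi_e$ are supported in $X_-$ (this is where $\chi_e$ is non-constant), and since every $z$- and $\overline{z}$-derivative of $r$ is, by Leibniz, a finite sum of terms each containing at least one factor $\partial_x^j\chi_e$ with $j\geq 1$ — because the $x$-derivative structure of \eqref{eqn:QuasModRFullRepres} is preserved under $\partial_{z\overline z}$, as differentiating $\phi_+$, $g$, $z$, or $a$ in $z,\overline z$ never produces a term without such a factor — we get $\supp \partial_{z\overline z}^\beta r \subset X_-$. Then I would track the size of each derivative. Each $\partial_{z\overline z}$ hitting $\chi_e$ costs a factor $\mO(h^{-1/2})$ by \eqref{eqn:higherzzbarderchi_e}; each one hitting $a(z;h)$ costs $\mO(h^{-1})$ by \eqref{d_coef1} but that is worse than we want, so the key is that $a$ is smooth with bounded derivatives in the $\Omega_i$ regime (where $\dist(\Omega,\partial\Sigma) > 1/C$, so $\eta \asymp 1$) — here \eqref{d_coef1} with $\eta=1$ gives $\partial^\beta a = \mO(1)$, costing nothing; each one hitting the factor $(\partial_x\phi_+ + \overline{g-z}) = -2i\Ima(g(x)-z)$ costs only $\mO(1)$; and each one hitting the exponential $e^{\frac{i}{h}\phi_+}$ brings down a factor $\frac{i}{h}\partial_{z\overline z}\phi_+ = \mO(h^{-1})$, which again looks too expensive. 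The resolution is standard: on $X_-$ one has $|e^{\frac{i}{h}\phi_+(x;z)}| = e^{-\frac{1}{h}\Ima\phi_+(x;z)}$, and combined with the $\mO(h^{-1})$ prefactor the product $h^{-1}e^{-\frac{1}{h}\Ima\phi_+}$ is still, after taking the $L^2$-norm over $X_-$ and using that near $x_+$... — but $x_+ \notin X_-$, so on $X_-$ the phase $\Ima\phi_+$ is comparable to $S$ with a uniform lower bound, and the measure of $X_-$ is $\mO(h^{1/2})$, so the $h^{-1}$ factors are controlled.

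More precisely, the final norm estimate I would carry out as follows: $\|\partial_{z\overline z}^\beta r\|^2 = \int_{X_-} |\partial_{z\overline z}^\beta r(x,z;h)|^2\,dx$, and each such derivative is bounded pointwise on $X_-$ by $\mO(h^{3/4 - |\beta|}) e^{-\frac{1}{h}\Ima\phi_+(x;z)}$ — the $h^{3/4}$ from the original prefactor in \eqref{eqn:QuasModRFullRepres}, the $h^{-|\beta|}$ from distributing the derivatives (worst case all hitting $\chi_e$ at cost $h^{-1/2}$ each, combined with the at most one $h^{-1/2}$ already present, or equivalently at most $h^{-|\beta|}$ after bookkeeping). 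Then $\int_{X_-} e^{-\frac{2}{h}\Ima\phi_+(x;z)}\,dx$: on $X_-$ we have $\Ima\phi_+(x;z) \geq S$ by the definition of $S$ in Definition \ref{defn:ZeroOrderDensity} (the cutoff $\chi_e$ lives away from $x_+$ and up to the "far" endpoint near $x_-$ or $x_--2\pi$, so the phase has accumulated at least $S$), and since $|X_-| = \mO(h^{1/2})$, this integral is $\mO(h^{1/2}e^{-2S/h})$. Therefore $\|\partial_{z\overline z}^\beta r\|^2 = \mO(h^{3/2 - 2|\beta|}\cdot h^{1/2}e^{-2S/h}) = \mO(h^{2-2|\beta|}e^{-2S/h})$, giving $\|\partial_{z\overline z}^\beta r\| = \mO(h^{1-|\beta|}e^{-S/h})$ as claimed. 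The main obstacle I anticipate is the careful bookkeeping of how $z,\overline z$-derivatives distribute over the various factors — in particular making rigorous the claim that differentiating $e^{\frac{i}{h}\phi_+}$ in $z$ does not destroy the $h$-power count, which works because $\partial_z\phi_+(x;z) = x - x_+(z) + (\text{boundary terms from }\partial_z x_+)$ is $\mO(1)$ uniformly (not $\mO(1/h)$), so the genuine cost per such derivative is $\mO(1)$ and not $\mO(1/h)$; the only genuine $h^{-1/2}$ costs come from $\chi_e$, yielding the stated $h^{-|\beta|}$. I would present this bookkeeping as a short induction on $|\beta|$ using the Leibniz rule applied to \eqref{eqn:QuasModRFullRepres}.
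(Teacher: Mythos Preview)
Your overall strategy matches the paper's: write $r$ via \eqref{eqn:QuasModRFullRepres}, observe the support is in $X_-$, apply Leibniz to get a pointwise bound $\partial_{z\overline z}^\beta r = \mO(h^{3/4-|\beta|})e^{\frac{i}{h}\phi_+}$, then integrate over $X_-$ (of measure $\mO(h^{1/2})$, with $\Ima\phi_+\geq S$ there) to obtain $\|\partial_{z\overline z}^\beta r\|=\mO(h^{1-|\beta|}e^{-S/h})$.

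However, your bookkeeping for the $h^{-|\beta|}$ loss is internally inconsistent. You first (correctly) note that a $z$-derivative falling on $e^{\frac{i}{h}\phi_+}$ produces $\frac{i}{h}\partial_z\phi_+$, and since $\partial_z\phi_+=\mO(1)$ on $X_-$ this costs $\mO(h^{-1})$. But then you retract this and assert the cost is $\mO(1)$, claiming the only $h$-losses come from $\chi_e$ at $h^{-1/2}$ per derivative. That cannot be right: $|\beta|$ factors of $h^{-1/2}$ would only give $h^{-|\beta|/2}$, not the $h^{-|\beta|}$ you (correctly) state. The dominant mechanism is precisely the one you tried to argue away: each $\partial_z$ or $\partial_{\overline z}$ hitting the exponential brings down $\frac{i}{h}\cdot\mO(1)=\mO(h^{-1})$, and after $|\beta|$ such hits you get $h^{-|\beta|}$. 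The derivatives landing on $\chi_e$, on $a$, or on $-2i\Ima(g-z)$ are all subdominant. Once you correct this one point, your argument is complete and coincides with the paper's (very brief) proof.
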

\begin{proof}
Using (\ref{eqn:higherzzbarderchi_e}), (\ref{eqn:QuasModRFullRepres}) we conclude 
by the Leibniz rule that for $\beta\in\mathds{N}^2$
  \begin{equation*}
   \partial_{z\overline{z}}^{\beta}r = 
   \mathcal{O}\!\left(h^{\frac{3}{4}-|\beta|}\right)e^{\frac{i}{h} \phi_+(x;z)}
  \end{equation*}
which is supported in $]x_- - 2\pi, x_- -2\pi+ h^{1/2}[\cup ]x_- - h^{1/2}, x_-[$ and 
one computes that $\lVert \partial_{z\overline{z}}^{\beta}r \rVert ^2 = 
\mathcal{O}\!\left(h^{2-2|\beta|}\e^{-\frac{2S}{h}}\right)$.
\end{proof}
\begin{lem}\label{lem:QuasmodProof2}
Let $\Omega\Subset\Sigma$ such that $\dist(\Omega,\partial\Sigma)>1/C$ and let $z\in\Omega$. Moreover, 
let $\Pi_{t_0^2}:~ L^2(S^1)\rightarrow \mathds{C} e_0$ denote 
the spectral projection of $Q(z)$ onto the eigenspace associated with $t_0^2$. Then, 
\begin{equation*}
   \lVert \partial_{z\overline{z}}^{\beta} \Pi_{t_0^2}(z) \rVert_{L^2\rightarrow H_{sc}^2 }
   = \mathcal{O}\!\left({h }^{-\frac{|\beta|}{2}}\right).
  \end{equation*}
\end{lem}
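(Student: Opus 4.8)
Here is how I would prove Lemma~\ref{lem:QuasmodProof2}.

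\textbf{Plan.} The plan is to start from the Riesz projection formula and differentiate under the integral sign. As in the proof of Proposition~\ref{prop:E0T0Smooth}, for $z$ in a small enough neighbourhood of a fixed point $z_0\in\Omega$ the circle $\gamma:=\partial D(0,h/(2D))$ is a simultaneously valid contour, since $t_0^2(z)\ll h$ and $t_1^2(z)\geq h/D$ by Proposition~\ref{prop:QDistFirstEig}, and by \cite{Kato} the resolvent $R(\zeta;z):=(\zeta-Q(z))^{-1}$ is holomorphic in $\zeta$ near $\gamma$ and real-analytic in $z$ near $z_0$, so that one may write
\begin{equation*}
 \partial_{z\overline{z}}^{\beta}\Pi_{t_0^2}(z)=\frac{1}{2\pi i}\int_{\gamma}\partial_{z\overline{z}}^{\beta}R(\zeta;z)\,d\zeta .
\end{equation*}
First I would record the basic estimates on $\gamma$. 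Since $\dist(\zeta,\sigma(Q(z)))\asymp h$ there, self-adjointness of $Q(z)$ gives $\lVert R(\zeta;z)\rVert_{L^2\to L^2}=\mO(h^{-1})$, and the semiclassical ellipticity of $Q(z)=(hD_x)^2+\dots$ (i.e.\ $\lVert u\rVert_{H^{s}_{sc}}\lesssim\lVert Q(z)u\rVert_{H^{s-2}_{sc}}+\lVert u\rVert_{H^{s-2}_{sc}}$ uniformly in $z\in\Omega$), together with $Q(z)R(\zeta;z)=\zeta R(\zeta;z)-\mathds{1}$ and $|\zeta|\asymp h$, upgrades this to $\lVert R(\zeta;z)\rVert_{H^{s}_{sc}\to H^{s+2}_{sc}}=\mO(h^{-1})$ for each fixed $s$.

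The key point will be a gain of a half power of $h$ whenever the ``second order part'' of $R(\zeta;z)$ meets a first order factor. Writing $Q(z)=(P_h-z)^{*}(P_h-z)$, using the polar decompositions $(P_h-z)=UQ(z)^{1/2}$ and $(P_h-z)^{*}=Q(z)^{1/2}U^{*}$, the intertwining $(P_h-z)g(Q(z))=g(\widetilde{Q}(z))(P_h-z)$, and $\sigma(\widetilde{Q}(z))=\sigma(Q(z))$, the spectral theorem on $\gamma$ yields
\begin{equation*}
 \lVert Q(z)^{1/2}R(\zeta;z)\rVert=\sup_{t\in\sigma(Q(z))}\frac{\sqrt{t}}{|\zeta-t|}=\mO(h^{-1/2}),\qquad
 \lVert Q(z)^{1/2}R(\zeta;z)Q(z)^{1/2}\rVert=\sup_{t\in\sigma(Q(z))}\frac{t}{|\zeta-t|}=\mO(1),
\end{equation*}
the suprema being $\ll h^{-1/2}$, resp.\ $\ll1$, at $t_0^2\ll h$ and $\leq C$ on the part $t\geq t_1^2\geq h/D$. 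In the appropriate semiclassical Sobolev spaces this gives, on $\gamma$, $\lVert (P_h-z)R(\zeta;z)\rVert=\lVert R(\zeta;z)(P_h-z)^{*}\rVert=\mO(h^{-1/2})$ and $\lVert (P_h-z)R(\zeta;z)(P_h-z)^{*}\rVert=\mO(1)$.

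Then I would differentiate the resolvent. By \eqref{eqn:expansionQinZ} one has $\partial_zQ(z)=-(P_h-z)^{*}$, $\partial_{\overline z}Q(z)=-(P_h-z)$, $\partial_z\partial_{\overline z}Q(z)=\mathds{1}$ and $\partial_z^{2}Q(z)=\partial_{\overline z}^{2}Q(z)=0$, so $\partial_{z\overline{z}}^{\beta}R(\zeta;z)$ is a finite sum of terms $RA_1RA_2\cdots A_mR$ with $R=R(\zeta;z)$, each $A_i\in\{-(P_h-z)^{*},-(P_h-z),\mathds{1}\}$, with $p$ first order factors, $q$ identity factors, $p+2q=|\beta|$, and $p+q+1$ resolvents. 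In each term I would pair every factor $-(P_h-z)^{*}$ with the resolvent immediately to its \emph{left} and every $-(P_h-z)$ with the resolvent immediately to its \emph{right} (forced, since the opposite pairings only yield $\mO(h^{-1})$); the only conflict is a sub-block $(P_h-z)R(P_h-z)^{*}$, which is nonetheless merely $\mO(1)$. Writing $c$ for the number of such blocks, the term factors as $c$ blocks of norm $\mO(1)$ consuming $2c$ first order factors and $c$ resolvents, $p-2c$ genuine pairs of norm $\mO(h^{-1/2})$, $q$ identities of norm $\mO(1)$, and $q+c+1$ bare resolvents of norm $\mO(h^{-1})$; since the product gains $|\beta|+2\geq2$ semiclassical derivatives overall, its range embeds into $H^{2}_{sc}$ and the rightmost factor accepts $L^2$, so the term is $\mO(h^{-(p-2c)/2-(q+c+1)})=\mO(h^{-p/2-q-1})$, and multiplying by $|\gamma|\asymp h$ gives $\mO(h^{-p/2-q})=\mO(h^{-|\beta|/2})$.

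The hard part will be exactly this last accounting: one must exploit the asymmetry between $(P_h-z)$ and $(P_h-z)^{*}$ by pairing each with the correct neighbouring resolvent, track the semiclassical Sobolev indices along every product so that each operator norm above is used in a space where it is valid, and check that the powers of $h$ lost by being forced into the $\mO(1)$ blocks $(P_h-z)R(P_h-z)^{*}$ are exactly compensated — which is what produces the exponent $|\beta|/2$ rather than $|\beta|$. (Alternatively one can compute the residue at $\zeta=t_0^2$ directly, using that each first order factor is then adjacent either to $\Pi_{t_0^2}$, giving $\lVert(P_h-z)^{*}e_0\rVert=\mO(h^{1/2})$ since $e_0$ concentrates at $\rho_+$ where $\Ima(g-z)$ vanishes, or to a power of the reduced resolvent $(t_0^2-Q(z))^{-1}(1-\Pi_{t_0^2})$, where the same $\mO(h^{-1/2})$ gain applies.)
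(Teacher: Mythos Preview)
Your proposal is correct and follows the same overall strategy as the paper: write $\Pi_{t_0^2}$ as a Riesz integral over the fixed circle $\gamma$ of radius $\asymp h$, differentiate under the integral, expand $\partial_{z\overline z}^{\beta}R(\zeta;z)$ as a sum of products $R A_1 R\cdots A_k R$ with $A_i\in\{-(P_h-z)^*,-(P_h-z),\mathds{1}\}$, and exploit a half-power gain whenever a first-order factor meets a resolvent. The difference lies in how this gain is obtained. You use the functional calculus of $Q(z)$ (equivalently the polar decomposition) to get $\lVert(P_h-z)R\rVert=\mO(h^{-1/2})$ and $\lVert R(P_h-z)^*\rVert=\mO(h^{-1/2})$, which forces you to pair $(P_h-z)$ to the right and $(P_h-z)^*$ to the left, and then to handle the ``conflict'' blocks $(P_h-z)R(P_h-z)^*=\mO(1)$. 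The paper instead proves directly, via the quadratic-form inequality $\lVert(P_h-z)u\rVert^2-|\gamma|\,\lVert u\rVert^2\le\lVert(Q-\lambda)u\rVert\,\lVert u\rVert$ and the elementary commutator bound $[P_h^*,P_h]=\mO_{H^2_{sc}\to L^2}(h)$, that \emph{both} $\lVert(P_h-z)R\rVert$ and $\lVert(P_h-z)^*R\rVert$ are $\mO(h^{-1/2})$. One can then always pair to the right, the accounting becomes $R\cdot(A_1R)\cdots(A_kR)=\mO(h^{-1})\cdot\mO(h^{-1/2})^{p}\cdot\mO(h^{-1})^{q}=\mO(h^{-|\beta|/2-1})$, and after integration over $\gamma$ one gets $\mO(h^{-|\beta|/2})$ with no conflict bookkeeping. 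In particular your remark that the opposite pairings ``only yield $\mO(h^{-1})$'' is not actually true here; the commutator trick gives them for free, and that is precisely what makes the paper's route shorter.
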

\begin{proof}
By virtue of Proposition \ref{prop:QDistFirstEig} and the continuity of $t_0^2(z)$ there exists for $h>0$ small enough a 
constant $D>0$ such that for all $z$ in a neighborhood of a point $z_0\in\Omega$ 
  \begin{equation*}
   t_1^2(z) > \frac{h}{D}.
  \end{equation*}
Let $\gamma$ be the positively oriented circle of radius $h/(2D)$ centered at $0$. Note 
that $\gamma$ is locally independent of $z$. Thus, we gain a path such that 
$0,t_1^2(z)\notin \gamma$ and which has length $|\gamma| = h\pi/D $. For $\lambda\in\gamma$ we have that
  \begin{equation}\label{eqn:ResEstimateSAQ}
   \lVert (\lambda - Q(z))^{-1} \rVert= \frac{1}{\mathrm{dist\,}(\lambda,\sigma(Q(z)))} = \mathcal{O}({|\gamma| }^{-1}).
  \end{equation}
By (\ref{eqn:expansionQinZ}) and the resolvent identity we see that the 
derivatives $\partial_{z}^n\partial_{\overline{z}}^m (\lambda -Q(z))^{-1}$, 
for $(n,m)\in\mathds{N}^2\backslash\{0\}$, are finite linear combinations of terms of the form 
  \begin{equation}\label{eq_dq1}
   (\lambda -Q(z))^{-1}\partial_{z\overline{z}}^{\alpha_1}(Q(z))(\lambda -Q(z))^{-1} \cdots 
   \partial_{z\overline{z}}^{\alpha_k}(Q(z))(\lambda -Q(z))^{-1}
  \end{equation}
with $\alpha_j =(1,0),(0,1),(1,1)$ and $\alpha_1 + \dots + \alpha_k = (n,m)$. Thus it is sufficient to estimate 
the terms of the form $(P_h-z)(Q(z)-\lambda)^{-1}$ and $(P_h-z)^*(Q(z)-\lambda)^{-1}$. 
Since $Q(z)=(P_h-z)^*(P_h-z)$, it follows that
  \begin{equation}\label{eqn:EstNormQ}
   \lVert(P_h-z)u\rVert^2 - {|\gamma| }\lVert u\rVert^2 \leq \left|((Q(z)-\lambda)u|u)\right| 
   \leq \lVert (Q(z)-\lambda)u \rVert \lVert u \rVert.
  \end{equation}
Since $Q(z)>0$ is self-adjoint and since $\dist (\lambda, \sigma(Q(z)))\asymp {|\gamma| }$ we have the a priori estimate
  \begin{equation*}
   \lVert (Q(z) -\lambda)u\rVert \geq C{|\gamma| }\lVert u\rVert
  \end{equation*}
for all $u\in H_{sc}^2(S^1)$, where $C>0$ is a constant locally uniform in $z$. This implies 
  \begin{align*}
   \lVert(P_h-z)u\rVert^2 &\leq \left(\lVert (Q(z)-\lambda)u \rVert + {|\gamma| }\lVert u\rVert \right)\lVert u \rVert \notag \\
    & \leq \widetilde{C} \lVert (Q(z)-\lambda)u \rVert \lVert u \rVert 
    \leq \frac{C}{|\gamma| }\lVert (Q(z)-\lambda)u \rVert^2,
  \end{align*}
where $C>0$ is a constant uniform in $z$. Hence	
  \begin{equation*}
   \lVert (P_h-z)(Q(z)-\lambda)^{-1}\rVert_{L^2\rightarrow L^2} = \mathcal{O}\!\left({|\gamma| }^{-\frac{1}{2}}\right).
  \end{equation*}
Finally, note that since $[P_h^*,P_h]=\mathcal{O}_{H^2_{sc}\rightarrow L^2}(h)$ we can replace 
$P_h$ by it's adjoint in (\ref{eqn:EstNormQ}) and gain the estimate
   \begin{equation*}
   \lVert (P_h-z)^*(Q(z)-\lambda)^{-1}\rVert_{L^2\rightarrow L^2} = \mathcal{O}\!\left({|\gamma| }^{-\frac{1}{2}}\right).
  \end{equation*}
Using \eqref{eq_dq1} and the fact that $|\gamma|=h\pi/D$ 
we have that for all $\beta\in\mathds{N}^2\backslash\{0\}$ 
  \begin{equation}\label{eqn:EstDerResQ}
   \lVert \partial_{z\overline{z}}^{\beta} (\lambda -Q(z))^{-1}\rVert_{L^2\rightarrow H_{sc}^2 }
   = \mathcal{O}\!\left({h }^{-\frac{|\beta|+2}{2}}\right).
  \end{equation}
Since for $u\in L^2(S^1)$
 \begin{equation*}
   \frac{1}{2\pi i}\int_{\gamma}(\lambda - Q(z))^{-1}ud\lambda  = \Pi_{t_0^2}u,
  \end{equation*}
(\ref{eqn:EstDerResQ}) implies 
  \begin{equation*}
   \lVert \partial_{z\overline{z}}^{\beta} \Pi_{t_0^2}(z) \rVert_{L^2\rightarrow H_{sc}^2 }
   =  \mathcal{O}\!\left({h }^{-\frac{|\beta|}{2}}\right). \qedhere
  \end{equation*}
\end{proof}
\begin{lem}\label{lem:QuasmodProof3}
 Under the assumptions of Lemma \ref{lem:QuasmodProof2} we have 
  \begin{equation*}
   \lVert \partial_{z\overline{z}}^{\beta} e_{wkb}(\cdot;z)\rVert, 
   \lVert \partial_{z\overline{z}}^{\beta} \Pi_{t_0^2}e_{wkb}(\cdot;z)\rVert
   = \mathcal{O}\!\left(h^{-|\beta|}\right).
  \end{equation*}
\end{lem}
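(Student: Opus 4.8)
The plan is to bound $\partial_{z\overline z}^{\beta}e_{wkb}$ directly from the explicit WKB form \eqref{eqn:DefEWKB}, and then to obtain the bound for $\partial_{z\overline z}^{\beta}\Pi_{t_0^2}e_{wkb}$ by feeding this into Lemma \ref{lem:QuasmodProof2} through the Leibniz rule. Throughout we are in the regime $\dist(\Omega,\partial\Sigma)>1/C$, so $S(z)\asymp 1$, the critical points $x_\pm(z)$ stay at mutual distance $\gtrsim 1$, and all constants below are uniform in $z\in\Omega$.

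First I would record three ingredients: (i) by \eqref{d_coef1}, $\partial_{z\overline z}^{\gamma}a(z;h)=\mathcal{O}(h^{-|\gamma|})$, and $|a(z;h)|\asymp 1$ for $h$ small since $a_0\neq 0$; (ii) by \eqref{eqn:higherzzbarderchi_e}, $\partial_{z\overline z x}^{\gamma}\chi_e=\mathcal{O}(h^{-|\gamma|/2})$ with support in $X_-:=\,]x_- - 2\pi, x_- -2\pi+ h^{1/2}[\,\cup\,]x_- - h^{1/2}, x_-[$ for $|\gamma|\geq 1$; (iii) since $x_+(z)$ is smooth in $z$ with all $z,\overline z$-derivatives bounded on $\Omega$ (implicit function theorem applied to $\Ima g(x_+)=\Ima z$, using $\Ima g'(x_+)\neq 0$), the phase $\phi_+(x;z)=\int_{x_+(z)}^x(z-g(y))\,dy$ is smooth in $(x,z,\overline z)$ with bounded derivatives on $]x_- - 2\pi,x_-[\times\Omega$, whence by the chain rule $\partial_{z\overline z}^{\gamma}e^{\frac{i}{h}\phi_+}=b_\gamma(x,z;h)\,e^{\frac{i}{h}\phi_+}$ with $b_\gamma=\mathcal{O}(h^{-|\gamma|})$ uniformly. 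Expanding $\partial_{z\overline z}^{\beta}e_{wkb}$ by the Leibniz rule gives a finite sum of terms $h^{-1/4}(\partial^{\beta_1}a)(\partial^{\beta_2}\chi_e)\,b_{\beta_3}e^{\frac{i}{h}\phi_+}$ with $\beta_1+\beta_2+\beta_3=\beta$. For a term with $\beta_2=0$ I would rewrite it as $(\partial^{\beta_1}a)\,b_{\beta_3}\,a^{-1}e_{wkb}$; since $\lVert e_{wkb}\rVert\asymp 1$, $|a|\asymp 1$, and $(\partial^{\beta_1}a)\,b_{\beta_3}=\mathcal{O}(h^{-|\beta_1|-|\beta_3|})=\mathcal{O}(h^{-|\beta|})$ uniformly in $x$, its $L^2$-norm is $\mathcal{O}(h^{-|\beta|})$. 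For a term with $\beta_2\neq 0$ it is supported in $X_-$ (measure $2h^{1/2}$) and bounded pointwise by $C\,h^{-1/4-|\beta_1|-|\beta_2|/2-|\beta_3|}e^{-\frac{1}{h}\Ima\phi_+}$, hence its $L^2$-norm is $\leq C\,h^{-|\beta_1|-|\beta_2|/2-|\beta_3|}$, which is $\mathcal{O}(h^{-|\beta|})$ since $|\beta_1|+|\beta_2|/2+|\beta_3|\leq|\beta|$ (in fact $\mathcal{O}(h^{\infty})$: on each component of $X_-$ the endpoint $x_-$, resp.\ $x_- - 2\pi$, is a non-degenerate maximum of $\Ima\phi_+$ with value $\geq S(z)$, so $\int_{X_-}e^{-\frac{2}{h}\Ima\phi_+}\,dx=\mathcal{O}(h^{1/2}e^{-2S/h})$). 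Summing the finitely many terms yields $\lVert\partial_{z\overline z}^{\beta}e_{wkb}\rVert=\mathcal{O}(h^{-|\beta|})$.

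Finally, for $\Pi_{t_0^2}e_{wkb}$ I apply the Leibniz rule once more: $\partial_{z\overline z}^{\beta}(\Pi_{t_0^2}e_{wkb})=\sum_{\gamma\leq\beta}\binom{\beta}{\gamma}(\partial_{z\overline z}^{\beta-\gamma}\Pi_{t_0^2})(\partial_{z\overline z}^{\gamma}e_{wkb})$. By Lemma \ref{lem:QuasmodProof2} one has $\lVert\partial_{z\overline z}^{\beta-\gamma}\Pi_{t_0^2}\rVert_{L^2\to L^2}=\mathcal{O}(h^{-|\beta-\gamma|/2})$, and by the previous paragraph $\lVert\partial_{z\overline z}^{\gamma}e_{wkb}\rVert=\mathcal{O}(h^{-|\gamma|})$, so each summand has $L^2$-norm $\mathcal{O}(h^{-|\beta-\gamma|/2-|\gamma|})=\mathcal{O}(h^{-|\beta|})$ because $|\beta-\gamma|/2+|\gamma|\leq|\beta|$; summing over the finitely many $\gamma$ gives the claim. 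The only genuinely delicate point is ingredient (iii) together with the observation that $e^{\frac{i}{h}\phi_+}$ is not small on all of $\supp\chi_e$ — it has modulus $1$ at $x_+$ — which is why the $\beta_2=0$ terms must be estimated by comparison with the normalized $e_{wkb}$ rather than by a crude pointwise bound; everything else is routine bookkeeping with the Leibniz rule.
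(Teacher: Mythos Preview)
Your proof is correct and follows essentially the same skeleton as the paper's: Leibniz-expand $e_{wkb}$, treat separately the terms where $\chi_e$ is differentiated (support in $X_-$, exponentially small) and those where it is not, and then combine with Lemma~\ref{lem:QuasmodProof2} via Leibniz for the projection. The one place where you deviate from the paper is in the handling of the $\beta_2=0$ terms: the paper works out $\partial_z e_{wkb}$ explicitly and applies the stationary phase method to each surviving integral (and then says ``similarly'' for higher $\beta$), whereas you bypass stationary phase entirely by rewriting each such term as $(\partial^{\beta_1}a)\,b_{\beta_3}\,a^{-1}e_{wkb}$ and invoking $\lVert e_{wkb}\rVert=1$ with the uniform pointwise bound $b_{\beta_3}=\mathcal{O}(h^{-|\beta_3|})$. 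This comparison trick is a genuine simplification---it packages all the stationary-phase work into the single normalization of $e_{wkb}$ that was already done---and makes the induction on $|\beta|$ completely mechanical. The paper's more explicit calculation has the advantage of yielding precise leading terms (e.g.\ the constant $\frac{i}{h}\partial_z\phi_+(x_+;z)$ appearing in \eqref{eqn:ProjewkbonzDiffeWKB}), which are actually used later in the proof of Lemma~\ref{lem:proofSymplVol}; your argument gives only the order of magnitude, which is all that is claimed here.
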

\begin{proof}
 Using \eqref{eqn:DefEWKB} and the triangular inequality, we get
    \begin{align*}
     \lVert \partial_z e_{wkb}(\cdot;z)\rVert &\leq  h^{-\frac{1}{4}} 
  	    \lVert \partial_z\chi_e(\cdot;z)a^i(z;h)\e^{\frac{i}{h}\phi_+(\cdot;z)}\rVert  \notag \\
	    & + h^{-\frac{1}{4}}\lVert\chi_e(\cdot;z)\partial_za^i(z;h)\e^{\frac{i}{h}\phi_+(\cdot;z)}\rVert \notag \\
   	    & +h^{-\frac{1}{4}} \lVert \chi_e(\cdot;z)a^i(z;h)ih^{-1}
	       \partial_z\phi_+(\cdot;z)\e^{\frac{i}{h}\phi_+(\cdot;z)}\rVert.
    \end{align*}
Recalling from (\ref{eqn:higherzzbarderchi_e}) that $\partial_z \chi_e(x,z;h) = \mathcal{O}(h^{-1/2})$ is supported in  
$]x_- - 2\pi, x_- -2\pi+ h^{1/2}[\cup ]x_- - h^{1/2}, x_-[$, one computes 
  \begin{equation*}
   h^{-\frac{1}{4}} 
  	    \lVert \partial_z\chi_e(\cdot;z)a^i(z;h)\e^{\frac{i}{h}\phi_+(\cdot;z)}\rVert = \mathcal{O}\!\left(h^{-\frac{1}{2}}
  	    \e^{-\frac{S}{h}}\right).
  \end{equation*}
Using \eqref{d_coef1}, the stationary phase method implies
  \begin{equation*}
    h^{-\frac{1}{4}}\lVert\chi_e(\cdot;z)\partial_za^i(z;h)\e^{\frac{i}{h}\phi_+(\cdot;z)}\rVert = \mathcal{O}(h^{-1}). 
  \end{equation*}
Furthermore, since 
  \begin{equation}\label{eqn:DerzOfPhasePhi+}
    \partial_z \phi_+(x;z) = \int_{x_+(z)}^x dy - \xi_+(z)\partial_z x_+(z)
  \end{equation}
it follows by the stationary phase method that 
  \begin{equation*}
   h^{-\frac{1}{4}} \lVert \chi_e(\cdot;z)a^i(z;h)\frac{i}{h}\partial_z
		    \phi_+(\cdot;z)\e^{\frac{i}{h}\phi_+(\cdot;z)}\rVert = 
    \frac{1}{h}\left|\xi_+(z)\partial_z x_+(z)\right| + \mathcal{O}(1).
  \end{equation*}
Hence, by putting all of the above together
  \begin{equation*}
   \lVert \partial_z e_{wkb}(\cdot;z)\rVert = \mathcal{O}\!\left(h^{-1}\right).
  \end{equation*}
Similarly, using \eqref{d_coef1}, (\ref{eqn:higherzzbarderchi_e}), 
the stationary phase method implies 
  \begin{equation*}
   \lVert \partial_{z\overline{z}}^{\beta} e_{wkb}(\cdot;z)\rVert 
   = \mathcal{O}\!\left(h^{-|\beta|}\right).
  \end{equation*}
Lemma \ref{lem:QuasmodProof2} then implies by the Leibniz rule that 
  \begin{equation*}
     \lVert \partial_{z\overline{z}}^{\beta}\Pi_{t_0^2} e_{wkb} \rVert 
     = \mathcal{O}\!\left(h^{-|\beta|}\right).\qedhere
  \end{equation*}
\end{proof}
\begin{proof}[Proof of Proposition \ref{quasimodes}] 
\textit{Part I }- First, suppose that 
$z\in\Omega_i$. Let $r$ be as in Lemma \ref{lem:QuasmodProof1} 
and consider for $\lambda\in\mathds{C}$
  \begin{equation*}
   (\lambda - Q(z))e_{wkb}= \lambda e_{wkb}- r. 
  \end{equation*}
If $\lambda \notin \sigma(Q(z))\cup\{0\}$ we have 
  \begin{equation*}
   (\lambda - Q(z))^{-1}e_{wkb}= \frac{1}{\lambda} e_{wkb}+ \frac{1}{\lambda}(\lambda - Q(z))^{-1}r. 
  \end{equation*}
As in the proof of Lemma \ref{lem:QuasmodProof1}, define $\gamma$ to be the positively 
oriented circle of radius $h/(2D)$ centered at $0$. $\gamma$ is locally independent of $z$. 
Thus, we gain a path such that $0,t_1^2(z)\notin \gamma$ and which has length $|\gamma| = h\pi/D $. 
Hence
  \begin{equation}\label{eqn:QuasModIntProjRep}
   \frac{1}{2\pi i}\int_{\gamma}(\lambda - Q(z))^{-1}e_{wkb}d\lambda  
   = e_{wkb}
   + \frac{1}{2\pi i}\int_{\gamma}\frac{1}{\lambda}(\lambda - Q(z))^{-1}r d\lambda.
  \end{equation} 
By Lemma \ref{lem:QuasmodProof1}, (\ref{eqn:lemQuasModrNorm_CB}) and (\ref{eqn:ResEstimateSAQ})
  \begin{equation*}
  \left\lVert\frac{1}{2\pi i}\int_{\gamma}\frac{1}{\lambda}(\lambda - Q(z))^{-1}r d\lambda \right\rVert
   = \mathcal{O}\!\left(e^{-\frac{S}{h}}\right)
  \end{equation*}
By (\ref{eqn:QuasModIntProjRep})
  \begin{equation}\label{eqn:ProjSolWKBOnE0}
  \lVert \Pi_{t_0^2}e_{wkb}-  e_{wkb}\rVert = \mathcal{O}\!\left(e^{-\frac{S}{h}}\right).  
  \end{equation}
Recall that $e_{wkb}$ is normalized. Pythagoras' theorem then implies
  \begin{equation}\label{eqn:QuasModPythagoras}
   \lVert \Pi_{t_0^2}e_{wkb}\rVert^2 = \lVert e_{wkb}\rVert^2 - \lVert e_{wkb} - \Pi_{t_0^2}e_{wkb}\rVert^2 
   = 1 - \mathcal{O}\!\left(e^{-\frac{2S}{h}}\right)
  \end{equation}
which yields
  \begin{equation}\label{eqn:QuasModE0Repres}
   e_0 = \frac{1}{\lVert \Pi_{t_0^2}e_{wkb}\rVert}\Pi_{t_0^2} e_{wkb}
       = \left(1 + \mathcal{O}\!\left(e^{-\frac{2S}{h}}\right)\right)\Pi_{t_0^2}e_{wkb}.
  \end{equation}
Let us now turn to the $z$- and $\overline{z}$-derivatives of $e_0 - e_{wkb}$. 
By (\ref{eqn:QuasModE0Repres})
  \begin{align*}
   \left\lVert \partial_{z\overline{z}}^{\beta} (e_0(z) - e_{wkb}(z)) \right\rVert=
   \left\lVert \partial_{z\overline{z}}^{\beta}  
   \left(\frac{(\Pi_{t_0^2}-1)e_{wkb}
	  +(1-\lVert \Pi_{t_0^2}e_{wkb}\rVert)e_{wkb}}
	  {\lVert \Pi_{t_0^2}e_{wkb}(z)\rVert}\right) \right\rVert.
  \end{align*}
First, note that Lemma \ref{lem:QuasModEstimate3} together with (\ref{eqn:QuasModPythagoras}) implies 
  \begin{equation*}
    \partial_{z\overline{z}}^{\beta}\lVert  \Pi_{t_0^2} e_{wkb} \rVert 
    = \mathcal{O}\!\left(h^{-|\beta|}\right).
  \end{equation*}
Using this result and (\ref{eqn:QuasModPythagoras}) implies by the Leibniz rule 
applied to (\ref{eqn:QuasModE0Repres}) that 
  \begin{equation*}
    \lVert \partial_{z\overline{z}}^{\beta} e_{0} \rVert 
    = \mathcal{O}\!\left(h^{-|\beta|}\right).
  \end{equation*}  
Next, applying Lemma \ref{lem:QuasmodProof1} and (\ref {eqn:EstDerResQ}) to (\ref{eqn:QuasModIntProjRep}) 
yields
  \begin{equation*}
  \left\lVert\partial_{z\overline{z}}^{\beta}(\Pi_{t_0^2}-1)e_{wkb}\right\rVert
   =
  \left\lVert\partial_{z\overline{z}}^{\beta}\frac{1}{2\pi i}
	  \int_{\gamma}\frac{1}{\lambda}(\lambda - Q(z))^{-1}rd\lambda \right\rVert
   =  
   \mathcal{O}\!\left(h^{-|\beta|}e^{-\frac{S}{h}}\right).
  \end{equation*}
Thus, Lemma \ref{lem:QuasModEstimate3} and (\ref{eqn:QuasModPythagoras}) together with the Leibniz rule then imply 
  \begin{equation*}
  \left\lVert \partial_{z\overline{z}}^{\beta} (e_0(z) - e_{wkb}(z)) \right\rVert
   = \mathcal{O}\!\left(h^{-|\beta|}e^{-\frac{S}{h}}\right).
   \end{equation*}
\textit{Part II }- Now, let $z\in\Omega_{\eta}^a$ with $h^{\frac{2}{3}} \ll \eta < \mathrm{const.}$ The 
statements of the proposition follow from a simple rescaling argument. For the rescaling we 
use the same notation as in the beginning of Section \ref{subse:Quasmod}. Let $\widetilde{e}_0(\widetilde{z})$ be 
the $L^2(S^1/\sqrt{\eta},d\widetilde{x})$-normalized eigenfunction of the operator 
$\widetilde{Q}(\widetilde{z})=(\widetilde{P}_{\widetilde{h}} - \widetilde{z} )^*(\widetilde{P}_{\widetilde{h}} 
- \widetilde{z})$ and note that $\eta^{\frac{1}{4}}e_{wkb}^{\eta}$ is 
$L^2(S^1/\sqrt{\eta},d\widetilde{x})$-normalized. Thus, 
  \begin{equation*}
  \left\lVert \partial_{z\overline{z}}^{\beta}(\widetilde{e}_0(\widetilde{z}) 
      - e_{wkb}^{\eta}(\cdot,\widetilde{z},\widetilde{h}) \right\rVert_{L^2(S^1/\sqrt{\eta},d\widetilde{x})}
   = \mathcal{O}\!\left(\widetilde{h}^{-|\beta|}e^{-\frac{\widetilde{S}}{\widetilde{h}}}\right),
   \end{equation*}
where $\widetilde{S}$ is as in (\ref{eqn:Sinvar}). Since $e_0(z) = \eta^{-1/4}\widetilde{e}_0(\widetilde{z})$, 
it follows by rescaling that 
  \begin{equation*}
  \left\lVert \partial_{z\overline{z}}^{\beta} (e_0(z) - e_{wkb}^{\eta}(z)) \right\rVert_{L^2(S^1,dx)}
   = \mathcal{O}\!\left(\eta^{\frac{|\beta|}{2}}h^{-|\beta|}e^{-\frac{S}{h}}\right).
   \end{equation*}
The results on $\lVert \partial_{z\overline{z}}^{\beta} e_{wkb}^{\eta}\rVert$ and on 
$ \lVert \partial_{z\overline{z}}^{\beta} e_0 \rVert$ can be proven by the same 
rescaling argument.
\end{proof}
%
%
\section{Grushin problem for the unperturbed operator $P_h$}\label{se:GrushProb}
To start with we give a short refresher on Grushin problems since 
they have become an essential tool in Microlocal Analysis and it is a key 
method to the present work. As reviewed in \cite{SjZwor07}, the central idea 
is to set up an auxiliary problem of the form 
\begin{equation*}
 \begin{pmatrix}
  P(z) & R_- \\ 
  R_+ & 0 \\
 \end{pmatrix}
 :
 \mathcal{H}_1\oplus \mathcal{H}_- 
 \longrightarrow \mathcal{H}_2\oplus \mathcal{H}_+,
\end{equation*}
where $P(z)$ is the operator of interest and $R_{\pm}$ are 
suitably chosen. We say that the Grushin problem is well-posed 
if this matrix of operators is bijective. If 
$\dim\mathcal{H}_-  = \dim\mathcal{H}_+ < \infty$, one usually  
writes 
\begin{equation*}
 \begin{pmatrix}
  P(z) & R_- \\ 
  R_+ & 0 \\
 \end{pmatrix}^{-1}
 =
 \begin{pmatrix}
  E(z) & E_{+}(z) \\ 
  E_{-}(z) & E_{-+}(z) \\
 \end{pmatrix}.
\end{equation*}
The key observation, going back to the Shur complement formula or 
equivalently the Lyapunov-Schmidt bifurcation method, 
is that the operator $P(z): \mathcal{H}_1 \rightarrow \mathcal{H}_2$ 
is invertible if and only if the finite dimensional matrix 
$E_{-+}(z)$ is invertible and when $E_{-+}(z)$ is invertible, 
we have 
\begin{equation*}
  P^{-1}(z) = E(z) - E_{+}(z) E_{-+}^{-1}(z) E_{-}(z).
\end{equation*}
$E_{-+}(z)$ is sometimes called effective Hamiltonian. 
\\
\par
The principal aim of this section is to introduce the three different 
Grushin Problems needed to study $P_h^{\delta}$: one valid in all 
of $\Sigma$ which is however less explicit (here we will follow the 
construction given in \cite[Sec. 7.2 and 7.4]{SjAX1002}), and two very explicit 
Grushin Problems, one valid in the interior of $\Sigma$ and one valid 
close to $\partial\Sigma$ (here we will recall the construction given by Hager 
in \cite{Ha06} respectively Bordeaux-Montrieux in \cite{BM}).
\subsection{Grushin problem valid in all of $\Sigma$}\label{suse:GrushForUnPert}
Following the ideas of \cite{SjAX1002}, we will use the eigenfunctions $e_0$ and $f_0$ 
to set up the Grushin problem 
\begin{prop}\label{propGrushUnpertOp}
Let $\Omega \Subset\Sigma$ be open and relatively compact and let $\alpha_0$ 
be as in (\ref{eqn:RelationEjFj}). Define for $z\in\Omega$
  \begin{align}\label{eqn:defnR+R-}
    R_+:~ &H^1(S^1)\longrightarrow\mathds{C}:~ u  \longmapsto (u|e_0) \notag \\
    R_-:~&\mathds{C}\longrightarrow L^2(S^1):~ u_-  \longmapsto u_-f_0.  
  \end{align}
Then 
  \begin{equation*}
   \mathcal{P}(z):=\begin{pmatrix}
                    P_h-z & R_- \\ R_+ & 0 \\
                   \end{pmatrix}
   :~ H^1(S^1)\times \mathds{C}\longrightarrow L^2(S^1)\times \mathds{C}
  \end{equation*}
is bijective with the bounded inverse 
  \begin{equation*}
   \mathcal{E}(z) = \begin{pmatrix}
                     E(z) & E_+(z) \\ E_-(z) & E_{-+}(z) \\
                    \end{pmatrix}
  \end{equation*}
where $E_-(z)v = (v|f_0)$, $E_+(z)v_+ = v_+e_0$ 
and $E(z)=(P_h-z)^{-1}|_{(f_0)^{\perp}\rightarrow (e_0)^{\perp}}$ and $E_{-+}(z)v_+ = -\alpha_0 v_+$. 
Furthermore, we have the estimates 
  \begin{itemize}
   \item for $z\in\Omega$ with $\dist(\Omega,\partial\Sigma)>1/C$
    \begin{align}\label{prop:GrushUnPert}
      & \lVert E_-(z)\rVert_{L^2\rightarrow \mathds{C}}, \lVert E_+(z)\rVert_{\mathds{C}\rightarrow H^1 } 
      = \mO(1), \notag\\
      &\lVert E(z)\rVert_{L^2\rightarrow H^1} = \mathcal{O}(h^{-1/2}), \notag \\
      &|E_{-+}(z)| = \mathcal{O}\!\left(\sqrt{h}\e^{-\frac{S}{h}}\right)
                   =\mathcal{O}\!\left(\e^{-\frac{1}{Ch}}\right);
    \end{align}
   \item for $z\in\Omega\cap\Omega_{\eta}^{a}$ with $h^{\frac{2}{3}} \ll \eta < \mathrm{const.}$
    \begin{align}\label{prop:GrushUnPert2}
      & \lVert E_-(z)\rVert_{L^2\rightarrow \mathds{C}}, \lVert E_+(z)\rVert_{\mathds{C}\rightarrow H^1 } 
      = \mO(1), \notag\\
      &\lVert E(z)\rVert_{L^2\rightarrow H^1} = \mathcal{O}((h\sqrt{\eta})^{-1/2}), \notag \\
      &|E_{-+}(z)| = \mathcal{O}\!\left(\sqrt{h}\eta^{\frac{1}{4}}\e^{-\frac{S}{h}}\right)
                   =\mathcal{O}\!\left(\e^{-\frac{\asymp \eta^{3/2}}{h}}\right).
    \end{align}
  \end{itemize}
\end{prop}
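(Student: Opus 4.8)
The plan is to build the inverse $\mathcal E(z)$ explicitly, using the two orthonormal bases of $L^2(S^1)$ adapted to $P_h-z$: the eigenbasis $\{e_j\}_{j\geq0}$ of $Q(z)$ and the eigenbasis $\{f_j\}_{j\geq0}$ of $\widetilde{Q}(z)$ introduced in Section \ref{sec:AuxOpe}, linked by $(P_h-z)e_j=\alpha_jf_j$ and $(P_h-z)^*f_j=\overline{\alpha_j}e_j$ with $|\alpha_j|^2=t_j^2$ (cf. \eqref{eqn:RelationEjFj} and the lines after it). By Proposition \ref{prop:QDistFirstEig} one has $t_j>0$, hence $\alpha_j\neq0$, for every $j\geq1$, whereas $\alpha_0$ may vanish --- precisely when $z\in\sigma(P_h)$, in which case $\mathcal N(P_h-z)=\C e_0$, and otherwise $\mathcal N(P_h-z)=\{0\}$; in either case $\mathcal N(P_h-z)\subseteq\C e_0$.

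Given $(v,v_+)\in L^2(S^1)\times\C$, I would solve the system $(P_h-z)u+u_-f_0=v$, $(u|e_0)=v_+$ by expanding $u=\sum_{j\geq0}a_je_j$. Since $(P_h-z)u=\sum_{j\geq0}\alpha_ja_jf_j$, projecting the first equation onto $\{f_j\}$ gives $a_j=(v|f_j)/\alpha_j$ for $j\geq1$ and $\alpha_0a_0+u_-=(v|f_0)$, while the second equation gives $a_0=v_+$; thus
\begin{equation*}
 u=v_+e_0+\sum_{j\geq1}\frac{(v|f_j)}{\alpha_j}e_j,\qquad u_-=(v|f_0)-\alpha_0v_+.
\end{equation*}
This yields a unique $(u,u_-)$ depending linearly on $(v,v_+)$, so $\mathcal P(z)$ is bijective and its inverse has exactly the stated components $E_+(z)v_+=v_+e_0$, $E_-(z)v=(v|f_0)$, $E_{-+}(z)v_+=-\alpha_0v_+$, and $E(z)v=\sum_{j\geq1}(v|f_j)\alpha_j^{-1}e_j$, the last being $(P_h-z)^{-1}$ read between $(f_0)^\perp$ and $(e_0)^\perp$. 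Bijectivity alone follows more cheaply from $P_h-z$ being Fredholm of index $0$ together with injectivity of $\mathcal P(z)$: pairing the homogeneous first equation with $f_0$ and using $(P_h-z)^*f_0=\overline{\alpha_0}e_0$ gives $u_-=-\alpha_0(u|e_0)=0$, and then $(P_h-z)u=0$ with $(u|e_0)=0$ forces $u=0$.

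It then remains to prove the norm estimates. Here $\lVert E_-(z)\rVert_{L^2\to\C}\leq\lVert f_0\rVert=1$ is immediate, and for $E_+$ and for the $H^1$-bound on $E$ I would use the identity $hD_x=(P_h-z)-(g(x)-z)$. Applied to $e_0$ it gives $hD_xe_0=\alpha_0f_0-(g-z)e_0$, hence $\lVert hD_xe_0\rVert\leq|\alpha_0|+\mO(1)=\mO(1)$ and $\lVert E_+(z)\rVert_{\C\to H^1}=\lVert e_0\rVert_{H^1}=\mO(1)$. For $E(z)$ one has $\lVert E(z)v\rVert^2=\sum_{j\geq1}|(v|f_j)|^2t_j^{-2}\leq t_1^{-2}\lVert v\rVert^2$, and since $(P_h-z)E(z)v=v-(v|f_0)f_0$ the same identity gives $hD_xE(z)v=v-(v|f_0)f_0-(g-z)E(z)v$, so $\lVert hD_xE(z)v\rVert\leq\mO(1)(1+t_1^{-1})\lVert v\rVert$; invoking $t_1^2\geq d(z)^{1/2}h/\mO(1)$ from Proposition \ref{prop:QDistFirstEig} (so $t_1^{-1}=\mO(h^{-1/2})$ when $d(z)>1/C$) gives $\lVert E(z)\rVert_{L^2\to H^1}=\mO(h^{-1/2})$. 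Finally $|E_{-+}(z)|=|\alpha_0|=t_0(z)$, and Propositions \ref{prop:QDistFirstEig} and \ref{prop:SProp} give $t_0=\mO(h^{1/2}\e^{-S/h})$ with $S\geq1/C$, so $|E_{-+}(z)|=\mO\!\left(\e^{-\frac{1}{Ch}}\right)$, proving \eqref{prop:GrushUnPert}.

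For $z\in\Omega\cap\Omega_\eta^a$ with $h^{2/3}\ll\eta<\mathrm{const.}$ I would repeat the same computation for the rescaled operator $\widetilde{P}_{\widetilde{h}}-\widetilde{z}$ of \eqref{eqn:RescaledOp}, now using $t_0^2(z)=\mO(\eta^{1/2}h\e^{-2S/h})$ and $t_1^2(z)\geq h\sqrt{\eta}/\mO(1)$ from Proposition \ref{prop:QDistFirstEig}, together with $\widetilde{S}/\widetilde{h}=S/h$ and $S\asymp\eta^{3/2}$ (see \eqref{eqn:Sinvar}, \eqref{eqn:SEstINBox}); this produces $\lVert E(z)\rVert_{L^2\to H^1}=\mO((h\sqrt{\eta})^{-1/2})$ and $|E_{-+}(z)|=\mO(\sqrt{h}\,\eta^{1/4}\e^{-S/h})=\mO\!\left(\e^{-\frac{\asymp\eta^{3/2}}{h}}\right)$, which is \eqref{prop:GrushUnPert2}. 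The only genuinely delicate point in all of this is upgrading the immediate $L^2\to L^2$ control of $E(z)$ to the asserted $L^2\to H^1$ bound, which is exactly what the identity relating $hD_x$, $P_h-z$ and multiplication by $g-z$ achieves; the same device then absorbs the $\eta$-weights in the boundary zone.
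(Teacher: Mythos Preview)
Your proof is correct and follows essentially the same approach as the paper: the explicit construction of $\mathcal E(z)$ via the orthonormal bases $\{e_j\}$ and $\{f_j\}$, together with the spectral gap $t_1^2-t_0^2\geq\eta^{1/2}h/C$ and the bound on $t_0^2$ from Proposition~\ref{prop:QDistFirstEig}. The paper simply refers to \cite[Section~7.2]{SjAX1002} and \cite[Proposition~4.3.5]{BM} for these details, while you have written them out explicitly, including the passage from $L^2$ to $H^1$ control of $E(z)$ via $hD_x=(P_h-z)-(g-z)$.
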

\begin{proof}
 For a proof of the existence of the bounded inverse as well as the estimate for 
 $\lVert E(z)\rVert_{L^2\rightarrow H^1}$ in the case of $\dist(\Omega,\partial\Sigma)>1/C$
 see \cite[Section 7.2]{SjAX1002}. \par 
 The other estimate for $\lVert E(z)\rVert_{L^2\rightarrow H^1}$ can be proven 
 by performing the same steps as in the case of $\dist(\Omega,\partial\Sigma)>1/C$, mutandi mutandis, 
 together with the estimate given by Bordeaux-Montrieux in \cite[Proposition 4.3.5]{BM}.
 The estimates for $|E_{-+}(z)|$ follow from Proposition \ref{prop:QDistFirstEig}, whereas 
 the estimates on $\lVert E_-(z)\rVert_{L^2\rightarrow\mathds{C}}$ and 
 $\lVert E_+(z)\rVert_{\mathds{C}\rightarrow H^1}$ come from the fact that $e_0$ and $f_0$ are 
 normalized.\par
 Alternatively, one can conclude the result in the case of $z\in\Omega\cap\Omega_{\eta}^{a}$ by 
 a rescaling argument similar to the one in the proof of Proposition \ref{quasimodes}.
\end{proof}

\subsection{Tunneling}\label{susec:tunnel}
We prove now the following formula for a tunnel effect from which we conclude 
Proposition \ref{prop:TunnelEffect}.
\begin{prop}\label{prop:AbsSPe0f0}
 Let $z\in\Omega \Subset\Sigma$ and let $e_0$ and $f_0$ be as in (\ref{def:e0,e1,...}) 
 and in (\ref{def:f0,f1,...}). Furthermore, let $\Phi(z,h)$ be as in Proposition \ref{prop:Resolvent}, 
 let $S$ be as in Definition \ref{defn:ZeroOrderDensity} and let $p$ and $\rho_{\pm}$ be as in Section 
 \ref{sec:Intro}. Let $h^{\frac{2}{3}} \ll \eta < \mathrm{const.}$ Then, 
 for all $z\in\Omega$ with $|\Ima z - \langle \Ima g\rangle|> 1/C $, $C\gg 1$, 
\begin{align*}
   |(e_0|f_0)| 
   = \frac{\left(\frac{i}{2}\{p,\overline{p}\}(\rho_+)
	\frac{i}{2}\{\overline{p},p\}(\rho_-)\right)^{\frac{1}{4}}}{\sqrt{\pi h}}
    |\partial_{\Ima z}S(z)|\left(1 + \mathcal{O}\!\left(\eta^{-\frac{3}{4}}h^{\frac{1}{2}}\right) \right)
    \e^{-\frac{S}{h}}
  \end{align*}
  where for all $\beta\in\mathds{N}^2$
  \begin{align*}
   \partial_{z\overline{z}}^{\beta} 
   \mathcal{O}\!\left(\eta^{-3/4}h^{\frac{1}{2}}\right)
    = \mathcal{O}\!\left(\eta^{\frac{|\beta|}{2}-\frac{3}{4}}h^{-|\beta|+\frac{1}{2}}\right).
  \end{align*}
\end{prop}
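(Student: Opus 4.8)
The plan is to replace the eigenfunctions $e_0,f_0$ by the explicit WKB quasimodes and to notice that the resulting oscillatory integral has a phase which is \emph{constant on each of the two arcs} of $S^1\setminus\{x_+,x_-\}$, so that the leading term is read off directly from the arc lengths. First I would handle the interior regime $\dist(\Omega,\partial\Sigma)>1/C$ (i.e.\ $\eta=1$). Since $e_0,f_0,e_{wkb},f_{wkb}$ are all $L^2$-normalized, Proposition \ref{quasimodes} together with the Leibniz rule gives
\[
 \partial_{z\overline{z}}^{\beta}\big[(e_0|f_0)-(e_{wkb}|f_{wkb})\big]=\mO\!\left(h^{-|\beta|}\e^{-\frac{S}{h}}\right);
\]
once we know $|(e_{wkb}|f_{wkb})|\asymp h^{-1/2}\e^{-S/h}$ this is a relative error of size $\mO(h^{1/2})$ with the correct derivative weights, hence harmless, and it suffices to compute $(e_{wkb}|f_{wkb})$.

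Inserting \eqref{eqn:DefEWKB}, \eqref{eqn:DefFWKB} and using $\overline{\e^{\frac{i}{h}\phi_-(x;z)}}=\e^{-\frac{i}{h}\overline{\phi_-(x;z)}}$ with $\overline{\phi_-(x;z)}=\int_{x_-}^{x}(z-g(y))\,dy$, one obtains $(e_{wkb}|f_{wkb})=h^{-1/2}a(z;h)\overline{b(z;h)}\int_{S^1}\chi_e\chi_f\,\e^{\frac{i}{h}\Psi(x;z)}\,dx$ with $\Psi:=\phi_+-\overline{\phi_-}$. The decisive observation is that $\partial_x\phi_+=\partial_x\overline{\phi_-}=z-g(x)$, so $\Psi(\cdot;z)$ is locally constant: keeping track of the conventions $x_--2\pi<x_+<x_-$ and of the $2\pi$-periodicity of $g$, it equals $A_{\mathrm{I}}:=\int_{x_+}^{x_-}(z-g(y))\,dy$ on the arc $]x_+,x_-[$ and $A_{\mathrm{II}}:=\int_{x_+}^{x_--2\pi}(z-g(y))\,dy$ on the arc $]x_--2\pi,x_+[$. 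By Definition \ref{defn:ZeroOrderDensity} and Proposition \ref{prop:SProp}, $S=\min(\Ima A_{\mathrm{I}},\Ima A_{\mathrm{II}})$, and since $A_{\mathrm{I}}-A_{\mathrm{II}}=2\pi(z-\langle g\rangle)$ we get $\Ima A_{\mathrm{I}}-\Ima A_{\mathrm{II}}=2\pi(\Ima z-\langle\Ima g\rangle)$; thus, under the hypothesis $|\Ima z-\langle\Ima g\rangle|>1/C$, one of the two branches (say $A_{\mathrm{I}}$ when $\Ima z<\langle\Ima g\rangle$) satisfies $\Ima A_{\mathrm{I}}=S$ while $\Ima A_{\mathrm{II}}\geq S+1/C'$. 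Factoring out $\e^{\frac{i}{h}A_{\mathrm{I}}}$, the other arc contributes $\mO(\e^{-1/(C'h)})$ relative to the main one, and by \eqref{eqn:defCutOffWKB}, \eqref{eqn:higherzzbarderchi_e} the cut-offs are $1$ on $]x_+,x_-[$ outside two intervals of length $\mO(\sqrt h)$, so $\int_{x_+}^{x_-}\chi_e\chi_f\,dx=(x_--x_+)+\mO(\sqrt h)=\partial_{\Ima z}S+\mO(\sqrt h)$ (using \eqref{eqn:zRepS}), with $\partial_{z\overline{z}}^{\beta}$-derivatives of the $\mO(\sqrt h)$-remainder of size $\mO(h^{-|\beta|/2+1/2})$. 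Finally \eqref{d_coef1} and Lemma \ref{lem:AssympA0B0BySPM} give $a\overline{b}=a_0b_0(1+\mO(h))$ with $a_0b_0=\pi^{-1/2}\big(-\Ima g'(x_+)\,\Ima g'(x_-)\big)^{1/4}=\pi^{-1/2}\big(\tfrac{i}{2}\{p,\overline{p}\}(\rho_+)\tfrac{i}{2}\{\overline{p},p\}(\rho_-)\big)^{1/4}$, using $\tfrac{i}{2}\{p,\overline{p}\}=\Ima g'$, $\tfrac{i}{2}\{\overline{p},p\}=-\Ima g'$. Collecting terms and taking moduli, with $|\e^{\frac{i}{h}A_{\mathrm{I}}}|=\e^{-S/h}$, yields the claimed identity for $\eta=1$.

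For the error bounds I would write $(e_0|f_0)=C(z)\,\e^{\frac{i}{h}A_{\mathrm{I}}}\big(1+K(z;h)\big)$ with $C(z)=\pi^{-1/2}h^{-1/2}\big(\tfrac{i}{2}\{p,\overline{p}\}(\rho_+)\tfrac{i}{2}\{\overline{p},p\}(\rho_-)\big)^{1/4}|\partial_{\Ima z}S|$; then $K$ is a finite sum of the $\mO(h^{1/2})$ corrections (from $a\overline{b}$, from the cut-off integral, and from the sub-dominant arc) and of $\mathrm{Err}\cdot\e^{-\frac{i}{h}A_{\mathrm{I}}}/C(z)$, and since each $z$- or $\overline{z}$-derivative costs $\mO(h^{-1})$ on $\mathrm{Err}$ and on $\e^{-\frac{i}{h}A_{\mathrm{I}}}$ (because $\partial_{z}A_{\mathrm{I}}=\mO(1)$ in the interior, the boundary terms $z-g(x_\pm)$ being real), while $C(z)^{\pm1}$ has bounded derivatives, one gets $\partial_{z\overline{z}}^{\beta}K=\mO(h^{-|\beta|+1/2})$; passing from $K$ to $|1+K|$ through the smoothness of $t\mapsto\sqrt{1+t}$ near $0$ finishes the interior case. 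The regime $z\in\Omega\cap\Omega_{\eta}^{a}$ with $h^{2/3}\ll\eta<\mathrm{const.}$ then follows from the rescaling \eqref{eqn:RescaledOp}: the point $\widetilde{z}=z/\eta$ lies well inside the range of the principal symbol $\widetilde{p}$ of $\widetilde{P}_{\widetilde{h}}$, at distance $\asymp 1$ from its boundary, and has $|\Ima\widetilde{z}-\langle\Ima\widetilde{g}\rangle|\asymp\eta^{-1}\gg1$, so the interior result applies to $\widetilde{P}_{\widetilde{h}}$ with $\widetilde{h}=h\eta^{-3/2}$; one then translates back through $(e_0|f_0)=(\widetilde{e}_0|\widetilde{f}_0)$, $S=\eta^{3/2}\widetilde{S}$, $\partial_{\Ima z}S=\eta^{1/2}\partial_{\Ima\widetilde{z}}\widetilde{S}$, $\tfrac{i}{2}\{p,\overline{p}\}(\rho_+)=\eta^{1/2}\tfrac{i}{2}\{\widetilde{p},\overline{\widetilde{p}}\}(\widetilde{\rho}_+)$ and $\partial_z=\eta^{-1}\partial_{\widetilde{z}}$: a short check shows that all powers of $\eta$ cancel in the principal term and that $\mO(\widetilde{h}^{1/2})$ and its derivatives become exactly $\mO(\eta^{-3/4}h^{1/2})$ and $\mO(\eta^{|\beta|/2-3/4}h^{-|\beta|+1/2})$. (Equivalently one could repeat the previous two paragraphs verbatim with $e_{wkb}^{\eta},f_{wkb}^{\eta}$ and the $\eta$-dependent estimates of Propositions \ref{quasimodes} and \ref{prop:QuasmodCB}.)

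I expect the main obstacle to be the computation in the second paragraph: verifying that $\Psi=\phi_+-\overline{\phi_-}$ is genuinely piecewise constant with the two values matching the two branches of Definition \ref{defn:ZeroOrderDensity} forces careful bookkeeping of the identification $S^1\cong[x_--2\pi,x_-[$ and of the periodicity of $g$; together with this, keeping the various error contributions (amplitude corrections, the $\mO(\sqrt h)$ from the cut-offs, the exponentially small sub-dominant arc) and their $z$-derivatives under control uniformly in the scaling parameter $\eta$ is the technically delicate part.
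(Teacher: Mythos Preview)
Your approach is essentially the same as the paper's: replace $(e_0|f_0)$ by $(e_{wkb}|f_{wkb})$ via Proposition~\ref{quasimodes}, observe that $\phi_+-\overline{\phi_-}$ is piecewise constant on the two arcs with values $A_{\mathrm{I}},A_{\mathrm{II}}$, compute the arc-length integrals $\int\chi_e\chi_f\,dx$ up to $\mO(\sqrt h)$, insert $a\overline b$ from Lemma~\ref{lem:AssympA0B0BySPM}, discard the subdominant arc using $|\Ima z-\langle\Ima g\rangle|>1/C$, and finally rescale for $z\in\Omega_\eta^a$. The only notable difference is that the paper sharpens your bound $\partial_{z\overline{z}}^{\beta}\mO(\sqrt h)=\mO(h^{-|\beta|/2+1/2})$ on the cut-off remainder to $\partial_{z\overline{z}}^{\beta}\mO(\sqrt h)=0$ for $\beta\neq0$ by an explicit one-line computation (the boundary and integrand contributions to $\partial_z\int_{x_--2\pi}^{x_--2\pi+\sqrt h}(1-\chi_e)\,dx$ cancel exactly); your weaker estimate is already sufficient for the stated conclusion.
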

This implies Proposition \ref{prop:TunnelEffect}. Furthermore, Proposition \ref{prop:AbsSPe0f0} 
implies by direct calculation the following result:
\begin{prop}\label{prop:EstimAndFormulaDerzE-+_pt2}
 Under the assumptions of Proposition \ref{prop:AbsSPe0f0} we have 
 for $h^{\frac{2}{3}} \ll \eta < const.$ 
  \begin{align*}
    \partial_{\Ima z}|(e_0|f_0)|^2 
   & = \frac{2\left(\frac{i}{2}\{p,\overline{p}\}(\rho_+)\frac{i}{2}\{\overline{p},p\}(\rho_-)\right)^{\frac{1}{2}}}{\pi h^2}
   |\partial_{\Ima z}S(z)|^2(-\partial_{\Ima z}S(z))
   \e^{-\frac{2S}{h}} \notag \\
   &+\mathcal{O}\!\left(\eta^{5/4}h^{-\frac{3}{2}}\e^{-\frac{2S}{h}}\right),
  \end{align*}
  \begin{align*}
    \partial_{\Rea z}|(e_0|f_0)|^2 , \partial_{\Rea z}\partial_{\Ima z}|(e_0|f_0)|^2
    = \mO\!\left(\e^{-\frac{1}{Ch}}\e^{-\frac{2S}{h}}\right).
  \end{align*}
\end{prop}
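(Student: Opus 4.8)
The plan is to take the formula of Proposition \ref{prop:AbsSPe0f0}, square it, and differentiate; this is indeed a direct computation, the only delicate point being the exponential gain claimed for the $\Rea z$-derivatives. Write $A(z):=\frac{i}{2}\{p,\overline{p}\}(\rho_+)\,\frac{i}{2}\{\overline{p},p\}(\rho_-)$, so that Proposition \ref{prop:AbsSPe0f0} gives
\begin{equation*}
 |(e_0|f_0)|^2=\frac{A(z)^{1/2}}{\pi h}\,|\partial_{\Ima z}S(z)|^2\,(1+K(z;h))^2\,\e^{-\frac{2S}{h}},
\end{equation*}
with $K=\mathcal{O}(\eta^{-3/4}h^{1/2})$ and $\partial_{z\overline z}^{\beta}K=\mathcal{O}(\eta^{|\beta|/2-3/4}h^{-|\beta|+1/2})$. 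First I would record the structural facts to be used: by Proposition \ref{prop:SProp}, $S$ and $\partial_{\Ima z}S$ depend on $z$ only through $\Ima z$, and the same is true of $A$ because $\{p,\overline{p}\}(\rho_\pm)=\mp 2i\Ima g'(x_\pm)$ and $x_\pm=x_\pm(\Ima z)$; moreover $A\asymp\eta$ and $|\partial_{\Ima z}S|\asymp\eta^{1/2}$ (by Proposition \ref{prop:SProp}, \eqref{eqn:SEstINBox}, \eqref{eqn:HTscale}), while $\partial_{\Ima z}x_\pm=1/\Ima g'(x_\pm)=\mathcal{O}(\eta^{-1/2})$, whence $\partial_{\Ima z}(A^{1/2})=\mathcal{O}(\eta^{-1/2})$ and $\partial_{\Ima z}(|\partial_{\Ima z}S|^2)=2(\partial_{\Ima z}S)(\partial_{\Ima z}^2S)=\mathcal{O}(1)$.

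For the first identity I apply the product rule to the displayed formula. The dominant term is the one in which $\partial_{\Ima z}$ hits $\e^{-2S/h}$ and the factor $1$ is kept from $(1+K)^2$; this produces $\tfrac{A^{1/2}}{\pi h}|\partial_{\Ima z}S|^2\cdot(-\tfrac2h\partial_{\Ima z}S)\,\e^{-2S/h}$, which is exactly the asserted main term $\frac{2A(z)^{1/2}}{\pi h^2}|\partial_{\Ima z}S|^2(-\partial_{\Ima z}S)\e^{-2S/h}$. Every remaining contribution — from $\partial_{\Ima z}(A^{1/2})$, from $\partial_{\Ima z}(|\partial_{\Ima z}S|^2)$, from $\partial_{\Ima z}((1+K)^2)=2(1+K)\partial_{\Ima z}K$, and the part of the $\e^{-2S/h}$-derivative carrying a factor $2K+K^2$ — is, after inserting $A^{1/2}\asymp\eta^{1/2}$, $|\partial_{\Ima z}S|^2\asymp\eta$ and $\partial_{\Ima z}K=\mathcal{O}(\eta^{-1/4}h^{-1/2})$, of size $\mathcal{O}(\eta^{1/2}h^{-1}\e^{-2S/h})$ or $\mathcal{O}(\eta^{5/4}h^{-3/2}\e^{-2S/h})$; since the standing hypothesis $h^{2/3}\ll\eta\le\mathrm{const.}$ gives $\eta^{1/2}h^{-1}\le C\eta^{5/4}h^{-3/2}$, all of them are absorbed into the claimed remainder $\mathcal{O}(\eta^{5/4}h^{-3/2}\e^{-2S/h})$.

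For the $\Rea z$-derivatives the key observation is that $A$, $S$ and $\partial_{\Ima z}S$ are $\Rea z$-independent, so $\partial_{\Rea z}|(e_0|f_0)|^2=\frac{A^{1/2}}{\pi h}|\partial_{\Ima z}S|^2\cdot 2(1+K)(\partial_{\Rea z}K)\,\e^{-2S/h}$, and similarly $\partial_{\Rea z}$ applied to the formula for $\partial_{\Ima z}|(e_0|f_0)|^2$ annihilates its ($\Rea z$-independent) main term and only hits the factors $K,\partial_{\Ima z}K$. The crude bound $\partial_{\Rea z}K=\mathcal{O}(\eta^{-1/4}h^{-1/2})$ merely reproduces the polynomial remainder above, which is not enough; here I would invoke the finer information coming out of the proof of Proposition \ref{prop:AbsSPe0f0} — the exact analogue of how the factor $|1-\e^{\Phi(z,h)}|=1+\mathcal{O}(\e^{-\frac{2\pi}{h}|\Ima z-\langle\Ima g\rangle|})$ appears in Proposition \ref{prop:Resolvent}: the \emph{entire} $\Rea z$-dependence of $(e_0|f_0)$ is carried by the secondary tunnelling paths winding around $S^1$, which enter with a relative factor $\e^{-\frac{2\pi}{h}|\Ima z-\langle\Ima g\rangle|}$ times an oscillatory factor $\e^{i\Rea z(x_--x_+)/h}$. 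Hence $\partial_{\Rea z}K=\mathcal{O}(h^{-1}\e^{-\frac{2\pi}{h}|\Ima z-\langle\Ima g\rangle|})$, and $\partial_{\Ima z}$ of such a quantity preserves the exponential factor up to a further $\mathcal{O}(h^{-1})$; since $|\Ima z-\langle\Ima g\rangle|>1/C$, these polynomial prefactors — together with $A^{1/2}\asymp\eta^{1/2}$, $|\partial_{\Ima z}S|^2\asymp\eta\le\mathrm{const.}$ — are swallowed by the exponential, so that both $\partial_{\Rea z}|(e_0|f_0)|^2$ and $\partial_{\Rea z}\partial_{\Ima z}|(e_0|f_0)|^2$ come out $\mathcal{O}(\e^{-\frac{1}{Ch}}\e^{-\frac{2S}{h}})$ after relabelling the constant.

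The routine part is the first identity — a careful but mechanical product-rule computation organised by the scales $A\asymp\eta$, $|\partial_{\Ima z}S|\asymp\eta^{1/2}$ and $h^{2/3}\ll\eta$. The real obstacle is the second statement: the polynomial remainder stated in Proposition \ref{prop:AbsSPe0f0} is, by itself, too weak to deliver the exponential gain in $\Rea z$, so one has to return to the tunnelling construction and isolate the geometric mechanism — the secondary, winding tunnelling paths and the extra suppression $\e^{-\frac{2\pi}{h}|\Ima z-\langle\Ima g\rangle|}$ they carry — that makes the $\Rea z$-dependence of $|(e_0|f_0)|^2$ exponentially small.
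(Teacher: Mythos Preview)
Your proposal is correct and follows essentially the same route as the paper. The paper's proof is even terser than yours: it simply notes that the first identity is a direct computation from Proposition~\ref{prop:AbsSPe0f0} (product rule, each derivative costing at most a factor $\eta^{1/2}h^{-1}$), and for the $\Rea z$-derivatives it points to the two structural facts you also isolate --- that the prefactor $R=a\overline{b}/\sqrt{h}$ (hence $A^{1/2}$, $S$, $\partial_{\Ima z}S$) is $\Rea z$-independent because the normalisation integrals for $e_{wkb},f_{wkb}$ involve only $\Ima\phi_\pm$, and that the only genuinely $\Rea z$-dependent piece of the tunnelling formula is the secondary contribution carrying the factor $\e^{\Phi}$, which is $\mathcal{O}(\e^{-1/Ch})$ once $|\Ima z-\langle\Ima g\rangle|>1/C$. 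Your write-up makes this mechanism more explicit (via the analogy with the $|1-\e^{\Phi}|$ factor in Proposition~\ref{prop:Resolvent} and the more detailed formula \eqref{eqn:formSPe0f013}), which is a useful addition, but the underlying argument is the same.
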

\begin{rem}
 \label{rem:MoreExplSP13}
 Let us point out that we can find an even more detailed 
 formula for $|(e_0|f_0)|$ (cf. (\ref{eqn:formSPe0f013})) 
 valid even for $|\Ima z - \langle \Ima g\rangle| \geq 1/C $:
  \begin{align*}
   |(e_0|f_0)| =
   &\frac{\left(\frac{i}{2}\{p,\overline{p}\}(\rho_+)
	       \frac{i}{2}\{\overline{p},p\}(\rho_-)\right)^{\frac{1}{4}}}{\sqrt{\pi h}}
   \e^{-\frac{S}{h}}|\partial_{\Ima z}S|
   \left(1 + \frac{2\pi - |\partial_{\Ima z}S|}{|\partial_{\Ima z}S|}\e^{\Rea\Phi}\right) \notag \\
   & + \mathcal{O}\!\left(\e^{-\frac{S}{h}}\right)
   + \mathcal{O}\!\left(\eta^{3/4}h^{-\frac{1}{2}}\e^{-\frac{S}{h}+\Rea\Phi}\right)
  \end{align*}
\end{rem}
\begin{proof}[Proof of Proposition \ref{prop:AbsSPe0f0}]
First, suppose that $z\in\Omega$ with $\dist(\Omega,\partial\Sigma)> 1/C$. 
Then, by Proposition \ref{quasimodes}
  \begin{align}\label{eqn:PfSPe0f0Estim}
   (e_0|f_0) = (e_0|f_{wkb}) + \mathcal{O}\!\left(\e^{-\frac{S}{h}}\right) 
	     = (e_{wkb}|f_{wkb}) + \mathcal{O}\!\left(\e^{-\frac{S}{h}}\right).
  \end{align}
Recall the definition of the quasimodes $e_{wkb}$ and $f_{wkb}$ from Section \ref{subse:Quasmod}. 
Moreover, recall from Section \ref{sec:Intro} that by the natural projection 
$\Pi:\mathds{R}\rightarrow S^1$ we identify $S^1$ with the interval $[x_-(z) -2\pi, x_-(z)[$. 
This choice leads to the fact that $\phi_+$ is given by 
  \begin{equation*}
     \phi_+(x) =  \int_{x_+(z)}^{x}(z-g(y))dy
  \end{equation*}
on this interval, whereas $\phi_-$ is given by 
  \begin{equation*}
   \phi_-(x) = \begin{cases}
       \displaystyle{
       \int_{x_-(z)}^{x}\overline{(z-g(y))}dy, \quad \text{for }x\in [x_+(z),x_-(z)[}, \\ 
        \displaystyle{
        \int_{x_-(z)-2\pi}^{x}\overline{(z-g(y))}dy, \quad \text{for }x\in [x_-(z)-2\pi,x_+(z)[.}
      \end{cases}
  \end{equation*}
Define 
   \begin{equation}\label{defn:R}
   R:= \frac{a\overline{b}}{\sqrt{h}}
                = \frac{\left(\frac{i}{2}\{p,\overline{p}\}(\rho_+)\frac{i}{2}\{\overline{p},p\}(\rho_-)\right)^{\frac{1}{4}}}{\sqrt{\pi}}
		  + \mathcal{O}(\sqrt{h}),
  \end{equation}
where we used Lemma \ref{lem:AssympA0B0BySPM}, Proposition \ref{prop:QuasmodCB} and (\ref{eqn:TaylorPoissonBracket}) 
to gain the equality. 
A straight forward computation yields that 
  \begin{align}\label{eqn:SPewkbfwkb}
  (e_{wkb}|f_{wkb}) 
   &=R\e^{\frac{i}{h}\int_{x_+(z)}^{x_-(z)-2\pi}(z-g(y))dy}
		  \int_{x_-(z)-2\pi}^{x_+(z)}\chi_e(x)\chi_f(x) dx \notag \\
   &\phantom{=.}+R\e^{\frac{i}{h}\int_{x_+(z)}^{x_-(z)}(z-g(y))dy} 
   \int_{x_+(z)}^{x_-(z)}\chi_e(x)\chi_f(x)dx.
  \end{align}
Using (\ref{eqn:defCutOffWKB}) and Definition \ref{def:quasmodCB}, we have that 
  \begin{align}\label{eqn:DeltaLength}
   \int_{x_-(z)-2\pi}^{x_+(z)}\chi_e(x)\chi_f(x) dx &= x_+(z) - (x_-(z)-2\pi) \notag \\
      &\hspace{-1cm}-\int_{x_--2\pi}^{x_--2\pi+\sqrt{h}}(1-\chi_e(x))dx - \int_{x_++2\pi-\sqrt{h}}^{x_++2\pi}(1-\chi_f(x))dx \notag \\
		&= x_+(z) - (x_-(z)-2\pi) +\mathcal{O}\!\left(\sqrt{h}\right), \notag \\
   \end{align}
and similarly 
   \begin{align}\label{eqn:DeltaLength_2}
   \int_{x_+(z)}^{x_-(z)}\chi_e(x)\chi_f(x)dx = x_-(z) - x_+(z) + \mathcal{O}\!\left(\sqrt{h}\right).
  \end{align}
Now let us assume that we are below the spectral line of $P_h$, i.e. $\Ima z \leq \langle \Ima g\rangle$. 
There, we see that 
  \begin{align*}
   |(e_{wkb}|f_{wkb})|
  &= R\e^{-\frac{1}{h}\Ima\int_{x_+(z)}^{x_-(z)}(z-g(y))dy}
      \Big|(x_-(z) -x_+(z)) +\mathcal{O}\!\left(\sqrt{h}\right) 
      \notag \\
     &+ \left(x_+(z) - (x_-(z)-2\pi)+\mathcal{O}\!\left(\sqrt{h}\right) \right)
       \e^{-\frac{2\pi i}{h}( z - \langle g\rangle)}\Big|.
  \end{align*}
Analogously, if we are above the spectral line, i.e. $\Ima z \geq \langle \Ima g\rangle$, 
  \begin{align*}
   |(e_{wkb}|f_{wkb})|
  &= R\e^{-\frac{1}{h}\Ima\int_{x_+(z)}^{x_-(z)-2\pi}(z-g(y))dy}
      \Big|(x_+(z) - (x_+(z)-2\pi)) 
      \notag \\
     &+\mathcal{O}\!\left(\sqrt{h}\right) + \left(x_-(z) -x_+(z)+\mathcal{O}\!\left(\sqrt{h}\right) \right)
       \e^{\frac{2\pi i}{h}( z - \langle g\rangle)}\Big|.
  \end{align*}
Together with (\ref{eqn:PfSPe0f0Estim}), we conclude that 
  \begin{align}\label{eqn:formSPe0f013}
   |(e_0|f_0)| =
   &\frac{\left(\frac{i}{2}\{p,\overline{p}\}(\rho_+)
	       \frac{i}{2}\{\overline{p},p\}(\rho_-)\right)^{\frac{1}{4}}}{\sqrt{\pi h}}
   \e^{-\frac{S}{h}}|\partial_{\Ima z}S|
   \left(1 + \frac{2\pi - |\partial_{\Ima z}S|}{|\partial_{\Ima z}S|}\e^{\Rea\Phi}\right) \notag \\
   & + \mathcal{O}\!\left(\e^{-\frac{S}{h}}\right)
   + \mathcal{O}\!\left(\eta^{3/4}h^{-\frac{1}{2}}\e^{-\frac{S}{h}+\Rea\Phi}\right)
  \end{align}
where $\Phi=\Phi(z,h)$ is as in Proposition \ref{prop:Resolvent}.  
Note that $\exp\left\{\Phi(z,h)\right\}$ is exponentially 
small for $|\Ima z - \langle \Ima g \rangle| > 1/C$. Thus, 
  \begin{align}\label{eqn:formSPe0f0}
   |(e_0|f_0)| =
   \frac{\left(\frac{i}{2}\{p,\overline{p}\}(\rho_+)
	       \frac{i}{2}\{\overline{p},p\}(\rho_-)\right)^{\frac{1}{4}}}{\sqrt{\pi h}}
   \e^{-\frac{S}{h}}|\partial_{\Ima z}S(z)|
   \left(1 + \mathcal{O}\!\left(\eta^{-3/4}h^{\frac{1}{2}}\right)\right).
  \end{align}
Now let us discuss the $\partial_{z\overline{z}}^{\beta}$-derivatives of the 
errors. First let us treat the error term $\mathcal{O}\!\left(\sqrt{h}\right)$ from 
the definition of $R$ which is given as a product of the normalization coefficients of 
the quasimodes $e_{wkb}$ and $f_{wkb}$. Thus, it is easy to see that 
  \begin{align}\label{eqn:estimErrVDer}
   \partial_{z\overline{z}}^{\beta} \mathcal{O}\!\left(\sqrt{h}\right)
    = \mathcal{O}\!\left(h^{-(|\beta|-1/2)}\right).
  \end{align}
The $\partial_{z\overline{z}}^{\beta}$-derivatives of the error term in (\ref{eqn:DeltaLength}), 
\eqref{eqn:DeltaLength_2} can be treated as follows: note that  
  \begin{align*}
   &\partial_z\int_{x_--2\pi}^{x_--2\pi+\sqrt{h}}(1-\chi_e(x;z))dx  = \notag \\
   &\left(\chi_e(x_--2\pi;z) - \chi_e(x_--2\pi+\sqrt{h};z)\right)\partial_zx_-
   -\int_{x_--2\pi}^{x_--2\pi+\sqrt{h}}\partial_z\chi_e(x;z)dx.
  \end{align*}
By (\ref{eqn:higherzzbarderchi_e})
  \begin{align*}
   \int_{x_--2\pi}^{x_--2\pi+\sqrt{h}}\partial_z\chi_e(x;z)dx &= 
   - \int_{x_--2\pi}^{x_--2\pi+\sqrt{h}}\psi\left(\frac{x-x_-+2\pi}{\sqrt{h}}\right)\partial_z x_-(z)dx 
   \notag \\
   &= - \partial_z x_-(z).
  \end{align*}
Since $\chi_e(x_--2\pi;z)=0$ and $\chi_e(x_--2\pi+\sqrt{h};z)=1$,
\begin{equation*}
   \partial_z\int_{x_--2\pi}^{x_--2\pi+\sqrt{h}}(1-\chi_e(x;z))dx =0.
  \end{equation*}
\eqref{eqn:DeltaLength_2} as well as the respective $\overline{z}$-derivatives can be treated 
analogously, and we conclude that $\partial_{z\overline{z}}^{\beta} \mathcal{O}(\sqrt{h}) = 0$ 
for all $\beta\in\mathds{N}^2\backslash\{0\}$. Hence, we have
\begin{align*}
   \partial_z^n\partial_{\overline{z}}^m 
   \mathcal{O}\!\left(\eta^{-3/4}h^{\frac{1}{2}}\right)
    = \mathcal{O}\!\left(\eta^{\frac{|\beta|}{2}-\frac{3}{4}}h^{-|\beta|+\frac{1}{2}}\right).
  \end{align*}
Finally, in the case where $z\in\Omega\cap\Omega_{\eta}^{a}$ we can conclude the statement by a 
rescaling argument similar as in the proof of Proposition \ref{quasimodes}.
\end{proof}
\begin{rem}\label{rem:derTunnel}
 It is a direct consequence of (\ref{eqn:SPewkbfwkb}), 
 (\ref{eqn:PfSPe0f0Estim}) and Proposition \ref{quasimodes} 
 that 
  \begin{equation*}
   \partial_{z\overline{z}}^{\beta}(e_0|f_0) 
    =  \mO\left(\eta^{\frac{|\beta|+3/2}{2}}h^{-(|\beta|+1/2)}\e^{-\frac{S}{h}}\right),
  \end{equation*}
 where we conclude the case where $z\in\Omega\cap\Omega_{\eta}^{a}$ by a rescaling argument 
 similar  as in the proof of Proposition \ref{quasimodes}.
\end{rem}
\begin{proof}[Proof of Proposition \ref{prop:EstimAndFormulaDerzE-+_pt2}]
 The first statement follows directly from Proposition \ref{prop:AbsSPe0f0}. 
 The statements regarding the derivatives can be derived by a 
 direct calculation from Proposition \ref{prop:AbsSPe0f0} together with 
 the fact that the $z$- respectively the $\overline{z}$-derivative of 
 the error term increases its growth at most by a term of order $\eta^{1/2}h^{-1}$. 
 Moreover, we use that $\e^{\Phi}$ is exponentially small in $h$ due to $|\Ima z - \langle\Ima g\rangle| >1/C$. 
 Furthermore, we use that the prefactor 
 $\left(\frac{i}{2}\{p,\overline{p}\}(\rho_+)\frac{i}{2}\{\overline{p},p\}(\rho_-)\right)^{\frac{1}{4}}$ 
 is the first order term of $R$ (cf. (\ref{defn:R})). Recall that $R$ is defined via 
 the normalization coefficients of the quasimodes $e_{wkb}$ and $f_{wkb}$. It is thus independent of $\Rea z$ 
 and its $\partial_{\Ima z}$ derivative is of order $\mO(\eta^{-1/4})$ which can be seen by 
 the stationary phase method and a rescaling argument similar to the one in the proof of Proposition 
 \ref{quasimodes}.
\end{proof}
Now let us give estimates on the derivatives of the effective Hamiltonian 
$E_{-+}(z)$.
\begin{prop}\label{prop:EstimDerzE-+}
 Let $z\in\Omega\Subset\Sigma$ and let $E_{-+}(z)$ be as in Proposition \ref{propGrushUnpertOp}. 
 Then there exists a $C>0$ such that for $h>0$ small enough and all $\beta\in\mathds{N}^2$
 \begin{equation*}
  |\partial_{z\overline{z}}^{\beta} E_{-+}(z)| = 
      \mathcal{O}\!\left(\eta^{\frac{|\beta|+1/2}{2}}h^{-|\beta|+1/2}\e^{-\frac{S}{h}}\right).
 \end{equation*}
\end{prop}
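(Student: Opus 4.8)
The plan is to exploit the explicit identification $E_{-+}(z)v_+ = -\alpha_0(z)v_+$ from Proposition \ref{propGrushUnpertOp}, where $\alpha_0$ is the normalization constant in \eqref{eqn:RelationEjFj} satisfying $(P_h-z)e_0 = \alpha_0 f_0$ and $|\alpha_0|^2 = t_0^2$. Since $|(e_0|f_0)|$ and $t_0^2$ are already controlled with all their $\partial_{z\overline{z}}^\beta$-derivatives (Proposition \ref{prop:AbsSPe0f0}, Remark \ref{rem:derTunnel}, and Proposition \ref{prop:QDistFirstEig}), the strategy is to express $\alpha_0$ itself (not merely $|\alpha_0|$) in a differentiable way and propagate these bounds through the Leibniz rule. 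First I would write $\alpha_0 = ((P_h-z)e_0|f_0)$ directly; this is smooth in $z,\overline z$ by Proposition \ref{prop:E0T0Smooth}, and its size is $|\alpha_0| = |\alpha_0 \overline{\beta_0}|^{1/2}\cdot$(phase) which by $\alpha_0\overline{\alpha}_0 = t_0^2$ has modulus $t_0 = \mathcal{O}(\eta^{1/4}h^{1/2}\e^{-S/h})$, consistent with the $\beta=0$ case of the claim.

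The key step is to differentiate $\alpha_0(z) = ((P_h-z)e_0(z)|f_0(z))$ using the product structure. Expanding, $\partial_{z\overline z}^\beta \alpha_0$ becomes a finite linear combination of terms $((P_h-z)\partial_{z\overline z}^{\beta_1}e_0 | \partial_{z\overline z}^{\beta_2}f_0)$ together with terms where one or two factors of $P_h-z$ are replaced by a factor of $-1$ coming from $\partial_z(P_h-z) = -1$ (and $\partial_{\overline z}(P_h - z) = 0$). In each such term I would rewrite $(P_h-z)$ hitting $\partial^{\beta_1}_{z\overline z}e_0$ by peeling off the leading WKB part: since $(P_h-z)e_{wkb}$ is $\mathcal{O}(h^{3/4}\eta^{\dots})\e^{i\phi_+/h}$ supported in $X_-$ with all derivatives controlled (Lemma \ref{lem:QuasmodProof1} and its rescaled analogue), and $e_0 - e_{wkb}$ together with its derivatives is $\mathcal{O}(\eta^{|\beta|/2}h^{-|\beta|}\e^{-S/h})$ in $L^2$ (Proposition \ref{quasimodes}), each pairing against $\partial^{\beta_2}_{z\overline z}f_0 = \mathcal{O}(\eta^{|\beta_2|/2}h^{-|\beta_2|})$ produces the claimed gain of one power $\e^{-S/h}$, with the correct $\eta,h$ bookkeeping. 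The terms where $(P_h-z)$ has been differentiated away are even simpler: they are of the form $\pm(\partial^{\beta_1}_{z\overline z}e_0|\partial^{\beta_2}_{z\overline z}f_0)$, i.e.\ (derivatives of) $(e_0|f_0)$ up to the product rule, which by Remark \ref{rem:derTunnel} are $\mathcal{O}(\eta^{(|\beta|+3/2)/2}h^{-(|\beta|+1/2)}\e^{-S/h})$ — this is dominated by, or equal to, the target bound $\mathcal{O}(\eta^{(|\beta|+1/2)/2}h^{-|\beta|+1/2}\e^{-S/h})$ once one notes $h^{-(|\beta|+1/2)} = h^{-1}\cdot h^{-|\beta|+1/2}$ and that the extra $h^{-1}$ is absorbed because at least one factor of $P_h-z$ must remain for the term to be nonzero unless $|\beta|\ge 1$, at which point the differentiated factor $-1$ replaces a lost $(P_h-z)$ of size $\mathcal{O}(\sqrt h\,\e^{-S/h})$ in operator-adjusted norms. (I would organize this by treating separately the number $k\in\{0,1,2\}$ of factors $P_h-z$ that survive.)

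The main obstacle I anticipate is the careful bookkeeping in the middle case: showing that each time a factor $(P_h-z)$ is eliminated by a $\partial_z$, one genuinely recovers a compensating smallness so that the estimate does not deteriorate beyond $h^{-|\beta|+1/2}$. This requires using not just $\lVert e_0 - e_{wkb}\rVert = \mathcal{O}(\e^{-S/h})$ but the refined fact that $(P_h-z)e_0 = \alpha_0 f_0$ exactly, so that $(P_h-z)\partial^{\beta_1}_{z\overline z}e_0$ can be re-expressed via $\partial^{\beta_1}_{z\overline z}(\alpha_0 f_0)$ minus commutator-type terms $[\partial^{\beta_1}_{z\overline z}, P_h-z]e_0 = $ (lower-order derivative terms with an extra $-1$), giving an inductive structure on $|\beta|$. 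Carrying the induction through, with the $\eta$-rescaling handled exactly as in the proof of Proposition \ref{quasimodes} (set $\widetilde h = h\eta^{-3/2}$, prove the bound for $\dist(z,\partial\Sigma)>1/C$, then rescale), yields the stated estimate. The boundary-layer case $z\in\Omega\cap\Omega_\eta^a$ follows from the interior case applied to $\widetilde P_{\widetilde h}$ together with \eqref{eqn:Sinvar} and the relation $E_{-+} = \eta \widetilde E_{-+}\cdot\eta^{\text{(power from normalization)}}$, exactly parallel to the end of the proof of Proposition \ref{quasimodes}.
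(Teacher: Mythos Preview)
Your overall strategy --- differentiate $\alpha_0 = ((P_h-z)e_0\mid f_0)$ and bound each piece --- is in the right spirit, but the inductive scheme you describe is circular. When you rewrite $(P_h-z)\partial^{\beta_1}_{z\overline z}e_0$ as $\partial^{\beta_1}_{z\overline z}(\alpha_0 f_0)$ plus commutator terms and pair with $\partial^{\beta_2}_{z\overline z}f_0$, the top-order contribution in the Leibniz expansion of $\partial^{\beta_1}(\alpha_0 f_0)$ is $(\partial^{\beta_1}\alpha_0)f_0$; taking $\beta_1 = \beta$, $\beta_2 = 0$ you recover exactly $\partial^\beta\alpha_0$ on the right-hand side, so the identity collapses to $0=0$ and yields no information on $\partial^\beta\alpha_0$. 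The WKB splitting you sketch first cannot rescue this either: controlling $(P_h-z)\partial^{\beta_1}(e_0-e_{wkb})$ in $L^2$ requires $H^1$-type bounds on $\partial^{\beta_1}(e_0-e_{wkb})$ (as in Proposition~\ref{quasimodes2}), but that proposition is proved \emph{after} Proposition~\ref{prop:EstimDerzE-+} and explicitly uses it, so appealing to it here would again be circular.

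The paper closes this gap by using \emph{both} relations in \eqref{eqn:RelationEjFj}, in particular the adjoint identity $(P_h-z)^*f_0 = \overline{\alpha_0}\,e_0$, which you never invoke. Differentiating $(P_h-z)e_0 = \alpha_0 f_0$, pairing with $f_0$, and moving $(P_h-z)$ across via the adjoint yields the closed first-order recurrences
\[
\partial_{\overline z}E_{-+} = E_{-+}\big\{(\partial_{\overline z}e_0\mid e_0) - (\partial_{\overline z}f_0\mid f_0)\big\},\qquad
\partial_z E_{-+} = E_{-+}\big\{(\partial_z e_0\mid e_0) - (\partial_z f_0\mid f_0)\big\} + (e_0\mid f_0).
\]
From here the claim follows by straightforward induction: the base size $|E_{-+}| = \mathcal O(\eta^{1/4}h^{1/2}\e^{-S/h})$, the bounds of Proposition~\ref{quasimodes} on the bracket factors $(\partial e_0\mid e_0)$, $(\partial f_0\mid f_0)$, and Remark~\ref{rem:derTunnel} for the inhomogeneous term $(e_0\mid f_0)$ and its derivatives. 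The missing idea in your proposal is precisely this adjoint move; without it the recursion does not close.
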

\begin{proof}
Take the $\partial_{\overline{z}}$ derivative and the $\partial_z$ derivative of the first 
equation in (\ref{eqn:RelationEjFj}) to gain 
  \begin{align*}
   (P_h -z)\partial_{\overline{z}}e_0 = (\partial_{\overline{z}}\alpha_0)f_0 + \alpha_0 \partial_{\overline{z}}f_0, ~
   (P_h -z)\partial_{z}e_0 - e_0 = (\partial_{z}\alpha_0)f_0 + \alpha_0\partial_{z}f_0.
  \end{align*}
Now consider the scalar product of these equations with $f_0$ and 
recall from Proposition \ref{propGrushUnpertOp} that $E_{-+}(z) = -\alpha_0(z)$ to conclude 
   \begin{align}\label{eqn:derzbarzE-+}
   &\partial_{\overline{z}}E_{-+}(z)= E_{-+}(z)\left\{(\partial_{\overline{z}}e_0|e_0)
    -(\partial_{\overline{z}}f_0|f_0)\right\} 
    ~\text{and}  \notag \\
   & \partial_{z}E_{-+}(z)= E_{-+}(z)\left\{(\partial_{z}e_0|e_0)-(\partial_{z}f_0|f_0)\right\} + (e_0|f_0) .
  \end{align}
The statement of the Proposition then follows by repeated differentiation of 
(\ref{eqn:derzbarzE-+}) and induction using Remark \ref{rem:derTunnel}, the 
estimate $|E_{-+}(z)| = \mO(\eta^{\frac{1}{4}}h^{\frac{1}{2}}\e^{-\frac{S}{h}})$ given in 
(\ref{prop:GrushUnPert}) and (\ref{prop:GrushUnPert2}) and the estimates 
given in Proposition \ref{quasimodes}.
\end{proof}
Finally, Proposition \ref{prop:AbsSPe0f0} permits us to prove the following 
extension of Proposition \ref{quasimodes}: 
\begin{prop}\label{quasimodes2}
Let $z\in\Omega \Subset\Sigma$ and let $e_0$ and $f_0$ be the eigenfunctions of the operators $Q$ and $\tilde{Q}$ with 
respect to their smallest eigenvalue (as in Section \ref{suse:GrushForUnPert}). Let $S=S(z)$ be defined as in 
Definition \ref{defn:ZeroOrderDensity}. Then 
  \begin{itemize}
   \item for $z\in\Omega$ with $\dist(\Omega,\partial\Sigma)>1/C$ 
	 and for all $\alpha\in\mathds{N}^3$ 
	  \begin{equation*}
	      \lVert \partial_{z\overline{z}x}^{\alpha}( e_0 - e_{wkb}) \rVert, ~
 	      \lVert \partial_{z\overline{z}x}^{\alpha}( f_0 - f_{wkb}) \rVert 
 	      = \mathcal{O}\!\left(h^{-|\alpha|}e^{-\frac{S}{h}}\right).
 	    \end{equation*}
 	  Here, we set $\partial_{z\overline{z}x}^{\alpha}= 
 	  \partial_z^{\alpha_1}\partial_{\overline{z}}^{\alpha_2}\partial_x^{\alpha_3}$. 
 	  Furthermore, the various $z$-, $\overline{z}$- and $x$-derivatives of 
	  $e_0$, $f_0$, $e_{wkb}$ and $f_{wkb}$ have at most temperate growth in $1/h$, more precisely
	    \begin{equation*}
	         \lVert \partial_{z\overline{z}x}^{\alpha}e_{wkb} \rVert, ~
	         \lVert \partial_{z\overline{z}x}^{\alpha} f_{wkb} \rVert, ~
	         \lVert \partial_{z\overline{z}x}^{\alpha} e_0 \rVert, 
	         ~ \lVert \partial_{z\overline{z}x}^{\alpha}f_0 \rVert 
	  	 =\mathcal{O}\!\left(h^{-|\alpha|}\right)
  	    \end{equation*}
	  for all $\alpha\in\mathds{N}^3$;
   \item for $h^{2/3} \ll \eta < const.$, $z\in\Omega\cap\Omega_{\eta}^{a}$ and for all $\alpha\in\mathds{N}^3$
	  \begin{equation*}
	      \lVert \partial_{z\overline{z}x}^{\alpha}(e_0 -  e_{wkb}^{\eta}) \rVert, ~
 	      \lVert \partial_{z\overline{z}x}^{\alpha}(f_0 -  f_{wkb}^{\eta}) \rVert 
		  = \mathcal{O}\!\left(\eta^{\frac{\alpha_1+\alpha_2}{2}+\alpha_3}h^{-|\alpha|}e^{-\frac{S}{h}}\right).
 	    \end{equation*}
 	  Furthermore, the various  $z$-, $\overline{z}$- and $x$-derivatives of 
	  $e_0$, $f_0$, $e_{wkb}^{\eta}$ and $f_{wkb}^{\eta}$ have at most temperate growth in $\sqrt{\eta}/h$, more precisely
	    \begin{equation*}
	         \lVert \partial_{z\overline{z}x}^{\alpha} e_{wkb}^{\eta} \rVert, ~
	         \lVert \partial_{z\overline{z}x}^{\alpha}f_{wkb}^{\eta}  \rVert, ~
	         \lVert \partial_{z\overline{z}x}^{\alpha}e_0 \rVert, 
	         ~ \lVert \partial_{z\overline{z}x}^{\alpha} f_0 \rVert 
	  	 =\mathcal{O}\!\left(\eta^{\frac{\alpha_1+\alpha_2}{2}+\alpha_3}h^{-|\alpha|}\right)
  	    \end{equation*}
	  for all $\alpha\in\mathds{N}^3$.
\end{itemize}
\end{prop}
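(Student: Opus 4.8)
\emph{Sketch of the proof.} The plan is to reduce the bounds on $x$-derivatives to the bounds on $z$- and $\overline z$-derivatives already obtained in Proposition \ref{quasimodes}, by exploiting the fact that $e_{wkb}$, $f_{wkb}$, $e_0$ and $f_0$ all satisfy first order differential equations in $x$.

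I would first dispose of the quasimodes themselves. From the explicit formula \eqref{eqn:DefEWKB} one has $\partial_x\phi_+(x;z)=z-g(x)=\mO(1)$ together with all its $x$-derivatives, so each $\partial_x$ falling on the phase $\e^{\frac{i}{h}\phi_+}$ contributes a factor $\mO(h^{-1})$, while by \eqref{eqn:higherzzbarderchi_e} each $\partial_x$ falling on $\chi_e$ contributes a factor $\mO(h^{-1/2})$; a Leibniz expansion together with the stationary phase bookkeeping of Section \ref{suse:QuasmodInt} (as in Lemma \ref{lem:QuasmodProof3}) then gives $\lVert\partial_{z\overline zx}^{\alpha}e_{wkb}\rVert=\mO(h^{-|\alpha|})$, and the same for $f_{wkb}$.

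Next I would treat $e_0$ and $f_0$ via the identities \eqref{eqn:RelationEjFj} with $j=0$, that is $(P_h-z)e_0=\alpha_0f_0$ and $(P_h-z)^*f_0=\overline{\alpha_0}e_0$, where $\alpha_0=-E_{-+}(z)$. Since $hD_x$ is self-adjoint, these solve for the first $x$-derivative, $\partial_x e_0=\tfrac{i}{h}\bigl(\alpha_0f_0-(g-z)e_0\bigr)$ and $\partial_x f_0=\tfrac{i}{h}\bigl(\overline{\alpha_0}e_0-(\overline g-\overline z)f_0\bigr)$, and repeated differentiation expresses each $\partial_x^{k}e_0$, $\partial_x^{k}f_0$ as $\tfrac{i}{h}$ times a combination of lower order $x$-derivatives of $e_0$, $f_0$, of the smooth function $g$, and of $\alpha_0$. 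Since $|\alpha_0|=\mO(\sqrt h\,\e^{-S/h})$ by \eqref{prop:GrushUnPert}, a joint induction on $k$ gives $\lVert\partial_x^{k}e_0\rVert,\lVert\partial_x^{k}f_0\rVert=\mO(h^{-k})$; running it together with a Leibniz expansion in $z,\overline z$ — controlling $\partial_{z\overline z}^{\beta}\alpha_0$ by Proposition \ref{prop:EstimDerzE-+} and $\partial_{z\overline z}^{\beta}e_0,\partial_{z\overline z}^{\beta}f_0$ by Proposition \ref{quasimodes} — yields the temperate bound $\lVert\partial_{z\overline zx}^{\alpha}e_0\rVert,\lVert\partial_{z\overline zx}^{\alpha}f_0\rVert=\mO(h^{-|\alpha|})$. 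For the difference I would set $w:=e_0-e_{wkb}$; since $\e^{\frac{i}{h}\phi_+}$ is an exact null solution of $P_h-z$ one has $(P_h-z)e_{wkb}=\tfrac{h}{i}h^{-1/4}a\,\chi_e'\,\e^{\frac{i}{h}\phi_+}=:\rho_e$, supported in $]x_--2\pi,x_--2\pi+h^{1/2}[\,\cup\,]x_--h^{1/2},x_-[$ with $\lVert\partial_{z\overline zx}^{\alpha}\rho_e\rVert=\mO(h^{1/2-|\alpha|}\e^{-S/h})$ by the computation of Lemma \ref{lem:QuasmodProof1}. Then $(P_h-z)w=\alpha_0f_0-\rho_e$ gives $\partial_xw=\tfrac{i}{h}\bigl(\alpha_0f_0-\rho_e-(g-z)w\bigr)$, and the same induction, now fed with $\lVert\partial_{z\overline z}^{\beta}w\rVert=\mO(h^{-|\beta|}\e^{-S/h})$ from Proposition \ref{quasimodes} and the temperate bounds on $\partial_x^{k}f_0$ just obtained, yields $\lVert\partial_{z\overline zx}^{\alpha}(e_0-e_{wkb})\rVert=\mO(h^{-|\alpha|}\e^{-S/h})$; the case of $f_0-f_{wkb}$ is identical.

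Finally, the statements for $z\in\Omega\cap\Omega_\eta^{a}$ would follow by the rescaling already used in the proof of Proposition \ref{quasimodes}: with $x=\sqrt\eta\,\widetilde x$ and $\widetilde h=h/\eta^{3/2}$ one has $\partial_x=\eta^{-1/2}\partial_{\widetilde x}$, so each $x$-derivative costs a factor $\eta^{-1/2}\widetilde h^{-1}=\eta\,h^{-1}$, whereas (as in Proposition \ref{quasimodes}) each $z$- or $\overline z$-derivative costs $\eta^{-1}\widetilde h^{-1}=\eta^{1/2}h^{-1}$; this reproduces the exponents $\eta^{\frac{\alpha_1+\alpha_2}{2}+\alpha_3}h^{-|\alpha|}$ in the claim. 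The step I expect to be the most delicate is the joint induction for the difference: the Leibniz expansion there mixes the exponentially small factor $\alpha_0$, whose $z,\overline z$-derivatives are available only through Proposition \ref{prop:EstimDerzE-+}, with the merely temperately growing $e_0$, $f_0$, and one must simultaneously track the $\eta$-powers produced by the rescaling; but this is bookkeeping of the same nature as in the proof of Proposition \ref{quasimodes}, and I do not expect a genuine new obstacle.
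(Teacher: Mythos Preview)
Your proposal is correct and follows essentially the same route as the paper: use $(P_h-z)e_0=\alpha_0 f_0$ and its adjoint to solve for $hD_x e_0$, $hD_x f_0$, then induct on the number of $x$-derivatives while feeding in Proposition~\ref{quasimodes} and Proposition~\ref{prop:EstimDerzE-+}; handle the difference via $(P_h-z)(e_0-e_{wkb})=\alpha_0 f_0-\rho_e$ and the same induction; and conclude the $\eta$-case by rescaling. The only cosmetic difference is that the paper first differentiates the relation in $z,\overline z$ and then isolates $hD_x$, whereas you first solve for $\partial_x$ and then take $z,\overline z$-derivatives, but the bookkeeping is identical.
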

\begin{proof}
Will show the proof in the case of $e_0(z)$ since the case of $f_0(z)$ is similar. 
Suppose first that $z\in\Omega$ with $\dist(\Omega,\partial\Sigma)>1/C$. 
Recall from (\ref{eqn:RelationEjFj}) that 
  \begin{equation}\label{eqn:ReGrStateFormula}
   (P_h-z)e_0 = \alpha_0f_0 \quad \text{and} \quad (P_h-z)^*f_0 = \overline{\alpha}_0e_0
  \end{equation}
  First consider the $\partial_z^n \partial_{\overline{z}}^m$ derivatives of (\ref{eqn:ReGrStateFormula}):
  \begin{align}\label{eqn:DiffofReGrStateFormula}  
   (P_h-z)\partial_z^n \partial_{\overline{z}}^m e_0(z) = 
   n\partial_z^{n-1} \partial_{\overline{z}}^{m}e_0(z) + \hspace{-0.2cm}
   \sum\limits_{\substack{|\alpha_1 + \beta_1| = n \\|\alpha_2 + \beta_2| = m } }
   \hspace{-0.2cm}
   \binom{\eta +\beta }{\beta} (\partial^{\eta}\alpha_0(z) )(\partial^{\beta}f_0(z) )
   \end{align}
   and
   \begin{align*}
   (P_h-z)^*\partial_z^n \partial_{\overline{z}}^m f_0(z) = 
   m\partial_z^{n} \partial_{\overline{z}}^{m-1}f_0(z) + \hspace{-0.25	cm}
    \sum\limits_{\substack{|\alpha_1 + \beta_1| = n \\|\alpha_2 + \beta_2| = m } }  
    \hspace{-0.2cm}\binom{\eta +\beta }{\beta}
    (\partial^{\eta}\overline{\alpha}_0(z) ) \partial^{\beta}e_0(z)
  \end{align*}
  and thus 
  \begin{align*}
    h \lVert D_x \partial_z^n \partial_{\overline{z}}^m  e_0(z) \rVert 
     \leq & n \lVert\partial_z^{n-1} \partial_{\overline{z}}^{m}e_0(z)\rVert + 
     \hspace{-0.2cm}
    \sum\limits_{\substack{|\alpha_1 + \beta_1| = n \\|\alpha_2 + \beta_2| = m } } 
    \hspace{-0.2cm}
    \binom{\eta +\beta }{\beta} \lVert \partial^{\eta}\alpha_0(z) \rVert
   \lVert \partial^{\beta}f_0(z) \rVert
      \notag \\
     & + \lVert g -z \rVert_{L^{\infty}(S^1)} \cdot\lVert \partial_z^n \partial_{\overline{z}}^m e_0(z) \rVert 
  \end{align*}
  and
    \begin{align*}
    h \lVert D_x \partial_z^n \partial_{\overline{z}}^m  f_0(z) \rVert 
     \leq & m \lVert\partial_z^{n} \partial_{\overline{z}}^{m-1}f_0(z)\rVert + 
     \hspace{-0.2cm}
    \sum\limits_{\substack{|\alpha_1 + \beta_1| = n \\|\alpha_2 + \beta_2| = m } }
    \hspace{-0.2cm}
     \binom{\eta +\beta }{\beta} \lVert \partial^{\eta}\overline{\alpha}_0(z) \rVert
   \lVert \partial^{\beta}e_0(z) \rVert
      \notag \\
     & + \lVert g -z \rVert_{L^{\infty}(S^1)} \cdot\lVert \partial_z^n \partial_{\overline{z}}^m f_0(z) \rVert .
  \end{align*}
  By Proposition \ref{prop:EstimDerzE-+}, there exists a constant $C>0$ such that 
    \begin{equation}\label{d_alph1}
 |\partial_z^k \partial_{\overline{z}}^j  \alpha_0(z)|=|\partial_z^k \partial_{\overline{z}}^j  E_{-+}(z)| =
   \mathcal{O}\!\left(h^{-(k+j)}\e^{-\frac{S}{h}}\right).
 \end{equation}
By (\ref{lem:QuasModEstimate3}) we conclude 
    \begin{equation*}
     \lVert D_x  \partial_z^n \partial_{\overline{z}}^m e_0(z) \rVert , 
     ~\lVert D_x  \partial_z^n \partial_{\overline{z}}^m f_0(z) \rVert = \mathcal{O}\!\left(h^{-(n+m+1)}\right).
    \end{equation*}
  Repeated differentiation of (\ref{eqn:DiffofReGrStateFormula}) and induction then yield that for all $l\in\mathds{N}$
  \begin{equation*}
   \lVert D_x^l  \partial_z^n \partial_{\overline{z}}^me_0(z) \rVert, 
   \lVert D_x^l  \partial_z^n \partial_{\overline{z}}^mf_0(z) \rVert  =\mathcal{O}\!\left(h^{-(l+n+m)}\right).
  \end{equation*}
 The estimate 
  \begin{equation*}
   \lVert D_x^l  \partial_z^n \partial_{\overline{z}}^me_{wkb} \rVert, 
   \lVert D_x^l  \partial_z^n \partial_{\overline{z}}^mf_{wkb} \rVert  =\mathcal{O}\!\left(h^{-(l+n+m)}\right)
  \end{equation*}
  follows directly by the stationary phase method together with \eqref{d_coef1}, 
  \eqref{eqn:higherzzbarderchi_e}. Finally, using \eqref{eqn:RelationEjFj}, \eqref{eqn:DefEWKB}, consider 
    \begin{equation*}
     (P_h-z)(e_0-e_{wkb}) = \alpha_0f_0 - 
     h^{-\frac{1}{4}}a(z)\frac{h}{i} \partial_{x}\chi_e \e^{\frac{i}{h} \phi_+(x)}
    \end{equation*}
  which implies for $k\geq 1$ that $(hD_x)^k\partial_z^n\partial_{\overline{z}}^{m}(e_0-e_{wkb})$ 
  is equal to 
    \begin{align*}
    (hD_x)^{(k-1)}\partial_z^n\partial_{\overline{z}}^{m}(\alpha_0f_0) &- 
    (hD_x)^{(k-1)}\partial_z^n\partial_{\overline{z}}^{m}
    \left(h^{-\frac{1}{4}}a(z)\frac{h}{i} \partial_{x}\chi_e \e^{\frac{i}{h} \phi_+(x)}\right)
    \notag \\ 
    & 
    +(hD_x)^{(k-1)}\partial_z^n\partial_{\overline{z}}^{m}(g(x)-z)(e_0-e_{wkb}).
    \end{align*}
 By induction over $k$ together with Proposition \ref{quasimodes} and \eqref{d_alph1}, 
 \eqref{eqn:higherzzbarderchi_e}, we conclude the first point of the Proposition. The 
 results in the case where $z\in\Omega\cap\Omega_{\eta}^a$ follow by a rescaling argument 
 similar as in the proof of Proposition \ref{quasimodes}.
\end{proof}

\subsection{Alternative Grushin problems for the unperturbed operator $P_h$}
In \cite{Ha06} Hager set up a different Grushin problem for $P_h$ and $z\in\Omega_i$ 
which results in a more explicit effective Hamiltonian $E^H_{-+}(z)$. 
To avert confusion, we will mark the elements of Hager's Grushin problem with an additional $``H``$. \par
Bordeaux-Montrieux in \cite{BM} then extended Hager's Grushin problem to $z\in\Omega\cap\Omega_{\eta}^{a}$.
It is very useful for the further discussion to have an explicit effective Hamiltonian. Thus we will briefly 
introduce Hager's Grushin problem $\mathcal{P}^H$ and show that $E_{-+}(z)$ and $E^H_{-+}(z)$ differ 
only by an exponentially small error. \\ 

\begin{prop}[\cite{Ha06,BM}]\label{porp:QuasmodHager}
   For $z\in\Omega\Subset\Sigma$, let $x_{\pm}(z)\in\mathds{R}$ be as in Section \ref{sec:Intro}.  
   \begin{itemize}
    \item for $z\in\Omega$ with $\dist(\Omega,\partial\Sigma)>1/C$: 
    let $I_{\pm}$ be open intervals, independent of $z$ such that 
   \begin{align*}
    x_{\pm}(z)\in I_{\pm}, ~ x_{\mp}(z)\notin\overline{I_{\pm}} \quad \text{for all }z\in\overline{\Omega}.
   \end{align*}
   Let $\phi_{\pm}(x;z)$ be as in Definition \ref{def:Quasimodes}. Then, there exist smooth functions $c_{\pm}(z;h)>0$ such 
   that 
    \begin{equation*}
     c_{\pm}(z;h) \sim h^{-\frac{1}{4}}\left(c_{\pm}^0(z)+hc_{\pm}^1(z)+\dots \right)
    \end{equation*}
   and, for $e_+(z;h):=c_+(z;h)\exp(\frac{i\phi_+(x;z)}{h})\in H^1(I_+)$ and 
   $e_-(z;h):=c_-(z;h)\exp(\frac{i\overline{\phi_-(x;z)}}{h})\in H^1(I_+)$,
    \begin{equation*}
     \lVert e_+ \rVert_{L^2(I_+)} =1 = \lVert e_- \rVert_{L^2(I_-)}. 
    \end{equation*}
    Furthermore, we have 
      \begin{equation*}
       c_+^0(z) = \left(\frac{-\Ima g'(x_+(z))}{\pi}\right)^{\frac{1}{4}},
       ~\text{and} ~ c_-^0(z)  = \left(\frac{\Ima g'(x_-(z))}{\pi}\right)^{\frac{1}{4}}.
      \end{equation*}
      
    \item for $z\in\Omega\cap\Omega_{\eta}^{a}$ with $h^{2/3} \ll \eta < const.$: 
    let $J_{\pm}$ be open intervals, such that 
	  \begin{equation*}
	   x_{\pm}(\Omega_{\eta}^{a})\in J_{\pm}, \quad \dist (J_+,J_-) > \frac{1}{C} \eta^{1/2}.
	  \end{equation*}   
    Define $\tilde{I}_{\pm} := S^1 \backslash \overline{J_{\mp}}$.
   Let $\phi_{\pm}(x;z)$ be as in Definition \ref{def:Quasimodes} and set $\tilde{h}:=h/\eta^{3/2}$. 
   Then, there exist smooth functions $c_{\pm}(z;\tilde{h})>0$ such 
   that 
    \begin{equation*}
     c_{\pm}^{\eta}(z;\tilde{h}) \sim \tilde{h}^{-\frac{1}{4}}\eta^{-1/4}\left(c_{\pm}^{0,\eta}(z)
		    +\tilde{h}c_{\pm}^{1,\eta}(z)+\dots \right)
    \end{equation*}
   and, for $e_+^{\eta}(z;h):=c_+^{\eta}(z;\tilde{h})\exp(\frac{i\phi_+(x;z)}{h})\in H^1(\tilde{I}_+)$ and 
   $e_-^{\eta}(z;h):=c_-^{\eta}(z;\tilde{h})\exp(\frac{i\overline{\phi_-(x;z)}}{h})\in H^1(\tilde{I}_+)$,
    \begin{equation*}
     \lVert e_+^{\eta} \rVert_{L^2(\tilde{I}_+)} =1 = \lVert e_-^{\eta} \rVert_{L^2(\tilde{I}_-)}. 
    \end{equation*}
    Furthermore, we have 
     \begin{align*}
   &c_+^{0,\eta}(z)= \left(\frac{|\Ima g''(a)(\tilde{x}_+(\tilde{z})-a/\sqrt{\eta})(1+o(1))|}{\pi}\right)^{\frac{1}{4}}, \quad 
    z\in\Omega_{\eta}^{a}, \notag \\
   &c_-^{0,\eta}(z) = 
    \left(\frac{|\Ima g''(a)(\tilde{x}_-(\tilde{z})-a/\sqrt{\eta})(1+o(1))|}{\pi}\right)^{\frac{1}{4}}, 
      \quad z\in\Omega_{\eta}^{a}.
  \end{align*}
   \end{itemize}
  \end{prop}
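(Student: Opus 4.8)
The plan is to recall the explicit WKB construction of Hager \cite{Ha06}, in the form extended to the boundary regime by Bordeaux-Montrieux \cite{BM}. The crucial observation is that we work on the proper subintervals $I_\pm\subsetneq S^1$ (resp.\ $\tilde I_\pm$), not on $S^1$, so the relevant first-order ODEs can be solved \emph{without error}: by Definition \ref{def:Quasimodes} one has $\partial_x\phi_+(x;z)=z-g(x)$, hence $(P_h-z)\exp(\tfrac ih\phi_+(\cdot;z))=0$ on $I_+$ identically; likewise $(P_h-z)^{*}$ has on $I_-$ an exact null solution of the same WKB form built from $\phi_-$, so that $e_-$ is a genuine quasimode for the adjoint. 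Both exponentials lie in $H^1$ of the corresponding interval, since $\phi_\pm$ is smooth there and the interval is bounded. As intervals $I_\pm$ with $x_\pm(\overline\Omega)\subset I_\pm$ and $x_\mp(\overline\Omega)\cap\overline{I_\pm}=\emptyset$ exist (the sets $x_\pm(\overline\Omega)$ are compact and disjoint, because $\dist(\Omega,\partial\Sigma)>1/C$ forces $x_+(z)\neq x_-(z)$), one simply \emph{defines} $c_\pm(z;h):=\lVert\exp(\tfrac ih\phi_\pm(\cdot;z))\rVert_{L^2(I_\pm)}^{-1}>0$, so that $e_\pm$ is normalized by construction.

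The asymptotics of $c_\pm$ then come from Laplace's method. We have $\lVert\exp(\tfrac ih\phi_\pm(\cdot;z))\rVert_{L^2(I_\pm)}^2=\int_{I_\pm}\exp\!\Big(-\tfrac2h\,\Ima\phi_\pm(x;z)\Big)\,dx$, and from $\partial_x\Ima\phi_\pm=\pm(\Ima z-\Ima g(x))$, together with the hypothesis that $\Ima g$ has only the two critical points $a,b$ and the separation $x_\mp\notin\overline{I_\pm}$, the real exponent $2\Ima\phi_\pm(\cdot;z)$ has on $I_\pm$ a unique critical point at $x=x_\pm(z)$, which is a non-degenerate minimum of value $0$ since $\partial_x^2\Ima\phi_\pm(x_\pm(z);z)=\mp\Ima g'(x_\pm(z))>0$ by the sign conditions $\pm\Ima g'(x_\pm)<0$. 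Laplace's method then gives a complete asymptotic expansion
\begin{equation*}
\int_{I_\pm}\exp\!\Big(-\tfrac2h\,\Ima\phi_\pm(x;z)\Big)\,dx\sim h^{1/2}\big(d_\pm^0(z)+h\,d_\pm^1(z)+\cdots\big),\qquad d_\pm^0(z)=\left(\frac{\pi}{\mp\Ima g'(x_\pm(z))}\right)^{1/2},
\end{equation*}
with coefficients depending smoothly on $z$ (smooth phase and constant amplitude, non-degenerate critical point). Since $d_\pm^0(z)>0$, raising this series to the power $-1/2$ yields $c_\pm(z;h)\sim h^{-1/4}(c_\pm^0(z)+h c_\pm^1(z)+\cdots)$ with smooth coefficients and $c_\pm^0(z)=d_\pm^0(z)^{-1/2}=\big(\mp\Ima g'(x_\pm(z))/\pi\big)^{1/4}$, which are the stated formulas.

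For $z\in\Omega\cap\Omega_\eta^{a}$ I would rescale as in \eqref{eqn:RescaledOp}: with $x=\sqrt\eta\,\tilde x$ and $\tilde h=h/\eta^{3/2}$ one has $P_h-z=\eta(\widetilde P_{\tilde h}-\tilde z)$, and by \eqref{eq:relCrtiP} the critical points move to $x_\pm(z)=\sqrt\eta\,\tilde x_\pm(\tilde z)$. By \eqref{eqn:HTscale} one has $\tilde x_\pm(\tilde z)-a/\sqrt\eta\asymp1$, so $\tilde x_\pm(\tilde z)$ sits, non-degenerately and at bounded distance, near the critical point of $\Ima\tilde g$, while $\tilde h\ll1$ is exactly the hypothesis $h^{2/3}\ll\eta$. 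Hence the two preceding paragraphs apply verbatim with $(\tilde h,\tilde g,\tilde\phi_\pm,\tilde x_\pm)$ replacing $(h,g,\phi_\pm,x_\pm)$. Returning to the variable $x$, the Jacobian $dx=\sqrt\eta\,d\tilde x$ in the $L^2(\tilde I_\pm,dx)$-norm multiplies the norm-squared by $\sqrt\eta$, producing the extra factor $(\sqrt\eta)^{-1/2}=\eta^{-1/4}$ in $c_\pm^\eta$; and the leading coefficient $\big(\mp\Ima\tilde g'(\tilde x_\pm)/\pi\big)^{1/4}$ is brought to the claimed form by the second-order Taylor expansion of $g$ at $a$ (using $\Ima g'(a)=0$ and $\Ima g''(a)\neq0$), which gives $\Ima\tilde g'(\tilde x_\pm(\tilde z))=\Ima g''(a)\,(\tilde x_\pm(\tilde z)-a/\sqrt\eta)\,(1+o(1))$ as $\eta\to0$.

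There is no deep obstacle here: the two points requiring care are the $\eta$-uniformity of the stationary-phase expansion in the boundary regime — which rests on $\tilde h\to0$ and on the non-degenerate critical point staying in a fixed compact set, both ensured by the hypotheses and \eqref{eqn:HTscale} — and the verification that inverting and square-rooting the norm expansion preserves smooth $z$-dependence; both are routine. Beyond these, the proposition merely records the explicit WKB solutions of \cite{Ha06,BM}.
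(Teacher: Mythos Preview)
Your argument is correct and is precisely the approach the paper has in mind: the paper's own proof simply cites \cite{Ha06} and \cite{BM} for the two bullet points and remarks that the explicit form of $c_\pm^{0,\eta}$ follows by the same stationary-phase/rescaling computation as in Proposition~\ref{prop:QuasmodCB}. You have spelled out exactly that computation (Laplace's method on $\int_{I_\pm}\exp(-\tfrac{2}{h}\Ima\phi_\pm)\,dx$ for the interior case, then the $x=\sqrt\eta\,\tilde x$ rescaling for the boundary case), so your write-up is in fact more detailed than the paper's, but methodologically identical to what is done for $e_{wkb},f_{wkb}$ in Lemma~\ref{lem:AssympA0B0BySPM} and Proposition~\ref{prop:QuasmodCB}.
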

  \begin{proof}
  For a proof of the first statement see \cite{Ha06}. The second statement has been proven in \cite{BM} 
  with the exception of the representation of $c_{\pm}^{0,\eta}(z)$ which can be achieved by an analogous argument to 
  the one used in the proof of Proposition \ref{prop:QuasmodCB}. 
  \end{proof}
Note that $(P_h-z)e_+^{\bullet}(x;z)=0$ on $I_+$ and that $(P_h-z)^*e_-^{\bullet}(x;z)=0$ on $I_-$. 
With these quasimodes Hager and then Bordeaux-Montrieux set up a Grushin problem
$\mathcal{P}^H$ and proved the existence of an inverse $\mathcal{E}^H$. 
  \begin{prop}[\cite{Ha06}]\label{prop:GrushHager}
   For $z\in\Omega_i\Subset\mathring{\Sigma}$ and $x_{\pm}(z)$ as in Section \ref{sec:Intro}. 
   Let $g\in\mathcal{C}^{\infty}(S^1:\mathds{C})$ be as in (\ref{eqn:defnModelOperator}) and 
   let $a < b< a+2\pi$ where $a$ denotes the minimum and $b$ the maximum 
   of $\Ima g$. Let $J_{+}\subset(b,a+2\pi)$ and $J_{-}\subset(a,b)$ such that 
   $\overline{\{x_{\pm}(z):~z\in\Omega\}}\subset J_{\pm}$. Let $\chi_{\pm}\in\mathcal{C}^{\infty}_c(I_{\pm})$ be such that 
   $\chi_{\pm}\equiv 1$ on $\overline{J_{\pm}}$ and $\supp(\chi_+) \cap \supp(\chi_-) = \emptyset$. Define
  \begin{align*}
    R_+^H:~ &H^1(S^1)\longrightarrow\mathds{C}:~ u  \longmapsto (u|\chi_+e_+) \notag \\
    R_-^H:~&\mathds{C}\longrightarrow L^2(S^1):~ u_-  \longmapsto u_-\chi_-e_-.  
  \end{align*}
Then 
  \begin{equation*}
   \mathcal{P}^H(z):=\begin{pmatrix}
                    P_h-z & R_-^H \\ R_+^H & 0 \\
                   \end{pmatrix}
   :~ H^1(S^1)\times \mathds{C}\longrightarrow L^2(S^1)\times \mathds{C}
  \end{equation*}
is bijective with the bounded inverse 
  \begin{equation*}
   \mathcal{E}^H(z) = \begin{pmatrix}
                     E^H(z) & E_+^H(z) \\ E_-^H(z) & E_{-+}^H(z) \\
                    \end{pmatrix}
  \end{equation*}
where 
  \begin{align}\label{eqn:DetailGrushHag}
   &\lVert E^H(z)\rVert_{L^2\rightarrow H^1} = \mathcal{O}(h^{-1/2}),
   &\lVert E_-^H(z)\rVert_{L^2\rightarrow\mathds{C}} = \mathcal{O}(1), \notag \\
   & \lVert E_+^H(z)\rVert_{\mathds{C}\rightarrow H^1} = \mathcal{O}(1), 
   &|E_{-+}^H(z)| = \mathcal{O}\!\left(\e^{-\frac{1}{Ch}}\right). 
  \end{align}
Furthermore,
   \begin{align}\label{eqn:EffectHamHager}
    E_{-+}^H(z) = 
    &\left(\left(\frac{i}{2}\{p,\overline{p}\}(\rho_+)
	  \frac{i}{2}\{\overline{p},p\}(\rho_-)\right)^{\frac{1}{4}}\left(\frac{h}{\pi}\right)^{\frac{1}{2}}+
	  \mathcal{O}\!\left(h^{\frac{3}{2}}\right)\right)\cdot \notag \\
    &\left(\e^{\frac{i}{h}\int_{x_+}^{x_-}(z-g(y))dy} - \e^{\frac{i}{h}\int_{x_+}^{x_- +2\pi}(z-g(y))dy}\right),
   \end{align}
where the prefactor of the exponentials depends only on $\Ima z$ and has bounded derivatives of order 
$\mathcal{O}(\sqrt{h})$. 
  \end{prop}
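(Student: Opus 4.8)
The plan is to recall and flesh out Hager's hand‑made inversion of $\mathcal{P}^H(z)$ from \cite{Ha06} (with the boundary‑layer variant of \cite{BM} for $z\in\Omega\cap\Omega_\eta^a$), in which one solves the first‑order ODE $(P_h-z)u=hD_xu+(g-z)u=v$ on $S^1\cong[x_--2\pi,x_-[$ by quadrature and then simply reads off the effective Hamiltonian. I would first build an approximate inverse $\mathcal{E}^H_0$: on each of the two arcs of $S^1$ cut out by $\{x_+,x_-\}$ the operator $P_h-z$ has the elementary solution operator $v\mapsto\frac ih\int e^{\frac ih\int(z-g)}v$, and since $|e^{\frac ih\int_y^x(z-g)}|=e^{-\frac1h\Ima\int_y^x(z-g)}$, integrating in the direction in which $\Ima(z-g)$ has the favourable sign gives forward/backward solution operators bounded on $L^2$. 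Gluing the two arcs across $x_+$ is free because the mismatch lies along the null solution $e_+$ of $P_h-z$, which is exactly the range of $R_+^H$; the choice of $R_+^H$ (built from $e_+$ of Proposition \ref{porp:QuasmodHager}) also damps the direction in which the naive solution operator grows near $x_+$, leaving $\lVert E^H\rVert_{L^2\to H^1}=\mO(h^{-1/2})$ (the $H^1$ part via $hD_xu=v-(g-z)u$), while $\lVert E^H_\pm\rVert=\mO(1)$ since $e_\pm$ are normalized; gluing across the seam $x_-$ is arranged through the coefficient of $\chi_-e_-$, i.e.\ through $E^H_-$ and $E^H_{-+}$. One then checks $\mathcal{P}^H\mathcal{E}^H_0=I+\mathcal K$ with $\lVert\mathcal K\rVert=\mO(h^\infty)$ — using that $(P_h-z)e_+=0$ holds exactly on $I_+$ and that $\chi_\pm$ are cut off at distance $\mO(1)$ from $x_\pm$ — so $\mathcal{P}^H$ is bijective, $\mathcal{E}^H=\mathcal{E}^H_0(I+\mathcal K)^{-1}$ obeys \eqref{eqn:DetailGrushHag}, and $|E^H_{-+}|=\mO(e^{-1/Ch})$ will drop out of the computation below (consistently with $t_0^2\asymp h\,\e^{-2S/h}$ from Proposition \ref{prop:QDistFirstEig}).

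For the formula \eqref{eqn:EffectHamHager} I would solve $\mathcal{P}^H\binom{u}{u_-}=\binom{0}{v_+}$ directly, with $u=E^H_+v_+$, $u_-=E^H_{-+}v_+$. The condition $(u|\chi_+e_+)=v_+$ together with $(P_h-z)u=-u_-\chi_-e_-$ forces, away from the seam, $u(x)=v_+e_+(x)=v_+c_+e^{\frac ih\phi_+(x;z)}(1+\mO(\e^{-1/Ch}))$, since there $u$ solves the homogeneous ODE and the coefficient is pinned by $\lVert\chi_+e_+\rVert=1+\mO(\e^{-1/Ch})$. Demanding that $u$ be single‑valued across the seam $x_-$: the free part $v_+e_+$ fails to close up by $v_+c_+\big(e^{\frac ih\int_{x_+}^{x_-}(z-g)}-e^{\frac ih\int_{x_+}^{x_-+2\pi}(z-g)}\big)$, the two exponentials being $e_+$'s phase propagated to the seam the two ways around $S^1$ and their relative sign encoding the periodicity factor $1-e^{\frac{2\pi i}{h}(z-\langle g\rangle)}$ (which vanishes exactly on $\sigma(P_h)$, a useful check on $E^H_{-+}$). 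This mismatch must be cancelled by the jump that the source $-u_-\chi_-e_-$ produces via Duhamel's formula; near $x_-$ the total phase of that integrand, $\frac ih\big(\int_y^{x_-}(z-g)+\overline{\int_{x_-}^y(z-g)}\big)=\frac2h\Ima\int_{x_-}^y(z-g)$, is real with a nondegenerate maximum $0$ at $y=x_-$ of second derivative $-2\Ima g'(x_-)$, so Laplace's method evaluates the jump as $\frac ih u_-c_-\sqrt{\pi h/\Ima g'(x_-)}(1+\mO(h))$ up to a unimodular factor. Equating, and inserting $c_+^0=(-\Ima g'(x_+)/\pi)^{1/4}$, $c_-^0=(\Ima g'(x_-)/\pi)^{1/4}$ from Proposition \ref{porp:QuasmodHager} together with $\frac i2\{p,\overline p\}(\rho_+)=\Ima g'(x_+)$ and $\frac i2\{\overline p,p\}(\rho_-)=-\Ima g'(x_-)$ (from \eqref{eqn:TaylorPoissonBracket}), the powers of $\pi$ and of $\Ima g'(x_-)$ recombine symmetrically into $\big(\frac i2\{p,\overline p\}(\rho_+)\frac i2\{\overline p,p\}(\rho_-)\big)^{1/4}(h/\pi)^{1/2}$, and the $\mO(h^{3/2})$ remainder absorbs the subleading terms of $c_\pm$ and of the Laplace expansion. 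The prefactor depends only on $\Ima z$ because $x_\pm(z)$ do, and it and all its $z$‑, $\overline z$‑derivatives are $\mO(h^{1/2})$ since the underlying data are smooth and $\mO(1)$ in $z$ on $\overline\Omega$; for $z\in\Omega\cap\Omega_\eta^a$ the same argument, run in the rescaled variables of \eqref{eqn:RescaledOp} and using $\widetilde S/\widetilde h=S/h$, gives the corresponding statement.

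The main obstacle is bookkeeping rather than analysis: one must coordinate three sets of conventions — the identification $S^1\cong[x_--2\pi,x_-[$ and the direction of propagation of the ODE, the definitions of $\phi_\pm$ and of the cut‑off quasimodes $e_\pm$ on $I_\pm$, and the orientation of the Duhamel contour and the sign in the Laplace step — so that the periodicity obstruction emerges as $1-e^{\frac{2\pi i}{h}(z-\langle g\rangle)}$ and the amplitude as $(h/\pi)^{1/2}\big(\frac i2\{p,\overline p\}(\rho_+)\frac i2\{\overline p,p\}(\rho_-)\big)^{1/4}$ with no spurious power of $h$. Everything else is routine Laplace/stationary‑phase estimation, the resolvent‑identity differentiation already used in the proof of Proposition \ref{quasimodes}, and a Neumann series.
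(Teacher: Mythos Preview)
The paper's own proof of this proposition is simply ``See \cite{Ha06}'', i.e.\ a bare citation to Hager's original paper. Your proposal is a reasonable and essentially correct reconstruction of Hager's argument: the explicit quadrature of the first-order ODE $(P_h-z)u=v$ on the two arcs, the gluing across $x_+$ absorbed by $R_+^H$, the single-valuedness obstruction at the seam $x_-$ producing the difference of exponentials, and the Laplace evaluation of the Duhamel integral giving the prefactor. This is exactly the route taken in \cite{Ha06}, so there is nothing to compare.

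One small remark: your final paragraph about the case $z\in\Omega\cap\Omega_\eta^a$ via the rescaling \eqref{eqn:RescaledOp} does not belong here --- this proposition is stated only for $z\in\Omega_i\Subset\mathring{\Sigma}$ (interior, distance $>1/C$ from $\partial\Sigma$). The boundary-layer version is the separate Proposition~\ref{prop:GrushBM}, cited from \cite{BM}. Otherwise the sketch is sound; the ``bookkeeping'' you flag (orientation of $S^1$, sign in the periodicity factor, matching $c_\pm^0$ with the Poisson bracket via $\{p,\overline p\}(\rho_\pm)=-2i\,\Ima g'(x_\pm)$) is indeed the only place one can slip, and you have the pieces lined up correctly.
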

  \begin{proof}
   See \cite{Ha06}.
  \end{proof}
  
 \begin{prop}[\cite{BM}]\label{prop:GrushBM}
   Let $\Omega\Subset\Sigma$. For $z\in\Omega\cap\Omega_{\eta}^{a,b}$ and $x_{\pm}(z)$ as in Section \ref{sec:Intro}. 
   Let $g\in\mathcal{C}^{\infty}(S^1)$ be as in (\ref{eqn:defnModelOperator}). Let $J_{\pm}$ and $I_{\pm}$ 
   be as in the second point of Proposition \ref{porp:QuasmodHager}. 
   Let $\chi_{\pm}^{ \eta}\in\mathcal{C}^{\infty}_c(I_{\pm})$ such that 
   $\chi_{\pm}^{ \eta}\equiv 1$ on $\overline{J_{\pm}}$ and $\supp(\chi_+^{ \eta}) \cap \supp(\chi_-^{ \eta}) = \emptyset$. Define
  \begin{align*}
    R_+^{\eta}:~ &H^1(S^1)\longrightarrow\mathds{C}:~ u  \longmapsto (u|\chi_+e_+^{ \eta}) \notag \\
    R_-^{\eta}:~&\mathds{C}\longrightarrow L^2(S^1):~ u_-  \longmapsto u_-\chi_-e_-^{ \eta}.  
  \end{align*}
Then 
  \begin{equation*}
   \mathcal{P}^{\eta}(z):=\begin{pmatrix}
                    P_h-z & R_-^{\eta} \\ R_+^{\eta} & 0 \\
                   \end{pmatrix}
   :~ H^1(S^1)\times \mathds{C}\longrightarrow L^2(S^1)\times \mathds{C}
  \end{equation*}
is bijective with the bounded inverse 
  \begin{equation*}
   \mathcal{E}^{\eta}(z) = \begin{pmatrix}
                     E^{\eta}(z) & E_+^{\eta}(z) \\ E_-^{\eta}(z) & E_{-+}^{\eta}(z) \\
                    \end{pmatrix}
  \end{equation*}
where 
  \begin{align}\label{eqn:DetailGrushBM}
   &\lVert E^{\eta}(z)\rVert_{L^2\rightarrow H^1} = \mathcal{O}((\sqrt{\eta}h)^{-1/2}),
   &\lVert E_-^{\eta}(z)\rVert_{L^2\rightarrow\mathds{C}} = \mathcal{O}(1), \notag \\
   & \lVert E_+^{\eta}(z)\rVert_{\mathds{C}\rightarrow H^1} = \mathcal{O}(1),
   &|E_{-+}^{\eta}(z)| = \mathcal{O}\!\left(\eta^{1/4}h^{1/2}\e^{-\frac{\asymp\eta^{3/2}}{h}}\right). 
  \end{align}
Furthermore,
   \begin{align}\label{eqn:EffectHamBM}
    E_{-+}^{\eta}(z) = &
    \left(c_+^{0,\eta}(z)c_-^{0,\eta}(z)\left(h\sqrt{\eta}\right)^{\frac{1}{2}}+
	  \mathcal{O}\!\left(h^{\frac{3}{2}}\eta^{-5/4}\right)\right)
    \notag\\
    &\cdot
    \left(\e^{\frac{i}{h}\int_{x_+}^{x_-}(z-g(y))dy} - \e^{\frac{i}{h}\int_{x_+}^{x_- +2\pi}(z-g(y))dy}\right),
   \end{align}
where the prefactor of the exponentials depends only on $\Ima z$ and has bounded derivatives of order 
$\mathcal{O}(\sqrt{h\sqrt{\eta}})$. 
  \end{prop}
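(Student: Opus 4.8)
The plan is to deduce Proposition \ref{prop:GrushBM} from Hager's construction, Proposition \ref{prop:GrushHager}, via the rescaling $x=\sqrt{\eta}\,\widetilde x$, $\widetilde h=h/\eta^{3/2}$ of Section \ref{subse:Quasmod}, following \cite{BM}. The key algebraic fact is the exact identity $P_h-z=\eta\,(\widetilde P_{\widetilde h}-\widetilde z)$, where $\widetilde P_{\widetilde h}=\widetilde h D_{\widetilde x}+\widetilde g(\widetilde x)$ and $\widetilde z=z/\eta$. By \eqref{eqn:HTscale} and the Taylor expansion of $\Ima g$ at the critical point $a$, for $z\in\Omega\cap\Omega_\eta^a$ the rescaled parameter $\widetilde z$ lies well inside $\widetilde\Sigma=\overline{\widetilde p(T^*S^1)}$, at a distance to $\partial\widetilde\Sigma$ bounded below uniformly in $h^{2/3}\ll\eta<\mathrm{const.}$, and $\{\widetilde p,\overline{\widetilde p}\}$ is uniformly bounded away from $0$ at the two solutions $\widetilde\rho_\pm(\widetilde z)$ of $\widetilde p=\widetilde z$ (with $\widetilde x_\pm(\widetilde z)-a/\sqrt\eta\asymp 1$); hence the hypotheses of Proposition \ref{prop:GrushHager} are met for $\widetilde P_{\widetilde h}-\widetilde z$ with constants independent of $\eta$, and the quasimodes $e_\pm^\eta$, the intervals $\widetilde I_\pm$ and the cut-offs $\chi_\pm^\eta$ of Propositions \ref{porp:QuasmodHager} and \ref{prop:GrushBM} are, under the unitary identification $L^2(S^1,dx)\cong L^2(S^1/\sqrt\eta,\sqrt\eta\,d\widetilde x)$, exactly Hager's data for $\widetilde P_{\widetilde h}$.

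First I would apply Proposition \ref{prop:GrushHager} to $\widetilde P_{\widetilde h}-\widetilde z$, producing a well-posed Grushin problem $\widetilde{\mathcal P}(\widetilde z)$ with bounded inverse $\widetilde{\mathcal E}(\widetilde z)$ satisfying $\|\widetilde E\|_{L^2\to H^1_{sc}(\widetilde h)}=\mO(\widetilde h^{-1/2})$, $\|\widetilde E_\pm\|=\mO(1)$, $|\widetilde E_{-+}(\widetilde z)|=\mO(\widetilde h^{1/2}\e^{-\widetilde S/\widetilde h})$, and the explicit formula \eqref{eqn:EffectHamHager} for $\widetilde E_{-+}(\widetilde z)$ in terms of $\widetilde c_\pm^0$ and the phase integrals $\int_{\widetilde x_\pm}^{\widetilde x_\mp}(\widetilde z-\widetilde g(\widetilde y))\,d\widetilde y$. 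Since $R_\pm^\eta=\widetilde R_\pm$ under the above identification and $P_h-z=\eta(\widetilde P_{\widetilde h}-\widetilde z)$, one has $\mathcal P^\eta(z)=\mathrm{diag}(\eta,1)\,\widetilde{\mathcal P}(\widetilde z)\,\mathrm{diag}(1,\eta^{-1})$; hence $\mathcal P^\eta(z)$ is bijective with inverse $\mathcal E^\eta(z)=\mathrm{diag}(1,\eta)\,\widetilde{\mathcal E}(\widetilde z)\,\mathrm{diag}(\eta^{-1},1)$, that is $E^\eta=\eta^{-1}\widetilde E$, $E_\pm^\eta=\widetilde E_\pm$, $E_{-+}^\eta=\eta\,\widetilde E_{-+}$. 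By \eqref{eqn:phaseScaling} the exponentials $\e^{\frac{i}{\widetilde h}\widetilde\phi_\pm}$ in \eqref{eqn:EffectHamHager} equal $\e^{\frac{i}{h}\phi_\pm}$, so the two exponentials in \eqref{eqn:EffectHamBM} coincide with Hager's, and the first-order normalization coefficients satisfy $c_\pm^{0,\eta}(z)=\widetilde c_\pm^0(\widetilde z)$, whose value in terms of $\Ima g''(a)$ is read off from the Taylor expansion at $a$ exactly as in the proof of Proposition \ref{prop:QuasmodCB}.

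It then remains to keep track of the powers of $\eta$. Using $\widetilde h=h/\eta^{3/2}$ one has $\eta\,\widetilde h^{1/2}=(h\sqrt\eta)^{1/2}$, which turns Hager's prefactor $\widetilde c_+^0\widetilde c_-^0\widetilde h^{1/2}+\mO(\widetilde h^{3/2})$ into the prefactor $c_+^{0,\eta}c_-^{0,\eta}(h\sqrt\eta)^{1/2}+\mO(h^{3/2}\eta^{-5/4})$ of \eqref{eqn:EffectHamBM}; combining this with $\widetilde S/\widetilde h=S/h$ from \eqref{eqn:Sinvar} and $S\asymp\eta^{3/2}$ from \eqref{eqn:SEstINBox} gives $|E_{-+}^\eta(z)|=\mO(\eta^{1/4}h^{1/2}\e^{-\asymp\eta^{3/2}/h})$. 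Since $hD_x=\eta\,\widetilde h D_{\widetilde x}$ and $\eta\le 1$, one has $\|\,\cdot\,\|_{H^1_{sc}(h),\,x}\le\|\,\cdot\,\|_{H^1_{sc}(\widetilde h),\,\widetilde x}$ after the identification, so $E^\eta=\eta^{-1}\widetilde E$ yields $\|E^\eta\|_{L^2\to H^1}=\mO(\eta^{-1}\widetilde h^{-1/2})=\mO((\sqrt\eta h)^{-1/2})$, while $\|E_\pm^\eta\|=\|\widetilde E_\pm\|=\mO(1)$; the derivative bound $\mO(\sqrt{h\sqrt\eta})$ on the prefactor likewise follows from Hager's derivative bound by rescaling. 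I expect the genuine difficulty to be the uniformity asserted in the first paragraph: one must verify that every constant in Hager's construction for $\widetilde P_{\widetilde h}-\widetilde z$ remains bounded as $\eta\to0$ with $\eta\gg h^{2/3}$, which reduces to the uniform non-degeneracy of the critical points of the rescaled phases $\widetilde\phi_\pm$ and the uniform lower bound on $|\{\widetilde p,\overline{\widetilde p}\}(\widetilde\rho_\pm)|$, both consequences of the quadratic vanishing of $\Ima g$ at $a$ already exploited in Propositions \ref{prop:QuasmodCB} and \ref{prop:QDistFirstEig}.
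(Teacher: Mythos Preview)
Your proposal is correct and is precisely the rescaling argument from \cite{BM} that the paper is invoking: the paper's own proof consists of the citation ``See \cite{BM}'' together with the remark that the explicit form \eqref{eqn:EffectHamBM} follows from the results there combined with Proposition \ref{porp:QuasmodHager}. Your conjugation identity $\mathcal{P}^\eta(z)=\mathrm{diag}(\eta,1)\,\widetilde{\mathcal{P}}(\widetilde z)\,\mathrm{diag}(1,\eta^{-1})$ and the resulting bookkeeping $E^\eta=\eta^{-1}\widetilde E$, $E_{-+}^\eta=\eta\,\widetilde E_{-+}$, together with $\eta\,\widetilde h^{1/2}=(h\sqrt\eta)^{1/2}$ and $\eta\,\widetilde h^{3/2}=h^{3/2}\eta^{-5/4}$, reproduce exactly the stated bounds and prefactor; the uniformity concern you flag in the last paragraph is the genuine content of \cite{BM} and is handled there just as you indicate.
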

  \begin{proof}
   See \cite{BM}. (\ref{eqn:EffectHamBM}) has not been stated in this form on \cite{BM}. However, 
   it can easily be deduce from the results in \cite{BM} together with Proposition \ref{porp:QuasmodHager}.
  \end{proof}  
\begin{rem}\label{rem:CutOff_CB}
 The cut-off function $\chi_{\pm}^{ \eta}$ in the above proposition can be chosen 
 similarly to $\chi_{e,f}^{\eta}$ in Definition \ref{def:quasmodCB} (compare also 
 with Definition \ref{def:Quasimodes}).
\end{rem}

\subsection{Estimates on the effective Hamiltonians}
\label{suse:EstEffHam}
In \cite{Ha06} Hager chose to represent $S^1$ as an interval between 
two of the periodically appearing minima of $\Ima g$ and thus 
chose the notation for $x_{\pm}$ accordingly (this notation was 
used in (\ref{eqn:EffectHamHager})). In our case however, we chose 
to represent $S^1$ as an interval between two of the periodically 
appearing maxima of $\Ima g$. This results in the following 
difference between notations: 
  \begin{equation*}
   x_+(z) = x_+^H(z) -2\pi \quad \text{and} \quad x_-(z) = x_-^H(z).
  \end{equation*}
Thus, in our notation, we have for $\bullet = H,\eta$
  \begin{align}\label{eqn:EstimEffectHamHager}
    E_{-+}^{\bullet}(z) = V^{\bullet}(z,h)
	\left(\e^{\frac{i}{h}\int_{x_+}^{x_--2\pi}(z-g(y))dy} 
      - \e^{\frac{i}{h}\int_{x_+}^{x_-}(z-g(y))dy}\right),
  \end{align}
where $V^{\bullet}=V^{\bullet}(z,h)$ satisfies 
 \begin{align}\label{eqn:Prefac} 
    V^{\bullet} = 
    \begin{cases}
    \left(\frac{i}{2}\{p,\overline{p}\}(\rho_+)
	  \frac{i}{2}\{\overline{p},p\}(\rho_-)\right)^{\frac{1}{4}}\left(\frac{h}{\pi}\right)^{\frac{1}{2}}
	  \left(1 + \mathcal{O}\!\left(h\right)\right),\text{if}~\bullet = H,~ z\in\Omega_i \\ 
          c_+^{0,\eta}(z)c_-^{0,\eta}(z)\left(h\sqrt{\eta}\right)^{\frac{1}{2}}
	  \left(1 + \mathcal{O}\!\left(\eta^{-\frac{3}{2}}h\right)\right), 
	   \text{if}~\bullet = \eta,~ z\in\Omega_{\eta}^{a}. \\
    \end{cases}
  \end{align}
Note that Taylor expansion around the point $a$ yields 
 \begin{align}\label{eqn:TaylorPoissonBracket}
   \{p,\overline{p}\}(\rho_{\pm}) &= -2i\Ima g'(x_{\pm}) \\
   &=
      2i\sqrt{\eta}\left|\Ima g''(a)(\tilde{x}_{\pm}(\tilde{z})-a/\sqrt{\eta})(1+o_{\sqrt{\eta}}(1))\right|, 
      ~\text{for} ~ z\in\Omega_{\eta}^{a}.\notag 
  \end{align}
Therefore, we may write for all $z\in\Omega\Subset\Sigma$ 
  \begin{align}\label{eqn:Prefac2}
   V(z,h):= V^{\bullet}(z,h) = 
    \left(\frac{i}{2}\{p,\overline{p}\}(\rho_+)
	  \frac{i}{2}\{\overline{p},p\}(\rho_-)\right)^{\frac{1}{4}}\left(\frac{h}{\pi}\right)^{\frac{1}{2}}
	  \left(1 + \mathcal{O}\!\left(\eta^{-\frac{3}{2}}h\right)\right)
  \end{align}
where the first order term is $\eta^{1/4}$ for $z\in\Omega\cap\Omega_{\eta}^{a}$. Note that
 \begin{align}\label{eqn:EstimEffectHamHagerPr1}
   \Big|\e^{\frac{i}{h}\int_{x_+}^{x_--2\pi}(z-g(y))dy} &- \e^{\frac{i}{h}\int_{x_+}^{x_-}(z-g(y))dy}\Big|
    = \e^{-\frac{S}{h}}
       \left|1 - \e^{ \Phi(z,h)}\right|,
  \end{align}
where $ \Phi(z,h)$ is defined already in Proposition \ref{prop:Resolvent}. For the 
readers convenience:
\begin{align*}
   \Phi(z,h)= 
    \begin{cases}
     -\frac{2\pi i}{h}(z -\langle g\rangle),~\text{if}~ \Ima z < \langle \Ima g\rangle , \\
      \frac{2\pi i}{h}(z -\langle g\rangle),~\text{if}~ \Ima z >\langle \Ima g\rangle,
     \end{cases}
  \end{align*}
Hence 
  \begin{align}\label{eqn:EstimModEffectHamHager}
    |E_{-+}^{\bullet}(z)| = V(z,h)\e^{-\frac{S}{h}}
    \left|1 - \e^{ \Phi(z,h)}\right|.
  \end{align}
The aim of this section is to prove the following proposition. 
\begin{prop}\label{cor:E-+Repres}
 Let $\Omega\Subset\Sigma$, let $\Phi(z,h)$ be as in Proposition \ref{prop:Resolvent} and let 
 $E_{-+}(z)$ the effective Hamiltonian given in Proposition \ref{propGrushUnpertOp}. 
 Then, for $h>0$ small enough, there exists a constant $C>0$ such that for 
 $h^{\frac{2}{3}} \ll \eta \leq \mathrm{const.}$
    \begin{align*}
     |E_{-+}(z)| = V(z,h)&\e^{-\frac{S(z)}{h}}
    \left|1 - \e^{\Phi(z,h)}\right|
      \left(1 + \mathcal{O}\!\left(\e^{-\frac{\asymp\eta^{3/2}}{h}}\right)\right).
    \end{align*}
 Furthermore, for all $\beta\in\mathds{N}^2$ the $\partial_{z\overline{z}}^{\beta}$ derivatives 
of the error terms are bounded and of order 
  \begin{equation*}
      \mathcal{O}\!\left(\eta^{\frac{|\beta|}{2}}h^{-|\beta|}\e^{-\frac{\asymp\eta^{\frac{3}{2}}}{h}}\right).
  \end{equation*}
\end{prop}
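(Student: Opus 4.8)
The strategy is to compare $E_{-+}(z)$ with the explicit effective Hamiltonian $E_{-+}^{\bullet}(z)$ of Hager (for $z\in\Omega_i$, $\bullet=H$, Proposition \ref{prop:GrushHager}) resp.\ of Bordeaux-Montrieux (for $z\in\Omega\cap\Omega_{\eta}^{a}$, $\bullet=\eta$, Proposition \ref{prop:GrushBM}), for which \eqref{eqn:EstimModEffectHamHager} already gives $|E_{-+}^{\bullet}(z)|=V(z,h)\,\e^{-S(z)/h}\,|1-\e^{\Phi(z,h)}|$ with $V$ as in \eqref{eqn:Prefac2}. It therefore suffices to show that $E_{-+}$ and $E_{-+}^{\bullet}$ differ by a factor $B(z)=1+\mathcal O(\e^{-\asymp\eta^{3/2}/h})$ whose $\partial_{z\overline z}^{\beta}$-derivatives are $\mathcal O(\eta^{|\beta|/2}h^{-|\beta|}\e^{-\asymp\eta^{3/2}/h})$.

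For $z\notin\sigma(P_h)$ the operator $P_h-z$ is invertible, and any one-dimensional well-posed Grushin problem for it built from $R_+u=(u|\psi_+)$, $R_-u_-=u_-\psi_-$ has effective Hamiltonian $E_{-+}=-\,((P_h-z)^{-1}\psi_-|\psi_+)^{-1}$. Applied to the problem of Proposition \ref{propGrushUnpertOp} ($\psi_\pm=e_0,f_0$), together with $(P_h-z)e_0=\alpha_0 f_0$, this recovers $E_{-+}(z)=-\alpha_0(z)$, $|E_{-+}(z)|=t_0(z)$; applied to $\mathcal P^{\bullet}$ ($\psi_\pm=\chi_\pm^{\bullet}e_\pm^{\bullet}$) it gives $E_{-+}^{\bullet}(z)=-\,((P_h-z)^{-1}\chi_-^{\bullet}e_-^{\bullet}|\chi_+^{\bullet}e_+^{\bullet})^{-1}$. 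Since the range of $E(z)$ lies in $(e_0)^{\perp}$ and $(P_h-z)^{-1}f_0=\alpha_0^{-1}e_0$, one has $(P_h-z)^{-1}=E(z)+\alpha_0^{-1}(\,\cdot\,|f_0)\,e_0$, and inserting this yields
\begin{equation}\label{eq:Bid}
 E_{-+}(z)=E_{-+}^{\bullet}(z)\,B(z),\qquad
 B(z):=\alpha_0(z)\big(E(z)\chi_-^{\bullet}e_-^{\bullet}\,\big|\,\chi_+^{\bullet}e_+^{\bullet}\big)+\big(\chi_-^{\bullet}e_-^{\bullet}\,\big|\,f_0\big)\big(e_0\,\big|\,\chi_+^{\bullet}e_+^{\bullet}\big).
\end{equation}
By smoothness of $\alpha_0=-E_{-+}$ (Proposition \ref{prop:E0T0Smooth}), of $E(z)$ (differentiate $\mathcal E(z)=\mathcal P(z)^{-1}$), of the quasimodes and of $E_{-+}^{\bullet}$, the identity \eqref{eq:Bid} extends to all of $\Omega$; at $z\in\sigma(P_h)$ both sides vanish, consistently with $|1-\e^{\Phi}|=0\iff z\in\sigma(P_h)$ (Proposition \ref{prop:Resolvent}).

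It remains to prove $B(z)=1+\mathcal O(\e^{-\asymp\eta^{3/2}/h})$. The first summand of \eqref{eq:Bid} is harmless: $|\alpha_0|=t_0\le\mathcal O(\eta^{1/4}h^{1/2}\e^{-S/h})$ (Proposition \ref{prop:QDistFirstEig}), $\|E(z)\|=\mathcal O((h\sqrt\eta)^{-1/2})$ (Proposition \ref{propGrushUnpertOp}), and $\|\chi_\pm^{\bullet}e_\pm^{\bullet}\|=1+\mathcal O(\e^{-\asymp\eta^{3/2}/h})$ because $\chi_\pm^{\bullet}\equiv1$ near $x_\pm$ while $e_\pm^{\bullet}$ is exponentially small off a neighbourhood of $x_\pm$; hence this summand is $\mathcal O(\e^{-S/h})=\mathcal O(\e^{-\asymp\eta^{3/2}/h})$ by $S\asymp\eta^{3/2}$ \eqref{eqn:SEstINBox}. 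For the second summand one shows $(e_0|\chi_+^{\bullet}e_+^{\bullet})=1+\mathcal O(\e^{-\asymp\eta^{3/2}/h})$, and symmetrically $(\chi_-^{\bullet}e_-^{\bullet}|f_0)=1+\mathcal O(\e^{-\asymp\eta^{3/2}/h})$ using $\widetilde Q(z)$. Here the crucial observation is that $P_h-z=hD_x+g-z$ is first order, so $e_+^{\bullet}$ is an \emph{exact} null solution of $P_h-z$ on $I_+\supset\supp\chi_+^{\bullet}$, whence $(P_h-z)(\chi_+^{\bullet}e_+^{\bullet})=\tfrac h i(\partial_x\chi_+^{\bullet})e_+^{\bullet}$ is supported where $\partial_x\chi_+^{\bullet}\neq0$, at distance $\gtrsim\sqrt\eta$ from $x_+(z)$, hence of size $\mathcal O(\e^{-\asymp\eta^{3/2}/h})$ (in the boundary region one first passes to the rescaled operator of Section \ref{subse:Quasmod}). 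Thus $\chi_+^{\bullet}e_+^{\bullet}/\|\chi_+^{\bullet}e_+^{\bullet}\|$ is a normalised, exponentially accurate quasimode; running the spectral-projection argument in the proof of Proposition \ref{quasimodes} with it in place of $e_{wkb}$ gives $\|\Pi_{t_0^2}(\chi_+^{\bullet}e_+^{\bullet})-\chi_+^{\bullet}e_+^{\bullet}\|=\mathcal O(\e^{-\asymp\eta^{3/2}/h})$, and choosing the phase of $e_0$ so that $e_0=\Pi_{t_0^2}(\chi_+^{\bullet}e_+^{\bullet})/\|\Pi_{t_0^2}(\chi_+^{\bullet}e_+^{\bullet})\|$ gives $(e_0|\chi_+^{\bullet}e_+^{\bullet})=\|\Pi_{t_0^2}(\chi_+^{\bullet}e_+^{\bullet})\|=1+\mathcal O(\e^{-\asymp\eta^{3/2}/h})$. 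Hence $B(z)=1+\mathcal O(\e^{-\asymp\eta^{3/2}/h})$; taking moduli in \eqref{eq:Bid} and using \eqref{eqn:EstimModEffectHamHager} yields the asserted formula for $|E_{-+}(z)|$.

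For the derivative statement one differentiates \eqref{eq:Bid}. In the first summand the factor $\alpha_0=-E_{-+}$ keeps an exponential $\e^{-S/h}$ under $\partial_{z\overline z}^{\beta}$ (Proposition \ref{prop:EstimDerzE-+}), while $E(z)$ and the quasimodes grow at most polynomially in $1/h$ (resp.\ $\sqrt\eta/h$) under differentiation — for the quasimodes use the stationary phase expansion as in Lemma \ref{lem:QuasmodProof3}, for $E(z)$ iterate $\partial_z\mathcal E=-\mathcal E(\partial_z\mathcal P)\mathcal E$ — and since $\sqrt\eta/h\gg h^{-2/3}\to\infty$ any fixed such power is absorbed into $\e^{-\asymp\eta^{3/2}/h}$. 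In the second summand one uses $\partial_{z\overline z}^{\beta}\Pi_{t_0^2}=\mathcal O(h^{-|\beta|/2})$ (Lemma \ref{lem:QuasmodProof2}) together with the fact that $Q(z)(\chi_+^{\bullet}e_+^{\bullet})$, $\|\chi_+^{\bullet}e_+^{\bullet}\|^{2}-1$ and $(1-\Pi_{t_0^2})(\chi_+^{\bullet}e_+^{\bullet})$ (and the analogous objects for $\chi_-^{\bullet}e_-^{\bullet}$) are exponentially small together with all their $z,\overline z$-derivatives; this gives $\partial_{z\overline z}^{\beta}(B(z)-1)=\mathcal O(\eta^{|\beta|/2}h^{-|\beta|}\e^{-\asymp\eta^{3/2}/h})$, and the same bound for $|B(z)|-1$. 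The main obstacle is exactly the passage from a polynomially to an exponentially small relative error in $B(z)$: a naive stationary-phase comparison of $e_0\approx e_{wkb}$ with $\chi_+^{\bullet}e_+^{\bullet}$ only produces $1+\mathcal O(h)$, and the gain rests on the operator being of first order, so that the WKB phase $e^{i\phi_+/h}$ with constant amplitude is an honest null solution and the quasimode error is a pure cut-off effect, the polynomial stationary-phase corrections being common to both sides and cancelling in the normalisation; keeping all of this uniform in $\eta$ as $z\to\partial\Sigma$, via the rescaling, is the remaining technical work.
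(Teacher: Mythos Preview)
Your argument is correct and takes a genuinely different route from the paper's. The paper compares the two Grushin problems at the level of the full inverses via a Neumann series: writing $\mathcal{P}\mathcal{E}^{\bullet}=1+(\mathcal{P}-\mathcal{P}^{\bullet})\mathcal{E}^{\bullet}$, it shows $\lVert(\mathcal{P}-\mathcal{P}^{\bullet})\mathcal{E}^{\bullet}\rVert_{\infty}\ll 1$ by estimating $\lVert e_0-\chi_+^{\bullet}e_+^{\bullet}\rVert$ and $\lVert f_0-\chi_-^{\bullet}e_-^{\bullet}\rVert$ (passing through $e_{wkb},f_{wkb}$), inverts the geometric series, and reads off $E_{-+}=E_{-+}^{\bullet}(1+\mathcal O(\e^{-1/(Ch)}))$ from the $(2,2)$-entry. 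You instead use the scalar resolvent identity $E_{-+}^{\bullet}=-((P_h-z)^{-1}\chi_-^{\bullet}e_-^{\bullet}\,|\,\chi_+^{\bullet}e_+^{\bullet})^{-1}$, valid for any one-dimensional well-posed Grushin problem when $z\notin\sigma(P_h)$, together with the rank-one splitting $(P_h-z)^{-1}=E(z)(1-\Pi_{f_0})+\alpha_0^{-1}(\,\cdot\,|f_0)e_0$, to obtain the explicit multiplicative factor $B(z)$ in one step. Both approaches rest on the same analytic input---that $\chi_\pm^{\bullet}e_\pm^{\bullet}$ are exponentially accurate quasimodes, hence exponentially close to $e_0,f_0$ in the sense $|(e_0|\chi_+^{\bullet}e_+^{\bullet})|=1+\mathcal O(\e^{-\asymp\eta^{3/2}/h})$---but your packaging is more direct and avoids the matrix Neumann series, at the price of being specific to the rank-one situation. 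The paper's route generalises more readily to higher-rank Grushin problems; yours is cleaner here.

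Two small points. First, in your formula for $B(z)$ the operator $E(z)$ from Proposition~\ref{propGrushUnpertOp} is only defined on $(f_0)^{\perp}$; strictly one should write $E(z)(1-\Pi_{f_0})\chi_-^{\bullet}e_-^{\bullet}$, though extending $E(z)$ by zero on $\mathds{C}f_0$ gives the same thing and the estimate is unaffected. Second, the phase of $e_0$ is already fixed in the paper via $e_{wkb}$, not via $\chi_+^{\bullet}e_+^{\bullet}$; your ``choosing the phase'' step is unnecessary since you only need $|(e_0|\chi_+^{\bullet}e_+^{\bullet})|=\lVert\Pi_{t_0^2}(\chi_+^{\bullet}e_+^{\bullet})\rVert$, which holds for any phase of $e_0$, and only $|B(z)|$ enters the final statement.
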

\begin{proof}[Proof of Proposition \ref{prop:TunnelEffect2}]
 Recall that $(P_h-z)e_0=\alpha_0f_0$ (cf. (\ref{eqn:RelationEjFj})). Suppose first that 
 $z\in\Omega$ with $\dist(\Omega,\partial\Sigma)>1/C$. By Proposition \ref{quasimodes} 
 we find
  \begin{align*}
   \left((1-\chi)(P_h-z)e_0|f_0\right) 
      &= \alpha_0(f_0|(1-\chi)f_0) \notag\\ 
      &= \alpha_0\left((f_{wkb}|(1-\chi)f_{wkb}) + \mO\left(\e^{-\frac{S}{h}}\right)\right).
  \end{align*}
 Since the phase of $f_{wkb}$ has no critical point on the support of $\chi$, it follows 
 that there exists a constant $C>0$, depending on $\chi$ but uniform in $z\in\Omega$, 
 such that 
  \begin{equation*}
   \left((1-\chi)(P_h-z)e_0|f_0\right) 
      = \mO\left(\alpha_0\e^{-\frac{1}{Ch}}\right).
  \end{equation*}
 By a similar argument we find that 
 \begin{equation*}
   \left((P_h-z)\chi e_0|f_0\right) 
      = \alpha_0\left(\chi e_0|e_0\right) 
      = \mO\left(\alpha_0\e^{-\frac{1}{Ch}}\right).
  \end{equation*}
 In the case where $z\in\Omega\cap\Omega_{\eta}^a$, we perform a rescaling argument 
 similar to the one in the proof of Proposition \ref{quasimodes}. Thus, 
  \begin{align*}
   \left((1-\chi)(P_h-z)e_0|f_0\right),
   \left((P_h-z)\chi e_0|f_0\right) 
      = \mO\left(\alpha_0\exp\left\{-\frac{\eta^{\frac{3}{2}}}{Ch}\right\}\right).
  \end{align*}
 Note that Proposition \ref{quasimodes} implies that each $z$- and $\overline{z}$-
 derivative of the exponentially small error term increases its order of growth 
 at most by factor of order $\mO(\eta^{1/2}h^{-1})$.
 Thus, using (\ref{eqn:RelationEjFj}) yields 
  \begin{equation}\label{eqn:alphaEFFTUNN}
   \alpha_0 =  \left((1-\chi + \chi)(P_h-z)e_0|f_0\right) 
	    =  \left([\chi,P_h]e_0|f_0\right) 
	    + \mO\left(\alpha_0\exp\left\{-\frac{\eta^{\frac{3}{2}}}{Ch}\right\}\right)
  \end{equation}
  The statement of the Proposition then follows by the fact that $|\alpha_0|=|E_{-+}(z)|$ 
  (cf. Proposition \ref{propGrushUnpertOp}) together with Proposition \ref{cor:E-+Repres}.
\end{proof}
We give some estimates on the elements of the Grushin problems introduced 
in Section \ref{se:GrushProb}.
\begin{prop}\label{prop:EstimOnGrushElements}
 Let $\Omega\Subset\Sigma$, let $E_{-+}^{\bullet}, E_{\pm}^{\bullet},R_{\pm}^{\bullet}, E^{\bullet}$ 
 be as in the Propositions \ref{propGrushUnpertOp}, \ref{prop:GrushHager} and \ref{prop:GrushBM}, 
 where $\bullet = -,H,\eta$ with ``$-$'' symbolizing no index. Furthermore, let 
 $S(z)$ as in Definition \ref{defn:ZeroOrderDensity}. 
 Then we have the following estimates 
  \begin{enumerate}
   \item for $\bullet = -,H$ and for $z\in\Omega_i\subset\Omega$ 
    \begin{align*}
     &\lVert\partial_{z\overline{z}}^{\beta}R_{\pm}^{\bullet} \rVert, 
     \lVert\partial_{z\overline{z}}^{\beta}E_{\pm}^{\bullet}\rVert
     = \mathcal{O}\left( h^{-|\beta|}\right), \notag \\
     &|\partial_{z\overline{z}}^{\beta}E_{-+}^{H}|
     = \mathcal{O}\!\left(h^{-(|\beta|-\frac{1}{2})}\e^{-\frac{S(z)}{h}}\right) , \quad
     \lVert\partial_{z\overline{z}}^{\beta}E^{\bullet}\rVert
     = \mathcal{O}\left( h^{-(|\beta|+1/2)}\right). 
    \end{align*}
   \item for $\bullet = -,\eta$ and for $z\in\Omega_{\eta}^{a,b}\subset\Omega$ 
    \begin{align*}
     &\lVert\partial_{z\overline{z}}^{\beta}R_{\pm}^{\bullet} \rVert, 
     \lVert\partial_{z\overline{z}}^{\beta}E_{\pm}^{\bullet}\rVert
     = \mathcal{O}\left(\eta^{\frac{|\beta|}{2}}h^{-|\beta|}\right), \notag \\
     &|\partial_{z\overline{z}}^{\beta}E_{-+}^{\eta}|= 
     \mathcal{O}\!\left(\eta^{\frac{|\beta|+1/2}{2}}h^{-(|\beta|-\frac{1}{2})}\e^{-\frac{\asymp\eta^{3/2}}{h}}\right) ,
      \notag \\
    & \lVert\partial_{z\overline{z}}^{\beta}E^{\bullet}\rVert
     = \mathcal{O}\left( \eta^{\frac{|\beta|-1/2}{2}}h^{-(|\beta|+1/2)}\right) .
    \end{align*}
  \end{enumerate}
\end{prop}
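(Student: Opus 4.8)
\noindent The plan is to split the assertion into three independent blocks —the pairing and multiplication operators $R_\pm^\bullet,E_\pm^\bullet$, the effective Hamiltonian $E_{-+}^\bullet$, and the regular part $E^\bullet$— and to feed each into an estimate already at our disposal.

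First I would dispose of $R_\pm^\bullet$ and $E_\pm^\bullet$. For $\bullet=-$ these are, by Proposition \ref{propGrushUnpertOp}, simply $u\mapsto(u|e_0)$, $v_+\mapsto v_+e_0$, $u_-\mapsto u_-f_0$ and $v\mapsto(v|f_0)$, so the norms of all their $\partial_{z\overline z}^\beta$-derivatives are bounded by $\lVert\partial_{z\overline z}^\beta e_0\rVert$ and $\lVert\partial_{z\overline z}^\beta f_0\rVert$; then \eqref{lem:QuasModEstimate3} and \eqref{lem:QuasModEstimate3_CB} of Proposition \ref{quasimodes} yield exactly $\mO(h^{-|\beta|})$ on $\Omega_i$ and $\mO(\eta^{|\beta|/2}h^{-|\beta|})$ on $\Omega_\eta^{a,b}$. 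For $\bullet=H,\eta$ the operators have the same shape with $\chi_\pm e_\pm^\bullet$ (Proposition \ref{porp:QuasmodHager}) in place of $e_0,f_0$, the cut-off $\chi_\pm$ being fixed and $z$-independent; one differentiates $\chi_\pm c_\pm^\bullet\e^{i\phi_\pm/h}$ by Leibniz and evaluates the resulting $L^2$-integrals by stationary phase, which reproduces the same bounds. In the $\bullet=\eta$ case this is cleanest via the rescaling argument already used in Proposition \ref{quasimodes}, carried out on $\widetilde P_{\widetilde h}-\widetilde z$ (with $\widetilde h=h/\eta^{3/2}$ in the role of $h$) and undone at the end.

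For $E_{-+}^\bullet$ with $\bullet=H,\eta$ I would start from the closed formula \eqref{eqn:EstimEffectHamHager}, $E_{-+}^\bullet(z)=V^\bullet(z,h)\bigl(\e^{\frac ih\int_{x_+}^{x_--2\pi}(z-g(y))dy}-\e^{\frac ih\int_{x_+}^{x_-}(z-g(y))dy}\bigr)$. By \eqref{eqn:Prefac2} (and its rescaled version) the prefactor $V^\bullet$ together with its $\partial_{z\overline z}^\beta$-derivatives is under control, while by \eqref{eqn:EstimEffectHamHagerPr1} the bracket equals $\e^{-S/h}\bigl(1-\e^{\Phi(z,h)}\bigr)$, whose $\partial_{z\overline z}^\beta$-derivatives come from the chain rule through $\partial_{z\overline z}S$, $\partial_{z\overline z}x_\pm$ and the exponentially small contributions of $\e^{\Phi}$. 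Combining by Leibniz and using $S\asymp\eta^{3/2}$ on $\Omega_\eta^{a,b}$ gives the stated estimates; this runs parallel to the computation behind Proposition \ref{prop:EstimDerzE-+}, from which, together with the representation of $|E_{-+}^\bullet|$ implicit in Corollary \ref{cor:E-+Repres}, one may also deduce the bound directly.

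Finally, for $E^\bullet$ I would differentiate the Grushin relation $\mathcal E^\bullet(z)\mathcal P^\bullet(z)=\mathrm{Id}$. The only $z$-dependence of $\mathcal P^\bullet$ sits in the $-z$ in the upper-left corner (whose $\partial_z$ is $-\mathrm{Id}$) and in the off-diagonal blocks $R_\pm^\bullet$, and $\partial^2(P_h-z)=0$; hence the identity $\partial\mathcal E^\bullet=-\mathcal E^\bullet(\partial\mathcal P^\bullet)\mathcal E^\bullet$ iterates to express $\partial_{z\overline z}^\beta\mathcal E^\bullet$ as a finite sum of alternating products of blocks of $\mathcal E^\bullet$ and of $\partial_{z\overline z}^{\gamma_j}\mathcal P^\bullet$ with $\sum_j|\gamma_j|=|\beta|$. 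Reading off the $(1,1)$-entry and inserting the block bounds already obtained —$\lVert E^\bullet\rVert=\mO(h^{-1/2})$ resp. $\mO((\sqrt\eta\,h)^{-1/2})$, $\lVert E_\pm^\bullet\rVert,\lVert R_\pm^\bullet\rVert=\mO(1)$, $|E_{-+}^\bullet|$ exponentially small, and the derivative bounds from the first two steps— produces $\mO(h^{-(|\beta|+1/2)})$ on $\Omega_i$ and $\mO(\eta^{(|\beta|-1/2)/2}h^{-(|\beta|+1/2)})$ on $\Omega_\eta^{a,b}$. The step I expect to be the main obstacle is the near-boundary bookkeeping of the powers of $\eta$: in the alternating products above, and in the chain-rule expansion for $E_{-+}^\bullet$, one must check that the terms in which every derivative lands on an $R_\pm^\bullet$-factor dominate those carrying extra $E^\bullet$-factors (coming from the $-\mathrm{Id}$ block) or extra derivatives of $V^\bullet$. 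This is exactly where the hypothesis $h^{2/3}\ll\eta$ enters: it gives $\eta^{1/2}h^{-1}\gg\eta^{-1/4}h^{-1/2}$ (one derivative of $R^\eta$ beats one extra factor $E^\eta$) and $\eta^{1/2}h^{-1}\gg\eta^{-1}$, so that the exponents of $h$ and of $\eta$ add up precisely as claimed.
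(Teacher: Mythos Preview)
Your proposal is correct and follows essentially the same route as the paper: the three blocks $R_\pm^\bullet/E_\pm^\bullet$, $E_{-+}^\bullet$, and $E^\bullet$ are handled exactly as the paper does, via Proposition~\ref{quasimodes} for the first, the explicit formula~\eqref{eqn:EstimEffectHamHager} for the second, and differentiation of $\mathcal E^\bullet\mathcal P^\bullet=\mathrm{Id}$ with induction for the third.

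One small difference worth noting: for $R_\pm^\bullet,E_\pm^\bullet$ with $\bullet=H,\eta$ you differentiate the Hager/Bordeaux-Montrieux quasimodes $\chi_\pm e_\pm^\bullet$ directly by stationary phase, whereas the paper instead invokes the difference estimates $\lVert\partial_{z\overline z}^\beta(R_\pm-R_\pm^\bullet)\rVert$ (equations~\eqref{eqn:PfComGrushHag3} and~\eqref{eqn:PfComGrushHag3_R-}, established in the proof of Proposition~\ref{cor:E-+Repres}) and a triangle inequality against the $\bullet=-$ case already done. Your direct route is self-contained and avoids that forward reference; the paper's route recycles work it needs anyway for comparing the two Grushin problems. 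Also, your explicit verification that $\eta\gg h^{2/3}$ is what makes the $R_\pm^\bullet$-derivative terms dominate the extra $E^\bullet$-factors in the alternating products is a point the paper leaves implicit in its ``by induction''. One caveat: the cut-offs $\chi_\pm^\eta$ in the Bordeaux-Montrieux problem need not be $z$-independent (see Remark~\ref{rem:CutOff_CB}), but as you know from the treatment of $\chi_e$ in Lemma~\ref{lem:QuasmodProof3}, their $z$-derivatives are harmless for the stated bounds, so this does not affect your argument.
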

\begin{proof}
 Recall the definition of $R_{\pm}$ and $E_{\pm}$ given in Proposition \ref{propGrushUnpertOp}. By the estimates 
 on the $z$- and $\overline{z}$- derivatives of $e_0$ and $f_0$ given in Proposition \ref{quasimodes}, 
 we may conclude for $z\in\Omega$ that
  \begin{align}\label{eqn:estimDzzEpmRpm}
   &\lVert \partial_{z\overline{z}}^{\beta} E_+ \rVert_{\mathds{C} \rightarrow L^2}, ~
   \lVert \partial_{z\overline{z}}^{\beta} R_+ \rVert_{H^1\rightarrow \mathds{C}} 
   \leq \lVert\partial_{z\overline{z}}^{\beta}e_0\rVert_{L^2}  
   = \mathcal{O}\!\left(\eta^{\frac{|\beta|}{2}}h^{-|\beta|}\right),
     \notag \\
   &\lVert \partial_{z\overline{z}}^{\beta} E_- \rVert_{L^2 \rightarrow \mathds{C}}, ~
   \lVert \partial_{z\overline{z}}^{\beta} R_- \rVert_{\mathds{C}\rightarrow L^2} 
   \leq \lVert \partial_{z\overline{z}}^{\beta} f_0\rVert_{L^2}
   =  \mathcal{O}\!\left(\eta^{\frac{|\beta|}{2}}h^{-|\beta|}\right),
  \end{align}
 and thus prove the corresponding ``-''-cases in the Proposition. The estimates for the other 
 cases of $R_{\pm}^{\bullet}$ and $E_{\pm}^{\bullet}$ then follow from 
 (\ref{eqn:estimDzzEpmRpm}), (\ref{eqn:PfComGrushHag3}) and (\ref{eqn:PfComGrushHag3_R-}). 
 
 Recall from Proposition 
 \ref{propGrushUnpertOp} that $\mathcal{E}(z)\mathcal{P}(z)=1$. Thus, note that
  \begin{align*}
   &\partial_z\mathcal{E}(z) + \mathcal{E}(z)(\partial_{z}\mathcal{P}(z))\mathcal{E}(z) =0, \notag \\
   &\partial_{\overline{z}}\mathcal{E}(z) + \mathcal{E}(z)(\partial_{\overline{z}}\mathcal{P}(z))\mathcal{E}(z) = 0,
  \end{align*}
 which implies 
  \begin{align*}
   \partial_z E &= - E(\partial_z(P_h-z)) E - E_+(\partial_zR_+) E - E(\partial_zR_-) E_- \notag \\
		 &= E^2 - E_+(\partial_zR_+) E - E(\partial_zR_-) E_-
  \end{align*}
and
  \begin{align*}
   \partial_{\overline{z}}E(z) = - E_+(z)(\partial_{\overline{z}}R_+)E(z) - E(z)(\partial_{\overline{z}}R_-)E_-(z).
  \end{align*}
Thus, by induction we conclude from this, from (\ref{eqn:estimDzzEpmRpm}) and from Proposition 
\ref{propGrushUnpertOp} that for $z\in\Omega$
  \begin{equation*}
   \lVert\partial_{z\overline{z}}^{\beta}E(z)\rVert 
   = \mathcal{O}\!\left(\eta^{\frac{|\beta|-1/2}{2}}h^{-(|\beta|+\frac{1}{2})}\right).
  \end{equation*}
The estimates on $\lVert\partial_{z\overline{z}}^{\beta}E^{\bullet}(z)\rVert$, 
for $\bullet = \eta,H$, can 
be conclude by following the same steps and by using the corresponding estimates on 
$R_{\pm}^{\bullet}$ and $E_{\pm}^{\bullet}$ and the Propositions \ref{prop:GrushHager} 
and \ref{prop:GrushBM}.\par 
It remains to prove the estimates on $|\partial_{z\overline{z}}^{\beta}E_{-+}^{\eta}(z)|$ and 
$|\partial_{z\overline{z}}^{\beta}E_{-+}^{H}(z)|$: let us first consider the case where 
$z\in\Omega_i\subset\Omega$. Recall (\ref{eqn:EstimEffectHamHager}) and recall from 
Proposition \ref{prop:GrushHager} that the prefactor $V^H(z)$ has bounded $z$- and $\overline{z}$-derivatives 
of order $\mathcal{O}(\sqrt{h})$. Thus, the statement follows immediately. \\
\par
In the case where $z\in\Omega_{\eta}^{a,b}\subset\Omega$, recall (\ref{eqn:EstimEffectHamHager}) and 
from Proposition \ref{prop:GrushBM} that the prefactor $V^{\eta}(z)$ has bounded $z$- and $\overline{z}$-derivatives 
of order $\mathcal{O}(\sqrt{h\sqrt{\eta}})$. Using that 
  \begin{align*}
   \e^{\frac{i}{h}\int_{x_+}^{x_--2\pi}(z-g(y))dy} &- \e^{\frac{i}{h}\int_{x_+}^{x_-}(z-g(y))dy} 
   \notag \\
   &= 
   \e^{\frac{i}{\tilde{h}}\int_{\tilde{x}_+}^{\tilde{x}_--2\pi/\sqrt{\eta}}(\tilde{z}-\tilde{g}(\tilde{y}))d\tilde{y} }
   - \e^{\frac{i}{\tilde{h}}\int_{\tilde{x}_+}^{\tilde{x}_-}(\tilde{z}-\tilde{g}(\tilde{y}))d\tilde{y}},
  \end{align*}
(\ref{eqn:SEstINBox}) implies 
  \begin{equation*}
   |\partial_{z\overline{z}}^{\beta}E_{-+}^{\eta}(z)| = 
   \eta^{-|\beta|}|\partial_{\tilde{z}\overline{\tilde{z}}}^{\beta}E_{-+}^{\eta}(z)| 
   = \mathcal{O}\!\left(\eta^{\frac{|\beta|+1/2}{2}}h^{-(|\beta|-\frac{1}{2})}\e^{\frac{\asymp\eta^{3/2}}{h}}\right).
   \qedhere
  \end{equation*}
\end{proof}
\begin{proof}[Proof of Proposition \ref{cor:E-+Repres}]
Let $\bullet= H,\eta$ denote the quasimodes and elements of the Grushin problems 
corresponding to the different zones of $z$. \par
Since $\mathcal{P}^{\bullet}\mathcal{E}^{\bullet}:~L^2(S^1)\times \mathds{C}\longrightarrow L^2(S^1)\times \mathds{C}$ let us introduce the 
following norm for an operator-valued matrix $A:~L^2(S^1)\times \mathds{C}\longrightarrow L^2(S^1)\times \mathds{C}$:
  \begin{equation*}
   \lVert A\rVert_{\infty} := \max\limits_{1\leq i\leq 2} \sum_{j=1}^{2} \lVert A_{ij}\rVert, 
  \end{equation*}
where $\lVert A_{ij}\rVert$ denotes the respective operator norm for $A_{ij}$. Next, note that 
 \begin{equation*}
   \mathcal{P}\mathcal{E}^{\bullet} = \left(\mathcal{P}^{\bullet} 
   + (\mathcal{P} - \mathcal{P}^{\bullet})\right)\mathcal{E}^{\bullet} = 
   1 + (\mathcal{P} - \mathcal{P}^{\bullet})\mathcal{E}^{\bullet}.
  \end{equation*}
\\
\textbf{Estimates for ($\mathcal{P} - \mathcal{P}^{\bullet}$) } 
 Recall the definition of $\mathcal{P}$ and of $\mathcal{P}^{\bullet}$ from the Propositions \ref{propGrushUnpertOp}, 
 \ref{prop:GrushHager} and \ref{prop:GrushBM} and note that 
  \begin{equation*}
   \mathcal{P} - \mathcal{P}^{\bullet} = \begin{pmatrix}
                                  0 & R_- - R_-^{\bullet} \\
                                  R_+ - R_+^{\bullet} & 0 \\
                                 \end{pmatrix}.
  \end{equation*}
We will now prove that for all $(n,m)\in\mathds{N}^2$
  \begin{align}\label{eqn:PfComGrushHag3}
   \lVert\partial_{z\overline{z}}^{\beta}( R_+ &- R_+^{\bullet} )\rVert_{H^1(S^1)\rightarrow\mathds{C}} \leq 
   \lVert\partial_{z\overline{z}}^{\beta}( e_0 - \chi_+^{\bullet} e_+^{\bullet} ) \rVert \notag \\
   &= 
   \begin{cases}
    \mathcal{O}\!\left( h^{-|\beta|}\e^{-\frac{1}{Ch}}\right), ~\text{for} ~ z\in\Omega,~\dist(\Omega,\partial\Sigma)>1/C,\\
    \mathcal{O}\!\left(\eta^{\frac{|\beta|}{2}}h^{-|\beta|}\e^{-\frac{\asymp\eta^{\frac{3}{2}}}{h}}\right), 
      ~\text{for} ~ z\in\Omega_{\eta}^{a},
   \end{cases}
  \end{align}
where the first estimate follows from the Cauchy-Schwartz inequality. Note that
  \begin{align}\label{eqn:PfComGrushHag2}
   \lVert\partial_{z\overline{z}}^{\beta}( e_0 - \chi_+^{\bullet}e_+^{\bullet} )\rVert 
   \leq 
   \lVert \partial_{z\overline{z}}^{\beta}(e_{wkb}^{\bullet} - \chi_+^{\bullet}e_+^{\bullet} )\rVert 
   + 
   \lVert\partial_{z\overline{z}}^{\beta}( e_0 - e_{wkb}^{\bullet} )\rVert.
  \end{align}
By Proposition \ref{quasimodes} it remains to prove the desired estimate on 
$\lVert \partial_{z\overline{z}}^{\beta}(e_{wkb}^{\bullet} - \chi_+^{\bullet}e_+ ^{\bullet})\rVert$.
Recall the definition of the quasimodes $e_{wkb}^{\bullet}$ and $e_+^{\bullet}$ from 
Section \ref{subse:Quasmod} and from Proposition \ref{porp:QuasmodHager}. \\ 
\par
Let us first consider the case of $z\in\Omega$ with $\dist(\Omega,\partial\Sigma)>1/C$: 
recall from Proposition \ref{prop:GrushHager} that all $z$- and $\overline{z}$-derivatives of $\chi_+$ 
are bounded independently of $h>0$, whereas for the derivatives of $\chi_e$
we have (\ref{eqn:higherzzbarderchi_e}). Thus 
	  \begin{equation*}
	   \partial_{z\overline{z}}^{\beta}\chi_+,
	   \partial_{z\overline{z}}^{\beta}\chi_e= \mathcal{O}(h^{-|\beta|/2}).
	  \end{equation*}
Thus, since $\chi_e(-;z) \succ\chi_+$ for all $z\in\overline{\Omega}_i$, which implies 
that $x_+(z)\notin\supp (\chi_e(-;z)-\chi_+)$ for all $z\in\overline{\Omega}_i$, the
Leibniz rule then implies
  \begin{align}\label{eqn:PfComGrushHag1}
   \left\lVert\partial_{z\overline{z}}^{\beta}\left(\left(\chi_e(\cdot;z)
   - \chi_+ \right)\e^{\frac{i}{h}\phi_+(\cdot;z)}\right)\right\rVert
   \leq
    \mathcal{O}\!\left(h^{-|\beta|} \e^{-\frac{F}{h}}\right).
  \end{align}
where $F>0$ is given by the infimum of $\Ima\phi(x;z)$ over all $z\in\overline{\Omega}$ and all 
\begin{equation*}
 x\in\left(\bigcup_{z\in\overline{\Omega}}\supp(\chi_e(\cdot;z))\right)\backslash\{x\in I_+:~\chi_+\equiv 1\}. 
\end{equation*}
Note that 
$F>0$ is strictly positive because $x_-(z)\notin \overline{I_+}$ for all $z\in\overline{\Omega}$ and 
$\chi_+\in\mathcal{C}^{\infty}_0(I_+)$ (cf. Propositions \ref{prop:GrushHager} and \ref{porp:QuasmodHager}).\\ 
\par
Recall that $h^{-1/4}a(z;h)$ and $c_+(z;h)$ are the normalization factors 
of $e_{wkb}$ and $e_+$ (cf. (\ref{eqn:DefEWKB}) and Proposition \ref{porp:QuasmodHager}). 
Hence, for $z\in\Omega_i$, 
  \begin{align*}
   h^{-\frac{1}{4}}\partial_{z\overline{z}}^{\beta}a(z;h),
      \partial_{z\overline{z}}^{\beta}c_+(z;h)= 
   \mathcal{O}\!\left(h^{-(|\beta|+1/2)}\right).
  \end{align*}
Thus the Leibniz rule implies 
  \begin{align*}
    | \partial_{z\overline{z}}^{\beta}c_+(z;h) &-  \partial_{z\overline{z}}^{\beta}h^{-1/4}a(z;h)| 
   \notag\\
   &= 
   \left|\partial_{z\overline{z}}^{\beta}\frac{\left\lVert\left(\chi_e(\cdot;z)\e^{\frac{i}{h}\phi_+(\cdot;z)}\right)\right\rVert 
   - \left\lVert\left( \chi_+\e^{\frac{i}{h}\phi_+(\cdot;z)}\right) \right\rVert}
   {\left\lVert\left(\chi_e(\cdot;z)\e^{\frac{i}{h}\phi_+(\cdot;z)}\right)\right\rVert 
    \left\lVert\left( \chi_+\e^{\frac{i}{h}\phi_+(\cdot;z)}\right) \right\rVert}\right| \notag \\
   &= \mathcal{O}\!\left(h^{-(|\beta|+1/2)} \e^{-\frac{F}{h}}\right).
  \end{align*}
Since $h^{-\frac{1}{4}}a(z;h),c_+(z;h)= \mathcal{O}(h^{-\frac{1}{4}})$, 
the Leibniz rule and the above imply that for $z\in\Omega_i$
  \begin{align*}
   \lVert\partial_{z\overline{z}}^{\beta}\left( e_{wkb} - \chi_+e_+ \right)\rVert
    \leq \mathcal{O}\!\left(h^{-(|\beta|+1/2)} \e^{-\frac{F}{h}}\right).
  \end{align*} 
Thus there exists a constant $C>0$, for $h>0$ small enough, such that for $z\in\Omega_i$ 
  \begin{align}\label{eqn:PfEffHamCompEstimFF}
   \lVert\partial_{z\overline{z}}^{\beta}\left( e_{wkb} - \chi_+e_+ \right)\rVert =
     \mathcal{O}\!\left(h^{-|\beta|} \e^{-\frac{1}{Ch}}\right).
  \end{align}
Now let us consider the case $z\in\Omega\cap\Omega_{\eta}^{a,b}$: recall the quasimodes 
$e_{wkb}^{\eta}$ and $e_+^{\eta}$ as given in Definition \ref{def:quasmodCB} and 
Proposition \ref{porp:QuasmodHager}. A rescaling argument similar to the one in 
the proof of Proposition \ref{quasimodes} then implies 
  \begin{align*}
   \lVert\partial_{z\overline{z}}^{\beta}\left( e_{wkb}^{\eta} - \chi_+^{\eta}e_+^{\eta} \right)\rVert 
   = \mathcal{O}\!\left(\eta^{\frac{|\beta|+3/2}{2}}h^{-(|\beta|+1/2)}\e^{-\frac{\asymp\eta^{\frac{3}{2}}}{h}}\right).
  \end{align*}
Absorbing the factor $\eta^{3/4}h^{-1/2}$ into $ \e^{-\frac{\asymp\eta^{\frac{3}{2}}}{h}}$ then yields 
the desired estimate. 
\par
It is possible to achieve an analogous estimate for $R_- - R_-^{\bullet}$, namely that for all $z\in\Omega$ 
and for all $(n,m)\in\mathds{N}^2$
  \begin{align}\label{eqn:PfComGrushHag3_R-}
   &\lVert\partial_{z\overline{z}}^{\beta}( R_- - R_-^{\bullet} )\rVert_{\mathds{C}\rightarrow H^1(S^1)} =
   \lVert\partial_{z\overline{z}}^{\beta}( f_0 - \chi_-^{\bullet} e_-^{\bullet} ) \rVert \notag \\
   &= 
   \begin{cases}
    \mathcal{O}\!\left( h^{-|\beta|}\e^{-\frac{1}{Ch}}\right), ~\text{for} ~z\in\Omega,~\dist(\Omega,\partial\Sigma)>1/C,\\
    \mathcal{O}\!\left(\eta^{\frac{|\beta|}{2}}h^{-|\beta|}\e^{-\frac{\asymp\eta^{\frac{3}{2}}}{h}}\right), 
      ~\text{for} ~ z\in\Omega_{\eta}^{a},
   \end{cases}
  \end{align}
This can be achieved by analogous reasoning as for the estimate on $R_+ - R_+^{\bullet}$. \\
\\
\textbf{A formula for $E_{-+}$ } 
It is easy to see, that for $h>0$ small enough 
  \begin{equation*}
    \lVert (\mathcal{P} - \mathcal{P}^{\bullet})\mathcal{E}^{\bullet} \rVert_{\infty} \ll 1 .
  \end{equation*}
Thus, $1 + (\mathcal{P} - \mathcal{P}^{\bullet})\mathcal{E}^{\bullet}$ is invertible by the Neumann series, wherefore 
  \begin{equation*}
   \mathcal{P}\mathcal{E}^{\bullet}\left[1 + (\mathcal{P} - \mathcal{P}^{\bullet})\mathcal{E}^{\bullet}\right]^{-1} = 1.
  \end{equation*}
We conclude that 
  \begin{equation*} 
   \mathcal{E}= \mathcal{E}^{\bullet}\sum_{n\geq 0} (-1)^n\left[(\mathcal{P} - \mathcal{P}^{\bullet})\mathcal{E}^{\bullet}\right]^n.
  \end{equation*}
Define $g_-:=R_- - R_-^{\bullet} $ and $g_+:=R_+ - R_+^{\bullet}$. Hence, by Propositions \ref{prop:GrushHager} and 
\ref{prop:GrushBM} as well as by (\ref{eqn:PfComGrushHag2}) 
and (\ref{eqn:PfComGrushHag3}), there exists a constant $C>0$ such that 
  \begin{equation*} 
   (\mathcal{P} - \mathcal{P}^{\bullet})\mathcal{E}^{\bullet} =  \begin{pmatrix}
                                                  g_-E_-^{\bullet} & g_-E_{-+}^{\bullet} \\ 
                                                  g_+E^{\bullet} & g_+E_+^{\bullet} \\
                                                 \end{pmatrix}
                      =  \begin{pmatrix}
                         \mathcal{O}\!\left( \e^{-\frac{1}{Ch}}\right) & E_{-+}^{\bullet}\mathcal{O}\!\left( \e^{-\frac{1}{Ch}}\right) \\ 
                           \mathcal{O}\!\left( \e^{-\frac{1}{Ch}}\right) & \mathcal{O}\!\left( \e^{-\frac{1}{Ch}}\right) \\
                         \end{pmatrix}.
  \end{equation*}
By induction it follows that for $n\in\mathds{N}$
    \begin{equation*} 
   \left[(\mathcal{P} - \mathcal{P}^{\bullet})\mathcal{E}^{\bullet}\right]^n 
                      =  \begin{pmatrix}
                         \mathcal{O}\!\left( \e^{-\frac{n}{Ch}}\right) & E_{-+}^{\bullet}\mathcal{O}\!\left( \e^{-\frac{n}{Ch}}\right) \\ 
                           \mathcal{O}\!\left( \e^{-\frac{n}{Ch}}\right) & \mathcal{O}\!\left( \e^{-\frac{n}{Ch}}\right) \\
                         \end{pmatrix}.
  \end{equation*}
We conclude that 
  \begin{equation*}
   E_{-+}(z) = E_{-+}^{\bullet}\left(1 + \sum_{n\geq 1}\mathcal{O}\!\left( \e^{-\frac{n}{Ch}}\right)\right) 
	     = E_{-+}^{\bullet}\left(1 + \mathcal{O}\!\left( \e^{-\frac{1}{Ch}}\right)\right).
  \end{equation*}
Finally, by the estimates on $g_+$ and $g_+$ obtained above and by the 
estimates given in Proposition \ref{prop:EstimOnGrushElements} we conclude 
the desired estimates on the $z$- and $\overline{z}$-derivatives of the error term.
\end{proof}
%
%
\section{Grushin problem for the perturbed operator $P_h^{\delta}$}\label{sec:EstEffectHamil}
For $\delta>0$ small enough, we can use the Grushin problem for the 
unperturbed operator $P_h$ to gain a well-posed Grushin 
problem for the perturbed operator $P_h^{\delta}$. 
\begin{prop}[\cite{SjAX1002}]\label{propGrushPertOp}
Let $z\in\Omega \Subset\Sigma$, let $h^{2/3} \ll \eta \leq \mathrm{const.}$ 
and let $R_-,R_+$ be as in Proposition \ref{propGrushUnpertOp}. Then
  \begin{equation*}
   \mathcal{P}_{\delta}(z):=\begin{pmatrix}
                    P_h^{\delta}-z & R_- \\ R_+ & 0 \\
                   \end{pmatrix}
   :~ H^1(S^1)\times \mathds{C}\longrightarrow L^2(S^1)\times \mathds{C}
  \end{equation*}
is bijective with the bounded inverse 
  \begin{equation*}
   \mathcal{E}_{\delta}(z) = \begin{pmatrix}
                     E^{\delta}(z) & E^{\delta}_+(z) \\ E^{\delta}_-(z) & E^{\delta}_{-+}(z) \\
                    \end{pmatrix}
  \end{equation*}
where 
  \begin{align*}
   &E^{\delta}(z)= E(z) + \mathcal{O}_{\eta^{-1/2}}\!\left(\delta h^{-2}\right)=\mathcal{O}(\eta^{-1/4}h^{-1/2})  \notag \\
   &E^{\delta}_-(z)=   E_-(z) + \mathcal{O}\!\left(\delta\eta^{-1/4} h^{-3/2}\right)=\mathcal{O}(1)  \notag \\
   &E^{\delta}_+(z)= E_+(z) + \mathcal{O}\!\left(\delta\eta^{-1/4} h^{-3/2}\right)=\mathcal{O}(1)  
  \end{align*}
and 
\begin{align}\label{eqn:estimateE-+}
    E_{-+}^{\delta}(z) &= E_{-+}(z) - \delta \left(E_-Q_{\omega}E_+ + 
			  \sum_{n=1}^{\infty}(-\delta)^n E_-Q_{\omega}(EQ_{\omega})^nE_+\right) \notag \\
	  &=  E_{-+}(z) - \delta \left(E_-Q_{\omega}E_+ + \mathcal{O}(\delta\eta^{-1/4} h^{-5/2}) \right)
\end{align} 
\end{prop}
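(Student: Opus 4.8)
The plan is to treat $\mathcal{P}_\delta(z)$ as a small perturbation of the well-posed Grushin problem $\mathcal{P}(z)$ of Proposition \ref{propGrushUnpertOp} and to invert it by a Neumann series. Since the perturbation acts only in the upper-left block, the resulting series is explicit, and that is exactly what produces the identity for $E_{-+}^{\delta}$.

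First I would write $\mathcal{P}_\delta(z) = \mathcal{P}(z) + \delta\,\mathcal{Q}$ with $\mathcal{Q} = \bigl(\begin{smallmatrix} Q_\omega & 0 \\ 0 & 0\end{smallmatrix}\bigr)$, so that, composing with $\mathcal{E}(z)$ on the right,
\begin{equation*}
 \mathcal{P}_\delta(z)\,\mathcal{E}(z) = 1 + \delta\,\mathcal{M}(z), \qquad
 \mathcal{M}(z) := \mathcal{Q}\,\mathcal{E}(z) = \begin{pmatrix} Q_\omega E & Q_\omega E_+ \\ 0 & 0 \end{pmatrix}.
\end{equation*}
By Hypothesis \ref{hyp:H5}, $\lVert Q_\omega\rVert_{L^2\to L^2}\leq\lVert Q_\omega\rVert_{HS}\leq C/h$, while Proposition \ref{propGrushUnpertOp} gives $\lVert E\rVert_{L^2\to H^1}=\mathcal{O}((h\sqrt\eta)^{-1/2})$ and $\lVert E_+\rVert=\mathcal{O}(1)$; hence $\lVert Q_\omega E\rVert_{L^2\to L^2}=\mathcal{O}(\eta^{-1/4}h^{-3/2})$ and $\lVert Q_\omega E_+\rVert=\mathcal{O}(h^{-1})$. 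Since $h^{2/3}\ll\eta$ and $\delta\ll h^{5/2}$, this makes $\lVert\delta\,\mathcal{M}(z)\rVert\ll1$ for $h$ small, so $1+\delta\,\mathcal{M}(z)$ is invertible by the Neumann series and $\mathcal{E}_\delta(z):=\mathcal{E}(z)\bigl(1+\delta\,\mathcal{M}(z)\bigr)^{-1}$ inverts $\mathcal{P}_\delta(z)$, which also settles existence and boundedness.

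The key observation is that $\mathcal{M}$ is block upper triangular with vanishing second row, so for $n\geq1$ one has $\mathcal{M}^n = \bigl(\begin{smallmatrix} (Q_\omega E)^n & (Q_\omega E)^{n-1}Q_\omega E_+ \\ 0 & 0 \end{smallmatrix}\bigr)$, whence
\begin{equation*}
 \bigl(1+\delta\,\mathcal{M}\bigr)^{-1} = \begin{pmatrix} 1+X & Y \\ 0 & 1 \end{pmatrix},\qquad
 X=\sum_{n\geq1}(-\delta)^n(Q_\omega E)^n,\quad Y=-\delta\sum_{n\geq0}(-\delta)^n(Q_\omega E)^nQ_\omega E_+ .
\end{equation*}
Multiplying out $\mathcal{E}_\delta=\mathcal{E}\,(1+\delta\,\mathcal{M})^{-1}$ gives $E^\delta=E+EX$, $E^\delta_-=E_-+E_-X$, $E^\delta_+=E_++EY$ and $E^\delta_{-+}=E_{-+}+E_-Y$. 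Using $(Q_\omega E)^nQ_\omega=Q_\omega(EQ_\omega)^n$ one recognizes
\begin{equation*}
 E_-Y = -\delta\Bigl(E_-Q_\omega E_+ + \sum_{n\geq1}(-\delta)^nE_-Q_\omega(EQ_\omega)^nE_+\Bigr),
\end{equation*}
which is precisely \eqref{eqn:estimateE-+}; the stated $\mathcal{O}$-remainders then follow by summing geometric series with the bounds above — e.g. $\sum_{n\geq1}(-\delta)^nE_-Q_\omega(EQ_\omega)^nE_+=\mathcal{O}(\delta\eta^{-1/4}h^{-5/2})$ (already its $n=1$ term is of this size), $EX=-\delta EQ_\omega E+\cdots=\mathcal{O}_{\eta^{-1/2}}(\delta h^{-2})$, and $E_-X$, $EY=\mathcal{O}(\delta\eta^{-1/4}h^{-3/2})$.

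I expect the only real work to be the bookkeeping of powers of $h$ and $\eta$: one must carry the $\eta$-dependent norms of Proposition \ref{propGrushUnpertOp} (with $\lVert E\rVert=\mathcal{O}(h^{-1/2})$ on $\Omega_i$ and $\mathcal{O}((h\sqrt\eta)^{-1/2})$ on $\Omega_\eta^{a,b}$) through every composition, check that the convergence threshold $\delta\lVert Q_\omega\rVert\,\lVert E\rVert=\mathcal{O}(\delta\eta^{-1/4}h^{-3/2})\ll1$ really holds under $\delta\ll h^{5/2}$, $\eta\gg h^{2/3}$, and verify that $Q_\omega$, being a finite-rank trigonometric operator, is bounded between the relevant semiclassical Sobolev spaces so that every product makes sense as an operator $H^1(S^1)\times\C\to L^2(S^1)\times\C$. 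None of this is analytically delicate; the proposition is essentially the one of \cite{SjAX1002}.
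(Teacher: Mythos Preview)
Your proof is correct and follows exactly the approach the paper indicates: the paper's own proof is the single line ``The statement follows immediately from Proposition \ref{propGrushUnpertOp} by use of the Neumann series,'' and you have simply written out that Neumann series computation in full, including the block-triangular structure of $\mathcal{M}$ that makes the formula \eqref{eqn:estimateE-+} drop out. The bookkeeping of the $h$- and $\eta$-powers is right.
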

\begin{proof}
The statement follows immediately from Proposition \ref{propGrushUnpertOp} by use of the Neumann series. 
\end{proof}
By (\ref{prop:GrushUnPert}) we get
  \begin{equation*}
    E_-Q_{\omega}E_+ = \sum\limits_{|j|,|k|\leq \left\lfloor\frac{C_1}{h}\right\rfloor} \alpha_{j,k} (e_0|e^k)\cdot (e^j|f_0)
		     = \sum\limits_{|j|,|k|\leq \left\lfloor\frac{C_1}{h}\right\rfloor} 
			\alpha_{j,k}~\widehat{e_0}(k) \overline{\widehat{f_0}(j)}.
  \end{equation*}
Recall from Proposition \ref{lem:truncation} that the random variables 
satisfy $\alpha\in B(0,C/h)$. For a more convenient notation we make 
the following definition:
\begin{defn}\label{def:firstordertermnota} 
For $x\in\mathds{R}$ we shall denote the Gauss brackets by 
$\lfloor x\rfloor:=\max\{k\in\mathds{Z}:~k\leq x\}$. Let $C_1>0$ be big enough as above and define $N:=(2\lfloor\frac{C_1}{h}\rfloor +1)^2$. 
For $z\in\Omega\Subset\mathring{\Sigma}$ let $X(z)=(X_{j,k}(z))_{|j|,|k|\leq \lfloor\frac{C_1}{h}\rfloor }\in\mathds{C}^{N}$ 
be given by 
  \begin{equation*}
	X_{j,k}(z) =  \widehat{e_0}(z;k) \overline{\widehat{f_0}(z;j)}, \quad\text{for }|j|,|k| \leq \left\lfloor\frac{C_1}{h}\right\rfloor.
  \end{equation*}
\end{defn}
Thus, for $z\in\Omega \Subset \Sigma$ and $\alpha\in B(0,C/h)\subset\mathds{C}^{N}$
  \begin{equation}\label{eqn:ShortCutE-+d}
   E_{-+}^{\delta}(z) = E_{-+}(z) - \delta\left[ X(z)\cdot\alpha + T(z;\alpha) \right],
  \end{equation}
where the dot-product $X(z)\cdot\alpha$ is the bilinear one, and 
  \begin{equation}\label{eqn:PowerSeriesT}
   T(z;\alpha) : = \sum_{n=1}^{\infty}(-\delta)^n E_-Q_{\omega}(EQ_{\omega})^nE_+ 
    =  \mathcal{O}(\delta\eta^{-1/4} h^{-5/2}),
  \end{equation}
where the estimate comes from Proposition \ref{propGrushPertOp}. Note that $T(z;\alpha)$ is $\mathcal{C}^{\infty}$ in $z$ and 
holomorphic in $\alpha$ in a ball of radius $C/h$, $B(0,C/h)\subset\mathds{C}^{N}$, 
by Proposition \ref{lem:truncation}. 
\begin{prop}\label{prop:EstimFourrierCoeffE0F0}
 Let $z\in\Omega\Subset\Sigma$, let $X(z)$ be as in Definition \ref{def:firstordertermnota}. 
 Let $h|k|\geq C$ for $C>0$ large enough, then the Fourier coefficients satisfy
  \begin{equation*}
   \widehat{e_0}(z;k), ~\widehat{f_0}(z;k) 
   =  \mathcal{O}\!\left(|k|^{-M}\dist(\Omega,\partial\Sigma)^{-\frac{M}{2}}\right), \quad 
   \dist(\Omega,\partial\Sigma) \gg h^{\frac{2}{3}}
  \end{equation*}
 for all $M\in\mathds{N}$. In particular
  \begin{equation*} 
    \lVert X(z)\rVert = 1 + \mathcal{O}\!\left(h^{\infty}\right).
  \end{equation*}
\end{prop}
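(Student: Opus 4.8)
The plan is to prove the decay of the Fourier coefficients $\widehat{e_0}(z;k)$ via a standard integration-by-parts argument exploiting the smoothness and localization of the quasimode $e_{wkb}$, and then to deduce the statement on $\lVert X(z)\rVert$ from Parseval's identity together with the normalization of $e_0$ and $f_0$. Since by Proposition \ref{quasimodes} (and Proposition \ref{quasimodes2}) the function $e_0$ differs from $e_{wkb}$ (resp. $e_{wkb}^{\eta}$) only by an error that is $\mO(e^{-S/h})=\mO(e^{-\asymp\eta^{3/2}/h})$ in every $H^l$-norm, and since the Fourier coefficients of an $L^2$ function are bounded by its $L^2$-norm, it suffices to estimate $\widehat{e_{wkb}}(k)$ (the corresponding error being absorbed once $h|k|\geq C$ is large, because $e^{-\asymp\eta^{3/2}/h}$ beats any negative power of $h|k|$ for $|k|\lesssim 1/h$, and for $|k|\gg 1/h$ one argues directly). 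Recall $e_{wkb}(x) = h^{-1/4}a(z;h)\chi_e(x,z;h)e^{\frac{i}{h}\phi_+(x;z)}$ with $\partial_x\phi_+ = z-g(x)$, so that $\frac{i}{h}\partial_x\phi_+ = \frac{i}{h}(z-g(x))$.

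First I would write $\widehat{e_{wkb}}(k) = (2\pi)^{-1/2}\int_{S^1} e_{wkb}(x)e^{-ikx}dx$ and integrate by parts $M$ times using the operator $L := (\partial_x(\tfrac{i}{h}\phi_+) - ik)^{-1}\partial_x$, whose transpose applied to $e^{\frac{i}{h}\phi_+ - ikx}$ reproduces that exponential. The symbol of $L$ is $\sigma(x) = \big(\tfrac{i}{h}(z-g(x)) - ik\big)^{-1}$. The key quantitative point is the lower bound on its denominator: for $x$ in the support of $\chi_e$, which (by Definition \ref{def:Quasimodes}) avoids a neighbourhood of the critical point $x_+(z)$ except on the sets $X_-$ of size $h^{1/2}$, one has $\Ima(z-g(x))$ bounded below in modulus in a way controlled by $\dist(z,\partial\Sigma) \asymp \eta$; more precisely $|\Ima(z-g(x))|\gtrsim \eta^{1/2}|x-x_+|$ by Taylor expansion near the critical point, combined with the global bound away from it. Hence $\big|\tfrac{i}{h}(z-g(x)) - ik\big| \geq \tfrac{1}{h}|\Ima(z-g(x))| \gtrsim \tfrac{1}{h}\,\eta^{1/2}\,h^{1/2}$ on the worst part of the support (where $|x-x_+|\asymp h^{1/2}$), while elsewhere it is $\gtrsim |k| + \eta^{1/2}/h$. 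Balancing these and tracking how each integration by parts produces either a factor $\sigma$ (gaining $\asymp (|k|\eta^{1/2})^{-1}$ after using $|k|h\geq C$) or a derivative hitting $\chi_e$ (costing $h^{-1/2}$ by \eqref{eqn:higherzzbarderchi_e}) or hitting $\sigma$ itself, one collects after $M$ steps a bound of the shape $\mO\!\left(|k|^{-M}\eta^{-M/2}\right) = \mO\!\left(|k|^{-M}\dist(\Omega,\partial\Sigma)^{-M/2}\right)$. The factor $h^{-1/4}|a(z;h)| = \mO(h^{-1/2})$ in front is easily absorbed once $|k|h\geq C$ is large enough. The same argument applied to $f_{wkb}$ (resp. $f_{wkb}^{\eta}$), whose phase $\phi_-$ satisfies $\partial_x\phi_- = \overline{z-g(x)}$, gives the bound for $\widehat{f_0}(z;k)$.

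For the statement on $\lVert X(z)\rVert$, I would argue as follows. Since $\{e^k\}_{k\in\mathds{Z}}$ is an orthonormal basis of $L^2(S^1)$ and $e_0,f_0$ are $L^2$-normalized, Parseval gives $\sum_{k\in\mathds{Z}}|\widehat{e_0}(k)|^2 = 1 = \sum_{j\in\mathds{Z}}|\widehat{f_0}(j)|^2$, hence $\sum_{j,k\in\mathds{Z}}|X_{j,k}|^2 = 1$. Therefore
\begin{equation*}
 \lVert X(z)\rVert^2 = \sum_{|j|,|k|\leq \lfloor C_1/h\rfloor}|\widehat{e_0}(k)|^2|\widehat{f_0}(j)|^2
 = 1 - \sum_{\substack{|j| \text{ or }|k| > \lfloor C_1/h\rfloor}}|\widehat{e_0}(k)|^2|\widehat{f_0}(j)|^2,
\end{equation*}
and the tail is controlled by $\big(\sum_{|k|>\lfloor C_1/h\rfloor}|\widehat{e_0}(k)|^2\big) + \big(\sum_{|j|>\lfloor C_1/h\rfloor}|\widehat{f_0}(j)|^2\big)$. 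By the decay just proved, taking $M$ large, each such tail is $\mO\!\left(\sum_{|k|>C_1/h}|k|^{-2M}\dist(\Omega,\partial\Sigma)^{-M}\right) = \mO\!\left(h^{2M-1}\dist(\Omega,\partial\Sigma)^{-M}\right)$, and since $\dist(\Omega,\partial\Sigma)\gg h^{2/3}$ this is $\mO(h^{2M-1-2M/3}) = \mO(h^{4M/3-1})$, which is $\mO(h^{\infty})$ as $M\to\infty$. Hence $\lVert X(z)\rVert^2 = 1 + \mO(h^{\infty})$, and taking square roots gives the claim.

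The main obstacle is the bookkeeping in the integration-by-parts estimate near the critical point $x_+(z)$: the cut-off $\chi_e$ is only as smooth as scale $h^{1/2}$ (its derivatives blow up like $h^{-1/2}$), and the phase denominator $\tfrac{i}{h}(z-g(x))-ik$ degenerates precisely where $x\to x_+$ and $k$ is comparable to $\Rea(z-g(x_+))/h$. One must check that these two competing effects still leave a net gain of $|k|^{-1}\eta^{-1/2}$ per integration by parts; this is where the hypothesis $|k|h\geq C$ with $C$ large (so that $|k|$ dominates the real part of the symbol's denominator) and the Taylor estimate $|\Ima(z-g(x))|\gtrsim\eta^{1/2}|x-x_+|$ near $x_+$ are both essential. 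The rescaling to $\widetilde{P}_{\widetilde h}$ as in the proof of Proposition \ref{quasimodes} can be invoked to make the $\eta$-dependence transparent and to reduce the boundary case to an interior estimate, which I expect is how one cleanly handles the $\eta^{-1/2}$ factors.
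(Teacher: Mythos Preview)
Your overall strategy---integration by parts on the WKB quasimode plus Parseval for $\|X\|$---matches the paper's proof, and your argument for $\|X(z)\| = 1 + \mO(h^\infty)$ via the tail estimate is correct.

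The obstacle you flag at the end, however, is not real, and your bookkeeping of the phase-derivative lower bound is both more complicated than needed and partly based on a wrong geometric picture. First, the support of $\chi_e$ does \emph{not} avoid $x_+$: by Definition~\ref{def:Quasimodes} the cut-off is $\equiv 1$ in a full neighbourhood of $x_+$; it is $x_-$ that lies outside $\supp\chi_e$, and the transition region $X_-$ sits near $x_-$, not near $x_+$. Second, and more importantly, under the hypothesis $h|k|\geq C$ with $C$ large there is no degeneracy of the denominator anywhere on $S^1$: since $\Omega$ is relatively compact and $g\in\mathcal{C}^\infty(S^1)$, one has $|z - g(x)| \leq M$ for a fixed constant $M$ and all $(z,x)\in\overline{\Omega}\times S^1$, hence
\[
\bigl|(z-g(x))/h - k\bigr| \;\geq\; |k| - M/h \;\geq\; |k|(1 - M/C) \;\asymp\; |k|
\]
uniformly in $x$. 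The phase is therefore globally nonstationary, each integration by parts gains a clean factor $|k|^{-1}$, and no analysis of $\Ima(z-g(x))$ near $x_+$ is needed. This is the paper's one-line observation.

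Consequently your claim that each $\sigma$-factor contributes $(|k|\eta^{1/2})^{-1}$ is wrong---it contributes $|k|^{-1}$. The $\eta^{-M/2}$ in the stated bound does not arise from the phase denominator; it appears only when one passes to the rescaled variable $\tilde{x} = x/\sqrt{\eta}$ (so $\tilde{k} = \sqrt{\eta}k$) for $z\in\Omega_\eta^a$ and reads off $\mO(|\tilde{k}|^{-n}) = \mO(\eta^{-n/2}|k|^{-n})$ from the interior estimate. You correctly anticipate this rescaling in your last sentence, and that is in fact the whole story for the $\eta$-dependence.
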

\begin{proof} Will show the proof in the case of $e_0(z)$ since the case of $f_0(z)$ is similar. 
 Let us first suppose that $z\in\Omega$ with $\dist(\Omega,\partial\Sigma)>1/C$. 
 Recall the definition of the quasimode $e_{wkb}$ given in (\ref{eqn:DefEWKB}).
 By Proposition \ref{quasimodes2} 
  \begin{align*}
   \widehat{e_0}(z;k) = \int \left(e_{wkb}(z;x)
	  + \mO_{C^{\infty}}\!\left(\e^{-\frac{S}{2h}}\right)\right) \e^{-ikx} dx.
  \end{align*}
 For $k\in\mathds{Z}\backslash\{0\}$, repeated integration by parts using the operator 
  \begin{equation*}
   ^t L := \frac{i}{k}\frac{d}{dx}
  \end{equation*}
 applied to the error term yields by Proposition \ref{quasimodes2} 
 that for all $n\in\mathds{N}$
  \begin{align*}
   \widehat{e_0}(z;k) = \int e_{wkb}(z;x)\e^{-ikx} dx
	  + \mO\!\left(|k|^{-n} h^{\infty}\right).
  \end{align*}
 Define the phase function $\Phi(x,z) := (\phi_+(x,z)h^{-1} - kx)$. Since $h|k|\geq C$ is large 
 enough and since $\Omega$ is relatively compact, it follows that 
  \begin{equation*}
   |\partial_x\Phi(x,z)| = |\partial_x\phi_+(x,z)h^{-1}  - k|\geq  C_1|k| > 0.
  \end{equation*}
Repeated integration by parts using the operator
  \begin{equation*}
   ^t L' := \frac{1}{\partial_x\Phi(x,z)}D_x
  \end{equation*}
yields that for all $n\in\mathds{N}$ 
 \begin{align*}
    \int e_{wkb}(z;x) \e^{-ikx} dx = \mO\!\left(|k|^{-n}\right).
  \end{align*}
Thus, for all $n\in\mathds{N}$ 
  \begin{align*}
   \widehat{e_0}(z;k) = \mO\!\left(|k|^{-n}\right).
  \end{align*}
 For $z\in\Omega\cap\Omega_{\eta}^a$ one performs a similar rescaling argument 
 as in the proof of Proposition \ref{quasimodes}. Since in the rescaled coordinates 
 $\tilde{k} = \sqrt{\eta}k$, we conclude that for all $n\in\mathds{N}$
  \begin{equation*}
   |\widehat{e_0}(k)|\leq \mO\!\left(\eta^{-\frac{n}{2}}|k|^{-n}\right).
  \end{equation*}
 Finally, by definition \ref{def:firstordertermnota}, Parseval identity and the 
 estimates on the Fourier coefficients above, it follows that
  \begin{align*}
   \lVert X(z)\rVert^2 = \hspace{-0.1cm}\sum_{|j|,|k|\leq N} \hspace{-0.1cm}
   |\widehat{e_0}(z;j)|^2 |\widehat{f_0}(z;k)|^2 
   =(e_0(z)|e_0(z))(f_0(z)|f_0(z))  + \mathcal{O}\!\left(h^{\infty}\right).
  \end{align*}
Since $\lVert e_0\rVert,\lVert f_0\rVert=1$, we conclude the second statement of the Proposition.
\end{proof}
The following is an extension of Proposition \ref{prop:EstimFourrierCoeffE0F0}.
\begin{prop}\label{cor:EstimFourrierCoeffE0F0}
 Let $z\in\Omega\Subset\Sigma$, let $X(z)$ be as in Definition \ref{def:firstordertermnota}. 
 Let $h|k|\geq C$ for $C>0$ large enough, then for $\dist(\Omega,\partial\Sigma) \gg h^{\frac{2}{3}}$ and 
 for all $n,m\in\mathds{N}_0$ 
  \begin{equation*}
   \partial_z^n \partial_{\overline{z}}^m \widehat{e_0}(z;k),\partial_z^n \partial_{\overline{z}}^m\widehat{f_0}(z;k)  
   =  \left(|k|^{-M}\dist(\Omega,\partial\Sigma)^{-\frac{M}{2}}\right).
  \end{equation*}
 Furthermore, 
  \begin{equation*}
   \lVert \partial_z^n \partial_{\overline{z}}^mX(z) \rVert = 
    \mathcal{O}\!\left(\dist(\Omega,\partial\Sigma)^{\frac{n+m}{2}}h^{-(n+m)}\right).
  \end{equation*}
\end{prop}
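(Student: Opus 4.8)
The plan is to rerun the argument of Proposition \ref{prop:EstimFourrierCoeffE0F0} with $\partial_{z\overline{z}}^{\beta}$ applied throughout, using the extended quasimode estimates of Proposition \ref{quasimodes2} (which also control $x$-derivatives) in place of those of Proposition \ref{quasimodes}, and then to deduce the bound on $X(z)$ from Parseval's identity. As in Proposition \ref{prop:EstimFourrierCoeffE0F0} I discuss only $\widehat{e_0}$; the case of $\widehat{f_0}$ is entirely analogous. Assume first $\dist(\Omega,\partial\Sigma)>1/C$. Since $\widehat{e_0}(z;k)=(2\pi)^{1/2}(e_0(z)\,|\,e^k)$, the Fourier coefficient commutes with $\partial_{z\overline{z}}^{\beta}$, so $\partial_{z\overline{z}}^{\beta}\widehat{e_0}(z;k)=\int\partial_{z\overline{z}}^{\beta}e_0(z;x)\,\e^{-ikx}dx$, and I split $e_0=e_{wkb}+(e_0-e_{wkb})$.

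For the remainder I integrate by parts $M$ times with ${}^{t}L=\tfrac{i}{k}D_x$ and estimate in $L^2$ by Proposition \ref{quasimodes2}, which gives $\lVert\partial_x^{M}\partial_{z\overline{z}}^{\beta}(e_0-e_{wkb})\rVert=\mO(h^{-M-|\beta|}\e^{-S/h})$; since $S\asymp 1$ this contribution is $\mO(|k|^{-M}h^{\infty})$. For the WKB part I pull out the oscillation, $\partial_{z\overline{z}}^{\beta}e_{wkb}=B_{\beta}(x,z)\,\e^{i\phi_+(x;z)/h}$. On the set where $\chi_e\equiv 1$ the amplitude $B_{\beta}$ is, by \eqref{eqn:DerzOfPhasePhi+}, \eqref{d_coef1} and the fact that $\partial_x^2\partial_z\phi_+\equiv 0$ while $\partial_x\partial_{\overline{z}}\phi_+\equiv 0$, of the form $h^{-1/4}a(z;h)$ times a polynomial in $x$ of degree $\le|\beta|$ whose coefficients are $\mO(h^{-|\beta|})$; hence $\partial_x^{\ell}B_{\beta}=\mO(h^{-|\beta|})$ uniformly in $\ell$ there. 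On the support of $\partial_x\chi_e$, where the extra factors $\partial_x^{\ell}\chi_e=\mO(h^{-\ell/2})$ of \eqref{eqn:higherzzbarderchi_e} enter, one has $|\e^{i\phi_+(x;z)/h}|\le\e^{-S/h}$ by Definition \ref{defn:ZeroOrderDensity}, so that piece is $\mO(|k|^{-M}h^{\infty})$ after the same integrations by parts. On the bulk I integrate by parts $M$ times with ${}^{t}L'=(\partial_x\Phi)^{-1}D_x$, $\Phi:=\phi_+/h-kx$; since $|\partial_x\phi_+|=|z-g(x)|$ is bounded and $h|k|\ge C$ is large, $|\partial_x\Phi|\ge|k|/2$ and $|\partial_x^2\Phi|/|\partial_x\Phi|^2=\mO(h^{-1}|k|^{-2})=\mO(|k|^{-1})$, so each step gains a clean factor $\mO(|k|^{-1})$ and the bulk integral is $\mO(|k|^{-M}h^{-|\beta|})$. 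Absorbing $h^{-|\beta|}$ via $h^{-1}\le C^{-1}|k|$ and then letting $M$ grow shows $\partial_{z\overline{z}}^{\beta}\widehat{e_0}(z;k)=\mO(|k|^{-M})$ for every $M$, which is the asserted bound since $\dist(\Omega,\partial\Sigma)\asymp 1$ here.

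For $z\in\Omega\cap\Omega_{\eta}^{a}$ with $h^{2/3}\ll\eta\le\mathrm{const.}$ I use the rescaling of Section \ref{subse:Quasmod}, $x=\sqrt{\eta}\,\widetilde{x}$, $\widetilde{k}=\sqrt{\eta}\,k$, $\widetilde{h}=h/\eta^{3/2}$, $\widetilde{z}=z/\eta$. From $e_0(z;\sqrt{\eta}\,\widetilde{x})=\eta^{-1/4}\widetilde{e}_0(\widetilde{z};\widetilde{x})$ one gets $\widehat{e_0}(z;k)=\eta^{1/4}\widehat{\widetilde{e}_0}(\widetilde{z};\widetilde{k})$, and $\partial_z=\eta^{-1}\partial_{\widetilde{z}}$; the interior case just proved, applied to $\widetilde{P}_{\widetilde{h}}$ (legitimate because $\widetilde{h}|\widetilde{k}|=h|k|/\eta\ge c_0$ once $C$ is large), yields $\partial_{\widetilde{z}\overline{\widetilde{z}}}^{\beta}\widehat{\widetilde{e}_0}(\widetilde{z};\widetilde{k})=\mO(|\widetilde{k}|^{-M})$ for every $M$, hence $\partial_{z\overline{z}}^{\beta}\widehat{e_0}(z;k)=\mO(\eta^{1/4-|\beta|-M/2}|k|^{-M})$. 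Since $h\ll\eta^{3/2}$ and $h|k|\ge C$ force $\eta^{-1}\ll|k|^{2/3}$, the surplus powers of $\eta^{-1}$ are absorbed into powers of $|k|$, and increasing $M$ gives $\partial_{z\overline{z}}^{\beta}\widehat{e_0}(z;k)=\mO(|k|^{-M}\eta^{-M/2})=\mO(|k|^{-M}\dist(\Omega,\partial\Sigma)^{-M/2})$ for every $M$, which is the first assertion.

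For the bound on $X(z)$ the pointwise estimate is not even needed: by Definition \ref{def:firstordertermnota} and Leibniz, $\partial_{z\overline{z}}^{\beta}X_{j,k}(z)$ is a finite sum of products $\partial_{z\overline{z}}^{\beta'}\widehat{e_0}(z;k)\cdot\overline{\partial_{z\overline{z}}^{\beta''}\widehat{f_0}(z;j)}$ with $|\beta'|+|\beta''|=|\beta|$, so Cauchy--Schwarz and Parseval give
\begin{equation*}
 \lVert\partial_{z\overline{z}}^{\beta}X(z)\rVert^{2}
 \le C\!\!\sum_{|\beta'|+|\beta''|=|\beta|}\!\!\lVert\partial_{z\overline{z}}^{\beta'}e_0\rVert^{2}\,\lVert\partial_{z\overline{z}}^{\beta''}f_0\rVert^{2},
\end{equation*}
and Proposition \ref{quasimodes} bounds each factor by $\mO(\eta^{|\beta'|}h^{-2|\beta'|})$ resp. $\mO(\eta^{|\beta''|}h^{-2|\beta''|})$, so $\lVert\partial_{z\overline{z}}^{\beta}X(z)\rVert=\mO(\eta^{|\beta|/2}h^{-|\beta|})=\mO(\dist(\Omega,\partial\Sigma)^{|\beta|/2}h^{-|\beta|})$. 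The main obstacle, exactly as at $\beta=0$, is bookkeeping the $h$-powers in the oscillatory integral so that each integration by parts genuinely produces a factor $|k|^{-1}$: this works because the $x$-derivatives of $B_{\beta}$ do not grow on $\{\chi_e\equiv 1\}$, because $\e^{-S/h}$ is available on the cut-off region, and because $|\partial_x^2\Phi|/|\partial_x\Phi|^2$ is of size $|k|^{-1}$ rather than $h^{-1}|k|^{-2}$ once $h|k|$ is large.
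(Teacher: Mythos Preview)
Your proof is correct and follows essentially the same approach as the paper's own (very terse) argument: commute $\partial_{z\overline{z}}^{\beta}$ into the Fourier integral, split $e_0=e_{wkb}+(e_0-e_{wkb})$, integrate by parts using Proposition~\ref{quasimodes2} for the remainder and non-stationary phase for the WKB piece, then rescale for $z\in\Omega_\eta^a$; the bound on $\lVert\partial_{z\overline{z}}^{\beta}X\rVert$ via Parseval and Proposition~\ref{quasimodes} is exactly what the paper does. Your explicit bookkeeping of why each integration by parts gains a genuine $|k|^{-1}$ (namely $|\partial_x^2\Phi|/|\partial_x\Phi|^2=\mO(h^{-1}|k|^{-2})=\mO(|k|^{-1})$ once $h|k|\ge C$, and the observation that $B_\beta$ is polynomial in $x$ on $\{\chi_e\equiv 1\}$ so its $x$-derivatives do not blow up) fills in exactly the details the paper suppresses with ``similar to the proof of Proposition~\ref{prop:EstimFourrierCoeffE0F0}''.
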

\begin{proof}
 Since 
  \begin{align*}
   \partial_z^n \partial_{\overline{z}}^m\widehat{e_0}(z;k) &=
   \int \partial_z^n \partial_{\overline{z}}^me_0(z;x)\e^{-ikx} dx. 
  \end{align*}
  We then conclude similar to the proof of Proposition \ref{prop:EstimFourrierCoeffE0F0} that 
  for all $N\in\mathds{N}$
  \begin{equation*}
   |\partial_z^n \partial_{\overline{z}}^m\widehat{e_0}(z;k)| = \mathcal{O}\!\left(\eta^{-\frac{N}{2}}|k|^{-N}\right).
  \end{equation*}
The second statement of the Proposition is a direct consequence of Parseval's identity and 
Proposition \ref{quasimodes}.
\end{proof}
%
%
\section{Connections with symplectic volume and tunneling effects}\label{eqn:SecSymplVol}
The first two terms of the effective Hamiltonian $E_{-+}^{\delta}$ for the perturbed 
operator $P_h^{\delta}$ (cf. \eqref{eqn:ShortCutE-+d}) have a relation to the symplectic 
volume form on $T^*S^1$ and to the tunneling effects described in Section \ref{susec:tunnel}.
\subsection{Link with the symplectic volume}
\begin{prop}\label{lem:1MomSymplVol}
Let $z\in\Omega\Subset\Sigma$ and let $p$ and $\rho_{\pm}$ be as in Section \ref{sec:Intro}. 
Let $X(z)$ be as in Definition \ref{def:firstordertermnota}. 
Then we have for $h>0$ small enough and $h^{2/3}\ll \eta \leq \mathrm{const.}$
  \begin{equation*}
   (\partial_z X | \partial_z X) - \frac{\left|(\partial_z X | X) \right|^2}{\lVert X \rVert^2} = 
    \frac{1}{h}\left(\frac{i}{\{p,\overline{p}\}(\rho_{+}(z))}-\frac{i}{\{p,\overline{p}\}(\rho_{-}(z))}\right) 
    +\mathcal{O}(\eta^{-2}),
  \end{equation*}
where
  \begin{equation*}
   |\{p,\overline{p}\}(\rho_{\pm})| \asymp \sqrt{\eta}.
  \end{equation*}
 The $\partial_{z\overline{z}}^{\beta}$ derivatives of the error term 
 $\mathcal{O}(\eta^{-2})$ are of order 
 $\mathcal{O}\left(\eta^{\frac{|\beta|}{2}-2}h^{-\frac{|\beta|}{2}}\right)$.
\end{prop}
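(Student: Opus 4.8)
The plan is to reduce the rational expression on the left to a sum of two curvature/variance terms, one attached to $e_0$ and one to $f_0$, and then evaluate each by the stationary phase (Laplace) method using the WKB quasimodes. First I would set $u=(\widehat{e_0}(z;k))_{|k|\le\lfloor C_1/h\rfloor}$ and $v=(\widehat{f_0}(z;j))_{|j|\le\lfloor C_1/h\rfloor}$, so that $X(z)=u\otimes\overline v$ in the sense $X_{j,k}=u_k\overline v_j$, and hence $\partial_zX=(\partial_zu)\otimes\overline v+u\otimes\overline{\partial_{\overline z}v}$. Using the identity $(a\otimes\overline b\mid c\otimes\overline d)_{\C^N}=(a\mid c)(d\mid b)$ one expands $(\partial_zX\mid\partial_zX)$ into four terms and $(\partial_zX\mid X)$ into two; the two ``cross'' terms appearing in $|(\partial_zX\mid X)|^2/\|X\|^2$ cancel exactly against the two cross terms of $(\partial_zX\mid\partial_zX)$. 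Propositions \ref{prop:EstimFourrierCoeffE0F0} and \ref{cor:EstimFourrierCoeffE0F0} (Fourier decay of $e_0,f_0$ and their $z$-derivatives, together with $\|u\|^2=\|v\|^2=1+\mathcal O(h^\infty)$, $\|X\|^2=1+\mathcal O(h^\infty)$) let me pass back from the truncated vectors to $e_0,f_0$, so that
\[
(\partial_z X \mid \partial_z X) - \frac{|(\partial_z X \mid X)|^2}{\|X\|^2}
=\big\|\partial_ze_0-(\partial_ze_0\mid e_0)e_0\big\|^2+\big\|\partial_{\overline z}f_0-(\partial_{\overline z}f_0\mid f_0)f_0\big\|^2+\mathcal O(h^\infty).
\]

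Next I would note that each summand is invariant under the gauge change $e_0\mapsto e^{i\theta(z,\overline z)}e_0$ (and likewise for $f_0$), so by Proposition \ref{quasimodes2} --- using that the error there is $\mathcal O(h^{-|\beta|}e^{-S/h})$ with $S\asymp\eta^{3/2}$, hence negligible next to $\eta^{-2}$ --- I may replace $e_0,f_0$ by the WKB quasimodes $e_{wkb},f_{wkb}$ (or $e_{wkb}^{\eta},f_{wkb}^{\eta}$). For $e_{wkb}=h^{-1/4}a\,\chi_e\,e^{i\phi_+/h}$ one has $\partial_ze_{wkb}=\big(\tfrac{\partial_za}{a}+\tfrac{\partial_z\chi_e}{\chi_e}+\tfrac ih\partial_z\phi_+\big)e_{wkb}$, where the first coefficient is independent of $x$ and therefore drops out of the gauge--invariant difference, the second is supported in the transition region of $\chi_e$, far from $x_+$, and contributes only $e^{-S/h}$--errors, and by \eqref{eqn:DerzOfPhasePhi+} the only $x$--dependent part of $\partial_z\phi_+(x;z)=(x-x_+(z))-\xi_+(z)\partial_zx_+(z)$ is $x-x_+$. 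Hence the $e_0$--summand equals $h^{-2}\,\mathrm{Var}_{\mu_+}(x)+(\text{negligible})$, where $\mu_+=|e_{wkb}|^2\,dx$ is a probability measure concentrated at $x_+$; the Laplace method, using $\partial_x^2(\Ima\phi_+)(x_+)=-\Ima g'(x_+)>0$, gives $\mathrm{Var}_{\mu_+}(x)=\tfrac{h}{2|\Ima g'(x_+)|}\big(1+\mathcal O(h/\eta^{3/2})\big)$, so this summand is $\tfrac1{2h|\Ima g'(x_+)|}+\mathcal O(\eta^{-2})$. The same computation with $f_{wkb}=h^{-1/4}b\,\chi_f\,e^{i\phi_-/h}$, now with $\partial_{\overline z}\phi_-(x;z)=(x-x_-(z))-\xi_-(z)\partial_{\overline z}x_-(z)$ and $|f_{wkb}|^2$ concentrated at $x_-$, gives $\tfrac1{2h|\Ima g'(x_-)|}+\mathcal O(\eta^{-2})$.

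To conclude I would use $\{p,\overline p\}(\rho_\pm)=-2i\,\Ima g'(x_\pm)$ with $\pm\Ima g'(x_\pm)<0$, which yields $\tfrac1h\tfrac{i}{\{p,\overline p\}(\rho_+)}=\tfrac1{2h|\Ima g'(x_+)|}$ and $-\tfrac1h\tfrac{i}{\{p,\overline p\}(\rho_-)}=\tfrac1{2h|\Ima g'(x_-)|}$; summing the two summands gives the stated main term. The estimate $|\{p,\overline p\}(\rho_\pm)|\asymp\sqrt\eta$ is exactly \eqref{eqn:TaylorPoissonBracket} (Taylor expansion at $a$ together with $|x_\pm-a|\asymp\sqrt\eta$). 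In the region $z\in\Omega\cap\Omega_\eta^a$ the whole of the argument above is run for the rescaled operator $\widetilde P_{\widetilde h}-\widetilde z$ of \eqref{eqn:RescaledOp}: the Fourier coefficients are essentially unchanged by the rescaling, the interior computation applies verbatim with $\widetilde h=h/\eta^{3/2}$ in place of $h$, and translating back produces the $\eta$-powers in the remainder, while differentiating the stationary--phase expansion (with the $z$- and $\overline z$-derivatives of $e_0,f_0,X$ controlled by Proposition \ref{cor:EstimFourrierCoeffE0F0}) gives the bound $\mathcal O(\eta^{|\beta|/2-2}h^{-|\beta|/2})$ for $\partial_{z\overline z}^{\beta}$ of the error term.

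The main obstacle is the first step: recognizing the rational expression as a sum of two gauge--invariant ``Fubini--Study'' norms is what makes the $\tfrac ih\{p,\overline p\}^{-1}$ structure appear at all. After that the difficulty is bookkeeping rather than conceptual --- one must check that every $x$-independent coefficient and every $e^{-S/h}$ contribution genuinely cancels or is absorbed in the gauge--invariant difference, and one must run the Laplace expansion (and its $z$-derivatives) uniformly down to the critical scale $\eta\asymp h^{2/3}$.
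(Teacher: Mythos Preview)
Your proposal is correct and follows essentially the same route as the paper: reduce the left-hand side via the tensor structure $X=u\otimes\overline v$ and Parseval to $\|(1-\Pi_{e_0})\partial_ze_0\|^2+\|(1-\Pi_{f_0})\partial_{\overline z}f_0\|^2+\mathcal O(h^\infty)$, replace $e_0,f_0$ by the WKB quasimodes, and evaluate by stationary phase (the paper isolates this as Lemma~\ref{lem:proofSymplVol}, which is exactly your variance computation $h^{-2}\mathrm{Var}_{\mu_\pm}(x)=\tfrac1{2h|\Ima g'(x_\pm)|}+\mathcal O(\eta^{-2})$), with the boundary case handled by rescaling. Your explicit tensor-algebra cancellation of the cross terms and the gauge-invariance remark are a slightly cleaner packaging of what the paper does by direct citation of Parseval and the projection identity \eqref{eqn:PfLemSymVolProjID}.
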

\begin{prop}\label{cor:1MomSymplVol}
 Let $z\in\Omega\Subset\Sigma$, let $p$ and $\rho_{\pm}$ be as in Section \ref{sec:Intro} and 
 let $d\xi \wedge dx$ be the symplectic form on $T^*S^1$. Then,
  \begin{align*}
   \frac{1}{h}\left(\frac{i}{\{p,\overline{p}\}(\rho_{+}(z))}-\frac{i}{\{p,\overline{p}\}(\rho_{-}(z))}\right)L(dz) 
   = \frac{1}{2h}p_*(d\xi\wedge  dx) 
  \end{align*}
\end{prop}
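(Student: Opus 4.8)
The proof is a direct change-of-variables computation, and the plan is to evaluate both sides as measures on $\C$ and check they agree Lebesgue-almost everywhere on $\Omega$.

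First I would make the Poisson bracket explicit. Since $p(x,\xi)=\xi+g(x)$ one has $p'_\xi=1$, $p'_x=g'(x)$, $\overline{p}{}'_\xi=1$, $\overline{p}{}'_x=\overline{g'(x)}$, so that $\{p,\overline{p}\}(\rho_\pm)=\overline{g'(x_\pm)}-g'(x_\pm)=-2i\,\Ima g'(x_\pm)$, which is precisely \eqref{eqn:TaylorPoissonBracket}. The defining sign condition $\pm\Ima g'(x_\pm)<0$ gives $\Ima g'(x_+)<0<\Ima g'(x_-)$, hence
\begin{equation*}
\frac{i}{\{p,\overline{p}\}(\rho_+(z))}=\frac{-1}{2\,\Ima g'(x_+)}=\frac{1}{2|\Ima g'(x_+)|},\qquad
\frac{i}{\{p,\overline{p}\}(\rho_-(z))}=\frac{-1}{2\,\Ima g'(x_-)}=\frac{-1}{2|\Ima g'(x_-)|},
\end{equation*}
so the left-hand side of the proposition equals $\frac{1}{2h}\bigl(|\Ima g'(x_+(z))|^{-1}+|\Ima g'(x_-(z))|^{-1}\bigr)L(dz)$.

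Next I would compute $p_*(d\xi\wedge dx)$, understood as the pushforward under $p$ of the measure $|d\xi\,dx|$ on $T^*S^1=S^1\times\mathds{R}$. Writing $p=u+iv$ with $u(x,\xi)=\xi+\Rea g(x)$, $v(x,\xi)=\Ima g(x)$ and identifying $z=u+iv$, $L(dz)=du\,dv$, the Jacobian of $(\xi,x)\mapsto(u,v)$ is
\begin{equation*}
\det\begin{pmatrix}\partial_\xi u & \partial_x u\\ \partial_\xi v & \partial_x v\end{pmatrix}
=\det\begin{pmatrix}1 & \Rea g'(x)\\ 0 & \Ima g'(x)\end{pmatrix}=\Ima g'(x),
\end{equation*}
which vanishes only at $x=a,b$. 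For $z$ with $\Ima g(a)<\Ima z<\Ima g(b)$ the equation $\Ima g(x)=\Ima z$ has exactly the two solutions $x_+(z),x_-(z)\in S^1$ (one on each monotonicity arc of $\Ima g$), and then $\xi=u-\Rea g(x_\pm)=\xi_\pm$ is uniquely determined; thus $p^{-1}(z)=\{\rho_+(z),\rho_-(z)\}$, while the exceptional set $\{\Ima z=\Ima g(a)\}\cup\{\Ima z=\Ima g(b)\}$ is $L$-null. The coarea/change-of-variables formula then gives, for every $\varphi\in\mathcal{C}_0(\C)$,
\begin{equation*}
\int_{T^*S^1}\varphi(p(x,\xi))\,d\xi\,dx
=\int_{\C}\varphi(z)\Bigl(\frac{1}{|\Ima g'(x_+(z))|}+\frac{1}{|\Ima g'(x_-(z))|}\Bigr)L(dz),
\end{equation*}
i.e. $p_*(d\xi\wedge dx)=\bigl(|\Ima g'(x_+(z))|^{-1}+|\Ima g'(x_-(z))|^{-1}\bigr)L(dz)$. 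Comparing with the previous display yields the claimed identity.

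There is no real obstacle here; the only point requiring a little care is that $p$ is not injective — it has the two sheets $x_\pm(z)$, and $\xi$ is a free fibre variable — but this is handled by the elementary Jacobian-plus-multiplicity bookkeeping above, together with the observation that the bad horizontal lines form an $L$-null set, so the identity holds a.e. on $\Omega$ as required.
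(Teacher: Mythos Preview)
Your proof is correct. The paper does not actually give its own argument for this proposition; it simply refers the reader to \cite{Ha06} and \cite[Prop.~7.4]{SjAX1002}. Your direct Jacobian computation --- making $\{p,\overline{p}\}(\rho_\pm)=-2i\,\Ima g'(x_\pm)$ explicit, using the sign convention $\pm\Ima g'(x_\pm)<0$, and then applying the two-sheeted change of variables $(x,\xi)\mapsto p(x,\xi)$ --- is exactly the elementary verification one would expect and is self-contained, so in that sense it supplies what the paper omits.
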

\begin{proof}
 See \cite{Ha06} or \cite[Prop. 7.4]{SjAX1002}.
\end{proof}

To prove Proposition \ref{lem:1MomSymplVol} we first prove the 
following result. 
\begin{lem}\label{lem:proofSymplVol}
 Let $\Omega\Subset\Sigma$ such that $\dist(\Omega,\partial\Sigma)>1/C$ and let $g\in\mathcal{C}^{\infty}(\mathds{C})$ 
 and $\rho_{\pm}$ be as in Section \ref{sec:Intro}. Let $e_{wkb}$ and $f_{wkb}$ 
 be as in (\ref{eqn:DefEWKB}) and (\ref{eqn:DefFWKB}). Let $\Pi_{e_{wkb}}:L^2(S^1)\rightarrow L^2(S^1)$ 
 and $\Pi_{f_{wkb}}:L^2(S^1)\rightarrow L^2(S^1)$ denote the orthogonal projections onto the 
 subspaces spanned by $e_{wkb}$ and $f_{wkb}$ respectively. Then, 
 \begin{align*}
   &\lVert (1-\Pi_{e_{wkb}})\partial_ze_{wkb}(\cdot;z) \rVert^2   = \frac{-1}{2h\Ima g'(x_+(z))} + \mathcal{O}(1), \notag \\
  & \lVert (1-\Pi_{f_{wkb}})\partial_{\overline{z}}f_{wkb}(\cdot;z) \rVert^2   = \frac{1}{2h\Ima g'(x_-(z))} + \mathcal{O}(1).
  \end{align*}
\end{lem}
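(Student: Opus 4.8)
I will carry out the argument for $e_{wkb}$; the statement for $f_{wkb}$ is obtained by the identical computation with $\phi_+$, $x_+$, $\partial_z$ replaced by $\phi_-$, $x_-$, $\partial_{\overline z}$, the only change in the final constant being that the relevant second derivative of the phase at the critical point is $2\Ima g'(x_-)>0$ instead of $-2\Ima g'(x_+)>0$.

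First I would differentiate the explicit formula \eqref{eqn:DefEWKB}. By \eqref{eqn:DerzOfPhasePhi+} we have $\partial_z\phi_+(x;z)=(x-x_+(z))-\xi_+(z)\partial_z x_+(z)$, so, writing $e_{wkb}=h^{-1/4}a(z;h)\chi_e(x,z;h)\e^{\frac{i}{h}\phi_+(x;z)}$,
\begin{equation*}
 \partial_z e_{wkb}=\frac{i}{h}\,(x-x_+(z))\,e_{wkb}\;+\;\Big(\frac{\partial_z a}{a}-\frac{i}{h}\xi_+(z)\partial_z x_+(z)\Big)e_{wkb}\;+\;h^{-1/4}a(z;h)(\partial_z\chi_e)\e^{\frac{i}{h}\phi_+}.
\end{equation*}
The middle term is a scalar (independent of $x$) times $e_{wkb}$, hence annihilated by $1-\Pi_{e_{wkb}}$; the last term is supported in $]x_--2\pi,x_--2\pi+h^{1/2}[\cup\,]x_--h^{1/2},x_-[$ (cf. \eqref{eqn:higherzzbarderchi_e}), where $\Ima\phi_+\geq 1/C$ because $\dist(\Omega,\partial\Sigma)>1/C$, so its $L^2$-norm is $\mO(\e^{-1/(Ch)})$. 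Therefore, since $e_{wkb}$ is normalized and $\Pi_{e_{wkb}}u=(u|e_{wkb})e_{wkb}$,
\begin{equation*}
 \big\|(1-\Pi_{e_{wkb}})\partial_z e_{wkb}\big\|^2=\frac{1}{h^2}\,\big\|(1-\Pi_{e_{wkb}})\big[(x-x_+(z))e_{wkb}\big]\big\|^2+\mO\!\big(\e^{-1/(C'h)}\big),
\end{equation*}
where the error absorbs the cross term, bounded using $\|(x-x_+)e_{wkb}\|=\mO(\sqrt h)$.

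Next I would compute the remaining norm. With $m_j:=\int_{S^1}(x-x_+(z))^{j}\,|e_{wkb}(x,z;h)|^2\,dx$ one has $m_0=1$ and $\big\|(1-\Pi_{e_{wkb}})[(x-x_+)e_{wkb}]\big\|^2=m_2-m_1^2$, i.e. $h^{-2}$ times the variance of $x$ under the probability density $|e_{wkb}(x,z;h)|^2\,dx$. To evaluate it, note that on $\supp\chi_e$ one has $|e_{wkb}|^2=h^{-1/2}|a(z;h)|^2\chi_e^2\,\e^{-\Phi(x;z)/h}$ with $\Phi=2\Ima\phi_+$, that $\Phi$ has a unique non-degenerate minimum at $x_+(z)$ with $\Phi(x_+)=0$, $\Phi''(x_+)=-2\Ima g'(x_+)>0$, that $\chi_e\equiv1$ near $x_+$, and that $\Phi\geq 1/C$ away from $x_+$ on $\supp\chi_e$ (for $z$ as in the statement). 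Laplace's method, rescaling $x-x_+=\sqrt h\,t$ and using that $a(z;h)\sim a_0+ha_1+\cdots$ has no half-integer powers of $h$ and that the odd moments of the limiting Gaussian vanish, together with the normalization $m_0=1+\mO(h)$ (which pins down $|a_0|^2=(-\Ima g'(x_+)/\pi)^{1/2}$, in agreement with Lemma \ref{lem:AssympA0B0BySPM}), gives
\begin{equation*}
 m_1=\mO(h),\qquad m_2=\frac{h}{-2\Ima g'(x_+(z))}+\mO(h^2),\qquad m_2-m_1^2=\frac{-h}{2\Ima g'(x_+(z))}+\mO(h^2).
\end{equation*}
Dividing by $h^2$ gives the asserted formula $\big\|(1-\Pi_{e_{wkb}})\partial_z e_{wkb}\big\|^2=\dfrac{-1}{2h\,\Ima g'(x_+(z))}+\mO(1)$.

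The only genuinely delicate point is this last step: one must push the stationary-phase (Laplace) expansion of the first and second centred moments of $|e_{wkb}|^2$ one order beyond the leading term and check that the cubic and higher Taylor terms of $\Phi$, the $\mO(h)$-corrections in $a(z;h)$, and the (exponentially small) contribution of the cut-off region where $\chi_e\not\equiv1$ all enter $m_2-m_1^2$ only at order $\mO(h^2)$, so that after division by $h^2$ the remainder is $\mO(1)$. Everything else — differentiating the WKB Ansatz, isolating the piece not proportional to $e_{wkb}$, and the elementary identity for $\|(1-\Pi_{e_{wkb}})\cdot\|^2$ — is routine, and the $f_{wkb}$ statement follows by repeating the computation with $+$ and $-$ interchanged.
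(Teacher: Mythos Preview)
Your proof is correct and follows essentially the same route as the paper: both identify that, modulo an exponentially small cut-off contribution and a scalar multiple of $e_{wkb}$, one has $(1-\Pi_{e_{wkb}})\partial_z e_{wkb}=\frac{i}{h}(x-x_+)e_{wkb}+\mO_{L^2}(1)$ (this is the paper's \eqref{eqn:FMFDpfTRI}, using $\partial_z\phi_+(x)-\partial_z\phi_+(x_+)=x-x_+$), and then compute the squared norm by a second-order stationary-phase/Laplace expansion. Your organization is slightly tidier in that you kill the scalar piece immediately via the projection and phrase the remaining computation as the variance $m_2-m_1^{2}$, whereas the paper first computes $(\partial_z e_{wkb}\,|\,e_{wkb})$ explicitly and then carries out the stationary-phase step through a Morse change of variables; the substance is the same.
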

\begin{rem}
 In the following, we shall regard $z$ as a fixed parameter. Hence, by the support of functions 
 depending on both $x$ and $z$ we mean the support with respect to the variable $x$.
\end{rem}
\begin{proof}
 We will consider only the case of $e_{wkb}$ since the case of $f_{wkb}$ is similar. One calculates
  \begin{multline}\label{eqn:zDiffEWKB}
   \partial_z e_{wkb}(x;z) = h^{-\frac{1}{4}} 
	    \left\{\phantom{\frac{i}{h}}\partial_z\chi_e(x;z)a(z;h) + \chi_e(x;z)\partial_za(z;h) \right. \\
	    \left.+ \chi_e(x;z)a(z;h)\frac{i}{h}\partial_z\phi_+(x;z)\right\}\e^{\frac{i}{h}\phi_+(x;z)}.
  \end{multline}
Thus 
  \begin{multline}\label{eqn:lemWeylProjInt}
    (\partial_z e_{wkb}|e_{wkb})  = h^{-\frac{1}{2}}
    \int\bigg(\left(\partial_z\chi_e(x;z)\right)|a(z;h)|^2 + \left(\partial_za(z;h)\right)\overline{a(z;h)}\chi_e(x;z)   \\
    \left. +|a(z;h)|^2\chi_e(x;z)\frac{i}{h}\partial_z\phi_+(x;z)\right)\chi_e(x;z)\e^{-\frac{\Phi(x;z)}{h}} dx,
  \end{multline}
where
  \begin{equation}\label{eqn:lemPhaseSTPM}
   \Phi(x;z):=- i(\phi_+(x;z)-\overline{\phi_+(x;z)}) = 2\Ima \int_{x_+(z)}^x (z - g(y)) dy.
  \end{equation}
First, we will compute 
\begin{equation}\label{eqn:lemPfIntForStPM}
   h^{-\frac{1}{2}}\int\left(\partial_z\chi_e(x;z)\right) \chi_e(x;z)|a(z;h)|^2 \e^{-\frac{\Phi(x;z)}{h} } dx.
  \end{equation}
Using \eqref{eqn:higherzzbarderchi_e} and the fact that $\partial_z \chi_e(z,\cdot)$ 
has support in $]x_- - 2\pi, x_- -2\pi+ h^{1/2}[\cup ]x_- - h^{1/2}, x_-[$, Taylor 
expansion of $\Phi(\cdot,z)$ at $x_-$ and $x_- - 2\pi$ yields that
  \begin{equation*}
   \e^{-\frac{\Phi(x;z)}{h}} \leq \mathcal{O}\!\left(\e^{-\frac{2S}{h}}\right),
  \end{equation*}
uniformly in $]x_- - 2\pi, x_- -2\pi+ h^{1/2}[\cup ]x_- - h^{1/2}, x_-[$. Here 
$S$ is as in Definition \ref{defn:ZeroOrderDensity}. Now, applying this and 
\eqref{eqn:higherzzbarderchi_e} to (\ref{eqn:lemPfIntForStPM}), yields
  \begin{equation}\label{eqn:lemPfSttPhasRes1int}
   h^{-\frac{1}{2}}|a(z;h)|^2 \int\partial_z\chi_e(x;z)\chi_e(x;z)\e^{-\frac{\Phi(x;z)}{h} } dx 
   = \mathcal{O}\!\left(h^{-\frac{1}{2}}\e^{-\frac{2S}{h}}\right).
  \end{equation}
Next, we will treat the other two contributions to (\ref{eqn:lemWeylProjInt}). 
First, consider
\begin{equation*}
    h^{-\frac{1}{2}}\left(\partial_za(z;h)\right)\overline{a(z;h)}\int\chi_e(x;z)^2\e^{-\frac{\Phi(x;z)}{h}} dx.
  \end{equation*}
Since $h^{-\frac{1}{2}}|a(z;h)|^2$ is the normalization factor of $\lVert e_{wkb}\rVert^2$ we see that 
    \begin{equation}\label{eqn:lemPfSttPhasRes2int}
    h^{-\frac{1}{2}}\partial_za(z;h)\overline{a(z;h)}\int\chi_e(x;z)^2\e^{-\frac{\Phi(x;z)}{h}} dx 
    = \frac{\partial_za(z;h)}{a(z;h)}.
  \end{equation}
Let us now turn to the third contribution to (\ref{eqn:lemWeylProjInt})
  \begin{equation*}
   I_h:=h^{-\frac{1}{2}}|a(z;h)|^2\int\frac{i}{h}\partial_z\phi_+(x;z)\chi_e(x;z)^2\e^{-\frac{\Phi(x;z)}{h}} dx .
  \end{equation*}
The stationary phase method implies together with (\ref{eqn:AssympA0B0}) that
    \begin{equation}\label{eqn:lemPfSttPhasResint3}
    I_h = \frac{i}{h}\partial_z\phi_+(x_+(z);z)+   \mathcal{O}(1).
  \end{equation}
Thus, by combining (\ref{eqn:lemPfSttPhasRes1int}), (\ref{eqn:lemPfSttPhasRes2int}) and (\ref{eqn:lemPfSttPhasResint3}) 
  \begin{equation*}
    (\partial_z e_{wkb}|e_{wkb}) = 
		 \frac{i}{h}\partial_z\phi_+(x_+(z);z) + \mathcal{O}(1)
  \end{equation*}
and thus
  \begin{multline}\label{eqn:ProjewkbonzDiffeWKB}
   (\partial_z e_{wkb}|e_{wkb})e_{wkb}(x;z) \\ 
     = h^{-\frac{1}{4}}\left\{a(z;h)\frac{i}{h}\partial_z\phi_+(x_+(z);z)
	+\mathcal{O}(1)\right\}\chi_e(x;z) \e^{\frac{i}{h}\phi_+(x;z)} .
  \end{multline}
Subtract (\ref{eqn:ProjewkbonzDiffeWKB}) from (\ref{eqn:zDiffEWKB}) and note that the term 
$a(z;h)\partial_z\chi_e(x;z)\e^{\frac{i}{h}\phi_+(x;z)}$ is exponentially small in $h$ like in (\ref{eqn:lemPfSttPhasRes1int}). 
Thus 
  \begin{align}\label{eqn:FMFDpfTRI}
   &(1-\Pi_{e_{wkb}})\partial_ze_{wkb}(x;z)\notag \\ 
   &=\frac{\e^{\frac{i}{h}\phi_+(x;z)}}{h^{1/4}}
   \left\{ a(z;h)\chi_e(x;z)\frac{i}{h}\left(\partial_z\phi_+(x;z)-\partial_z\phi_+(x_+(z);z)\right) 
    \right\}  +\mathcal{O}_{L^2}(1).
  \end{align}
It remains to treat 
  \begin{multline}\label{eqn:STPInt4Weyl}
   I_h:=\left\lVert a(z;h)\chi_e(x;z)\frac{i}{h^{\frac{5}{4}}}\left(\partial_z\phi_+(x;z)-\partial_z\phi_+(x_+(z);z)\right)\e^{\frac{i}{h}\phi_+(x;z)}\right\rVert^2 \\
   =h^{-\frac{1}{2}} \int \chi_e(x;z)^2|a(z;h)|^2 \left| \frac{i}{h}\left(\partial_z\phi_+(x;z)-\partial_z\phi_+(x_+(z);z)\right) \right|^2 \e^{-\frac{\Phi(x;z)}{h}} dx,
  \end{multline}
where $\Phi(x;z)$ is given in \eqref{eqn:lemPhaseSTPM}. This can be done by the stationary phase method, 
as in the proof of Lemma \ref{lem:AssympA0B0BySPM}. Thus 
  \begin{equation*}
    I_h =  \sqrt{2\pi}\sum_{n=0}^{N}\frac{1}{n!}
    \left(\frac{h}{2}\right)^n(\Delta^n_y u)(0) + \mathcal{O}(h^{N+1}),
  \end{equation*}
where 
  \begin{align*}
   u(y) = \chi_e(\kappa^{-1}(y);z)^2\frac{|a(z;h)|^2}{ |\kappa'(\kappa^{-1}(y))|}
	      \left| \frac{i}{h}\left(\partial_z\phi_+(\kappa^{-1}(y);z) 
	      -\partial_z\phi_+(x_+(z);z)\right) \right|^2
  \end{align*}
and $\kappa:~V\rightarrow U$ is a local $\mathcal{C}^{\infty}$ diffeomorphism 
from $V \subset\mathds{R}$, a neighborhood of $x_+(z)$, to $U\subset\mathds{R}$, 
a neighborhood of $0$, such that 
  \begin{equation*}
   \Phi(\kappa^{-1}(x);z) = \Phi(x_+(z);z) + \frac{x^2}{2},
  \end{equation*}
$\kappa^{-1}(0) = x_+(z)$ and 
  \begin{equation}\label{eqn:SPMCOV}
      \frac{d\kappa}{dx}(x_+(z)) = |\partial^2_{xx}\Phi(x_+(z);z)|^{\frac{1}{2}} = \sqrt{-2\Ima g'(x_+(z))} \neq 0.
  \end{equation}
This implies that $u(0)=0$ and thus we have to calculate the second order term 
in the above asymptotics, i.e. $\Delta_y u(y)$ is equal to 
  \begin{align*}
   &  \left(\Delta_y\chi_e(\kappa^{-1}(y);z)^2\frac{|a(z;h)|^2}{ |\kappa'(\kappa^{-1}(y))|}\right)
	      \left| \frac{i}{h}\left(\partial_z\phi_+(\kappa^{-1}(y);z) 
	      -\partial_z\phi_+(x_+;z)\right) \right|^2 \notag \\
	      &+2\frac{d}{dy}\left(\chi_e(\kappa^{-1}(y);z)^2\frac{|a(z;h)|^2}{ |\kappa'(\kappa^{-1}(y))|}\right)
	      \frac{d}{dy} \frac{1}{h^2}\left| \partial_z\phi_+(\kappa^{-1}(y);z) 
	      -\partial_z\phi_+(x_+;z)\right|^2\notag \\
	      &+\chi_e(\kappa^{-1}(y);z)^2\frac{|a(z;h)|^2}{ |\kappa'(\kappa^{-1}(y))|}
	      \Delta_y \left(\left| \frac{i}{h}\left(\partial_z\phi_+(\kappa^{-1}(y);z) 
	      -\partial_z\phi_+(x_+;z)\right) \right|^2\right).
  \end{align*}
Note that at $y=0$ the first and the second term of the right hand side vanish. By (\ref{eqn:DerzOfPhasePhi+})
  \begin{equation*}
   \Delta_y \left.\left(\left| \frac{i}{h}\left(\partial_z\phi_+(\kappa^{-1}(y);z) 
	      -\partial_z\phi_+(x_+(z);z)\right) \right|^2\right)\right|_{y=0}
	      = 2 h^{-2}\left|\frac{d}{dy}\kappa^{-1}(0)\right|^2.
  \end{equation*}
Thus, since $\chi_e(\kappa^{-1}(0);z) = \chi_e(x_+(z);z) =1$ (cf. Definition \ref{def:Quasimodes}),
  \begin{equation*}
   (\Delta_y u)(0) = \frac{2|a(z;h)|^2}{h^2|\kappa'(x_+(z))|^3}.
  \end{equation*}
Using (\ref{eqn:SPMCOV})and (\ref{eqn:AssympA0B0}), we have that 
  \begin{equation*}
   (\Delta_y u)(0) = \frac{1}{\sqrt{2\pi}h^2}\left(-\Ima g'(x_+(z))\right)^{-1} + \mathcal{O}(h^{-1})
  \end{equation*}
which yields
  \begin{equation*}
  I_h = \frac{-1}{2h\Ima g'(x_+(z))} + \mathcal{O}(1).
  \end{equation*}
This, together with (\ref{eqn:FMFDpfTRI}), yields
  \begin{equation*}
   \lVert (1-\Pi_{e_{wkb}})\partial_ze_{wkb}(-;z) \rVert^2   = \frac{-1}{2h\Ima g'(x_+(z))} + \mathcal{O}(1). \qedhere
  \end{equation*}
\end{proof}
\begin{proof}[Proof of Proposition \ref{lem:1MomSymplVol}]
Recall that $e_0(z)$ (respectively $f_0(z)$) denotes an 
eigenfunction of the $z$-dependent operator $Q(z)$ 
(respectively $\tilde{Q}(z)$). 
Using Definition \ref{def:firstordertermnota}, Proposition \ref{prop:EstimFourrierCoeffE0F0}, 
Corollary \ref{cor:EstimFourrierCoeffE0F0} and the Parseval identity one 
computes that 
  \begin{align*}
    &(\partial_z X | \partial_z X) - \frac{\left|(\partial_z X | X) \right|^2 }{\lVert X \rVert^2} = \notag \\ 
    & = (\partial_ze_0|\partial_ze_0) -|(\partial_ze_0|e_0)|^2 
      +(\partial_{\overline{z}}f_0|\partial_{\overline{z}}f_0) - |(f_0|\partial_{\overline{z}}f_0)|^2
     +\mathcal{O}\!\left(h^{\infty}\right).
  \end{align*}
Suppose that $z\in\Omega$ with $\dist(\Omega,\partial\Sigma)>1/C$. 
By Corollary \ref{QMEstimUnifZ} it then follows that 
$(\partial_ze_0|\partial_ze_0) -|(\partial_ze_0|e_0)|^2$ is equal to 
  \begin{align*}
    (\partial_ze_{wkb}|\partial_ze_{wkb}) 
   -|(\partial_ze_{wkb}|e_{wkb})|^2
   +    \mathcal{O}\!\left(h^{-1}\e^{-\frac{1}{Ch}}\right).
  \end{align*}
Let $\Pi_{e_{wkb}}$ and $\Pi_{f_{wkb}}$ be as in Lemma \ref{lem:proofSymplVol} and note that  
  \begin{align}\label{eqn:PfLemSymVolProjID}
   &\lVert (1-\Pi_{e_{wkb}})\partial_ze_{wkb}\rVert^2 = \lVert \partial_ze_{wkb}\rVert^2  
   -|(\partial_ze_{wkb}|e_{wkb})|^2 \quad \text{and} \notag \\
   &\lVert (1-\Pi_{f_{wkb}})\partial_zf_{wkb}\rVert^2 = \lVert \partial_zf_{wkb}\rVert^2  
   -|(\partial_zf_{wkb}|f_{wkb})|^2.
  \end{align}
Hence
  \begin{align}\label{eqn:ProofLemmaSymplecVol1Mom}
   (\partial_z X | \partial_z X) - \frac{\left|(\partial_z X | X) \right|^2 }{\lVert X \rVert^2} = 
   &\lVert (1-\Pi_{e_{wkb}})\partial_ze_{wkb}\rVert^2 
     + \lVert (1-\Pi_{f _{wkb}})\partial_{\overline{z}}f_{wkb}\rVert^2 \notag \\
     &+\mathcal{O}\!\left(h^{-1}\e^{-\frac{1}{Ch}}+h^{\infty}\right).
  \end{align}
Since $\{p,\overline{p}\}(\rho_{\pm}) = -2i\Ima g'(x_{\pm})$, 
it follows by Lemma \ref{lem:proofSymplVol} and (\ref{eqn:ProofLemmaSymplecVol1Mom}) that
  \begin{equation}\label{eqn:PfLemWeylEndRes}
   (\partial_z X | \partial_z X) - \frac{\left|(\partial_z X | X) \right|^2 }{\lVert X \rVert^2}= 
   \frac{1}{h}\left(\frac{i}{\{p,\overline{p}\}(\rho_{+}(z))}-\frac{i}{\{p,\overline{p}\}(\rho_{-}(z))}\right) + \mathcal{O}(1)
  \end{equation}
\\
Now let us consider the case where $z\in\Omega\cap\Omega_{\eta}^{a}$. Similar to 
Lemma \ref{lem:proofSymplVol} we get that 
  \begin{align*}
   &\lVert (1-\Pi_{e^{\eta}_{wkb}})\partial_{\tilde{z}}e^{\eta}_{wkb}(\cdot;\tilde{z}) 
   \rVert_{L^2(S^1/\sqrt{\eta},\sqrt{\eta}d\tilde{x})}^2   
   = \frac{-1}{2\tilde{h}\Ima g'(x_+(z))} + \mathcal{O}(1), \notag \\
   &\lVert (1-\Pi_{f^{\eta}_{wkb}})\partial_{\overline{\tilde{z}}}f^{\eta}_{wkb}(\cdot;\tilde{z})
   \rVert_{L^2(S^1/\sqrt{\eta},\sqrt{\eta}d\tilde{x})}^2   
   = \frac{1}{2\tilde{h}\Ima g'(x_-(z))} + \mathcal{O}(1),
  \end{align*}
where $|\Ima g'(x_{\pm}(z))|\asymp\sqrt{\eta}$. A rescaling argument 
similar to the one in the proof of Proposition \ref{quasimodes} and Corollary \ref{QMEstimUnifZ} then 
imply 
 \begin{align*}
   (\partial_ze_0|\partial_ze_0) -|(\partial_ze_0|e_0)|^2  
   =  \frac{-1}{2\tilde{h}\Ima g'(x_+(z))} + \mathcal{O}(\eta^{-2})
  \end{align*}
and similar for $(\partial_{\overline{z}}f_0|\partial_{\overline{z}}f_0) - |(f_0|\partial_{\overline{z}}f_0)|^2$. 
Hence,
  \begin{equation*}
   (\partial_z X | \partial_z X) - \frac{\left|(\partial_z X | X) \right|^2 }{\lVert X \rVert^2}= 
   \frac{1}{h}\left(\frac{i}{\{p,\overline{p}\}(\rho_{+}(z))}-\frac{i}{\{p,\overline{p}\}(\rho_{-}(z))}\right) 
   + \mathcal{O}(\eta^{-2})
  \end{equation*}
with $|\{p,\overline{p}\}(\rho_{\pm}(z))|\asymp\sqrt{\eta}$. The statement on the derivatives of the error estimates 
follow by the Stationary phase method and the usual rescaling argument. 
  \end{proof}
\subsection{Link with the tunneling effects}
We will prove the following result in the light of Proposition \ref{prop:AbsSPe0f0}.
\begin{prop}\label{prop:EstimAndFormulaDerzE-+}
 Let $z\in\Omega\Subset\Sigma$, let $X(z)$ be as in Definition \ref{def:firstordertermnota} and let 
 $E_{-+}(z)$ be as in Proposition \ref{propGrushUnpertOp}. Let $S$ be as in Definition 
 \ref{defn:ZeroOrderDensity}. Then, 
  \begin{equation*}
   \left| \partial_z E_{-+}(z)- E_{-+}(z)\frac{(\partial_z X(z)|X(z))}{\lVert X(z)\rVert^2} - (e_0|f_0) \right| 
   \leq \mathcal{O}\left(h^{\infty}\e^{-\frac{S}{h}}\right).
  \end{equation*}
\end{prop}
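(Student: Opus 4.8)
The identity to be proven relates the $\partial_z$-derivative of the effective Hamiltonian $E_{-+}(z) = -\alpha_0(z)$ to the quantities $X(z)$, $(\partial_z X|X)$ and $(e_0|f_0)$, all modulo an error $\mathcal{O}(h^\infty \e^{-S/h})$. The starting point is the relation $(P_h-z)e_0 = \alpha_0 f_0$ from \eqref{eqn:RelationEjFj} and the differentiated version \eqref{eqn:derzbarzE-+} established in the proof of Proposition \ref{prop:EstimDerzE-+}, namely
\begin{equation*}
  \partial_z E_{-+}(z) = E_{-+}(z)\left\{(\partial_z e_0|e_0) - (\partial_z f_0|f_0)\right\} + (e_0|f_0).
\end{equation*}
So the task reduces to identifying the bracket $(\partial_z e_0|e_0) - (\partial_z f_0|f_0)$ with $(\partial_z X|X)/\lVert X\rVert^2$ up to the claimed error.

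\textbf{Step 1: Expand $(\partial_z X|X)$ via Parseval.} By Definition \ref{def:firstordertermnota}, $X_{j,k}(z) = \widehat{e_0}(z;k)\,\overline{\widehat{f_0}(z;j)}$, so that, writing the indices out,
\begin{equation*}
  (\partial_z X|X) = \sum_{|j|,|k|\le \lfloor C_1/h\rfloor} \partial_z\!\left(\widehat{e_0}(k)\overline{\widehat{f_0}(j)}\right)\overline{\widehat{e_0}(k)\overline{\widehat{f_0}(j)}}.
\end{equation*}
Expanding the product rule and using that $\sum_{|j|\le\lfloor C_1/h\rfloor}|\widehat{f_0}(j)|^2 = \lVert f_0\rVert^2 - \mathcal{O}(h^\infty) = 1 - \mathcal{O}(h^\infty)$ (and similarly for $e_0$) by Proposition \ref{prop:EstimFourrierCoeffE0F0}, one gets
\begin{equation*}
  (\partial_z X|X) = (\partial_z e_0|e_0)\lVert f_0\rVert^2 + \overline{(\partial_z f_0|f_0)}\lVert e_0\rVert^2 + \mathcal{O}(h^\infty),
\end{equation*}
where the $\mathcal{O}(h^\infty)$ absorbs the contributions of the Fourier tails $|k|,|j| > \lfloor C_1/h\rfloor$, controlled by Proposition \ref{cor:EstimFourrierCoeffE0F0}. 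Since $\lVert e_0\rVert = \lVert f_0\rVert = 1$, this gives $(\partial_z X|X) = (\partial_z e_0|e_0) + \overline{(\partial_z f_0|f_0)} + \mathcal{O}(h^\infty)$, and likewise $\lVert X\rVert^2 = 1 + \mathcal{O}(h^\infty)$.

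\textbf{Step 2: Handle the conjugate.} We have a $\overline{(\partial_z f_0|f_0)}$ where the claimed formula needs $-(\partial_z f_0|f_0)$. Since $\lVert f_0\rVert^2 = 1$ identically in $z$, differentiating gives $(\partial_z f_0|f_0) + (f_0|\partial_z f_0) = 0$, i.e. $\overline{(\partial_z f_0|f_0)} = (f_0|\partial_z f_0) = -(\partial_z f_0|f_0)$. Hence $(\partial_z X|X)/\lVert X\rVert^2 = (\partial_z e_0|e_0) - (\partial_z f_0|f_0) + \mathcal{O}(h^\infty)$, where I use $|E_{-+}(z)|\lVert X\rVert^{-2}$-type factors are harmless; more precisely, multiplying through by $E_{-+}(z)$, which satisfies $|E_{-+}(z)| = \mathcal{O}(\e^{-S/h})$ by \eqref{prop:GrushUnPert}–\eqref{prop:GrushUnPert2} and Proposition \ref{cor:E-+Repres}, turns the $\mathcal{O}(h^\infty)$ into $\mathcal{O}(h^\infty \e^{-S/h})$.

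\textbf{Step 3: Assemble.} Substituting into the differentiated Grushin identity from Proposition \ref{prop:EstimDerzE-+},
\begin{equation*}
  \partial_z E_{-+}(z) = E_{-+}(z)\frac{(\partial_z X(z)|X(z))}{\lVert X(z)\rVert^2} + (e_0|f_0) + E_{-+}(z)\cdot\mathcal{O}(h^\infty),
\end{equation*}
and the last term is $\mathcal{O}(h^\infty \e^{-S/h})$ as noted. Rearranging yields the assertion. \textbf{The main subtlety} is bookkeeping in Step 1: I must be careful that the Parseval sums are truncated at $\lfloor C_1/h\rfloor$ while the full $L^2$-norms involve all Fourier modes, and that $\partial_z$ hits both the Fourier coefficients and (implicitly) nothing else — the truncation bound $N = \lfloor C_1/h\rfloor$ is $z$-independent so differentiation commutes with it. The decay estimates of Proposition \ref{cor:EstimFourrierCoeffE0F0}, which control $\partial_z^n\partial_{\overline z}^m \widehat{e_0}(z;k)$ for $h|k| \ge C$, are exactly what makes the tail contributions $\mathcal{O}(h^\infty)$ rather than merely small; this is the one place the argument genuinely uses more than the $L^2$-normalization of $e_0, f_0$. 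Everything else is the elementary manipulation of Steps 2–3.
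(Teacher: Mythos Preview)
Your argument follows exactly the paper's route: derive the exact identity $\partial_z E_{-+} = E_{-+}\{(\partial_z e_0|e_0) - (\partial_z f_0|f_0)\} + (e_0|f_0)$ from the Grushin relations, identify $(\partial_z X|X)/\lVert X\rVert^2$ with the bracket via Parseval plus the Fourier-tail estimates of Propositions~\ref{prop:EstimFourrierCoeffE0F0} and~\ref{cor:EstimFourrierCoeffE0F0}, and absorb the remainder using $|E_{-+}| = \mathcal{O}(\e^{-S/h})$.

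There is, however, a Wirtinger-calculus slip in Steps~1--2 that happens to self-cancel. Since $\partial_z\overline{\widehat{f_0}(j)} = \overline{\partial_{\overline z}\widehat{f_0}(j)}$, the second term in your expansion of $(\partial_z X|X)$ is $(f_0|\partial_{\overline z}f_0)$, not $\overline{(\partial_z f_0|f_0)}$; and differentiating $\lVert f_0\rVert^2 = 1$ in $z$ gives $(\partial_z f_0|f_0) + (f_0|\partial_{\overline z}f_0) = 0$, not $(\partial_z f_0|f_0) + (f_0|\partial_z f_0) = 0$ (the $\partial_z$ hitting the antilinear slot becomes $\partial_{\overline z}$ on the argument). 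With both corrections your conclusion $(\partial_z X|X)/\lVert X\rVert^2 = (\partial_z e_0|e_0) - (\partial_z f_0|f_0) + \mathcal{O}(h^\infty)$ stands and matches the paper's \eqref{eqn:SPDerXX}.
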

\begin{proof}[Proof of Proposition \ref{prop:EstimAndFormulaDerzE-+}]
Apply the $\partial_z$ derivative to the first equation in (\ref{eqn:RelationEjFj}),
  \begin{equation*}
   (P_h-z)\partial_z e_0 - e_0 = \partial_z \alpha_0 \cdot f_0 + \alpha_0 \partial_z f_0.
  \end{equation*}
Taking the scalar product with $f_0$ (which is $L^2$-normalized) then yields
  \begin{equation*}
   (\partial_ze_0|(P_h-z)^*f_0) - (e_0|f_0) = \partial_z \alpha_0 + \alpha_0 (\partial_zf_0|f_0).
  \end{equation*}
Recall from Proposition \ref{propGrushUnpertOp} that $E_{-+}(z) = -\alpha_0(z)$ and use the second 
equation in (\ref{eqn:RelationEjFj}) to see 
  \begin{equation}\label{eqn:ExactDerE}
   \partial_z E_{-+}(z) - E_{-+}(z)( (\partial_ze_0|e_0) - (\partial_zf_0|f_0) )  - (e_0|f_0) = 0.
  \end{equation}
By Definition \ref{def:firstordertermnota} we have the following identity 
  \begin{align*}
    (\partial_z X | X)= \hspace{-0.2cm}\sum_{|j|,|k|<\frac{C_1}{h}}\hspace{-0.2cm}
	    \left(\partial_z\widehat{e}_0(z;j)\overline{\widehat{f}_0(z;k)}+\widehat{e}_0(z;j)\overline{\partial_{\overline{z}}\widehat{f}_0(z;k)}\right)
	    \left(\overline{\widehat{e}_0(z;j)}\widehat{f}_0(z;k)\right).
  \end{align*}
Proposition \ref{prop:EstimFourrierCoeffE0F0}, Corollary \ref{cor:EstimFourrierCoeffE0F0} and the Parseval identity then imply 
  \begin{align}\label{eqn:SPDerXX}
    \frac{(\partial_z X | X)}{\lVert X \rVert^2} =
     (\partial_ze_0|e_0) + (f_0|\partial_{\overline{z}}f_0) +\mathcal{O}\!\left(h^{\infty}\right).
  \end{align}
Note that in the above we also used that $e_0$ and $f_0$ are normalized. 
Since $(f_0|\partial_{\overline{z}} f_0) = - (\partial_zf_0|f_0)$ we conclude by the triangular inequality 
   \begin{equation*}
   \left| \partial_z E_{-+}(z)- E_{-+}(z)\frac{(\partial_z X(z)|X(z))}{\lVert X(z)\rVert^2} - (e_0|f_0) \right| 
   \leq \mathcal{O}(h^{\infty})|E_{-+}(z)|.
  \end{equation*}
The statement of the proposition then follows by the estimate 
$|E_{-+}(z)| = \mathcal{O}\!\left(\eta^{\frac{1}{4}}h^{\frac{1}{2}}\e^{-\frac{S}{h}}\right)$ 
given in Proposition \ref{prop:EstimDerzE-+}.
\end{proof}
%
%
\section{Preparations for the distribution of eigenvalues of $P_h^{\delta}$}\label{sec:PrepLemmas}
To calculate the intensity measure of $\Xi$ we make use of the following observations: 
\subsection{Counting zeros}
\begin{lem}\label{lem:approxdelta}
 Let $\Omega\subset\mathds{C}$ be open and convex and let $g,F:\Omega \longrightarrow \mathds{C}$ be ${C}^{\infty}$ such that 
 $g\not\equiv0$ and 
  \begin{equation}\label{lem:dbarequ}
   \partial_{\overline{z}}g(z) + \partial_{\overline{z}}F(z)\cdot g(z)=0
  \end{equation}
holds for all $z\in\Omega$. The zeros of $g$ form a discrete set of locally finite multiplicity.The notion of multiplicity here is 
the same as for holomorphic functions, more details can be found in the proof. Furthermore, 
for all $\varphi\in\mathcal{C}_0(\Omega)$
  \begin{equation*}
    \left\langle \chi\left(\frac{g}{\varepsilon}\right)\frac{1}{\varepsilon^2}\left|\partial_z g \right|^2,\varphi \right\rangle 
    \longrightarrow \sum_{z\in g^{-1}(0)} \varphi(z), \quad\varepsilon\rightarrow 0,
  \end{equation*}
where $\chi\in\mathcal{C}^{\infty}_0(\C)$ such that $\chi\geq 0$ and $\int\chi(w) L(dw) = 1 $ and the zeros are counted 
according to their multiplicities.
\end{lem}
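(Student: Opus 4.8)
The plan is to first establish the structural fact that the zeros of $g$ are isolated with well-defined finite multiplicity. The key point is that equation \eqref{lem:dbarequ} says $g$ satisfies a $\overline{\partial}$-equation with smooth coefficient $-\partial_{\overline{z}}F$, so locally $g = \e^{-F + H}\cdot \tilde g$ is not quite right; instead I would argue that near any point $z_0$ one can solve $\partial_{\overline{z}} u = \partial_{\overline{z}}F$ with $u\in C^\infty$ on a small disc, so that $\partial_{\overline{z}}(\e^{u}g) = \e^{u}(\partial_{\overline{z}}g + (\partial_{\overline{z}}u)g) = \e^{u}(\partial_{\overline{z}}g + (\partial_{\overline{z}}F)g) = 0$. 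Hence $h:=\e^{u}g$ is holomorphic on the disc, and since $g\not\equiv 0$ and $\e^u$ is nonvanishing, $h$ is a nonzero holomorphic function, so its zeros are isolated of finite order; $g$ has exactly the same zeros with the same multiplicities. Covering $\Omega$ by such discs gives the discreteness and local finiteness globally. This factorization is the backbone of the whole argument.

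Next I would prove the convergence statement, working locally near a zero $z_0$ of multiplicity $m$. Using $g = \e^{-u}h$ with $h$ holomorphic, $h(z) = (z-z_0)^m k(z)$ with $k(z_0)\neq 0$, I would compute $\partial_z g = \e^{-u}(\partial_z h - (\partial_z u) h)$, and observe that near $z_0$ the term $\partial_z h = m(z-z_0)^{m-1}k + (z-z_0)^m k'$ dominates $(\partial_z u)h = \mathcal{O}((z-z_0)^m)$. After the substitution $w = g(z)/\varepsilon$, i.e. a rescaling around $z_0$, one checks that $\frac{1}{\varepsilon^2}|\partial_z g(z)|^2 \chi(g(z)/\varepsilon)\, L(dz)$ — note $L(dz)$ here is two-dimensional Lebesgue measure, the Jacobian of $z\mapsto g(z)$ being $|\partial_z g|^2 - |\partial_{\overline z} g|^2 = |\partial_z g|^2(1 + o(1))$ near a simple-type point after absorbing the $\overline\partial$ term — pushes forward to something close to $\chi(w)L(dw)$ counted with multiplicity $m$. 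More carefully: away from the zero set, $\chi(g/\varepsilon) = 0$ for $\varepsilon$ small since $|g|$ is bounded below on compact sets avoiding $g^{-1}(0)$, so the integral localizes to small neighborhoods of the finitely many zeros in $\mathrm{supp}\,\varphi$; near each zero $z_0$ of multiplicity $m$, a change of variables shows the contribution converges to $m\,\varphi(z_0)\int \chi(w)L(dw) = m\,\varphi(z_0)$.

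The cleanest way to handle the local change of variables is to note that $g$ is, up to the nonvanishing factor $\e^{-u}$ and a smooth perturbation, an $m$-to-one branched cover near $z_0$: writing $h(z) = (z-z_0)^m k(z)$ and taking an $m$-th root $\ell(z)$ of $k$ (possible near $z_0$ since $k(z_0)\neq 0$), we have $h = ((z-z_0)\ell(z))^m =: \zeta^m$ where $\zeta$ is a local biholomorphism. Then $g = \e^{-u}\zeta^m$, and I would substitute $\zeta$ as the new variable. The map $w = \e^{-u}\zeta^m/\varepsilon$ from a small disc in $\zeta$ to $\mathbb{C}$ is, for fixed small $\varepsilon$, an $m$-fold branched cover of a large region; applying the change-of-variables / coarea formula and letting $\varepsilon\to 0$ so that the image region exhausts $\mathbb{C}$, the factor $\frac{1}{\varepsilon^2}|\partial_z g|^2$ is exactly (a smooth perturbation of) the relevant Jacobian density, and the integral tends to $m\int_{\mathbb{C}}\chi(w)L(dw)\cdot\varphi(z_0) = m\varphi(z_0)$. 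Summing over the zeros gives the claim.

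The main obstacle I anticipate is making the local change-of-variables rigorous in the presence of the smooth (non-holomorphic) factor $\e^{-u}$: one must check that $|\partial_z g|^2$ — rather than the naive $|g'|^2$ for a holomorphic $g$ — is still the correct Jacobian-type weight, which works because $\partial_{\overline z} g = -(\partial_{\overline z}F)g$ vanishes to the same order as $g$ at $z_0$, hence is negligible compared to $\partial_z g$ which vanishes only to order $m-1$. Controlling this error uniformly as $\varepsilon\to 0$, and ensuring the "tails" of the integral (the region where $|\zeta|$ is not small but $w$ is still $O(1)$) do not contribute, is the delicate bookkeeping; everything else is routine once the holomorphic factorization $g = \e^{-u}h$ is in hand.
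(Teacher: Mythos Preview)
Your proposal is correct and follows essentially the same route as the paper: factor out a nonvanishing exponential to make $g$ times that factor holomorphic, localize near each zero via a partition of unity, use that $g$ is an $m$-to-one branched cover there, and observe that the Jacobian $|\partial_z g|^2 - |\partial_{\overline z}g|^2$ differs from $|\partial_z g|^2$ by $|\partial_{\overline z}F|^2|g|^2 = \mathcal{O}(|w|^2)$, which is harmless. One simplification: you do not need to solve $\partial_{\overline z}u = \partial_{\overline z}F$ locally, since $u=F$ already works globally---the paper simply notes that $\e^{F}g$ is holomorphic on $\Omega$ directly from \eqref{lem:dbarequ}.
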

\begin{proof}
(\ref{lem:dbarequ}) implies that 
  \begin{equation}\label{eqn:pflemholfunc}
   \e^{F(z)}g(z)
  \end{equation}
is holomorphic in $\Omega$. $g$ has the same zeros as the holomorphic function (\ref{eqn:pflemholfunc}). 
Thus, the zeros of $g$ in $\Omega$ form a discrete set and the notion of the multiplicity of the zeros of $g$ is well-defined since 
we can view the zeros as those of a holomorphic function.\par
Let $z_0\in g^{-1}(0)$ have multiplicity $n$. There exists a neighborhood $W\subset\Omega$ of $z_0$ such that 
$W\cap g^{-1}(0) = {z_0}$. Since $\e^{F(z)}g(z)$ is holomorphic, there exists a neighborhood $U\subset\Omega$ of $z_0$ 
and a holomorphic function $f:U\rightarrow\mathds{C}$ such that for all $z\in U$ 
 \begin{equation*}
   f(z)\neq 0, \quad\text{and}\quad \e^{F(z)}g(z) = f(z)(z-z_0)^n.
 \end{equation*}
Choose a $\lambda>0$ such that $|\e^{-F(z)}f(z)-\e^{-F(z_0)}f(z_0)| < |\e^{-F(z_0)}f(z_0)|$ for $|z-z_0|<\lambda$. In this disk we 
can define a single-valued branch of $\sqrt[n]{\e^{-F(z)}f(z)}$.\par 
We take a test function $\varphi\in\mathcal{C}_0(\Omega)$ with 
  \begin{equation}\label{eqn:lemApproxDeltaLocalTestfctRestr}
   \supp\varphi \subset (U\cap W \cap\{z:~|z-z_0|<\lambda\})=:N
  \end{equation}
and consider for $\varepsilon>0$
  \begin{align*}
   \left\langle \chi\left(\frac{g}{\varepsilon}\right)\frac{1}{\varepsilon^2}\left|\partial_z g \right|^2,\varphi \right\rangle  
   = \frac{1}{\varepsilon^2}\int\limits_N \chi\left(\frac{g(z)}{\varepsilon}\right)|\partial_z g(z)|^2\varphi(z) L(dz).
  \end{align*}
Let us perform a change of variables. Define 
  \begin{equation}\label{eqn:lemapproxdelInvert}
   w := g(z) = (z-z_0)^n\e^{-F(z)}f(z),
  \end{equation}
Since 
  \begin{align*}
   &\partial_z w(z) = (z-z_0)^{n-1}\e^{-F(z)}\left(nf(z) +(z-z_0)(\partial_z f(z) - \partial_z F(z)f(z))\right),\notag \\
   &\partial_{\overline{z}} w(z_0) = 0,
  \end{align*}
the implicit function theorem implies that we can invert equation (\ref{eqn:lemapproxdelInvert}) for $z$ 
in a small neighborhood of $z_0$ without $\{z_0\}$, say the disk $D(z_0,r)\backslash\{z_0\}$ for some radius $r>0$, 
and $w$ in the $n$-fold covering surface of $w(D(z_0,r)\backslash\{z_0\})$. 
Thus, if we denote the domain on each leaf of the covering by $B_k$, 
for $k=1,\dots,n$, as a subset of $\mathds{C}$, and the respective branch of $g$ by $g_k$ 
we get for $\varepsilon>0$ small enough
 \begin{equation*}
  \left\langle \chi\left(\frac{g}{\varepsilon}\right)\frac{1}{\varepsilon^2}\left|\partial_z g \right|^2,\varphi \right\rangle  =
   \sum_{k=1}^n \frac{1}{\varepsilon^2} 
	\int_{B_k} \varphi(g_k^{-1}(w))\chi\left(\frac{w}{\varepsilon}\right)(1+\mathcal{O}(w^2)) L(dw),
 \end{equation*}
with $g_k^{-1}(0)=z_0$. In the above we used that 
  \begin{equation*}
   L(dw) = \left(|\partial_z g(z)|^2 - |\partial_{\overline{z}} g(z)|^2\right)Ld(z)
  \end{equation*}
and the $\overline{\partial}$-equation (\ref{lem:dbarequ}) which implies 
  \begin{equation*}
   |\partial_{\overline{z}} g(z)|^2 = |\partial_{\overline{z}} F(z)g(z)|^2 \asymp w^2.
  \end{equation*}
Thus we can conclude 
  \begin{equation}\label{eqn:lemdelapprox_Local}
   \left\langle \chi\left(\frac{g}{\varepsilon}\right)\frac{1}{\varepsilon^2}\left|\partial_z g \right|^2,\varphi \right\rangle  
   \longrightarrow   \sum_{k=1}^n \varphi(z(0))= n\varphi(z_0), \quad\text{for }\varepsilon
      \rightarrow 0.
  \end{equation}
Since $g$ has at most countably many zeros in $\Omega$, there exists some index set $I\subset\mathds{N}$ such that we can denote the set 
of zeros of $g$ in $\Omega$ by $\{z_i\}_{i\in I}:= g^{-1}(0)\cap\Omega$. Furthermore, let $m(i)$ for all $i\in I$ denote the multiplicity 
of the respective zero $z_i$. \\
For each zero $z_i$ we can construct a neighborhood $N_i$, as above, such that for a test function with support in $N_i$ we have 
the convergence as in (\ref{eqn:lemdelapprox_Local}). By potentially shrinking the $N_i$ we can gain $N_i\cap N_j = \emptyset$ for 
$i\neq j$. Consider the following locally finite open covering of $\Omega$
  \begin{equation*}
   \Omega=\left(\bigcup_{i\in I} N_i \right)\cup\left(\Omega\backslash\{z_i:~i\in I\}\right).
  \end{equation*}
Let $\{\chi_i\}_{i\in I\cup \{0\}}$ be a partition of unity subordinate to this open covering such that 
  \begin{equation*}
   1 = \sum_{i\in I} \chi_i + \chi_0.
  \end{equation*}
Here $\chi_i \in \mathcal{C}^{\infty}_0(N_i)$ and $\chi_i \equiv 1$ in a neighborhood of $z_i$ for all $i\in I$. Furthermore, 
$\chi_0 \in \mathcal{C}^{\infty}(\Omega)$ and $z_i \notin \supp\chi_0$ for all $i\in I$. 
Let $\varphi\in\mathcal{C}_0(\Omega)$ be an arbitrary test function. By (\ref{eqn:lemdelapprox_Local}) we have for $\varepsilon\rightarrow 0$
  \begin{align*}
   \left\langle \chi\left(\frac{g}{\varepsilon}\right)\frac{1}{\varepsilon^2}\left|\partial_z g \right|^2,\varphi \right\rangle &=
   \sum_{i\in I}\left\langle \chi\left(\frac{g}{\varepsilon}\right)
    \frac{1}{\varepsilon^2}\left|\partial_z g \right|^2,\chi_i \varphi \right\rangle 
    \rightarrow \sum_{i\in I} m(i) \chi_i(z_i) \varphi(z_i).
  \end{align*}
Since $g(z)\neq 0$ for all $z\in\supp\chi_0$ we have for $\varepsilon>0$ small enough 
  \begin{equation*}
   \left\langle \chi\left(\frac{g}{\varepsilon}\right)\frac{1}{\varepsilon^2}\left|\partial_z g \right|^2,\chi_0\varphi \right\rangle = 0
  \end{equation*}
and we can conclude the statement of the Lemma.
\end{proof}
\subsection{An implicit function theorem}
\begin{lem}\label{lem:globalInvFunThm}
Let $R>0$ and $a>c\geq 0$ be constants. Let $D(0,R)\subset\mathds{C}$ be the open disk of radius 
$R$ centered at $0$ and let ${g,f:D(0,R)\longrightarrow \mathds{C}}$ be holomorphic such that 
  \begin{equation}\label{lem:globalInvFunThm:assump}
   \lVert g\rVert_{\infty}\leq c, \quad\text{and for all}~ z\in D(0,R):~ \partial_{z} f(z) = a + g(z).
  \end{equation}
Assume that
  \begin{equation*}
    \xi\in D\!\left(f(0),(a-c)R\right)\subset\mathds{C}.
  \end{equation*}
Then the equation
  \begin{equation*}
   f(z) = \xi
  \end{equation*}
has exactly one solution $z=z(\xi)\in D(0,R)$ and it depends holomorphically on $\xi$.
\end{lem}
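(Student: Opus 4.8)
The plan is to reduce the statement to the classical holomorphic inverse function theorem via a Rouché-type counting argument. The key point is that $f$ is holomorphic and its derivative $f'(z) = a + g(z)$ is uniformly close to the constant $a$, since $|g(z)| \leq c < a$ forces $\Rea f'(z) \geq a - c > 0$ on $D(0,R)$; in particular $f'$ never vanishes, so $f$ is locally biholomorphic everywhere on the disk. The real content is the \emph{global} injectivity and the surjectivity onto the target disk $D(f(0),(a-c)R)$.

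First I would establish the two-sided estimate on how $f$ distorts distances. Writing $f(z) - f(0) = \int_0^1 f'(tz)\, z\, dt = az + \int_0^1 g(tz)\, z\, dt$ along the segment from $0$ to $z$ (legitimate since $D(0,R)$ is convex and $f$ is holomorphic), one gets
\begin{equation*}
 (a-c)|z| \leq |f(z) - f(0)| \leq (a+c)|z|, \qquad z \in D(0,R).
\end{equation*}
More generally, for any two points $z_1, z_2 \in D(0,R)$, integrating $f'$ along the segment joining them gives
\begin{equation*}
 |f(z_1) - f(z_2)| \geq (a-c)|z_1 - z_2|,
\end{equation*}
which already proves injectivity of $f$ on the whole disk. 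The lower bound in the first display also shows that for $|z| = r < R$ we have $|f(z) - f(0)| \geq (a-c)r$, so the image $f(\partial D(0,r))$ stays outside $D(f(0),(a-c)r)$.

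Next I would prove surjectivity onto $D(f(0),(a-c)R)$ by a degree/Rouché argument: fix $\xi \in D(f(0),(a-c)R)$ and pick $r < R$ with $|\xi - f(0)| < (a-c)r$. On $\partial D(0,r)$ we have $|f(z) - \xi| \geq |f(z) - f(0)| - |\xi - f(0)| \geq (a-c)r - |\xi - f(0)| > 0$, so by the argument principle the number of solutions of $f(z) = \xi$ in $D(0,r)$ equals $\frac{1}{2\pi i}\oint_{\partial D(0,r)} \frac{f'(z)}{f(z)-\xi}\, dz$; comparing $f(z) - \xi$ with the linear map $a(z) + (f(0) - \xi)$ via the homotopy $f_s(z) = az + s\left(\int_0^1 g(tz)z\,dt\right) + (f(0)-\xi)$, $s \in [0,1]$, which stays nonzero on $\partial D(0,r)$ by the same estimate, the count is the same as for the affine map $az + f(0) - \xi$, namely exactly one. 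Holomorphic dependence of $z(\xi)$ on $\xi$ is then immediate from the holomorphic implicit function theorem since $f'(z(\xi)) \neq 0$, or directly from the Cauchy-integral formula $z(\xi) = \frac{1}{2\pi i}\oint_{\partial D(0,r)} \frac{w f'(w)}{f(w)-\xi}\, dw$, which is manifestly holomorphic in $\xi$.

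The only mild obstacle is organizing the Rouché/homotopy step cleanly so that the winding-number comparison is rigorous without ever dividing by zero; everything rests on the single inequality $|g| \leq c < a$, and once the distortion bound $|f(z_1)-f(z_2)| \geq (a-c)|z_1-z_2|$ is in hand both injectivity and the boundary non-vanishing needed for the counting argument follow with no further work. I expect no genuine difficulty here — this is a quantitative packaging of the inverse function theorem tailored to the linear-plus-small-perturbation structure.
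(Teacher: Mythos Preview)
Your proposal is correct and follows essentially the same route as the paper: write $f(z) = az + f(0) + G(z)$ with $|G(z)| \leq c|z|$, then apply Rouch\'e's theorem on circles $|z| = R - \varepsilon$ to compare $f(z) - \xi$ with the affine map $az + f(0) - \xi$, which has exactly one zero in the disk. Your additional distortion bound $|f(z_1)-f(z_2)| \geq (a-c)|z_1-z_2|$ gives injectivity directly and is a nice touch, but the paper gets it for free from the Rouch\'e count being exactly one; your explicit Cauchy-integral formula for $z(\xi)$ makes the holomorphic dependence more transparent than the paper's terse ``and the result follows.''
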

\begin{proof} For $z\in D(0,R)$
  \begin{equation*}
   f(z) = \int_{0}^{z} \left(a + g(w)\right)dw + f(0) = az + f(0) + G(z),
  \end{equation*}
where $G(z) := \int_{0}^{z}g(w)dw$. Now let us consider the equation 
  \begin{equation*}
   az + f(0) - \xi = 0.
  \end{equation*}
It has a unique solution in the disk $D(0,R)$ since 
  \begin{equation*}
   \frac{|\xi - f(0)|}{a} < \frac{|a-c|}{a}R < R. 
  \end{equation*}
Now consider for $\varepsilon>0$ and for $z\in D(0,R-\varepsilon)$ the equation 
  \begin{equation*}
   f(z) - \xi = az + f(0) - \xi + G(z) = 0.
  \end{equation*}
Recall that $\xi\in D\!\left(f(0),(a-c)R\right)$ which implies that there exists a 
$\varepsilon(\xi)>0$ such that $|\xi - f(0)| \leq (a-c)(R-\varepsilon(\xi))$. 
Thus for all $\varepsilon <\varepsilon(\xi)$ 
 \begin{equation*}
  |az + f(0) - \xi| \geq  |az| - |f(0) - \xi| > a|z| 
	    -(a-c)(R-\varepsilon)
  \end{equation*}
and, using that $  |G(z)| \leq c |z| $, we may conclude that for $|z|= R-\varepsilon$
 \begin{equation*}
  |G(z)| < |az + f(0) - \xi|.
 \end{equation*}
By Rouch\'e's theorem we have that $az + f(0) - \xi$ and $f(z)-\xi$ have the same number of zeros in the disk 
$D\left(0,R-\varepsilon\right)$. We also see that $f(z)-\xi$ has no zero in 
$D\left(0,R\right)\backslash D\left(0,R-\varepsilon\right)$ and the result follows. 
\end{proof}
\begin{prop}\label{cor:globalInvFunThm}
Let $a>c\geq 0$ be constants, $n\in\mathds{N}$, let $\Omega\subset\mathds{C}^n$ be open, bounded and of the form 
  \begin{equation*}
   \Omega = \{z =(z',z_n)\in\mathds{C}^n~:~z'\in\Omega',~|z_n| < R_{z'}\} 
  \end{equation*}
where $R_{z'}>0$ is continuous in $z'$. Furthermore, assume that 
  \begin{itemize}
   \item ${g,F:\Omega\longrightarrow \mathds{C}}$ are holomorphic such that 
  \begin{equation}\label{cor:globalInvFunThm:assump}
   \lVert g\rVert_{\infty}\leq c, \quad\text{and for all}~ z\in\Omega:~ \partial_{z_n} F(z) = a + g(z),
  \end{equation}  
  \item $\Gamma\Subset\Omega'$ is open so that $\inf\limits_{z'\in\Gamma}R_{z'} \geq \mathrm{const.}>0$,
  \item $\xi\in \bigcap_{z'\in\Gamma}D\!\left(F(z',0),(a-c)R_{z'}\right)\subset\mathds{C}$.
  \end{itemize}
Then, when $z'\in\Gamma$, the equation
  \begin{equation*}
   F(z',z_n) = \xi
  \end{equation*}
has exactly one solution $z_n(z',\xi)\in D(0,R_{z'})$ 
and it depends holomorphically on $\xi$ and on $z'\in\Gamma$.
\end{prop}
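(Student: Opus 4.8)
The plan is to reduce Proposition \ref{cor:globalInvFunThm} to Lemma \ref{lem:globalInvFunThm} by fixing $z' \in \Gamma$ and applying the one-dimensional statement in the last variable $z_n$, then to recover the holomorphic dependence on $(z',\xi)$ by a separate argument.

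First I would fix $z'\in\Gamma$ and set $R := R_{z'}$, $g_{z'}(z_n) := g(z',z_n)$ and $f_{z'}(z_n) := F(z',z_n)$, both holomorphic on $D(0,R)\subset\mathds{C}$. The hypotheses \eqref{cor:globalInvFunThm:assump} give exactly $\lVert g_{z'}\rVert_\infty \leq c$ and $\partial_{z_n} f_{z'}(z_n) = a + g_{z'}(z_n)$ for all $z_n\in D(0,R)$, which is precisely assumption \eqref{lem:globalInvFunThm:assump} of Lemma \ref{lem:globalInvFunThm}. The assumption $\xi\in\bigcap_{z'\in\Gamma}D(F(z',0),(a-c)R_{z'})$ in particular gives $\xi\in D(f_{z'}(0),(a-c)R)$, so Lemma \ref{lem:globalInvFunThm} applies and yields a unique solution $z_n = z_n(z',\xi)\in D(0,R_{z'})$ of $F(z',z_n)=\xi$, which depends holomorphically on $\xi$ for that fixed $z'$.

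It remains to upgrade this pointwise-in-$z'$ statement to joint holomorphic dependence on $(z',\xi)\in\Gamma\times \bigcap_{z'\in\Gamma}D(F(z',0),(a-c)R_{z'})$. For this I would use the holomorphic implicit function theorem: the map $(z',z_n,\xi)\mapsto F(z',z_n)-\xi$ is holomorphic on the open set $\Omega\times\mathds{C}$, and at any solution point $(z'_0, z_n(z'_0,\xi_0),\xi_0)$ one has $\partial_{z_n}(F-\xi) = a + g(z',z_n)$ with $|g|\leq c < a$, hence $\partial_{z_n}(F-\xi)\neq 0$ there. Therefore the holomorphic implicit function theorem produces a local holomorphic solution $z_n = \zeta(z',\xi)$ near $(z'_0,\xi_0)$; by the uniqueness in $D(0,R_{z'})$ provided by Lemma \ref{lem:globalInvFunThm}, this local solution must coincide with $z_n(z',\xi)$ on a neighborhood. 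Since this holds near every point of $\Gamma\times\bigcap_{z'\in\Gamma}D(F(z',0),(a-c)R_{z'})$, the function $z_n(z',\xi)$ is holomorphic there.

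The main obstacle is a mild technical one: the domain $\Omega$ has $z'$-dependent vertical radius $R_{z'}$, so one must be slightly careful that the uniqueness region $D(0,R_{z'})$ used to glue the local implicit-function solutions is honest, i.e. that $z_n(z',\xi)$ does not escape to the boundary $|z_n| = R_{z'}$ as $z'$ varies over $\Gamma$; this is handled by the condition $\inf_{z'\in\Gamma}R_{z'}\geq \mathrm{const.}>0$ together with the quantitative Rouch\'e estimate already built into the proof of Lemma \ref{lem:globalInvFunThm}, which places the solution strictly inside $D(0,R_{z'})$ with a margin controlled by the distance of $\xi$ to the boundary circle $\partial D(F(z',0),(a-c)R_{z'})$. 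Everything else is a routine application of the already-proven one-variable lemma and the standard holomorphic implicit function theorem.
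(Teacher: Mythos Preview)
Your proposal is correct and follows essentially the same approach as the paper: apply Lemma \ref{lem:globalInvFunThm} with $z'$ frozen to get existence and uniqueness of $z_n(z',\xi)$ in each disk, then observe that $\partial_{z_n}F = a + g \neq 0$ at the solution and invoke the holomorphic implicit function theorem for the joint holomorphic dependence on $(z',\xi)$. Your write-up is in fact more detailed than the paper's, which compresses the gluing-via-uniqueness step and the remark about the $z'$-dependent radius into a single sentence.
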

\begin{proof} 
Lemma \ref{lem:globalInvFunThm} implies the existence an uniqueness of the solutions 
\\
$z_n(z',\xi)$ in each disk $D(0,R_{z'})$.
By (\ref{cor:globalInvFunThm:assump}) it follows that 
  \begin{equation*}
   \frac{\partial F}{\partial z_n}(z',z_n(z',\xi)) \neq 0 
  \end{equation*}
for all $z' \in\Gamma$ and all 
$\xi\in  D\!\left(F(z',0),(a-c)(R_{z'}-\lambda)\right)$. Hence, the implicit function theorem 
implies that $z_n(z',\xi)$ depends holomorphically on $\xi$ and $z'$.
\end{proof}
%
%
\section{A formula for the intensity measure of the point process of eigenvalues of $P_h^{\delta}$}\label{sec:FIntMesForm}
We prove the following formula for the intensity measure of $\Xi$:

\begin{prop}\label{prop:formulaFor1Intensity}
Let $h^{2/3}\ll \eta < \mathrm{const.}$ and let $\Omega:=\Omega_{\eta}^a\Subset\Sigma$. 
Let $C > 0$ and let $C_1>0$ be as in 
(\ref{defn:PertIntOp}) such that $C-C_1>0$ is large enough. Let 
$\delta$ be as in Definition \ref{defn:CoupConstDel} with $\kappa >4$, define 
$N:=(2\lfloor C_1/h\rfloor + 1)^2$ and let $B(0,R)\subset\mathds{C}^N$ 
be the ball of radius $R:=Ch^{-1 }$ centered at zero. For $z\in\Omega$ 
let $X(z)$ be as in Definition \ref{def:firstordertermnota}, let 
$E_{-+}(z)$ be as in Proposition \ref{propGrushUnpertOp} and let 
$e_0$ and $f_0$ be as in (\ref{def:e0,e1,...}) and 
(\ref{def:f0,f1,...}). There exist functions
  \begin{align}
  \label{eqn:Psi11}
   &\Psi(z;h,\delta)= (\partial_z X | \partial_z X) 
      - \frac{1}{\lVert X \rVert^2}\left|(\partial_z X | X) \right|^2 \notag \\
     & \phantom{\Psi(z;h,\delta)=  }
      + \delta^{-2}\left|(e_0|f_0)(1+\mO\left(h^{\infty}\right))
                          +\mathcal{O}\!\left(\eta^{1/4}\delta^2 h^{-7/2}\right)
                          \right|^2 + \mathcal{O}\!\left(\delta^3 h^{-3}\right), \\
    \label{eqn:Psi12}                     
   &\Theta(z;h,\delta)= \frac{|E_{-+}(z) + 
   \mathcal{O}\!\left(\delta^2 \eta^{-1/4}h^{-5/2}\right)|^2}{\delta^2 \lVert X(z) \rVert^2 },
  \end{align}
and $D>0$ and $\tilde{C}>0$ such that 
for all $\varphi\in\mathcal{C}_0(\Omega)$ and for $h>0$ small 
enough
 \begin{align*}
  \erw\big[~\Xi(\varphi)\mathds{1}_{B(0,R)}~\big] = 
  &\int \varphi(z) 
  \frac{1+ \mathcal{O}\!\left(\delta \eta^{-1/4}h^{-3/2}\right)}{\pi \lVert X \rVert^2} \Psi(z;h,\delta) 
       \e^{-\Theta(z;h,\delta)} L(dz)  \notag \\
   & + \mathcal{O}\!\left(\e^{-\frac{D}{h^2}}\right).
 \end{align*}
Here, $\mathcal{O}\!\left(\eta^{-1/4}\delta h^{-3/2}\right)$ is 
independent of $\varphi$ and 
$\mathcal{O}\!\left(\e^{-\frac{D}{h^2}}\right)$ means 
$\langle T_h,\varphi\rangle$ where $T_h\in\mathcal{D}'(\mathds{C})$ such that 
$|\langle T_h,\varphi\rangle|\leq C \lVert\varphi\rVert_{\infty}\e^{-\frac{D}{h^2}}$ for all 
$\varphi\in\mathcal{C}_0(\Omega)$ where $C$ and $D$ is independent of $h$, $\delta$, $\eta$ and $\varphi$. 
Moreover, the estimates in (\ref{eqn:Psi11}) and (\ref{eqn:Psi12}) are stable under 
application of $\eta^{-\frac{n+m}{2}}h^{n+m}\partial_z^n\partial_{\overline{z}}^m$.
\end{prop}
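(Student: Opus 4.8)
The plan is to apply the zero-counting Lemma~\ref{lem:approxdelta} to the effective Hamiltonian $E_{-+}^{\delta}(z)$, whose zeros coincide (with multiplicity) with the eigenvalues of $P_h^{\delta}$ in $\Omega$ by the Grushin problem of Proposition~\ref{propGrushPertOp} and the Schur complement formula. First I would fix $\alpha\in B(0,R)$ and regard $g(z):=E_{-+}^{\delta}(z)$, which by \eqref{eqn:ShortCutE-+d} equals $E_{-+}(z)-\delta[X(z)\cdot\alpha+T(z;\alpha)]$. The $\overline\partial$-equation needed in Lemma~\ref{lem:approxdelta}, namely $\partial_{\overline z}g+\partial_{\overline z}F\cdot g=0$, holds with $F$ obtained from the fact that $e^{F}E_{-+}^{\delta}$ is holomorphic; concretely $\partial_{\overline z}E_{-+}^{\delta}=E_{-+}^{\delta}\cdot(\partial_{\overline z}e_0|e_0)+\text{(small)}$ using \eqref{eqn:derzbarzE-+} and the analogous relations for the perturbed quantities, and one checks the remainder is negligible. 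Then for $\varphi\in\mathcal C_0(\Omega)$,
\begin{equation*}
 \Xi(\varphi)=\lim_{\varepsilon\to 0}\Big\langle \chi\big(\tfrac{E_{-+}^{\delta}}{\varepsilon}\big)\tfrac{1}{\varepsilon^2}|\partial_z E_{-+}^{\delta}|^2,\varphi\Big\rangle .
\end{equation*}

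Next I would take the expectation over $\alpha$, restricted to $B(0,R)$, against the Gaussian weight. The key computation is a Gaussian integral in $\alpha\in\mathbb C^N$: writing $E_{-+}^{\delta}=E_{-+}-\delta X\cdot\alpha-\delta T$, the variable $w:=E_{-+}^{\delta}$ and the ``transverse'' variables decouple after an orthogonal change of coordinates adapted to $X(z)$ (so that $X\cdot\alpha$ is essentially one coordinate, of variance $\|X\|^2$). The Gaussian integral of $\chi(w/\varepsilon)\varepsilon^{-2}|\partial_z E_{-+}^{\delta}|^2$ then produces, as $\varepsilon\to 0$, the factor $\pi^{-1}\|X\|^{-2}\exp\{-|E_{-+}+\mathcal O(\delta^2\eta^{-1/4}h^{-5/2})|^2/(\delta^2\|X\|^2)\}$ times a ``variance'' term. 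That variance term is exactly the conditional second moment of $\partial_z E_{-+}^{\delta}$ given $E_{-+}^{\delta}=0$, which splits into $(\partial_z X|\partial_z X)-\|X\|^{-2}|(\partial_z X|X)|^2$ (the ``symplectic volume'' piece, cf.\ Proposition~\ref{lem:1MomSymplVol}) plus the contribution of $\partial_z E_{-+}-E_{-+}\|X\|^{-2}(\partial_z X|X)$, which by Proposition~\ref{prop:EstimAndFormulaDerzE-+} equals $(e_0|f_0)$ up to $\mathcal O(h^\infty e^{-S/h})$, giving the $\delta^{-2}|(e_0|f_0)(1+\mathcal O(h^\infty))+\dots|^2$ term in \eqref{eqn:Psi11}; the $T$-contributions and cross terms are collected into $\mathcal O(\delta^3 h^{-3})$ using \eqref{eqn:PowerSeriesT} and Proposition~\ref{propGrushPertOp}.

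The main obstacle, and where most of the work lies, is controlling the restriction to the ball $B(0,R)$: one must show that extending the $\alpha$-integral from $B(0,R)$ to all of $\mathbb C^N$ costs only $\mathcal O(e^{-D/h^2})$ in the sense of distributions described in the statement. This uses Proposition~\ref{lem:truncation} (the Gaussian mass outside $B(0,R)$ is $\le e^{-1/(Ch^2)}$) together with an a priori polynomial bound on the number of eigenvalues in $\Omega$ for each $\alpha$, so that the tail of $\mathbb E[\Xi(\varphi)\mathds 1_{B(0,R)^c}]$ is genuinely double-exponentially small; one also needs the limit $\varepsilon\to 0$ and the $\alpha$-integration to commute, which follows from dominated convergence once the zero count is bounded uniformly on $B(0,R)$ via Jensen's formula applied to $e^{F}E_{-+}^{\delta}$ and the estimates of Proposition~\ref{prop:EstimOnGrushElements} and Proposition~\ref{cor:E-+Repres}.

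Finally I would verify the stability of the error terms in \eqref{eqn:Psi11}–\eqref{eqn:Psi12} under $\eta^{-(n+m)/2}h^{n+m}\partial_z^n\partial_{\overline z}^m$: this is a bookkeeping step that follows from the derivative estimates already recorded for $E_{-+}$ (Proposition~\ref{prop:EstimDerzE-+}), for $X$ (Proposition~\ref{cor:EstimFourrierCoeffE0F0}), for $(e_0|f_0)$ (Proposition~\ref{prop:AbsSPe0f0} and Remark~\ref{rem:derTunnel}), and for $T$, each of which loses at most $\mathcal O(\eta^{1/2}h^{-1})$ per derivative. I also need to record that the prefactor $\pi^{-1}\|X\|^{-2}(1+\mathcal O(\delta\eta^{-1/4}h^{-3/2}))$ arises from the Jacobian of the change of variables $\alpha\mapsto(w,\text{transverse})$ together with $\|X\|=1+\mathcal O(h^\infty)$ from Proposition~\ref{prop:EstimFourrierCoeffE0F0}; the $\mathcal O(\delta\eta^{-1/4}h^{-3/2})$ collects the difference between $\partial_z E_{-+}^{\delta}$ and $\partial_z E_{-+}^{\delta}$ conditioned on the vanishing locus, and is independent of $\varphi$ as claimed.
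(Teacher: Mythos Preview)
Your overall strategy is right and tracks the paper's proof: apply Lemma~\ref{lem:approxdelta} to $E_{-+}^{\delta}$, take expectation, rotate the $\alpha$-coordinates so that $X\cdot\alpha$ is a single coordinate, and compute the resulting Gaussian integral. The identification of the two pieces of $\Psi$ via Propositions~\ref{lem:1MomSymplVol} and~\ref{prop:EstimAndFormulaDerzE-+} is also correct.

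There are, however, two concrete gaps in your plan that the paper handles differently.

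\textbf{(i) The change of variables is not merely orthogonal.} Because of the nonlinear remainder $T(z;\alpha)$ in \eqref{eqn:ShortCutE-+d}, the map $\alpha_1\mapsto E_{-+}^{\delta}(z;\alpha_1,\alpha')$ is not affine, so after the unitary rotation you still need an implicit-function-theorem step to write $\alpha_1=\alpha_1(\beta,\alpha')$ with $\beta=E_{-+}^{\delta}$. The paper does this via Proposition~\ref{cor:globalInvFunThm}, using the Cauchy estimate $\partial_{\alpha_1}(\delta T)=\mathcal O(\eta^{-1/4}\delta^2 h^{-3/2})$; this is also where the Jacobian factor $|\partial_{\alpha_1}E_{-+}^{\delta}|^{-2}=(\delta\|X\|)^{-2}(1+\mathcal O(\eta^{-1/4}\delta h^{-3/2}))$ producing the prefactor actually arises, not from ``the difference between $\partial_z E_{-+}^{\delta}$ and $\partial_z E_{-+}^{\delta}$ conditioned on the vanishing locus''.

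\textbf{(ii) You cannot extend the $\alpha$-integral to all of $\mathbb C^{N}$.} The Neumann series defining $T(z;\alpha)$, and hence $E_{-+}^{\delta}(z;\alpha)$, converges only for $\alpha\in B(0,R)$; outside that ball the integrand is not even defined. Consequently the approach ``extend to $\mathbb C^N$ and bound the tail by Proposition~\ref{lem:truncation} plus a Jensen-type eigenvalue count'' does not apply. The paper instead stays inside $B(0,R)$ and splits it according to $\|\alpha'\|_{\mathbb C^{N-1}}\lessgtr R_0$ with $C_1<C_0<C$. On $\{\|\alpha'\|<R_0\}$ the implicit function theorem applies uniformly; the resulting $\alpha'$-integrals over this smaller ball are then compared to full Gaussian integrals over $\mathbb C^{N-1}$ via a tailor-made moment tail estimate (Lemma~\ref{lem:FirstMomentHelpLemmaGaussError}), which is where the $\mathcal O(e^{-D/h^2})$ error genuinely comes from. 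The region $\{\|\alpha'\|>R_0\}\cap B(0,R)$ is handled separately (Step~V) by enlarging to $B(0,2R)$, repeating the change of variables, and invoking the same tail lemma. No a~priori eigenvalue bound or Jensen's formula is used.

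With these two points fixed, the remainder of your outline (the derivative stability via Propositions~\ref{prop:EstimDerzE-+}, \ref{cor:EstimFourrierCoeffE0F0}, and \eqref{eqn:PfFirstmomentErrorEstimpartialZbarT}) matches the paper.
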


\begin{proof}
\textbf{Step I } Recall form of Section \ref{suse:GrushForUnPert} that 
$\sigma(P_h^{\delta}) = (E_{-+}^{\delta})^{-1}(0)$, thus $\Xi$ (cf. Definition 
\ref{defn:PP}) satisfies
  \begin{equation*}
   \Xi = \sum_{z\in(E_{-+}^{\delta})^{-1}(0)}\delta_z.
  \end{equation*}
It has been proven in \cite{SjAX1002}, that $E_{-+}^{\delta}(z)$ 
satisfies \eqref{lem:dbarequ}. Let $\chi$ be as in Lemma 
\ref{lem:approxdelta} then by Lemma \ref{lem:approxdelta}, Fubini's 
theorem and the dominated convergence theorem we have 
  \begin{align}\label{eqn:bigpart2}
   &\erw\Bigg[\sum_{z\in(E_{-+}^{\delta})^{-1}(0)} \varphi(z)~\mathds{1}_{B(0,R)}\Bigg] 
      = \lim\limits_{\varepsilon\rightarrow 0} \int \varphi(z) \left(\int\limits_{B(0,R)}D(z,\alpha)L(d\alpha)\right)L(dz), \notag \\ 
    &\text{where} ~D(z,\alpha) = \pi^{-N} \chi\left(\frac{E_{-+}^{\delta}(z;\alpha)}{\varepsilon}\right)
     \frac{1}{\varepsilon^2}\left|\partial_z E_{-+}^{\delta}(z;\alpha) \right|^2 \e^{-\alpha\overline{\alpha}}.
  \end{align}
\textbf{Step II } Next we give an estimate on $\partial_z E_{-+}^{\delta}(z)$. By (\ref{eqn:ShortCutE-+d})
\begin{equation}\label{eqn:dE_+estimate1}
    \partial_zE_{-+}^{\delta}(z) = \partial_zE_{-+}(z) - \delta \left( \partial_zX(z)\cdot\alpha + 
			  \partial_zT(z;\alpha)\right),
\end{equation}
where the derivative $\partial_z$ acts on $X(z)$ component wise and the dot-product $\partial_zX(z)\cdot\alpha$ is bilinear. 
To estimate $\partial_z T(z;\alpha)$, recall (\ref{eqn:PowerSeriesT}) and consider the derivative
  \begin{align*}
   &\partial_zE_-Q_{\omega}(EQ_{\omega})^nE_+ = 
	    (\partial_zE_{-})Q_{\omega}(EQ_{\omega})^nE_+ \notag \\
	&+ E_-Q_{\omega}\left[ 
	\sum_{j=1}^n \left(EQ_{\omega}\right)^{j-1}(\partial_z E)Q_{\omega} \left(EQ_{\omega}\right)^{n-j}\right]E_+ 
	       + E_-Q_{\omega}(EQ_{\omega})^n(\partial_zE_+),
  \end{align*}
with the convention $(EQ_{\omega})^0=1$. Recall the Grushin problem from Proposition 
\ref{propGrushUnpertOp} and take the derivative with respect to $z$ of the 
relation $\mathcal{E}(z)\mathcal{P}(z)= 1$ to obtain
  \begin{equation*}
   \partial_z\mathcal{E}(z) + \mathcal{E}(z)(\partial_{z}\mathcal{P}(z))\mathcal{E}(z) =0.
  \end{equation*}
A direct calculation yields
  \begin{align*}
   \partial_z E &= - E(\partial_z(P_h-z)) E - E_+(\partial_zR_+) E - E(\partial_zR_-) E_- \notag \\
		&= E^2 - E_+(\partial_zR_+) E - E(\partial_zR_-) E_-.
  \end{align*}
Recall the definition of $R_+$ and $R_-$ given in (\ref{eqn:defnR+R-}). By the estimates on the $z$- and $\overline{z}$-
derivatives of $e_0$ and $f_0$ given in Lemma \ref{quasimodes}, we conclude that
  \begin{align*}
   \lVert \partial_z R_+ \rVert_{H^1\rightarrow \mathds{C}}, 
   \lVert \partial_z R_- \rVert_{\mathds{C}\rightarrow L^2}  
   = \mathcal{O}\!\left(\eta^{1/2}h^{-1}\right).
  \end{align*}
Similarly, we have the same estimates on $\lVert \partial_z E_+ \rVert_{\mathds{C}\rightarrow L^2}$ 
and $\lVert \partial_z E_- \rVert_{H^1\rightarrow \mathds{C}}$. 
Thus, since $\lVert E(z)\rVert_{L^2\rightarrow H^1} = \mathcal{O}((h\sqrt{\eta})^{-1/2})$ 
and $\lVert E_{\pm}\rVert =\mathcal{O}(1)$, we have
  \begin{equation*}
   \lVert \partial_z E\rVert_{L^2\rightarrow H^1} = 
    \mathcal{O}(\eta^{1/4}h^{-3/2}).
  \end{equation*}
Putting all of this together, we get that the series of $\partial_z T(z;\alpha)$ converges again geometrically and we gain the estimate
  \begin{align}\label{eqn:PfFirstmomentErrorEstimpartialZT}
   \partial_{z}T(z;\alpha) = 
    \mathcal{O}\!\left(\eta^{1/4}\delta h^{-7/2} \right).
    \end{align}
Analogously, we conclude for all $\beta\in\N^2$
  \begin{align}\label{eqn:PfFirstmomentErrorEstimpartialZbarT}
   \eta^{-\frac{|\beta|}{2}}h^{|\beta|}\partial_{z\overline{z}}^{\beta}T(z;\alpha) =
    \mathcal{O}\!\left(\eta^{-1/4}\delta h^{-5/2} \right).
  \end{align}
Thus,
  \begin{equation*}
   \partial_zE_{-+}^{\delta}(z) = \partial_zE_{-+}(z) - \delta \partial_zX(z)\cdot\alpha +
    \mathcal{O}\!\left(\eta^{1/4}\delta h^{-7/2} \right).
  \end{equation*}
\\
\textbf{Step III } Consider the integral (\ref{eqn:bigpart2}) and choose vectors $e_1,e_2,\dots\in\mathds{C}^N$ 
as a basis of the $\alpha$-space such that $e_1=\overline{X}/\lVert\overline{X}\rVert$ and such that $e_1,e_2$ and 
$\overline{X}/\lVert\overline{X}\rVert, \partial_z\overline{X}$ span the same space:  
Therefore, we perform a unitary transformation in the $\alpha$-space such that with a slight 
abuse of notion 
  \begin{equation}\label{a1}
   \alpha= \alpha_1\frac{
			  \overline{X(z)}
			 }
			 {
			  \lVert X(z)\rVert
			 }
			  + 
		    \alpha_2 b\left(
			  \frac{ \overline{\partial_zX(z)} }{ \lVert \partial_zX(z) \rVert } - 
			  \frac{ 
				\overline{\left(\partial_zX(z)|X(z)\right)}\overline{X(z)} 
			       }
			       { \lVert \partial_zX(z) \rVert ~ \lVert X(z)\rVert^2 }
		    \right)
		    + \alpha^{\perp},
  \end{equation}
where $\alpha_1,\alpha_2\in\mathds{C}$ and $\alpha^{\perp}\in\mathds{C}^{N-3}$ and $b>0$ is a factor of normalization, 
  \begin{equation}\label{b1}
   b = \frac{
	      \lVert \partial_zX(z) \rVert ~ \lVert X(z)\rVert
	    }
	    {
	      \sqrt{ 
		    \lVert \partial_zX(z) \rVert^2 ~ \lVert X(z)\rVert^2 - 
		    \left|\left(\partial_zX(z)|X(z)\right)\right|^2
		   }
	    }.
  \end{equation}
This change of variables is well defined by Lemma \ref{lem:1MomSymplVol}. 
In the following we will also use the notation $(\alpha_1,\alpha_2,\alpha^{\perp})=(\alpha_1,\alpha')$.
This choice of basis yields by (\ref{eqn:PowerSeriesT}) and (\ref{eqn:ShortCutE-+d})
  \begin{equation}\label{eqn:E-+AfterChangeInAlpha}
   E_{-+}^{\delta}(z) = E_{-+}(z) - \delta \lVert X(z)\rVert\alpha_1 +\mathcal{O}\!\left(\eta^{-1/4}\delta^2 h^{-5/2} \right)
  \end{equation}
and by (\ref{eqn:dE_+estimate1}), \eqref{a1}, \eqref{b1}
  \begin{align}\label{derzEfHamPert}
   \partial_z E_{-+}^{\delta}(z) 
    &= \partial_z E_{-+}(z) - \delta \frac{\left(\partial_zX(z)|X(z)\right)}{\lVert X(z)\rVert}\alpha_1 \notag \\
    &\phantom{=}-\delta\left(
		   \lVert \partial_zX(z) \rVert^2  - 
			  \frac{ 
				\left|\left(\partial_zX(z)|X(z)\right)\right|^2
			       }
			       {  \lVert X(z)\rVert^2 }
	    \right)^{\frac{1}{2}}
		    \alpha_2+\mathcal{O}\!\left(\eta^{1/4}\delta^2 h^{-7/2} \right).
  \end{align}
Now let us split the ball $B(0,R)$, $R=Ch^{-1}$, into two pieces: 
pick $C_0>0$ such that $0<C_1<C_0<C$ and define $R_0:=C_0h^{-1}$. 
Then we shall consider one piece such that
$\lVert\alpha'\rVert_{\mathds{C}^{N-1}} < R_0$ and the other such that $\lVert\alpha'\rVert_{\mathds{C}^{N-1}} > R_0$. Hence, 
(\ref{eqn:bigpart2}) is equal to 
  \begin{align}\label{eqn:pfSplitBigInt}
    &\lim\limits_{\varepsilon\rightarrow 0} \int \varphi(z) 
    \hspace{-0.5cm}\int\limits_{\substack{B(0,R) \\ \lVert\alpha'\rVert_{\mathds{C}^{N-1}} < R_0} } 
	\hspace{-0.5cm}   D(z,\alpha)L(d\alpha) L(dz) 
     + \lim\limits_{\varepsilon\rightarrow 0} \int \varphi(z) 
    \hspace{-0.5cm}\int\limits_{\substack{B(0,R) \\ \lVert\alpha'\rVert_{\mathds{C}^{N-1}} > R_0} }
     \hspace{-0.5cm} D(z,\alpha)L(d\alpha) L(dz) \notag \\
     & =: I_1(\varphi) + I_2(\varphi).
  \end{align}
\\
\textbf{Step IV } In this step we will calculate $I_1(\varphi)$ of (\ref{eqn:pfSplitBigInt}). 
There we perform a change of variables such that $\beta:=E_{-+}^{\delta}(z;\alpha)$ is one of them. Due to 
(\ref{eqn:E-+AfterChangeInAlpha}) it is natural to express $\alpha_1$ as a function of $\beta$ and $\alpha'$. To this 
purpose we will apply Proposition \ref{cor:globalInvFunThm} to the function $E_{-+}^{\delta}(z;\alpha)$: \\ \par
  $E_{-+}^{\delta}(z;\alpha_1,\alpha')$ is holomorphic in $\alpha$ in ball of radius $R=Ch^{-1}$ centered at $0$. 
  Here, $\alpha$ plays the role of $z$ in the Proposition, in particular $\alpha_1$ plays the role of $z_n$.
  Recall (\ref{eqn:ShortCutE-+d}) and note that since 
$T(z;\alpha)=\mathcal{O}\!\left(\eta^{-1/4}\delta h^{-5/2}\right)$ (cf. (\ref{eqn:PowerSeriesT})) 
we can conclude by the Cauchy inequalities that 
  \begin{equation*}
   \partial_{\alpha_1}\delta T(z;\alpha) = \mathcal{O}\!\left(\eta^{-1/4}\delta^2 h^{-3/2}\right)
  \end{equation*}
which implies 
  \begin{equation}\label{eqn:betaderest2}
   \partial_{\alpha_1}E_{-+}^{\delta}(z;\alpha_1,\alpha') 
   = -\delta \lVert X(z)\rVert + \mathcal{O}\!\left(\eta^{-1/4}\delta^2 h^{-3/2}\right).
  \end{equation}
By Proposition \ref{prop:EstimFourrierCoeffE0F0} we have that 
$\lVert X(z) \rVert = 1 + \mathcal{O}\!\left(h^{\infty}\right)$ which implies that 
    \begin{equation*}
   \partial_{\alpha_1}E_{-+}^{\delta}(z;\alpha_1,\alpha') 
    = -\delta\left(1  + \mathcal{O}\!\left(h^{\infty}+\eta^{-1/4}\delta h^{-3/2}\right)\right).
  \end{equation*}
Hence, $E_{-+}^{\delta}(z;\alpha)$ satisfies the assumptions of Proposition \ref{cor:globalInvFunThm}. 
Since we restricted $\alpha'$ to ${\lVert\alpha'\rVert_{\mathds{C}^{N-1}} < R_0}$ and 
since
  \begin{equation*}
   |\alpha_1| < R^2 -\lVert\alpha'\rVert_{\mathds{C}^{N-1}}=:R_{\alpha'},
  \end{equation*}
it follows by Proposition \ref{cor:globalInvFunThm} that for 
  \begin{equation}\label{eqn:DomainBeta}
   \beta \in \bigcap_{\lVert\alpha'\rVert_{\mathds{C}^{N-1}} < R_0} 
   D\!\left(E_{-+}^{\delta}(z;0,\alpha'), r_{\alpha'} \right)
  \end{equation}
with 
  \begin{equation}\label{eqn:radiusBeta}
   r_{\alpha'} \geq \delta\left(1  + \mathcal{O}\!\left(h^{\infty}+\eta^{-1/4}\delta h^{-3/2}\right)\right)
	   \frac{\sqrt{C^2 - C_0^2}}{h}
   \geq \frac{\delta h^{-1}}{\mO\!\left(1\right)} > 0.
  \end{equation}
and $h>0$ small enough, $\beta=E_{-+}^{\delta}(z;\alpha_1,\alpha')$ 
has exactly one solution $\alpha_1(\beta,\alpha')$ in the disk 
$D(0,R_{\alpha'})$ and it depends holomorphically on $\beta$ and $\alpha'$. 
More precisely,
  \begin{equation}\label{eqn:PfFormulaFirstMomentAlpha}
   \alpha_1(\beta,\alpha') = \frac{-\beta + E_{-+}(z) + \mathcal{O}\!\left(\eta^{-1/4}\delta^2 h^{-5/2}\right)}{\delta \lVert X(z)\rVert}.
  \end{equation}
Furthermore, 
  \begin{equation*}
   L(d\alpha) = |\partial_{\alpha_1}E_{-+}^{\delta}|^{-2}L(d\beta)L(d\alpha').
  \end{equation*}
Since the support of $\chi$ is compact, we can restrict our attention to $\beta$ and 
$E_{-+}^{\delta}(z;0,\alpha')$ in a small disk of radius $\varepsilon>0$ centered at $0$. By choosing 
$\varepsilon<\delta h^{-1}/C$, $C>0$ large enough, as in (\ref{eqn:radiusBeta}) we see that 
$\beta, E_{-+}^{\delta}(z;0,\alpha') \in D(0,\varepsilon)$ implies (\ref{eqn:DomainBeta}). By performing this 
change of variables and by picking $\varepsilon>0$ small enough as above, we get
  \begin{align}\label{eqn:firstIntensityPfSplitInt1}
     I_1(\varphi) = \lim\limits_{\varepsilon\rightarrow 0} \int \varphi(z) \left\{
      \int\limits_{\mathds{C}} \chi\left(\frac{\beta}{\varepsilon}\right)\frac{1}{\varepsilon^2}
      \varLambda(\beta;z)L(d\beta)\right\}  L(dz),
  \end{align}
where $\varLambda(\beta;z)$ depends smoothly on $z$ and on $\beta$ and, using (\ref{derzEfHamPert}), 
is given by
\begin{align}\label{eqn:densitybig}
 \varLambda(\beta,z):=& 
      \pi^{-N}\int\limits_{\lVert\alpha'\rVert_{\mathds{C}^{N-1}} < R_0}\mathds{1}_{B(0,R)}(\alpha_1,\alpha') 
      \left|\partial_{\alpha_1}E_{-+}^{\delta}(\alpha_1,\alpha';z)\right|^{-2}\notag \\
     &\cdot\left|A(\alpha,z) - \beta\frac{(\partial_z X(z)|X(z))}{\lVert X(z)\rVert^2}
     - B(z)\alpha_2+\mathcal{O}\!\left(\eta^{1/4}\delta^2 h^{-7/2} \right)\right|^2 \notag \\
      &\cdot\exp\left\{-\alpha'\overline{\alpha'}
	-\left|\frac{-\beta + E_{-+}(z)
	+ \mathcal{O}\!\left(\eta^{-1/4}\delta^2 h^{-5/2}\right)}{\delta \lVert X(z)\rVert}\right|^2\right\} L(d\alpha'), 
\end{align}
where where $\alpha_1=\alpha_1(\beta,\alpha',z)$ and $A(\alpha,z),B(z)$ are defined as follows: 
     \begin{align}\label{eqn:EstA}
    A(\alpha,z)  :&= \partial_z E_{-+}(z) - \frac{(\partial_z X(z)|X(z))}{\lVert X(z)\rVert^2}
      \left(E_{-+}(z) + \mathcal{O}\!\left(\eta^{-1/4}\delta^2 h^{-5/2}\right)\right)
         \notag \\
	  &\phantom{=}+\mathcal{O}\!\left(\eta^{1/4}\delta^2 h^{-7/2} \right) \notag \\
      &= (e_0|f_0)(1+\mO\left(h^{\infty}\right))+ \mathcal{O}\!\left(\eta^{1/4}\delta^2 h^{-7/2} \right) \notag \\
      &= \mathcal{O}\!\left(\eta^{3/4}h^{-\frac{1}{2}}\e^{-\frac{\asymp\eta^{3/2}}{h}}\right)
       + \mathcal{O}\!\left(\eta^{1/4}\delta^2 h^{-7/2} \right).
    \end{align}
The second identity for $A$ is due to Proposition \ref{prop:EstimAndFormulaDerzE-+} 
and the following estimate
 \begin{align*}
   \left|\frac{(\partial_z X(z)|X(z))}{\lVert X(z)\rVert^2}\right|  \leq \frac{\lVert \partial_z X(z) \rVert}{\lVert X(z) \rVert}
	  = \left( 1+\mathcal{O}\!\left(h^{\infty}\right)\right)\mathcal{O}\!\left(\eta^{1/2}h^{-1}\right) 
	  = \mathcal{O}\!\left(\eta^{1/2}h^{-1}\right)
  \end{align*}
which follows from Propositions \ref{prop:EstimFourrierCoeffE0F0} and \ref{cor:EstimFourrierCoeffE0F0}. 
In the last line we used Proposition \ref{prop:EstimDerzE-+} together with (\ref{eqn:SEstINBox}). 
Furthermore, recall by Step II and Step III that $A(\alpha,z)$ is holomorphic in $\alpha$.\par
Similarly, we define
    \begin{align}\label{eqn:EstB}
    B(z) := \delta \left(
		   \lVert \partial_zX(z) \rVert^2   - 
			  \frac{ 
				\left|\left(\partial_zX(z)|X(z)\right)\right|^2
			       }
			       {  \lVert X(z)\rVert^2 }
	    \right)^{\frac{1}{2}} 
	 =\mathcal{O}\!\left(\eta^{-1/4}\delta h^{-\frac{1}{2}}\right).
  \end{align}
The estimate in (\ref{eqn:EstB}) follows from Proposition  \ref{lem:1MomSymplVol}. 
\begin{rem}
 It follows from Proposition \ref{prop:EstimAndFormulaDerzE-+}, Proposition \ref{prop:EstimDerzE-+}, 
 Proposition  \ref{cor:EstimFourrierCoeffE0F0} and from (\ref{eqn:PfFirstmomentErrorEstimpartialZbarT})
 that 
  \begin{align}\label{eqn:EstimDerAB}
   &\eta^{-\frac{n+m}{2}}h^{n+m}\partial_z^n\partial_{\overline{z}}^m A(z) =
       \mathcal{O}\!\left(\eta^{3/4}h^{-\frac{1}{2}}\e^{-\frac{\asymp\eta^{3/2}}{h}}\right) 
       + \mathcal{O}\!\left(\eta^{1/4}\delta^2 h^{-7/2} \right),
     \notag \\
   &\eta^{-\frac{n+m}{2}}h^{n+m}\partial_z^n\partial_{\overline{z}}^m B(z) =
   \mathcal{O}\!\left(\eta^{-1/4}\delta h^{-\frac{1}{2}}\right).
  \end{align}
\end{rem}
Since $\varLambda(\beta,z)$ is continuous in $\beta$, the dominated convergence theorem shows that
  \begin{equation*}
    I_1(\varphi)=\int \varphi(z) \varLambda(0,z) L(dz).
  \end{equation*}
Next, let us look at the indicator function $\mathds{1}_{B(0,R)}(\alpha_1(\beta,\alpha';z),\alpha')$ for 
$\lVert\alpha'\rVert < R_0$: By (\ref{eqn:PfFormulaFirstMomentAlpha}) we have 
  \begin{equation*}
   |\alpha_1(0,\alpha')| = \frac{\left|E_{-+}(z) + \mathcal{O}\!\left(\delta^2h^{-5/2}\right)\right|}{\delta \lVert X (z)\rVert}.		 
  \end{equation*}
Thus, $\mathds{1}_{B(0,R)}(\alpha_1(0,\alpha';z),\alpha')= 1$ if 
$|\alpha_1(0,\alpha')|^2 \leq R^2 - R_0^2 = \frac{\tilde{C}^2}{h^2}, ~\lVert\alpha'\rVert < R_0^2$ 
and if $R^2 - R_0^2 <|\alpha_1(0,\alpha')|^2 < R^2, ~\lVert\alpha'\rVert < R_0^2 - |\alpha_1(0,\alpha')|^2$, 
and $\mathds{1}_{B(0,R)}(\alpha_1(0,\alpha';z),\alpha')= 0$ if 
$R^2 \leq |\alpha_1(0,\alpha')|^2$, with $\tilde{C}^2 := C^2 - C_0^2$. Hence, we split $\Lambda(0,z)$ into 
  \begin{align}\label{Lsp}
    \Lambda(0,z) &= \Lambda(0,z)\left(
	  \mathds{1}_{\{\sqrt{\Theta(z;h,\delta)} \leq \frac{\tilde{C}}{h}\}}(z) 
	 +  \mathds{1}_{\left\{\frac{\tilde{C}}{h} < \sqrt{\Theta(z;h,\delta)}  < R\right\}}(z) \right) \notag \\
	    &=: \Lambda_1(0,z) +\Lambda_2(0,z),
  \end{align}
where 
  \begin{equation*}
   \Theta(z;h,\delta)
   := \frac{|E_{-+}(z) + \mathcal{O}\!\left(
		\delta^2 \eta^{-1/4}h^{-5/2}\right)|^2}
           {\delta^2 \lVert X(z) \rVert^2 }.
  \end{equation*}
We start by treating $\Lambda_1$. Note that the function 
\begin{equation*}
 \{\lVert\alpha'\rVert_{\mathds{C}^{N-1}} < R_0\}\ni\alpha' \longmapsto 
 \exp\left\{-\left|\alpha_1(0,\alpha';z)\right|^2\right\} \in [0,1]
\end{equation*}
is continuous, bounded and recall that (\ref{eqn:PfFormulaFirstMomentAlpha}) holds 
for all $\alpha'\in\{\lVert\alpha'\rVert_{\mathds{C}^{N-1}} < R_0\}$. Furthermore, note that all factors in the integral 
(\ref{eqn:densitybig}) are positive. Since the ball $\{\lVert\alpha'\rVert_{\mathds{C}^{N-1}} < R_0\}$ is 
simply connected the intermediate value theorem yields
\begin{align}\label{eqn:densitybigContinue}
 \varLambda_1(0,z)= &\pi^{-N}\mathds{1}_{\{ \sqrt{\Theta(z;h,\delta)} \leq \frac{\tilde{C}}{h}\}}(z) 
 \left|\delta \lVert X(z)\rVert + \mathcal{O}\!\left(\eta^{-1/4}\delta^2 h^{-3/2}\right)\right|^{-2}
	\notag \\
       &\cdot
      \exp\{- \Theta(z;h,\delta)\}
      \int\limits_{\lVert\alpha'\rVert_{\mathds{C}^{N-1}} < R_0}
      \left|A(\alpha,z)-\delta B(z)\alpha_2\right|^2 \e^{-\alpha'\overline{\alpha'}}  L(d\alpha').
\end{align}
Here we also applied (\ref{eqn:betaderest2}). Before we can further simplify 
(\ref{eqn:densitybigContinue}), let us consider the following technical Lemma:
\begin{lem}\label{lem:FirstMomentHelpLemmaGaussError}
Let $h>0$, let  $C_0,C_1>0$ and let  $N:=(2\lfloor\frac{C_1}{h}\rfloor +1)^2$. 
Let $n\in\mathds{N}^{N-1},m\in\mathds{N}^{N-1}$, let $R_0=C_0/h$ and let $\alpha\in\mathds{C}^N$. 
If $C_0>C_1>0$ are large enough and such that 
  \begin{equation*}
   \ln \left(2 + \frac{\e R_0^2}{N-2} \right) < \frac{R_0^2}{2(N-2)},
  \end{equation*}
then, for $h>0$ small enough, there exists a constant $D_{n,m}=:D>0$ such that 
  \begin{equation*}
   \left|\pi^{1-N} \int_{\lVert \alpha' \rVert_{\mathds{C}^{N-1}}\geq R_0} 
	    \alpha'^n\overline{\alpha'}^m\e^{-\alpha'\overline{\alpha'}} L(d\alpha')\right| 
    = \mathcal{O}\!\left(\e^{-\frac{D}{h^2}}\right).
  \end{equation*}
\end{lem}
\begin{proof}
Define 
  \begin{equation*}
   2u:=\begin{cases}
        |n|+|m|, ~\text{if it is even} \\
        |n|+|m|+1, ~\text{else}
       \end{cases}
  \end{equation*}
and notice
  \begin{align*}
   &\left| \pi^{1-N}\int_{\lVert \alpha' \rVert_{\mathds{C}^{N-1}}\geq R_0} 
	  \alpha'^n\overline{\alpha'}^m\e^{-\alpha'\overline{\alpha'}} L(d\alpha')\right| 
	  \notag \\
   & \leq \pi^{1-N}\left|S^{2N-3}\right|\int_{R_0}^{\infty} r^{2u+2N-3}\e^{-r^2}dr 
     = \frac{2}{(N-2)!} \int_{R_0^2}^{\infty} \tau^{u+N-2}\e^{-\tau}d\tau.
  \end{align*}
Repeated partial integration then yields
  \begin{equation}\label{eqn:FirstMomProofGaussRestLemm1}
   \frac{2}{(N-2)!}\e^{-R_0^2} \sum_{i=0}^{u+N-2}\begin{pmatrix} u+N-2 \\ i \\ \end{pmatrix} (u+N-2 -i)! R_0^{2i}.
  \end{equation}
Using Stirling's formula one gets that \eqref{eqn:FirstMomProofGaussRestLemm1}$\leq$
  \begin{align*}
   & \frac{\e\sqrt{(u+N-2)}}{(N-2)!}\e^{-R_0^2} \sum_{i=0}^{u+N-2}\begin{pmatrix} u+N-2 \\ i \\ \end{pmatrix} \left(\frac{u+N-2}{\e}\right)^{u+N-2 -i} R_0^{2i} \notag \\ 
   & \leq \frac{\e\sqrt{(u+N-2)}}{\sqrt{2\pi (N-2)}}\e^{-R_0^2} \left(\frac{\e}{N-2}\right)^{N-2} \left( R_0^{2} + \frac{u+N-2}{\e}\right)^{u+N-2} \notag\\
   & = \e^{-R_0^2}\frac{\e}{\sqrt{2\pi}}\sqrt{1+\frac{u}{N-2}} \left(\frac{R_0^{2}\e}{N-2} + 1+ \frac{u}{N-2}\right)^{N-2} \left( R_0^{2} + \frac{u+N-2}{\e}\right)^{u}.
  \end{align*}
Since $u/(N-2)$ is bounded for $h>0$ small, it remains to consider
  \begin{equation}\label{eqn:FirstMomProofGaussRestLemm2}
   \exp\left\{-R_0^2 +(N-2)\ln\left(\frac{R_0^{2}\e}{N-2} + 1+ \frac{u}{N-2}\right) + u\ln\left( R_0^{2} + \frac{u+N-2}{\e}\right) \right\}.
  \end{equation}
However, there exists a $1> \kappa >0$ such that 
  \begin{equation*}
   -R_0^2 +(N-2)\ln\left(\frac{R_0^{2}\e}{N-2} + 1+ \frac{u}{N-2}\right) \leq - R_0^2\kappa = - \frac{C_0^2}{h^2},
  \end{equation*}
which implies that (\ref{eqn:FirstMomProofGaussRestLemm2}) is dominated by 
  \begin{equation*}
   \exp\left\{- \frac{C_0^2}{h^2}\left(\kappa - \frac{h^2}{\mathcal{O}(1)}\ln(h)\right) \right\},
  \end{equation*}
and we conclude the statement of the Lemma for $h>0$ small enough.
\end{proof}
Let us return to (\ref{eqn:densitybigContinue}): We are interested in the integral
  \begin{align}\label{eqn:IntToclaculate}
   \pi^{-N}\int\limits_{\lVert\alpha'\rVert_{\mathds{C}^{N-1}} < R_0}
	|A - B\alpha_2|^2 \exp\left\{-\alpha'\overline{\alpha'}\right\} L(d\alpha'). 
  \end{align}
We will investigate each term of \eqref{eqn:IntToclaculate} separately. 
Since $B$ is constant in $\alpha$ and since $\int |\alpha_2|^2\exp(-\alpha'\overline{\alpha'}) L(d\alpha') = \pi^{N-1}$, 
we conclude, by Lemma \ref{lem:FirstMomentHelpLemmaGaussError} for $C_0>C_1>0$ large enough and $h>0$ small enough, 
that there exists a constant $D>0$ such that 
  \begin{equation*}
   \pi^{-N}\int\limits_{\lVert\alpha'\rVert_{\mathds{C}^{N-1}} < R_0} 
   |B\alpha_2|^2\e^{-\alpha'\overline{\alpha'}} L(d\alpha') = 
   \pi^{-1}|B|^2 + \mathcal{O}\!\left(\eta^{-\frac{1}{2}}\delta^2 h^{-1}\e^{-\frac{D}{h^2}}\right).
  \end{equation*}
The mean value theorem, (\ref{eqn:EstA}) and Lemma \ref{lem:FirstMomentHelpLemmaGaussError} imply that there 
exists a constant $D>0$ (not necessarily the same as above) such that 
  \begin{equation*}
   \pi^{-N}\int\limits_{\lVert\alpha'\rVert_{\mathds{C}^{N-1}} < R_0} |A|^2 \exp\left\{-\alpha'\overline{\alpha'}\right\} L(d\alpha') = 
   \pi^{-1}|A|^2 + \mathcal{O}\!\left(\e^{-\frac{D}{h^2}}\right).
  \end{equation*}
Note that after the equality sign we have $A=A(\tilde{\alpha}',z)$ for an $\tilde{\alpha}'\in B(0,R_0)$ 
given by the mean value theorem. Next, since \eqref{eqn:EstB} is independent of $\alpha$, 
  \begin{align*}
   \pi^{-N}\int\limits_{\lVert\alpha'\rVert_{\mathds{C}^{N-1}} < R_0}
      \overline{A}B\alpha_2 \e^{-\alpha'\overline{\alpha'}} L(d\alpha')
      =\pi^{-N}B\int\limits_{\lVert\alpha'\rVert_{\mathds{C}^{N-1}} < R_0}
	 \overline{A}\alpha_2  \e^{-\alpha'\overline{\alpha'}} L(d\alpha').
  \end{align*}
Since $A(\alpha,z)$ is holomorphic in $\alpha$ we gain from (\ref{eqn:EstA}) by the Cauchy inequalities 
  \begin{equation}\label{d_A}
   |\partial_{\alpha_2}A| = \mathcal{O}\!\left(\eta^{1/4}\delta^2 h^{-5/2} \right).
  \end{equation}
Here we used that the first term in (\ref{eqn:EstA}) is independent of $\alpha$. 
Extend $A$ to a function on $\mathds{C}^{N-1}$ such that the above estimate still holds. 
Then, by Lemma \ref{lem:FirstMomentHelpLemmaGaussError} there exists a constant $D>0$ such that
   \begin{align*}
   \pi^{-N}B\int\limits_{\lVert\alpha'\rVert_{\mathds{C}^{N-1}} \geq R_0}
      \overline{A}\alpha_2  \e^{-\alpha'\overline{\alpha'}} L(d\alpha')
      = \mathcal{O}\!\left(\eta^{1/2}h^{-1}\delta\e^{-\frac{\asymp\eta^{3/2}}{h}} + \delta^3 h^{-4} \right)
       \e^{-\frac{D}{h^2}}.
  \end{align*}
Here we used (\ref{eqn:EstA}) and (\ref{eqn:EstB}). Stokes' theorem and \eqref{d_A} imply 
  \begin{align*}
  \pi^{-N}B\int_{\mathds{C}^{N-1}}
	  \overline{A}\alpha_2\e^{-\alpha'\overline{\alpha'}} L(d\alpha') &= 
   \pi^{-N}B\int_{\mathds{C}^{N-1}}
	  \left(\partial_{\overline{\alpha}_2}\overline{A}\right)\e^{-\alpha'\overline{\alpha'}} L(d\alpha')
	  \notag \\
	 & \leq  
       \mathcal{O}\!\left(\delta^3 h^{-3} \right).
  \end{align*}
Plugging the above into (\ref{eqn:IntToclaculate}), we gather that there exist a constant $D>0$ such that 
  \begin{align}\label{eqn:alltogehterInt}
   \pi^{-N}\int\limits_{\lVert\alpha'\rVert_{\mathds{C}^{N-1}} < R_0} &
	|A - B\alpha_2|^2 \exp\left\{-\alpha'\overline{\alpha'}\right\} L(d\alpha')\notag \\
	 &= \pi^{-1}\left(|A(z)|^2 + |B(z)|^2\right)
	+\mathcal{O}\!\left(\delta^3 h^{-3}  + \e^{-\frac{D}{h^2}}\right) \notag \\
	& =:\delta^{2}\Psi(z,h,\delta).
  \end{align}
By \eqref{eqn:EstA} and \eqref{eqn:EstB}, we see that 
$\pi^{-1}(|A(z)|^2 + |B(z)|^2)$ is equal to 
\begin{align*}
	 \frac{\delta^2}{\pi}\bigg(&(\partial_z X | \partial_z X) 
                                   - \frac{1}{\lVert X \rVert^2}\left|(\partial_z X | X) \right|^2
                                   \notag \\
                                   &+
          \delta^{-2}\left|(e_0|f_0)(1+\mO\left(h^{\infty}\right))
                          +\mathcal{O}\!\left(\eta^{1/4}\delta^2 h^{-7/2} \right)\right|^2\bigg).
  \end{align*}
The above, \eqref{eqn:alltogehterInt}, (\ref{eqn:densitybigContinue}) and
  \begin{equation*}
   \left|\delta \lVert X(z)\rVert + \mathcal{O}\!\left(\eta^{-1/4}\delta^2 h^{-3/2}\right)\right|^{-2}= 
    \frac{\left(1 + \mathcal{O}\!\left(\eta^{-1/4}\delta h^{-3/2}\right) \right)}{\delta^{2}\pi\lVert X(z)\rVert^2},
  \end{equation*}
imply that for $h>0$ small enough, there exists a constant $D>0$ such that
  \begin{align}\label{eqn:densitybigContinue2}
  \varLambda_1(0,z):= \frac{\left(1 + \mathcal{O}\!\left(\eta^{-1/4}\delta h^{-3/2}\right) \right)}{\pi\lVert X(z)\rVert^2} 
                   \mathds{1}_{\{ \sqrt{\Theta(z;h,\delta)}  \leq \frac{\tilde{C}}{h}\}}(z) 
		  \Psi(z,h,\delta)
		  \exp^{- \Theta(z;h,\delta)}.
  \end{align}
Finally, let us estimate $\Lambda_2$ from \eqref{Lsp}: applying (\ref{eqn:EstA}), (\ref{eqn:EstB}) 
and Lemma \ref{lem:FirstMomentHelpLemmaGaussError} to (\ref{eqn:densitybig}) yields
  \begin{align*}
    \Lambda_2(0,z) &\leq \e^{-\frac{\widetilde{C}}{h^2}}
	    \mathcal{O}\!\left(\delta^4 \eta^{1/2}h^{-7} 
		    + \eta^{1/2}h^{-1}\delta\e^{-\frac{\asymp\eta^{3/2}}{h}}\right)
    = \mathcal{O}\!\left(\e^{-\frac{D}{h^2}}\right),
  \end{align*}
for some $D>0$. Thus, up to an error of order $ \mathcal{O}\!(\e^{-\frac{D}{h^2}})$, we can 
substitute $ \mathds{1}_{\{ \sqrt{\Theta(z;h,\delta)}  \leq \frac{\tilde{C}}{h}\}}(z)$ with $1$ in 
(\ref{eqn:densitybigContinue2}).
\\
\\
\textbf{Step V } In this step we will estimate $I_2(\varphi)$ of (\ref{eqn:pfSplitBigInt}). 
Therefore, we increase the space of integration 
  \begin{align*}
   &\int\limits_{\substack{B(0,R) \\ \lVert\alpha'\rVert_{\mathds{C}^{N-1}} > R_0} }
      \chi\left(\frac{E_{-+}^{\delta}(z;\alpha)}{\varepsilon}\right)
     \frac{1}{\varepsilon^2}\left|\partial_z E_{-+}^{\delta}(z;\alpha) \right|^2 \e^{-\alpha\overline{\alpha}}L(d\alpha) \notag \\
   &\leq 
   \int\limits_{\substack{B(0,2R) \\ R_0 < \lVert\alpha'\rVert_{\mathds{C}^{N-1}} < 2R_0} }
      \chi\left(\frac{E_{-+}^{\delta}(z;\alpha)}{\varepsilon}\right)
     \frac{1}{\varepsilon^2}\left|\partial_z E_{-+}^{\delta}(z;\alpha) \right|^2 \e^{-\alpha\overline{\alpha}}L(d\alpha) =: W_{\varepsilon}.
  \end{align*}
It is easy to see that Lemma \ref{lem:globalInvFunThm} holds true for the set 
$B(0,2R) \cap \{ R_0 < \lVert\alpha'\rVert_{\mathds{C}^{N-1}} < 2R_0\}$, 
potentially by choosing a larger $C>0$ in Proposition \ref{lem:truncation} larger. We can proceed as in Step IV: 
perform the same change of variables and the limit of $\varepsilon\rightarrow 0$. This yields
  \begin{align*}
   \lim\limits_{\varepsilon\rightarrow 0} \,W_{\varepsilon}  = 
     & \pi^{-N}\int\limits_{R_0 < \lVert\alpha'\rVert_{\mathds{C}^{N-1}} < 2R_0}\mathds{1}_{B(0,2R)}(\alpha_1(0,\alpha';z),\alpha') 
      \left|\partial_{\alpha_1}\beta(\alpha_1,\alpha';z)\right|^{-2}\notag \\
     &\cdot\left|A(\alpha;z) - B(z)\alpha_2\right|^2
      \exp\left\{-\alpha'\overline{\alpha'}
	-\Lambda(z,h,\delta)^2\right\} L(d\alpha').
\end{align*}
By (\ref{eqn:EstA}), (\ref{eqn:EstB}) and Lemma \ref{lem:FirstMomentHelpLemmaGaussError} we see that there 
exists a constant $D>0$ such that 
  \begin{align*}
  &\pi^{-N}\int\limits_{R_0 < \lVert \alpha' \rVert_{\mathds{C}^{N-1}} < 2R_0} 
  |A - B\alpha_2|^2 \e^{-\alpha'\overline{\alpha'}}L(d\alpha') \\
  &\leq \e^{-\frac{\tilde{D}}{h^2}}
	    \mathcal{O}\!\left(\delta^4 \eta^{1/2}h^{-7} 
		    + \eta^{1/2}h^{-1}\delta\e^{-\frac{\asymp\eta^{3/2}}{h}}\right)
   = \mathcal{O}\!\left(\e^{-\frac{D}{h^2}}\right).
  \end{align*}
The statement about the derivatives of the error terms follows 
from (\ref{eqn:EstimDerAB}) and (\ref{eqn:PfFirstmomentErrorEstimpartialZbarT}).
\end{proof}
%
%
\section{Average Density of Eigenvalues}
First, we will give the proof the main result of this work:
\begin{proof}[Proof of Theorem \ref{thm:ModelFirstIntensity}]
Due to \eqref{eqn:TubeEmptyA} and Hypothesis \ref{hyp:H5} we have that, for $\kappa >0$ 
(as in Definition \ref{defn:CoupConstDel}) large enough, that \eqref{eqn:TubeEmpty} holds. Therefore, 
we assume that $\left(h\ln\frac{1}{h}\right)^{2/3} \ll \eta \leq C$, 
where $C>0$ is a constant. 
\par 
In particular, we now strengthen assumption \eqref{hyp_Omega_a} and assume 
from now on that $\Omega\Subset\Sigma$ satisfies Hypothesis \ref{hyp:H3} if 
nothing else is specified, i.e. we assume that 
\begin{equation*}
     \Omega\Subset\Sigma ~\text{is open, relatively compact with}~
     \mathrm{dist\,}(\Omega,\partial\Sigma) \gg \left(h\ln h^{-1} \right)^{2/3}.
\end{equation*}
Recall the definition of $\Omega_{\eta}^{a}$ given in (\ref{def:etaOmega}): 
  \begin{equation*}
   \Omega_{\eta}^{a} = 
   \left\{z\in \Omega:~ \frac{\eta}{C} \leq \Ima z \leq C\eta \right\}
  \end{equation*}
for some constant $C>0$. Define
\begin{equation*}
 \widetilde{\Omega}_{\eta}^a := 
     \left\{z\in \Omega:~ \frac{\eta}{2C} \leq \Ima z \leq 2C\eta \right\}.
\end{equation*}
Define $\eta_j:=C^{-j}$, $j\in\mathds{N}_0$, 
and consider the open covering of $\Omega$ 
  \begin{equation*}
   \Omega \subset \bigcup_{j\in\mathds{N}_0}\widetilde{\Omega}_{\eta_j}^{a} \cup 
      \left(\Omega \backslash \bigcup_{j\in\mathds{N}_0}\overline{\Omega_{\eta_j}^{a}}\right),
  \end{equation*}
where 
$\dist(\Omega \backslash \bigcup_{j\in\mathds{N}_0}\overline{\Omega_{\eta_j}^{a}},\partial\Sigma) > 1/C$, 
thus, conforming with the previous notation, we may define 
  \begin{equation*}
   \Omega_i :=\Omega \backslash \bigcup_{j\in\mathds{N}_0}\overline{\Omega_{\eta_j}^{a}}.
  \end{equation*}
Let $\{\chi_{\eta_j}\}_{j\in \mathds{N}_0}$ be a partition 
of unity subordinate to this locally finite open subcovering such that 
  \begin{equation*}
   1 = \sum_{j\in \mathds{N}} \chi_{\eta_j} + \chi_{\eta_0},
  \end{equation*}
in a neighborhood of $\Omega$. Here, for 
$j\in \mathds{N}$, $\chi_{\eta_j} \in \mathcal{C}^{\infty}_0(\widetilde{\Omega}_{\eta}^a)$, 
supported in either $\widetilde{\Omega}_{\eta}^{a}$. Furthermore, 
$\chi_{\eta_0} \in \mathcal{C}^{\infty}(\Omega_i)$. This partition 
of unity together with Proposition \ref{prop:formulaFor1Intensity} 
yields 
  \begin{align*}
   \erw\big[~\Xi(\varphi)&\mathds{1}_{B(0,R)}~\big] = 
   \sum_{j\in\mathds{N}} \erw\left[~\Xi(\varphi\chi_{\eta_j})\mathds{1}_{B(0,R)}~\right] + 
   \erw\left[~\Xi(\varphi\chi_0)\mathds{1}_{B(0,R)}~\right]  \notag \\
  & =\sum_{j\in\mathds{N}}\int \varphi(z)\chi_{\eta_j}(z) 
  \frac{1+ \mathcal{O}\!\left(\eta_j^{-1/4}\delta h^{-3/2}\right)}{\pi \lVert X \rVert^2} \Psi(z;h,\delta) 
       \e^{-\Theta_j} L(dz)
       \notag \\
   & +\int \varphi(z)\chi_0(z) \frac{1+ \mathcal{O}\!\left(\delta h^{-3/2}\right)}{\pi \lVert X \rVert^2} \Psi(z;h,\delta) 
       \e^{-\Theta_0 } L(dz)
      + \mathcal{O}\!\left(\e^{-\frac{D}{h^2}}\right).
  \end{align*}
where 
  \begin{equation*}
   \Theta_j := \frac{\left|E_{-+}(z) + 
   \mathcal{O}\!\left(\eta_j^{-1/4}\delta^2 h^{-5/2}\right)\right|^2}{\delta^2 \lVert X \rVert^2 }, ~
   \Theta_0 := \frac{|E_{-+}(z) + \mathcal{O}\!\left(\delta^2 h^{-5/2}\right)|^2}{\delta^2 \lVert X \rVert^2 }.
  \end{equation*}
Note that to gain the exponentially small error estimate 
in the above we used that the bound on the distribution $T_h\in D'(\C)$ 
(cf. Proposition \ref{prop:formulaFor1Intensity}) is independent 
of $\eta$. Thus,
  \begin{equation*}
    \left|\sum_{j\in\mathds{N_0}}\langle T_h,\varphi\chi_{\eta_j}\rangle\right|
    = |\langle T_h,\varphi\rangle | \leq C\lVert \varphi\rVert_{\infty} \e^{-\frac{D}{h^2}}.
  \end{equation*}
$\phantom{-}$\\
\textbf{Analysis of the density $\Psi$} Recall the formula for the density of eigenvalues given in Proposition 
\ref{prop:formulaFor1Intensity}. Define
  \begin{equation}
  \label{eqn:Psi1111}
   \Psi_1(z;h,\delta):=  (\partial_z X | \partial_z X) - \frac{1}{\lVert X \rVert^2}\left|(\partial_z X | X) \right|^2
			+  \mathcal{O}\!\left(\delta^3 h^{-3}\right)
  \end{equation}
Since the error above is of order $\mathcal{O}(1)$, it 
follows from Proposition \ref{lem:1MomSymplVol} that 
  \begin{equation*}
   \Psi_1(z,h,\delta)=  \frac{1}{h}\left\{\frac{i}{\{p,\overline{p}\}(\rho_+(z))} - \frac{i}{\{p,\overline{p}\}(\rho_-(z))}\right\}
			+\mathcal{O}\!\left(\dist(z,\partial\Sigma)^{-2}\right),
  \end{equation*}
where we used that $\Ima z \asymp \eta_j$ for $z\in\Omega_{\eta_j}^{a}$. Proposition 
\ref{cor:1MomSymplVol} implies 
  \begin{equation*}
    \Psi_1(z,h,\delta)L(dz) = \frac{1}{2h}p_*(d\xi\wedge dx) 
   + \mathcal{O}\!\left(\dist(z,\partial\Sigma)^{-2}\right)L(dz).
  \end{equation*}
Furthermore, Proposition \ref{prop:formulaFor1Intensity} and Proposition \ref{lem:1MomSymplVol} 
yield that  
  \begin{equation*}
    \eta^{-\frac{n+m}{2}}h^{n+m}\partial_z^n\partial_{\overline{z}}^m\mathcal{O}\!\left(\eta_j^{-2}\right) 
    = \mathcal{O}_{}\!\left(\eta_j^{-2}\right),
  \end{equation*}
where $\mathcal{O}_{}\!\left(\eta_j^{-2}\right)$ is the error term of 
$\Psi_1$. Next, let us turn to the second part of $\Psi$: 
  \begin{align*}
    &\delta^{-2}\left|(e_0|f_0)(1+\mO\left(h^{\infty}\right))
                          +\mathcal{O}\!\left(\eta_j^{1/4}\delta^2 h^{-7/2}\right)\right|^2 \notag \\
           &=  \delta^{-2}\left|(e_0|f_0)\right|^2(1+\mO\left(h^{\infty}\right)) 
              + \mathcal{O}\!\left(\eta_j^{1/2}\delta^2 h^{-7}\right)
              + \mathcal{O}\!\left(\eta_j^{1/4}h^{-7/2}\left|(e_0|f_0)\right|\right) \notag \\
           &=\delta^{-2}\left|(e_0|f_0)\right|^2(1+\mO\left(h^{\infty}\right)) 
    +\mathcal{O}\!\left(\eta_j h^{-4}\e^{-\frac{S}{h}}+\eta_j^{1/2}\delta^2 h^{-7}\right).
  \end{align*}
In the last line, we applied an estimate on $\left|(e_0|f_0)\right|$ which follows from 
Proposition \ref{prop:AbsSPe0f0}. The error term $\mO(\eta_jh^{-4}\e^{-\frac{S}{h}})$ is 
bounded by $\mO(\eta_j)$ because $\eta \gg (-h\ln h)^{2/3}$. We then absorb $\mO(\eta_j)$ 
into the error term $\mathcal{O}(\eta_j^{-2})$ of $\Psi_1$ as well as 
the error term $\mathcal{O}(\eta_j^{1/2}\delta^2 h^{-7})\leq \mathcal{O}(\eta_j^{1/2})$. Then, 
one defines
\begin{align}\label{eqn:Psi2Nice}
   \Psi_2(z;h,\delta)&:= \frac{\left|(e_0|f_0)\right|^2}{\delta^2}
    \left(1 + \mathcal{O}\!\left(\eta_j^{-3/4}h^{1/2}\right)\right).
  \end{align} 
As in (\ref{eqn:EstimDerAB}), the error estimates don't change if 
we apply $\eta^{-\frac{n+m}{2}}h^{n+m}\partial_z^n\partial_{\overline{z}}^m$. \\
\\
\textbf{Analysis of the exponential $\Theta$ } Recall from Proposition \ref{propGrushUnpertOp}
that $-\alpha_0= E_{-+}$ and use (\ref{eqn:alphaEFFTUNN}) to find that
  \begin{equation*}
   E_{-+}(z) = ([P_h,\chi ]e_0|f_0)\left(1 + 
    \mathcal{O}\!\left(\exp\left[-\frac{\eta_j^{3/2}}{h}\right]\right)\right).
  \end{equation*}
Here $\chi\in\mathcal{C}^{\infty}_0(S^1)$ with $\chi\equiv 1$ in a 
small open neighborhood of $\overline{\{x_-(z);z\in\Omega\}}$. Thus, 
using $\lVert X \rVert = (1 + \mathcal{O}(h^{\infty}))$ 
(cf. Proposition \ref{prop:EstimFourrierCoeffE0F0}), we have the following 
equation for $\Theta$ given in Proposition \ref{prop:formulaFor1Intensity}
  \begin{align}\label{eqn:ThetaNIce}
  \Theta(z,h,\delta)& =
  \frac{\left|E_{-+}(z) + \mathcal{O}\!\left(\eta_j^{-1/4}\delta^2 h^{-5/2}\right)\right|^2}{\delta^2\lVert X \rVert^2}
  \notag \\
   &= \frac{\left|([P_h,\chi ]e_0|f_0)+ \mathcal{O}\!\left(\eta_j^{-1/4}\delta^2 h^{-5/2}\right)\right|^2}
    {\delta^2(1 + \mathcal{O}(h^{\infty}))}\left(1 + 
    \mathcal{O}\!\left(\e^{-\frac{\eta_j^{3/2}}{h}}\right)\right).
  \end{align}
As in (\ref{eqn:EstimDerAB}), the error estimates stay invariant under the action of \\
$\eta_j^{-\frac{n+m}{2}}h^{n+m}\partial_z^n\partial_{\overline{z}}^m$. Finally, to 
conclude the density given in the Theorem, note that 
\begin{equation*}
   \frac{1+ \mathcal{O}\!\left(\eta_j^{-1/4}\delta h^{-3/2}\right)}{\pi\lVert X \rVert^2} 
   =    \frac{1+ \mathcal{O}\!\left(\dist(z,\partial\Sigma)^{-1/4}\delta h^{-3/2}\right)}{\pi}. \qedhere
  \end{equation*}
\end{proof}
In the case of the operator $P_h^{\delta}$, it is possible to state more 
explicit formulas for the different parts of the density of eigenvalues 
given in Theorem \ref{thm:ModelFirstIntensity}: \par 
It follows by Propositions \ref{lem:1MomSymplVol} and \ref{cor:1MomSymplVol} that 
  \begin{align*}
   \frac{1}{2h}p_*(d\xi\wedge dx) 
   &=  
   \frac{1}{h}\left\{\frac{i}{\{p,\overline{p}\}(\rho_+(z))} + \frac{i}{\{\overline{p},p\}(\rho_-(z))} \right\}
   L(dz)
   \notag\\
  & \asymp  \frac{1}{h\!\sqrt{\dist(z,\partial\Sigma)}}L(dz)
  \end{align*}
where we used that $\Ima z \asymp \eta_j$ for $z\in\Omega_{\eta}^{a}$.
For our purposes we can assume that $|\Ima z - \langle \Ima g\rangle | > 1/C$, $C\gg 1$, 
since inside this tube $\Psi_2$ and $\Theta$ are exponentially small in $h>0$. In the case 
of $\Psi_2$, this follows from the assumptions on $\delta$ (cf. Definition \ref{defn:CoupConstDel}) 
and from Remark \ref{rem:MoreExplSP13}. In the case of $\Theta$, this follows from the assumptions on $\delta$ 
and Proposition \ref{cor:E-+Repres} and (\ref{eqn:ThetaNIce}). Thus, applying 
Proposition \ref{prop:AbsSPe0f0} to (\ref{eqn:Psi2Nice}) yields
   \begin{equation}
    \label{eqn:SplitPsi3}
    \Psi_2(z;h,\delta)=
     \frac{\left(\frac{i}{2}\{p,\overline{p}\}(\rho_+)\frac{i}{2}\{\overline{p},p\}(\rho_-)\right)^{\frac{1}{2}}}
	{\pi h\delta^2\exp\{\frac{2S}{h}\}}|\partial_{\Ima z}S(z)|^2
      \left(1 + \mathcal{O}\!\left(\eta^{-3/4}h^{1/2}\right)\right).
   \end{equation}
As in (\ref{eqn:EstimDerAB}), the error estimates don't change if we apply 
$\eta^{-\frac{n+m}{2}}h^{n+m}\partial_z^n\partial_{\overline{z}}^m$. 
Moreover, since 
$\Ima z \asymp \eta_j$ for $z\in\Omega_{\eta}^{a}$,
\begin{equation*}
   \Psi_2^0(z;h,\delta) \asymp \frac{(\dist(z,\partial\Sigma))^{3/2}\e^{-\frac{2S}{h}}}{h\delta^2}.
  \end{equation*}
Apply Proposition \ref{cor:E-+Repres} to (\ref{eqn:ThetaNIce}) gives that
  \begin{align}\label{eqn:Therough}
   \Theta(z,h,\delta)
   =& V(z,h)^2\frac{\e^{-\frac{2S}{h}}}{\delta^2}
	  \left(1 +\mathcal{O}\!\left(h^{\infty}\right)
	  +\mathcal{O}\!\left(\e^{-\frac{\asymp\eta_j^{3/2}}{h}}\right)\right)
   \notag \\
   &+ \mathcal{O}\!\left(\eta_j^{-1/2}\delta^2 h^{-5}\right) 
      + \mathcal{O}\!\left(V h^{-5/2}\e^{-\frac{S}{h}} \right).
  \end{align}
Since $0\leq V = \mO\!\left(\eta_j^{1/4}h^{1/2}\right)$ by (\ref{eqn:Prefac2}), it 
follows that 
  \begin{align*}
   \Theta(z,h,\delta)
   =& \frac{h\left(\frac{i}{2}\{p,\overline{p}\}(\rho_+)
	  \frac{i}{2}\{\overline{p},p\}(\rho_-)\right)^{\frac{1}{2}}}{\pi}\frac{\e^{-\frac{2S}{h}}}{\delta^2}
	 \left(1 +\mathcal{O}\!\left(\eta_j^{-1/4}h^{\frac{3}{2}}\right)\right)
   \notag \\
   &+ \mathcal{O}\!\left(\eta_j^{-1/2}\delta^2 h^{-5}\right) 
      + \mathcal{O}\!\left(\eta_j^{1/4}h^{-2}\e^{-\frac{S}{h}}\right) .
  \end{align*}
Furthermore, for $\e^{-\frac{2S}{h}}\delta^{-2}\leq 1$, 
the error term $\mathcal{O}\!\left(\eta_j^{1/4} h^{-2}\e^{-\frac{S}{h}}\right)$ 
is bounded by $\mO(\eta_j^{1/4}h^{-2}\delta)$ since there we have 
that $\e^{-\frac{S}{h}}\leq \delta$. For $\e^{-\frac{2S}{h}}\delta^{-2}\leq 1$, 
we have that 
  \begin{equation*}
   \mathcal{O}\!\left(\eta_j^{1/4} h^{-2}\e^{-\frac{S}{h}}\right)
   \leq
   \mathcal{O}\!\left(\eta_j^{1/4} h^{-2}\delta \e^{-\frac{2S}{h}}\delta^{-2}\right)
   \leq 
   \mathcal{O}\!\left(\eta_j^{1/4} h^{2} \e^{-\frac{2S}{h}}\delta^{-2}\right).
  \end{equation*}
Thus, 
 \begin{align}\label{eqn:ThetaFinal}
  \Theta(z,h,\delta)
   = &\frac{h\left(\frac{i}{2}\{p,\overline{p}\}(\rho_+)
	  \frac{i}{2}\{\overline{p},p\}(\rho_-)\right)^{\frac{1}{2}}}{\pi}\frac{\e^{-\frac{2S}{h}}}{\delta^2}
	 \left(1 +\mathcal{O}\!\left(\eta_j^{-1/4}h^{\frac{3}{2}}\right)\right)\notag \\
   &+ \mathcal{O}\!\left(\eta_j^{1/4}h^{-2}\delta
   + \eta_j^{-1/2}\delta^2 h^{-5}\right).
  \end{align}
Analogous to (\ref{eqn:EstimDerAB}), the error estimates stay invariant under the action of 
$\eta_j^{-\frac{n+m}{2}}h^{n+m}\partial_z^n\partial_{\overline{z}}^m$. Moreover, 
  \begin{equation*}
   \Theta^0(z;h,\delta)\asymp h\sqrt{\dist(z,\partial\Sigma)}
   \frac{\e^{-\frac{2S}{h}}}{\delta^2}. 
  \end{equation*}
We have thus proven Proposition \ref{prop:GrowPropDens} and Proposition \ref{prop:DensExplicit}. 
Since we will need it later on we will state the following formulas:
\begin{lem}\label{lem:DerPsiThet}
 Under the assumptions of Theorem \ref{thm:ModelFirstIntensity} and for 
 $(h\ln h^{-1})^{2/3}\ll \eta <\mathrm{const.}$ we have 
 \begin{align*}
   \partial_{\Ima z}\Psi_1 = -\frac{1}{4h}\left( \frac{\Ima g''(x_-)}{(\Ima g'(x_-))^3}
   -\frac{\Ima g''(x_+)}{(\Ima g'(x_+))^3}\right) +\mO\left(\eta^{-2}\right)
   = \mO\left(\eta^{-3/2}h^{-1}\right)
  \end{align*}
  and for $|\Ima z - \langle\Ima g\rangle| >1/C$, $C>0$ large enough,
  \begin{align*}
   &\partial_{\Ima z}\Psi_2(z,h) = 
   \frac{2\left(\frac{i}{2}\{p,\overline{p}\}(\rho_+)\frac{i}{2}\{\overline{p},p\}(\rho_-)\right)^{\frac{1}{2}}}{\pi h^2}
   |\partial_{\Ima z}S(z)|^2(-\partial_{\Ima z}S(z))
   \frac{\e^{-\frac{2S}{h}}}{\delta^2} 
    \notag \\
    &\phantom{\partial_{\Ima z}\Psi_2(z,h) = }\cdot\left(1 +\mathcal{O}\!\left(\eta^{-3/4}h^{\frac{1}{2}}\right) \right),
  \end{align*}
  \begin{align*}
   \partial_{\Ima z}\Theta(z,h) =& 
   \frac{2\left(\frac{i}{2}\{p,\overline{p}\}(\rho_+)\frac{i}{2}\{\overline{p},p\}(\rho_-)\right)^{\frac{1}{2}}}
   {\pi\delta^2\exp\{-\frac{2S}{h}\}}
   (-\partial_{\Ima z}S(z))
   \left(1 +\mathcal{O}\!\left(\eta^{-1/4}h^{\frac{3}{2}}\right)\right) \notag \\
   &+\mathcal{O}\!\left(\eta^{3/4}h^{-3}\delta + \delta^2 h^{-6}\right),
  \end{align*}
\end{lem}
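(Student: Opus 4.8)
\textbf{Proof plan for Lemma \ref{lem:DerPsiThet}.} The plan is to differentiate, with respect to $\Ima z$, the three explicit formulas for $\Psi_1$, $\Psi_2$ and $\Theta$ that have already been established in the course of proving Theorem \ref{thm:ModelFirstIntensity} and Proposition \ref{prop:DensExplicit}, and then to read off the orders of the resulting error terms using the stability statements under $\eta^{-\frac{n+m}{2}}h^{n+m}\partial_z^n\partial_{\overline z}^m$ that accompany each of those formulas. All three identities are of the same shape --- a principal term built out of $\{p,\overline p\}(\rho_{\pm})$, $\partial_{\Ima z}S$ and $\exp(-2S/h)/\delta^2$, times $(1+\text{error})$, plus an additive error --- so the three computations run in parallel.

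First I would treat $\Psi_1$. By Proposition \ref{lem:1MomSymplVol} (together with Proposition \ref{cor:1MomSymplVol} and $\{p,\overline p\}(\rho_\pm)=-2i\Ima g'(x_\pm)$), the principal part of $\Psi_1$ is $\frac{1}{h}\bigl(\frac{i}{\{p,\overline p\}(\rho_+)}-\frac{i}{\{p,\overline p\}(\rho_-)}\bigr) = \frac{1}{2h}\bigl(-\frac{1}{\Ima g'(x_+)}+\frac{1}{\Ima g'(x_-)}\bigr)$, a function of $\Ima z$ alone; differentiating in $\Ima z$ and using $\partial_{\Ima z}x_\pm = (\partial_{\Ima z}S)$-type quantities (more precisely $\Ima g'(x_\pm)\,\partial_{\Ima z}x_\pm = 1$, which follows from differentiating $\Ima g(x_\pm)=\Ima z$) produces exactly the stated combination of $\Ima g''(x_\pm)/(\Ima g'(x_\pm))^3$. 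The error term of $\Psi_1$ is $\mathcal O(\eta^{-2})$ and its $\partial_{\Ima z}$-derivative is $\mathcal O(\eta^{-1/2}h^{-1}\cdot\eta^{-2})$ by the stability property, but one checks against the $\asymp$ estimates of Proposition \ref{prop:GrowPropDens} that $|\Ima g'(x_\pm)|\asymp\sqrt\eta$, so the principal term is $\mathcal O(\eta^{-3/2}h^{-1})$, which dominates; hence the overall order $\mathcal O(\eta^{-3/2}h^{-1})$ stated. For $\Psi_2$ one starts from \eqref{eqn:SplitPsi3} and applies Proposition \ref{prop:EstimAndFormulaDerzE-+_pt2} (which already records $\partial_{\Ima z}|(e_0|f_0)|^2$); since $\Psi_2 = |(e_0|f_0)|^2\delta^{-2}(1+\mathcal O(\eta^{-3/4}h^{1/2}))$, the derivative is immediate and the error term's order follows from the stability of the $\mathcal O(\eta^{-3/4}h^{1/2})$ factor plus the corresponding estimates in Proposition \ref{prop:EstimAndFormulaDerzE-+_pt2}; I would also note that the $\partial_{\Rea z}$-derivative of the prefactor $R$ vanishes (as remarked in the proof of Proposition \ref{prop:EstimAndFormulaDerzE-+_pt2}) so only the $\exp(-2S/h)$ and $|\partial_{\Ima z}S|^2$ factors contribute through the chain rule. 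For $\Theta$ I would differentiate \eqref{eqn:ThetaFinal}: the principal term is $\frac{h(\cdots)^{1/2}}{\pi}\frac{\e^{-2S/h}}{\delta^2}$, whose $\partial_{\Ima z}$-derivative brings down a factor $-\frac2h\partial_{\Ima z}S$ from the exponential (the derivative of the slowly varying prefactor $(\frac i2\{p,\overline p\}(\rho_+)\frac i2\{\overline p,p\}(\rho_-))^{1/2}$ is of lower order, $\mathcal O(\eta^{-1/2})$ relative to it, and gets absorbed), giving the displayed leading expression; the additive errors $\mathcal O(\eta^{1/4}h^{-2}\delta+\eta^{-1/2}\delta^2h^{-5})$ differentiate, by the stability rule, into $\mathcal O(\eta^{3/4}h^{-3}\delta+\delta^2h^{-6})$.

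The bookkeeping of which error is leading is the only real subtlety: one must repeatedly use $d(z)\asymp\Ima z\asymp\eta$ on $\Omega_{\eta}^a$ (and $\eta\asymp 1$ in the interior), the asymptotics $|\{p,\overline p\}(\rho_\pm)|\asymp\sqrt\eta$, $|\partial_{\Ima z}S|\asymp d(z)^{1/2}\asymp\sqrt\eta$ from Proposition \ref{prop:SProp}, and the constraint $(h\ln h^{-1})^{2/3}\ll\eta$ from Hypothesis \ref{hyp:H3}, to see that the "$\mathcal O$"-factors multiplying the principal terms are genuinely $o(1)$ after differentiation and that the additive errors stay subdominant in the regimes where they matter (exactly as in the split $\e^{-2S/h}\delta^{-2}\lessgtr 1$ performed just above in the derivation of \eqref{eqn:ThetaFinal}). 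I expect the main obstacle to be purely organizational --- keeping the many $\eta$- and $h$-powers straight while invoking the chain rule on products of the form $(\text{smooth prefactor})\cdot|\partial_{\Ima z}S|^{k}\cdot\e^{-2S/h}\cdot(1+\text{error})$ --- rather than conceptual; no new estimate beyond those already proved in Sections \ref{susec:tunnel}, \ref{eqn:SecSymplVol} and the body of the proof of Theorem \ref{thm:ModelFirstIntensity} is needed, so the lemma follows by direct computation.
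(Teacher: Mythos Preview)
Your approach is correct and essentially identical to the paper's own proof: differentiate the explicit formulas \eqref{eqn:Psi1111}, \eqref{eqn:SplitPsi3}, \eqref{eqn:ThetaFinal} in $\Ima z$ and track the errors via the stability statements already established. The paper's proof is equally brief and invokes exactly the same ingredients you list (Proposition~\ref{lem:1MomSymplVol}, the computation in Proposition~\ref{prop:EstimAndFormulaDerzE-+_pt2}, and direct differentiation of \eqref{eqn:ThetaFinal}).

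One small slip to fix in your bookkeeping for $\Psi_1$: the general stability rule multiplies by $\eta^{1/2}h^{-1}$ per derivative, not $\eta^{-1/2}h^{-1}$, so the differentiated error is $\mathcal O(\eta^{-3/2}h^{-1})$ (or even $\mathcal O(\eta^{-3/2}h^{-1/2})$ using the sharper bound recorded in Proposition~\ref{lem:1MomSymplVol}), not $\mathcal O(\eta^{-5/2}h^{-1})$; with your exponent the error would \emph{not} be dominated by the principal term. The paper actually goes one step further and observes that, because the $\mathcal O(\eta^{-2})$ remainder comes from a stationary-phase expansion whose phase vanishes at the critical point, its $\partial_{\Ima z}$-derivative grows only by a factor $\eta^{-1}$, yielding $\mathcal O(\eta^{-3})$, which is comfortably absorbed since $\eta\gg h^{2/3}$. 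Either the corrected general stability or this sharper observation closes the argument; once you adjust that exponent, your plan goes through verbatim.
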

\begin{proof}
 Let us first treat $\Psi_1$: Recall from the proof of 
 Proposition \ref{prop:EstimAndFormulaDerzE-+} that $\Psi_1$ was 
 given by an oscillatory integral where the phase vanishes at 
 the critical point. Thus, the $\partial_{\Ima z}$ derivative of 
 the error term $\mO\left(\eta^{-2}\right)$ grows at most by $\eta^{-1}$.
 Thus, taking the derivative of (\ref{eqn:Psi1111}) yields
  \begin{equation*}
   \partial_{\Ima z}\Psi_1 = -\frac{1}{4h}\left( \frac{\Ima g''(x_-)}{(\Ima g'(x_-))^3}
   -\frac{\Ima g''(x_+)}{(\Ima g'(x_+))^3}\right) +\mO\left(\eta^{-3}\right)
   = \mO\left(\eta^{-3/2}h^{-1}\right),
  \end{equation*}
where the last estimate follows from $|2\Ima g'(x_{\pm}|=|\{p,\overline{p}\}(\rho_{\pm}|\asymp \sqrt{\eta}$ 
(cf. Proposition \ref{lem:1MomSymplVol}) and from the fact that the $z$- and $\overline{z}$-derivative of 
  the error term grow at most by a factor of $\mO(\eta^{1/2}h^{-1})$. \\ 
  \par 
 Now let us turn to $\Psi_2$: one calculates from (\ref{eqn:SplitPsi3}) 
 that for $|\Ima z - \langle\Ima g\rangle| >1/C$
  \begin{align*}
   \partial_{\Ima z}\Psi_2(z,h) =& 
   \frac{2\left(\frac{i}{2}\{p,\overline{p}\}(\rho_+)\frac{i}{2}\{\overline{p},p\}(\rho_-)\right)^{\frac{1}{2}}}{\pi h^2}
   |\partial_{\Ima z}S(z)|^2(-\partial_{\Ima z}S(z))
   \frac{\e^{-\frac{2S}{h}}}{\delta^2} \notag \\
   &\cdot\left(1 +\mathcal{O}\!\left(\eta^{-3/4}h^{\frac{1}{2}}\right) \right).
  \end{align*}
  Here we used that the $z$- and $\overline{z}$-derivative of the error terms grow at most by 
  a factor of $\mO(\eta^{1/2}h^{-1})$. \par 
  Finally, let us turn to $\Theta$: as in the proof of Proposition \ref{prop:EstimAndFormulaDerzE-+_pt2} 
  one calculates the formula for $\partial_{\Ima z}\Theta$ from (\ref{eqn:ThetaFinal}).
\end{proof}
%
%
\section{Properties of the density}\label{sec:BehavDensity}
In this section we will discuss and prove the results stated 
in Section \ref{suse:MainResPropDens}. 
\subsection{Local maximum of the average density}\label{susec:10.1}
First, we prove the resolvent estimate given in Proposition \ref{prop:Resolvent}. 
\begin{proof}[Proof of Proposition \ref{prop:Resolvent}]
Recall that the operator $Q(z)$ is self-adjoint and that $|t_0(z)|=|\alpha_0(z)|$; 
see Section \ref{sec:AuxOpe}. It follows that 
  \begin{equation*}
   \lVert (P_h - z)^{-1}\rVert = |t_0(z)|^{-1} = |\alpha_0(z)|^{-1}. 
  \end{equation*}
Recall the Grushin problem posed in Proposition \ref{propGrushUnpertOp}. 
Since $E_{-+}^{-1}=-\alpha_0$, it follows by Proposition \ref{cor:E-+Repres} 
that 
  \begin{align}\label{eqn:ResolvBound}
   \lVert (P_h - z)^{-1}\rVert 
     = \frac{\exp\left\{\frac{S}{h}\right\}}{V(z)|1-\e^{\Phi(z)}|
    \left(1 + \mathcal{O}\!\left(\e^{-\frac{\asymp\eta^{\frac{3}{2}}}{h}}\right)\right)},
  \end{align}
which together with \eqref{eqn:Prefac2} implies \eqref{res_id}. The result 
about the asymptotic behavior of the resolvent follows from the above together 
with the fact that $|\{p,\overline{p}\}(\rho_{\pm})| \asymp \sqrt{\eta}$ 
(cf. Proposition \ref{lem:1MomSymplVol}).
\end{proof}
We have split the proof of Proposition \ref{prop:the2curves} into the following 
two Lemmata:
\begin{lem}\label{prop:ExZone2}
  Let $z\in\Omega\Subset\Sigma_{c,d}$ with $\Sigma_{c,d}$ as in (\ref{def:stripSigma}),  
  let $S(z)$ be as in Definition \ref{defn:ZeroOrderDensity}. Let $\delta>0$ and $\varepsilon(h)$ be as 
  in Definition \ref{defn:CoupConstDel} with $\kappa > 0$ large enough. Moreover, let $E_{-+}(z)$ be as 
  in Proposition \ref{propGrushUnpertOp}. Then,
  \begin{itemize}
   \item for $0<h\ll 1$, there exist numbers $y_{\pm}(h)$ such that $\varepsilon_0=S(y_{\pm}(h))$ with 
      \begin{align*}
	\frac{1}{C}(h\ln h^{-1})^{\frac{2}{3}} &\ll y_{-}(h) < \langle \Ima g\rangle - ch\ln h^{-1}
	\notag\\
	&<\langle \Ima g\rangle + ch\ln h^{-1} < y_{+}(h) \ll \Ima g(b) - \frac{1}{C}
	(h\ln h^{-1})^{\frac{2}{3}},
      \end{align*}
     for $c>1$. Furthermore, 
       \begin{align*}
        y_{-}(h), (\Ima g(b) - y_+(h)) \asymp (\varepsilon_0(h))^{2/3};
       \end{align*}
   \item there exists $h_0>0$ and a family of smooth curves, indexed by $h\in]h_0,0[$,
	\begin{equation*}
	 \gamma_{\pm}^h: ~ ]c,d[ \longrightarrow \mathds{C} 
	 ~\text{with}~ \Rea \gamma_{\pm}^h(t) =t
	\end{equation*}
	such that
	\begin{equation*}
	  |E_{-+}(\gamma_{\pm}^h(t))| = \delta,
	\end{equation*}
	and 
	\begin{equation*}
	  \Ima \gamma^h_{\pm}(t) = y_{\pm}(\varepsilon_0(h))
	  \left(1 + \mathcal{O}\left(\frac{h}{\varepsilon_0(h)}\right)\right).
	\end{equation*}
	Furthermore, there exists a constant $C>0$ such that 
      \begin{equation*}
	  \frac{d\Ima \gamma_{\pm}^h}{d t}(t) 
	  = \mathcal{O}\!\left(\exp\left[-\frac{\varepsilon_0(h)}{Ch}\right]\right).
	\end{equation*}
  \end{itemize}
\end{lem}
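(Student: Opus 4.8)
The plan is to treat the two assertions in turn, using Proposition \ref{prop:SProp} for the first and the asymptotic formula of Proposition \ref{cor:E-+Repres} for $E_{-+}$ for the second, combined with an implicit function argument in the variable $\Ima z$.

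For the first assertion I would only invoke Proposition \ref{prop:SProp}. Since $\Sigma$ is the horizontal strip $\{0\le\Ima z\le\Ima g(b)\}$, one has $d(z)=\min(\Ima z,\Ima g(b)-\Ima z)$, and $S$ is a continuous function of $\Ima z$ alone, vanishing at $\Ima g(a)=0$ and at $\Ima g(b)$, strictly increasing on $[0,\langle\Ima g\rangle]$ and strictly decreasing on $[\langle\Ima g\rangle,\Ima g(b)]$, with maximum $S(\langle\Ima g\rangle)$. By Definition \ref{defn:CoupConstDel} we have $(\kappa-\tfrac12)h\ln h^{-1}+Ch\le\varepsilon_0(h)<S(\langle\Ima g\rangle)$, the last inequality uniform in $h$; hence each monotone branch of $S$ is a homeomorphism onto $[0,S(\langle\Ima g\rangle)]$ and takes the value $\varepsilon_0(h)$ at exactly one point, which I call $y_-(h)<\langle\Ima g\rangle$ on the increasing branch and $y_+(h)>\langle\Ima g\rangle$ on the decreasing one. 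The relations $y_-(h),\ \Ima g(b)-y_+(h)\asymp\varepsilon_0(h)^{2/3}$ then follow from $S(z)\asymp d(z)^{3/2}$ together with $d(y_-(h))=y_-(h)$, $d(y_+(h))=\Ima g(b)-y_+(h)$; since $\varepsilon_0(h)\ge(\kappa-\tfrac12)h\ln h^{-1}$ with $\kappa$ large, this forces the distances of $y_\pm(h)$ to $\partial\Sigma$ to be $\gg(h\ln h^{-1})^{2/3}$ (with a large implicit constant), and since $\varepsilon_0(h)$ is bounded away from $S(\langle\Ima g\rangle)$ uniformly in $h$, the numbers $\langle\Ima g\rangle-y_-(h)$ and $y_+(h)-\langle\Ima g\rangle$ are bounded below by a fixed positive constant, in particular $\gg h\ln h^{-1}$.

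For the second assertion I would first rewrite $|E_{-+}(z)|=\delta$. By Proposition \ref{cor:E-+Repres} and \eqref{eqn:Prefac2}, with $\delta=\sqrt h\,\e^{-\varepsilon_0(h)/h}$, taking logarithms turns this equation into
\begin{equation*}
 S(\Ima z)=\varepsilon_0(h)+h\,r(z;h),
\end{equation*}
where $r(z;h)$ splits into a $C^{\infty}$ function of $\Ima z$ alone (built from $\ln(V(z,h)/\sqrt h)$, the prefactor $V$ depending only on $\Ima z$ by Propositions \ref{prop:GrushHager} and \ref{prop:GrushBM}), plus $\ln|1-\e^{\Phi(z,h)}|$, plus an error $\mathcal{O}(\e^{-\asymp\eta^{3/2}/h})$ with derivatives controlled by Propositions \ref{cor:E-+Repres} and \ref{prop:GrowPropDens}. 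Fixing $\Rea z=t\in\,]c,d[$ and viewing this as an equation for $s=\Ima z$, the left minus right side is strictly monotone in $s$ near $s=y_\pm(h)$ because $|\partial_{\Ima z}S(z)|\asymp d(z)^{1/2}\asymp\varepsilon_0^{1/3}\gg h$ there while $h\,\partial_s r$ is lower order; hence the implicit function theorem yields a unique smooth solution $s=\Ima\gamma_\pm^h(t)$ close to $y_\pm(h)$, and setting $\Rea\gamma_\pm^h(t)=t$ defines the smooth curves, with $|E_{-+}(\gamma_\pm^h(t))|=\delta$ by construction (equivalently $\|(P_h-\gamma_\pm^h(t))^{-1}\|=\delta^{-1}$, since $|E_{-+}(z)|=|\alpha_0(z)|=|t_0(z)|$). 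The uniform lower bound on $y_\pm(h)$ from the first part guarantees $\gamma_\pm^h(\,]c,d[\,)\Subset\Sigma$ in the sense of Hypothesis \ref{hyp:H3}, for $h$ small. Plugging $s=\Ima\gamma_\pm^h(t)$ back into the displayed equation and inverting $S$ on the relevant branch (using $|\partial_{\Ima z}S|\asymp\varepsilon_0^{1/3}$ and $y_\pm\asymp\varepsilon_0^{2/3}$) then gives $\Ima\gamma_\pm^h(t)=y_\pm(\varepsilon_0(h))\,(1+\mathcal{O}(h/\varepsilon_0(h)))$, the slowly varying logarithmic factor carried by $r$ being absorbed by choosing $\kappa$ large.

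Finally, the derivative bound follows by differentiating $S(\Ima\gamma_\pm^h(t))=\varepsilon_0(h)+h\,r(t+i\Ima\gamma_\pm^h(t);h)$ in $t$, giving
\begin{equation*}
 \frac{d}{dt}\Ima\gamma_\pm^h(t)=\frac{h\,\partial_{\Rea z}r}{\partial_{\Ima z}S-h\,\partial_{\Ima z}r}.
\end{equation*}
The denominator is $\asymp d(z)^{1/2}\asymp\varepsilon_0^{1/3}$. The crucial point, and what I expect to be the main obstacle, is that $\partial_{\Rea z}r$ is exponentially small in $1/h$: the $\Ima z$-only part of $r$ contributes nothing, so $\partial_{\Rea z}r$ comes solely from $\ln|1-\e^{\Phi(z,h)}|$ and the $\mathcal{O}(\e^{-\asymp\eta^{3/2}/h})$ remainder; using $|1-\e^{\Phi(z,h)}|=1+\mathcal{O}(\e^{-\frac{2\pi}{h}|\Ima z-\langle\Ima g\rangle|})$ from Proposition \ref{prop:Resolvent}, the fact that $\Ima\gamma_\pm^h(t)$ stays a fixed positive distance from $\langle\Ima g\rangle$, and that $\eta^{3/2}\asymp\varepsilon_0(h)$ along the curve, one gets $\partial_{\Rea z}r=\mathcal{O}(h^{-1}\e^{-\varepsilon_0(h)/(Ch)})$, whence $\tfrac{d}{dt}\Ima\gamma_\pm^h(t)=\mathcal{O}(h\varepsilon_0^{-1/3}\e^{-\varepsilon_0/(Ch)})=\mathcal{O}(\e^{-\varepsilon_0(h)/(C'h)})$ after enlarging the constant. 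Besides this $\overline{\partial}$-type smallness of the horizontal derivative, the other recurring technical point is keeping all error terms uniform as $\eta=\eta(h)\to0$, for which one appeals to the rescaled estimates already established in Propositions \ref{cor:E-+Repres} and \ref{prop:GrowPropDens}.
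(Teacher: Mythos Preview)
Your argument is correct and follows essentially the same route as the paper; the only difference is packaging. The paper computes $\partial_{\Ima z}|E_{-+}|$ and $\partial_{\Rea z}|E_{-+}|$ directly from Proposition~\ref{cor:E-+Repres} and then applies the mean value theorem to $|E_{-+}|$ itself to compare $\Ima\gamma_\pm^h$ with $y_\pm$, whereas you first take logarithms, recasting $|E_{-+}(z)|=\delta$ as $S(\Ima z)=\varepsilon_0(h)+h\,r(z;h)$, which cleanly separates the action $S$ from the slowly varying prefactor. Both versions exploit exactly the same two facts---that $V$ depends only on $\Ima z$, and that $|1-\e^{\Phi}|$ together with the multiplicative remainder are exponentially close to $1$ once $|\Ima z-\langle\Ima g\rangle|>1/C$---so the exponentially small bound on $\tfrac{d}{dt}\Ima\gamma_\pm^h$ follows identically. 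One small caveat: since $\ln(V/\sqrt h)\asymp\tfrac14\ln\eta$, your $r$ is $\mathcal{O}(|\ln\eta|)$ rather than $\mathcal{O}(1)$, so inverting $S$ gives a relative error $\mathcal{O}(h|\ln\varepsilon_0|/\varepsilon_0)$ rather than the stated $\mathcal{O}(h/\varepsilon_0)$; the phrase ``absorbed by choosing $\kappa$ large'' does not quite remove this logarithm. This is harmless for everything used later (only $\Ima\gamma_\pm^h\asymp\varepsilon_0^{2/3}$ matters downstream), and the paper's own mean-value bound is comparably loose at this step.
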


\begin{lem}\label{prop:NoName}
 Assume the same hypothesis as in Lemma \ref{prop:ExZone2} and let 
  \begin{equation*}
   D(z,h):=\frac{1+ \mathcal{O}\!\left(\delta h^{-\frac{3}{2}}\dist(z,\partial\Sigma)^{-1/4}\right)}{\pi} 
	     \Psi(z;h,\delta)\exp\{-\Theta(z;h,\delta)\}
  \end{equation*}
 be the average density of eigenvalues of the operator of $P_h^{\delta}$ given in 
 Theorem \ref{thm:ModelFirstIntensity}. Then, 
 there  exists $h_0>0$ and a family of smooth curves, indexed by $h\in]h_0,0[$,
	\begin{equation*}
	 \Gamma_{\pm}^h: ~ ]c,d[ \longrightarrow \mathds{C},~ \Rea \Gamma_{\pm}^h(t) = t,
	\end{equation*}
	with $\Gamma_-\subset\{\Ima z <  \langle \Ima g\rangle\}$ and 
	$\Gamma_+\subset\{\Ima z >  \langle \Ima g\rangle\}$, along which 
	$\Ima z \mapsto D(z,h)$ takes its local 
	maxima on the vertical line $\Rea z = \mathrm{const.}$ and 
	  \begin{align*}
	   \frac{d}{dt}\Ima\Gamma_{\pm}^h(t) = 
	   \mO\!\left(\frac{h^{4}}{\varepsilon_0(h)^{4}}\right).
	  \end{align*}
	Moreover, for all $c < t < d$
	\begin{equation*}
	  |\Gamma_{\pm}^h(t) - \gamma_{\pm}^h(t)|
	  \leq \mathcal{O}\left(\frac{h^{5}}{\varepsilon_0(h)^{13/3}}\right).
	  \end{equation*}
\end{lem}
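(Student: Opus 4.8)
The plan is to pin down $\Gamma_\pm^h$ by analyzing the stationarity equation $\partial_{\Ima z}D(t+is,h)=0$ along each vertical line $\Rea z=t$. Since $\Psi=\Psi_1+\Psi_2>0$ for $h$ small (the leading Weyl term of $\Psi_1$ in (\ref{eqn:thm1_dens}) is positive and dominates its $\mathcal{O}(d(z)^{-2})$ error, while $\Psi_2\asymp d(z)^{3/2}\e^{-2S/h}h^{-1}\delta^{-2}>0$ by Proposition \ref{prop:GrowPropDens}), the density $D$ from Theorem \ref{thm:ModelFirstIntensity} is positive, and any local maximum in $\Ima z$ lies among the zeros of
\[
\frac{\partial_{\Ima z}\Psi}{\Psi}-\partial_{\Ima z}\Theta+\frac{\partial_{\Ima z}C}{C}=0,\qquad C:=\frac{1+\mathcal{O}(\delta h^{-3/2}d(z)^{-1/4})}{\pi}.
\]
We work in the band around $\gamma_\pm^h$, where by Proposition \ref{prop:the2curves} one has $\Ima z\asymp\varepsilon_0^{2/3}$, hence $d(z)\asymp\eta\asymp\varepsilon_0^{2/3}$; for $\kappa$ large the separation $|\Ima z-\langle\Ima g\rangle|\gg h\ln h^{-1}$ makes the $|1-\e^{\Phi}|$-corrections $\mathcal{O}(h^\infty)$, so the explicit formulas of Proposition \ref{prop:DensExplicit}, Lemma \ref{lem:DerPsiThet} and Remark \ref{rem:MoreExplSP13} apply.

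First I would extract the leading balance. On this band $\Theta\asymp1$ and $\Psi_2\asymp d(z)h^{-2}$ dominates $\Psi_1\asymp (h\sqrt{d(z)})^{-1}$, so $\Psi=\Psi_2(1+\mathcal{O}(h/\varepsilon_0))$. Inserting Lemma \ref{lem:DerPsiThet}, both $\partial_{\Ima z}\Psi_2/\Psi_2$ and $\partial_{\Ima z}\Theta$ have leading term $-\tfrac{2}{h}\partial_{\Ima z}S$, with relative coefficients $1$ and $\Theta^0$ respectively; the remaining contributions ($\partial_{\Ima z}\Psi_1/\Psi$, the derivatives of the error factors, and $\partial_{\Ima z}C/C$) are lower order and controlled by the stability of the error estimates under $d(z)^{-(n+m)/2}h^{n+m}\partial_z^n\partial_{\overline z}^m$. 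The equation thus becomes $-\tfrac{2}{h}\partial_{\Ima z}S\,(1-\Theta^0(z))=\mathcal{E}(z)$ with an explicitly bounded remainder $\mathcal{E}$; since $|\partial_{\Ima z}S|\asymp\sqrt{d(z)}\asymp\varepsilon_0^{1/3}$ is bounded below, it is equivalent to $\Theta^0(z)=1+\mathcal{O}(\mathrm{small})$.

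Next, $\Theta^0(z)=\tfrac{h}{\pi\delta^2}\bigl(\tfrac{i}{2}\{p,\overline p\}(\rho_+)\tfrac{i}{2}\{\overline p,p\}(\rho_-)\bigr)^{1/2}\e^{-2S(z)/h}$ depends only on $\Ima z$ (via $S$ and the prefactor) and, by Proposition \ref{prop:SProp}, is strictly monotone on each of $\{\Ima z\lessgtr\langle\Ima g\rangle\}$, running from exponentially small at the spectral line to exponentially large at $\partial\Sigma$, with $|\partial_{\Ima z}\Theta^0|\asymp\sqrt{d(z)}h^{-1}\asymp\varepsilon_0^{1/3}h^{-1}$ near its $1$-level set. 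Hence the implicit function theorem gives, on each side, a unique smooth solution $\Ima z=\Ima\Gamma_\pm^h(t)$ of the stationarity equation, with $\Gamma_-\subset\{\Ima z<\langle\Ima g\rangle\}$, $\Gamma_+\subset\{\Ima z>\langle\Ima g\rangle\}$ and $\Ima\Gamma_\pm^h(t)\asymp\varepsilon_0^{2/3}$; the second derivative being negative there (see the last paragraph) confirms it is a local maximum. The curve $\gamma_\pm^h$ is characterized by $|E_{-+}|=\delta$, which by Proposition \ref{cor:E-+Repres} together with (\ref{eqn:Prefac2}) is the relation $\Theta^0=1+\mathcal{O}(h^\infty+\e^{-\asymp\varepsilon_0/h})$. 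Subtracting the two relations $\Theta^0=1+\mathrm{small}$ and dividing the difference by $|\partial_{\Ima z}\Theta^0|$ gives $|\Gamma_\pm^h(t)-\gamma_\pm^h(t)|$. I expect the dominant term to be the shift produced by $\Psi_1$ — i.e.\ $\partial_{\Ima z}(C\Psi_1\e^{-\Theta})$ balanced against the non-degenerate $\partial_{\Ima z}^2(C\Psi_2\e^{-\Theta})\asymp-d(z)^2h^{-4}$ at the maximum — together with the additive errors of $\Theta$ in (\ref{eqn:ThetaFinal}); collecting these with $\delta=\sqrt h\,\e^{-\varepsilon_0/h}$, $h\ll\varepsilon_0$ and $d(z)\asymp\varepsilon_0^{2/3}$ on $\gamma_\pm^h$ is what produces $\mathcal{O}(h^5/\varepsilon_0^{13/3})$.

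Finally, for $\tfrac{d}{dt}\Ima\Gamma_\pm^h$ I would differentiate $\partial_{\Ima z}D(t+is,h)=0$ implicitly, $\tfrac{d\Ima\Gamma}{dt}=-\partial_{\Rea z}\partial_{\Ima z}D\,/\,\partial_{\Ima z}^2D$. The denominator is non-degenerate: $\partial_{\Ima z}^2D\asymp-d(z)^2h^{-4}$ at the critical point, the $\Psi_2\partial_{\Ima z}^2\Theta$ and $\partial_{\Ima z}\Psi_2\,\partial_{\Ima z}\Theta$ terms adding with equal sign. The numerator is small because every bulk ingredient — $S$, the Poisson-bracket prefactors, $\Theta^0$, and the leading parts of $\Psi_1$ and $\Psi_2$ — depends only on $\Ima z$, so $\partial_{\Rea z}$ kills it; what survives is $\partial_{\Rea z}\partial_{\Ima z}$ of the $\Rea z$-dependent remainders (the $\mathcal{O}(d(z)^{-2})$ error of $\Psi_1$, the error factors of $\Psi_2$ and $\Theta$, the non-constant part of $C$, and the oscillatory parts of $|(e_0|f_0)|$ and $E_{-+}$, whose $\Rea z$-derivatives are exponentially small relative to their main terms by Proposition \ref{prop:EstimAndFormulaDerzE-+_pt2}), each bounded through the stability of the error estimates. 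The ratio then gives $\tfrac{d}{dt}\Ima\Gamma_\pm^h=\mathcal{O}(h^4/\varepsilon_0^4)$. The main obstacle throughout is the bookkeeping: one must verify that $\Psi_2$ genuinely dominates on the band, isolate in each of $\partial_{\Ima z}\Psi$, $\partial_{\Ima z}\Theta$ and $\partial_{\Rea z}\partial_{\Ima z}D$ the one or two leading terms from the cloud of $\mathcal{O}(\cdot)$'s, and propagate all the $d(z)^{-(n+m)/2}h^{n+m}\partial_z^n\partial_{\overline z}^m$-stable bounds through the implicit-function arguments with the scale hierarchy $h\ll\varepsilon_0\leq\mathrm{const}$ kept in view, so as to land precisely on $h^5/\varepsilon_0^{13/3}$ and $h^4/\varepsilon_0^4$.
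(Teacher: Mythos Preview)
Your approach is essentially the same as the paper's: reduce $\partial_{\Ima z}D=0$ to the scalar relation $\Theta^0(z)=1+\text{(small)}$ on the band where $d(z)\asymp\varepsilon_0^{2/3}$, exploit the strict monotonicity of $\Theta^0$ in $\Ima z$ to produce the curve, and then compare with $\gamma_\pm^h$ (where $|E_{-+}|=\delta$, i.e.\ $\Theta^0=1$ up to exponentially small error). The one technical difference is that the paper makes the fixed-point structure explicit: it sets $l(z):=\Theta^0(z)(1+\mathcal{O}(\eta^{-3/2}h))$, rewrites the equation as $G(z)+l(1-l)=0$ with $G\asymp h^2/\eta^3$, changes variable to $t=\eta^3h^{-2}(l-1)$ so that the equation becomes $t=\widetilde G(t)$ with $\partial_t\widetilde G=\mathcal{O}(h^2/\eta^3)$, and applies the Banach fixed-point theorem from the starting point $t(\gamma_\pm^h)=0$. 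Your implicit-function-theorem argument and the paper's contraction argument are interchangeable here; the rescaling to $t$ buys a cleaner contraction constant and makes the conversion back to $\Ima z$ (via $d(\Ima z)/dt\asymp-\eta^{-7/2}h^3$) mechanical, while your Taylor-expansion viewpoint (shift $=\partial_{\Ima z}(C\Psi_1\e^{-\Theta})/\partial_{\Ima z}^2(C\Psi_2\e^{-\Theta})$) is more direct but requires the same care in isolating leading terms. For $\tfrac{d}{dt}\Ima\Gamma_\pm^h$ the paper likewise goes through $dt^*/d\Rea z$ rather than $-\partial_{\Rea z}\partial_{\Ima z}D/\partial_{\Ima z}^2D$, but again the content is identical: the $\Rea z$-dependence enters only through the lower-order error factors, whose derivatives are controlled by the $d(z)^{-|\beta|/2}h^{|\beta|}\partial_{z\overline z}^\beta$-stability.
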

\begin{proof}[Proof of Proposition \ref{prop:the2curves}]
The first two points of the proposition follow from Lemma \ref{prop:ExZone2} together 
with the observations that $|E_{-+}(z)|=|\alpha_0|=|t_0(z)|$ (cf. Proposition 
\ref{propGrushUnpertOp}) and that by (\ref{eqn:ResolvBound})
\begin{equation*}
   \lVert (P_h - \gamma_{\pm}^h)^{-1}\rVert 
   = \delta^{-1}.
  \end{equation*}
The third point has been proven with Lemma \ref{prop:NoName}.
\end{proof}

\begin{proof}[Proof of Lemma \ref{prop:ExZone2}]
Recall from Proposition \ref{prop:SProp} that $S$ is strictly monotonous above and below 
the spectral line, i.e. $\Ima z = \langle\Ima g\rangle$. Furthermore, recall from 
Definition \ref{defn:CoupConstDel} that 
$ -\left(\kappa - \frac{1}{2}\right)h\ln h +Ch \leq \epsilon_0(h) < S(\langle \Ima g\rangle)$. 
Thus, the implicit function theorem implies that there exist $y_{\pm}(\varepsilon_0(h))\in\mathds{R}$ such 
that $S(y_{\pm}(\varepsilon_0(h)))=\varepsilon_0(h)$. Note that in the case where $\varepsilon_0(h)$ is independent of $h$, 
the same holds true for $y_{\pm}(\varepsilon_0)$. For the rest of the proof we will only treat the 
case where $\Ima z \leq \langle\Ima g \rangle$ (corresponding to $y_-$) since the other case is similar. 
\par
Consider $z\in\Omega\Subset\Sigma_{c,d}$ with $\Rea z=\mathrm{const}.$ 
First, let us prove some 
a priori estimates: assume that there exists a $\zeta_{-}$ with 
$h^{2/3} \ll \zeta_{-}\leq \langle\Ima g\rangle$ such 
that $|E_{-+}(\Rea z + i\zeta_{-})|\delta^{-1}=1$. Recall Proposition \ref{prop:SProp} 
and note that
 \begin{align}\label{eqn:Sepsilondependence}
     S(z) -\varepsilon_0(h) 
     &= \int_{\langle\Ima g\rangle }^{\Ima z}(\partial_{\Ima z}S) (t)dt+S(\langle\Ima g\rangle )-\varepsilon_0(h) \notag \\
     &=\int_{y_{-}(\varepsilon_0(h))}^{\Ima z}(\partial_{\Ima z}S) (t)dt+S(y_{-}(\varepsilon_0(h)))-\varepsilon_0(h).
 \end{align}
Recall Proposition \ref{cor:E-+Repres} and Definition \ref{defn:CoupConstDel}.
It follows by (\ref{eqn:Sepsilondependence}), that 
if $|\zeta_{-} - \langle\Ima g\rangle |\leq \frac{1}{C}$, $C>0$ large enough, then 
$|E_{-+}(\Rea z +i\zeta_{-})|\delta^{-1}\leq \mathcal{O}\left(\eta^{1/4}\e^{-\frac{1}{Dh}}\right)$ for 
some $D>0$ large.
Thus, we may assume that, in case it exists, 
  \begin{equation}\label{eqn:BFPbound1}
   |\zeta_{-} - \langle\Ima g\rangle | > \frac{1}{C}.
  \end{equation}
We conclude from (\ref{eqn:Sepsilondependence}) that
  \begin{equation}\label{eqn:YEps}
   y_{-}(h)\asymp (\varepsilon_0(h))^{2/3}
  \end{equation} 
and that for $C>0$ large enough
  \begin{equation}\label{eqn:BFPbound2}
    |\langle \Ima g\rangle - y_{-}(\varepsilon)|>\frac{1}{C}.
  \end{equation}
\eqref{eqn:YEps}, \eqref{eqn:BFPbound2} and Definition \ref{defn:CoupConstDel}
imply, for $\kappa >0 $ large enough, the first point of the Lemma. 
\\
\par
Now let us prove the existence of the points $\zeta_{-}$. More precisely, 
we will prove that for $z\in\Omega\Subset\Sigma_{c,d}$ with $\Ima z <\langle\Ima g\rangle-1/C$ 
(cf. (\ref{eqn:BFPbound1})) and fixed $\Rea z$ 
there exist exactly one $\zeta_{-}$ such that $|E_{-+}(\Rea z,\zeta_{-})|\delta^{-1} =1$.  
For $z\in\Omega\cap\Omega_{\eta}^a \Subset\Sigma_{c,d}$ one calculates from 
by Proposition \ref{cor:E-+Repres} that 
  \begin{align}\label{eqn:EImaDEr} 
   \partial_{\Ima z} |E_{-+}(z)| =  
    &\left\{- V(z)\frac{\partial_{\Ima z}S(z)}{h}|1-\e^{\Phi(z)}|\left(1 + 
      \mathcal{O}\!\left(\e^{-\frac{\asymp\eta^{\frac{3}{2}}}{h}}\right)\right) \right. \notag \\
     & +\left. \partial_{\Ima z}\left[ V(z)|1-\e^{\Phi(z)}|\left(1 + 
      \mathcal{O}\!\left(\e^{-\frac{\asymp\eta^{\frac{3}{2}}}{h}}\right)\right)\right]\right\}
      \e^{-\frac{S(z)}{h}},
  \end{align} 
Recall that $V$ is the product of the normalization factors of the 
quasimodes $e_{wkb}$ and $f_{wkb}$ when $z\in\Omega$ with $\dist(\Omega,\partial\Sigma)>1/C$ and 
the product of the normalization factors of the quasimodes $e^{\eta}_{wkb}$ and $f^{\eta}_{wkb}$ 
when $z\in\Omega\cap\Omega_{\eta}^a $ (cf. (\ref{eqn:Prefac})). Since the derivative with 
respect to $\Ima z$ of the imaginary part of their phase function $\Ima\phi_{\pm}$ is equal to 
zero at $x_{\pm}$, it follows that 
  \begin{equation}\label{eqn:DerImazV}
   |\partial_{\Ima z} V(z)| = \mathcal{O}(h^{1/2}\eta^{-3/4}).
  \end{equation}
The a priori bound (\ref{eqn:BFPbound1}) implies that there exists a constant $C>1$ such that 
  \begin{equation}\label{eqn:PhiTerm}
   |1 - \e^{\Phi(z)}| = 1 + \mathcal{O}\left(\e^{-\frac{1}{Ch}}\right),~\text{and}~ 
   \partial_{\Ima z}|1 - \e^{\Phi(z)}| = \mathcal{O}\left(\e^{-\frac{1}{Ch}}\right).
  \end{equation}
The fact that $\partial_{\Ima z}S(z)> 0$ (cf. (\ref{prop:SProp})) implies that 
$\partial_{\Ima z} |E_{-+}(z)| < 0$. Note that in the case where $\dist(\Omega,\partial\Sigma)>1/C$ 
one sets in the above $\eta=1$. Recall from Propositions \ref{prop:GrushHager} 
and \ref{prop:GrushBM} that $V$ is independent of $\Rea z$. Using
  \begin{equation*}
   \partial_{\Rea z }|1-\e^{\Phi(z)}| =   \mathcal{O}\!\left(\e^{-\frac{1}{Ch}}\right),
  \end{equation*}
we conclude that
  \begin{align}\label{eqn:EReaDEr}
   \partial_{\Rea z} |E_{-+}(z)| &= 
     \partial_{\Rea z}\left[ V(z)|1-\e^{\Phi(z)}|\left(1 + 
      \mathcal{O}\!\left(\e^{-\frac{\asymp\eta^{\frac{3}{2}}}{h}}\right)\right)\right]\e^{-\frac{S(z)}{h}}
      \notag \\
      & =  \mathcal{O}\!\left(\e^{-\frac{\asymp\eta^{\frac{3}{2}}}{h}}\right) \e^{-\frac{S(z)}{h}}.
  \end{align}
This implies that the gradient $|E_{-+}(z)|$ is non-zero for all $z$ with 
$|\Ima z - \langle\Ima g \rangle| > 1/C$ (cf. (\ref{eqn:BFPbound1})) and thus we may 
conclude by the implicit function theorem, that for $\delta$ as above there 
exist locally smooth curves $\gamma_{-}^h(\Rea z) := (\Rea z, \zeta_{-}(\varepsilon_0(h),\Rea z)$ such that 
$|E_{-+}(\gamma_{-}^h)|=\delta$. Furthermore, we may extend 
$\gamma_-(\Rea z)$ smoothly for $c < \Rea z < d$. By the mean value theorem applied to $|E_{-+}(z)|$, there 
exists a $\zeta$ between $y_-(h)$ and $\Ima \gamma_{-}^h(\Rea z)$ such that 
  \begin{align*}
   &\big||E_{-+}(\Rea z +iy_-(h))| - |E_{-+}(\gamma_{-}^h(\Rea z))|\big|
   \notag \\
   &=  
   |(\partial_{\Ima z} |E_{-+}(z)|)(\Rea z+i\zeta)|\cdot|y_-(h) - \Ima \gamma_{-}^h(\Rea z)|.
  \end{align*}
Since $|E_{-+}| =\mO(\sqrt{h}\eta^{1/4}\e^{-\frac{S}{h}})$ 
(cf. Proposition \ref{prop:EstimDerzE-+}) 
and since $\partial_{\Ima z}|E_{-+}|\asymp - h^{-1/2}\eta^{3/4}\e^{-\frac{S}{h}}$ 
(cf. (\ref{eqn:EImaDEr})), it follows that 
  \begin{equation*}
   |y_-(h) - \Ima \gamma_{-}^h(\Rea z)| = \mO\left(\eta^{-1/2}h\right).
  \end{equation*}
$\eta \asymp y_{-}(h)\asymp (\varepsilon_0(h))^{2/3}$ implies that also 
$ \Ima \gamma_{-}^h(\Rea z) \asymp \eta \asymp (\varepsilon_0(h))^{2/3}$, 
and we conclude that 
\begin{equation*}
  \Ima \gamma^h_{-}(\Rea z) = y_{-}(\varepsilon_0(h))
  \left(1 + \mathcal{O}\left(\frac{h}{\varepsilon_0(h)}\right)\right).
\end{equation*}
Finally, by 
  \begin{align*}
   0 &= \frac{d}{d\Rea z}|E_{-+}(\gamma_{-}^h(\Rea z))| \notag \\
   &= \partial_{\Rea z}|E_{-+}(\gamma_{-}^h(\Rea z))| 
   +\partial_{\Ima z}|E_{-+}(\gamma_{-}^h(\Rea z))| \frac{d\Ima \gamma_{-}^h}{d \Rea z}(\Rea z).
  \end{align*}
and by (\ref{eqn:EImaDEr} ) and (\ref{eqn:EReaDEr}) we may then conclude 
  \begin{equation}\label{eqn:estimDergammapm}
   \frac{d\Ima \gamma_{-}^h}{d \Rea z}(\Rea z) = 
   \mathcal{O}\!\left(\e^{-\frac{\asymp\eta^{3/2}}{h}}\right)
  \end{equation}
which, using $\eta \asymp y_{-}(h)\asymp (\varepsilon_0(h))^{2/3}$, 
yields the last statement of the Lemma. 
\end{proof}
\begin{proof}[Proof of Lemma \ref{prop:NoName}]
The idea of this proof is to search for the critical points of the 
average density of eigenvalues via the Banach fix point theorem. We 
shall only consider the case where $\Ima z \leq \langle \Ima g\rangle$ 
since the other case is similar. \\
\par
Recall from Proposition \ref{prop:DensExplicit} the explicit form the density 
given in Theorem \ref{thm:ModelFirstIntensity}. Proposition \ref{lem:1MomSymplVol} 
and the fact that $\Ima g$ has exactly two critical points imply that $\Psi_1$ is 
strictly monotonously decreasing. Thus, we may assume similar to 
(\ref{eqn:BFPbound1}) that for $C>0$ large enough
  \begin{equation}
   |\Ima z - \langle \Ima g\rangle| > \frac{1}{C}.
  \end{equation}
since else $\Psi_2 = \mO(\e^{-\frac{1}{Dh}})$ with $D>0$ large. Now, to find the 
critical points of the density of eigenvalues consider
  \begin{align}\label{eqn:DerImazDens}
   \pi \partial_{\Ima z}D(z,h) 
   &= \left(\partial_{\Ima z}\Psi(z;h,\delta)\exp\{-\Theta(z;h,\delta)\}\right)
     \left(1 +\mO\left(\delta\eta^{-1/4}h^{-3/2}\right) \right) \notag \\
   &\phantom{=}+\Psi(z;h,\delta)\exp\{-\Theta(z;h,\delta)\}\mO\left(\delta\eta^{1/4}h^{-5/2}\right) \notag \\ 
   & = 0.
  \end{align}
Here we used that the $z$- and $\overline{z}$-derivative of the error 
term $\mO\left(\delta\eta^{-1/4}h^{-3/2}\right)$ increases its order 
of growth at most by a term of order $\mO(\eta^{1/2}h^{-1})$ 
(cf. Theorem \ref{thm:ModelFirstIntensity}). By
  \begin{equation*}
   \partial_{\Ima z}\Psi(z;h,\delta)\e^{-\Theta(z;h,\delta)} = 
   \left(\partial_{\Ima z}\Psi_1 +\partial_{\Ima z}\Psi_2
   - (\Psi_1+\Psi_2)\partial_{\Ima z}\Theta\right)\e^{-\Theta(z;h,\delta)},
  \end{equation*}
and by Lemma \ref{lem:DerPsiThet} and Proposition \ref{prop:DensExplicit}, 
we can write (\ref{eqn:DerImazDens}) as 
\begin{align}\label{eqn:DerImazDensity2}
   & h^{-3}F(z,h,\delta) + 2\frac{\e^{-\frac{2S}{h}}}{\delta^2}|\partial_{\Ima z}S(z)|^2(-\partial_{\Ima z}S(z))
   \frac{\left(\frac{i}{2}\{p,\overline{p}\}(\rho_+)\frac{i}{2}\{\overline{p},p\}(\rho_-)\right)^{\frac{1}{2}}}{\pi h^2}
    \notag \\
   &\cdot\left(1 + \mO\left(\eta^{-3/4}h^{1/2}\right)\right)
   \left((1+ \mO(\eta^{-3/2}h))  - \frac{h\left(\frac{i}{2}\{p,\overline{p}\}(\rho_+)\frac{i}{2}\{\overline{p},p\}(\rho_-)\right)^{\frac{1}{2}}}
   {\pi\delta^2\exp\{-\frac{2S}{h}\}}
   \right) 
   \notag\\
   &=0,
  \end{align}
where $F(z,h,\delta)$ is a function depending smoothly on $z$, satisfying the bound 
  \begin{equation*}
   F(z,h,\delta) \asymp -\frac{h^2}{\eta^{3/2}}.
  \end{equation*}
Here we used $\partial_{\Ima z } \Psi_1 \asymp - (\eta^{3/2}h)^{-1}$ which 
follows from Lemma \ref{lem:DerPsiThet} using the fact that $\Ima g$ has only 
two critical points: a minimum at $a$ and a maximum at $b$. 
\begin{rem}
 In the case $\Ima z > \langle\Ima g\rangle$ we find similarly that 
 $F(z,h,\delta) \asymp \frac{h^2}{\eta^{3/2}}$.
\end{rem}
Furthermore, the functions in (\ref{eqn:DerImazDensity2}) are smooth in 
$z$ and the $z$- and $\overline{z}$-derivate increase their order 
of growth at most by $\mO(\eta^{1/2}h^{-1})$. Recall $|E_-+(z)|$ as given 
in Proposition \ref{cor:E-+Repres} and define
  \begin{equation*}
   l(z):=|E_{-+}( z)| 
   = \frac{h\left(\frac{i}{2}\{p,\overline{p}\}(\rho_+)\frac{i}{2}\{\overline{p},p\}(\rho_-)\right)^{\frac{1}{2}}}{\pi}
   \frac{\e^{-\frac{2S}{h}}}{\delta^2}(1+ \mO(\eta^{-3/2}h)) 
  \end{equation*}
Thus, (\ref{eqn:DerImazDensity2}) is equal to zero if and only if 
 \begin{align}\label{eqn:DerImazDensity3}
    G(z,h,\delta) +  l\left(1 - l\right) =0,
  \end{align}
where $G(z,h,\delta)$ is a function depending smoothly on $z$, satisfying
  \begin{equation*}
   G(z,h,\delta)= \frac{F(z,h,\delta)}{2|\partial_{\Ima z}S(z)|^2
   (-\partial_{\Ima z}S(z))}\left(1 + \mO\left(\eta^{-3/4}h^{1/2}\right)\right) 
    \asymp \frac{h^2}{\eta^{3}}.
  \end{equation*}
The $z$- and $\overline{z}$-derivate increase the order 
of growth of $G$ at most by $\mO(\eta^{\frac{1}{2}}h^{-1})$. For $l\geq0$ to be a 
solution to (\ref{eqn:DerImazDensity3}), it is necessary that 
  \begin{equation*}
   l = 1 + \frac{h^2}{\mO(1)\eta^{3}}.
  \end{equation*}
Thus, $l\asymp 1$. Define the smooth function
  \begin{equation*}
    z \mapsto  t(z) : = \frac{\eta^{3}}{h^2}(l( z) -1),
  \end{equation*}
with $- c_0 \leq t \leq C_0$ and $c_0,C_0>0$ large enough. 
As in (\ref{eqn:EImaDEr}) on calculates
  \begin{align*}
   \frac{h^2}{\eta^{3}}\partial_{\Ima z} t 
   = -\frac{2\partial_{\Ima z} S}{h}\left(1 + \mO(\eta^{-3/2}h)\right)l(\Ima z)
   \asymp -\frac{\eta^{1/2}}{h} ,
  \end{align*}
where we used that $\partial_{\Ima z} S\asymp \sqrt{\eta}$ (cf. Proposition \ref{prop:SProp}) and 
that the $\partial_{\Ima z}$ derivative of 
$\left(\frac{i}{2}\{p,\overline{p}\}(\rho_+)\frac{i}{2}\{\overline{p},p\}(\rho_-)\right)^{\frac{1}{2}}$ 
is of order $\mO(\eta^{-1/2})$ due to the scaling $\tilde{z}=z\eta$ as in the proof of 
Proposition \ref{quasimodes}.
The implicit function theorem then implies that we may locally invert and that $t\mapsto (\Ima z)(t)$ is smooth. Since 
$-c_0 \leq t\leq C_0$ we may continue $(\Ima z)(t)$ smoothly to all open subsets of the domain of $t$. Furthermore, we 
conclude that 
  \begin{equation}\label{eqn:f_max_dydt} 
   \frac{d(\Ima z)}{dt}  \asymp -\eta^{-7/2}h^{3}
  \end{equation}
Substitute $\Ima z = \Ima z(t)$ in (\ref{eqn:DerImazDensity3}). To find the critical points, 
it is then enough to consider 
  \begin{equation*}
   t-\widetilde{G}(t,\Rea z,h,\delta) =0, \quad 
   \widetilde{G}(t,\Rea z,h,\delta):= 
   \frac{G(\Ima z(t),\Rea z,h,\delta))}{\eta^{-3}h^{2}(1+ \eta^{-3}h^{2}t)} 
  \end{equation*}
and one finds 
  \begin{equation*}
   \frac{d}{dt}\widetilde{G}(t,\Rea z,h,\delta))
   =\mO(h^{2}\eta^{-3}).
  \end{equation*}
Thus, using $t(\gamma_{-}^h)=0$ as starting point, which corresponds 
to $l(\gamma_{-}^h)=1$, the Banach fixed-point theorem implies that 
for each $\Rea z$ there exist a unique zero, $t_{-}^*(\Rea z)$, of
(\ref{eqn:DerImazDensity2}), it depends smoothly on $\Rea z$ and satisfies
  \begin{equation}\label{eqn:tstartbound}
   |t_{-}^*(\Rea z) - t(\gamma_{-}^h)| \leq \mO(h^{2}\eta^{-3}).
  \end{equation}
and
  \begin{align*}
    \frac{dt_{-}^*(\Rea z)}{d \Rea z} 
    &= \frac{1}{1 - \left(\frac{d}{dt}\widetilde{G}\right)(t_{-}^*,\Rea z,h,\delta)} 
    (\partial_{\Rea z} \widetilde{G})(t_{-}^*,\Rea z,h,\delta))
    \notag \\
    &= \frac{1}{1 + \mO(h^{2}\eta^{-3})} (\partial_{\Rea z} \widetilde{G})(t_{-}^*,\Rea z,h,\delta)).
  \end{align*}
Since the $z$- and $\overline{z}$-derivate applied to $G$ increase its order 
of growth at most by $\mO(\eta^{1/2}h^{-1})$, we conclude that 
  \begin{align*}
    \frac{dt_{-}^*(\Rea z)}{d \Rea z} 
    = \mO(\eta^{1/2}h^{-1}).
  \end{align*}
Taylor's formula applied to $(\Ima z)(t)$ yields that 
  \begin{equation*}
   (\Ima z)(t_{\pm}^*(\Rea z)) =  \Ima z(t(\Ima\gamma_{\pm}^h(\Rea z))) 
   + \int_{t(\Ima\gamma_{\pm}^h(\Rea z))}^{t_{\pm}^*(\Rea z)}\frac{d\Ima z}{dt}(\tau)d\tau.
   \end{equation*}
By (\ref{eqn:tstartbound}) and (\ref{eqn:f_max_dydt}) we conclude that 
  \begin{equation}\label{eqn:TwoWayEstimate}
   (\Ima z)(t_{\pm}^*(\Rea z)) =  \Ima\gamma_{\pm}^h(\Rea z) + \mathcal{O}(\eta^{-13/2}h^{5})
   \end{equation}
and using (\ref{eqn:estimDergammapm}) that 
  \begin{equation*}
   \frac{d}{d\Rea z}(\Ima z)(t_{\pm}^*(\Rea z)) =  \mO\!\left(\eta^{-6}h^{4}\right).
   \end{equation*}
It follows by Proposition \ref{prop:WedgePoisson} that the density has local 
maxima along the curves $\Gamma_{\pm}^h(\Rea z) := (\Rea z,\Ima z(t_{\pm}^*(\Rea z)))$. 
Applying this definition to (\ref{eqn:TwoWayEstimate}) yields that
	\begin{equation*}
	  |\Ima \Gamma_{\pm}^h(\Rea z) - \Ima \gamma_{\pm}^h(\Rea z)| \leq \mathcal{O}(\eta^{-13/2}h^{5})
	  \end{equation*}
for all $z\in\Sigma_{c,d}$. By Lemma \ref{prop:ExZone2} we have that 
$\Ima \gamma_{\pm}^h(\Rea z)\asymp \varepsilon_0(h)^{2/3}$. Thus, 
  \begin{equation*}
   \Ima \Gamma_{\pm}^h(\Rea z) = \Ima \gamma_{\pm}^h(\Rea z)\left(1 +\mathcal{O}(\varepsilon_0(h)^{-5}h^{5}) \right),
  \end{equation*}
which in particular implies that $ \Ima \Gamma_{\pm}^h(\Rea z) \asymp \varepsilon_0(h)^{2/3}$. This concludes 
the proof of the lemma.
\end{proof}
\begin{proof}[Proof of Proposition \ref{prop:WedgePoisson}]
Proposition \ref{prop:DensExplicit} implies that for $|\Ima z - \langle \Ima g \rangle |> 1/C$
  \begin{equation}\label{eqn:Psi2_constawayFromSpecLine}
    \Psi_2(z,h,\delta) = 
    \frac{\left(\frac{i}{2}\{p,\overline{p}\}(\rho_+)\frac{i}{2}\{\overline{p},p\}(\rho_-)\right)^{\frac{1}{2}}}
    {\pi h\delta^2\exp\{\frac{2S}{h}\}}
   |\partial_{\Ima z}S(z)|^2\left(1 + \mO\left(\eta^{-3/4}h^{1/2}\right)\right)
  \end{equation}
and 
  \begin{align*}
    \Theta(z;h,\delta) = \frac{h\left(\frac{i}{2}\{p,\overline{p}\}(\rho_+)
	  \frac{i}{2}\{\overline{p},p\}(\rho_-)\right)^{\frac{1}{2}}}{\pi}&\frac{\e^{-\frac{2S}{h}}}{\delta^2}
     \left(1 +\mathcal{O}\!\left(\eta^{-1/4}h^{\frac{3}{2}}\right)\right) \notag \\
     &+\mathcal{O}\!\left(\eta^{1/4}h^{-2}\delta  + \eta^{-1/2}\delta^2 h^{-5}\right).
  \end{align*}
Thus, one calculates 
  \begin{align*}
   \left|\Psi_2 - \frac{|\partial_{\Ima z}S|^2}{h^2}\Theta \right| \leq &\frac{\left(\frac{i}{2}\{p,\overline{p}\}(\rho_+)\frac{i}{2}\{\overline{p},p\}(\rho_-)\right)^{\frac{1}{2}}}{\pi h\delta^2}
   \e^{-\frac{2S}{h}}|\partial_{\Ima z}S(z)|^2\mO\left(\eta^{-\frac{3}{4}}h^{\frac{1}{2}}\right)\notag \\
     &+\mathcal{O}\!\left(\eta^{5/4}h^{-4}\delta  + \eta^{1/2}\delta^2 h^{-7}\right),
  \end{align*}
which implies the result given in Proposition \ref{prop:Resolvent}.
\end{proof}
\begin{proof}[Proof of Proposition \ref{prop:DensityProperties}]
We will only consider the case $z\in\Sigma_{c,d}$ with $\Ima z \leq \langle\Ima g\rangle$.\\
\par
\textbf{A priori restrictions on the domain of integration } Let $y_{-}(h)$ and $\gamma_{-}(\Rea z)$ 
be as in Lemma \ref{prop:ExZone2} and note that similarly to (\ref{eqn:Sepsilondependence}), we have 
  \begin{equation}\label{eqn:Sinanewway}
   S(\Ima z) - \varepsilon_0(h)
   = \int_{y_-(h)}^{\Ima \gamma_{-}^h}(\partial_{\Ima z}S)(t)dt 
   + \int_{\Ima \gamma_{-}^h}^{\Ima z}(\partial_{\Ima z}S)(t)dt.
  \end{equation}
Recall from Lemma \ref{prop:ExZone2} that 
$(\Ima \gamma_{-}^h - y_-(h)) \asymp h \varepsilon_0^{-1/3}$. Then, 
one calculates using the mean value theorem and Proposition 
\ref{prop:SProp}, similar as in the proof of Lemma \ref{prop:ExZone2} 
(cf. (\ref{eqn:YEps})) that 
  \begin{equation*}
   \int_{y_-(h)}^{\Ima \gamma_{-}^h}(\partial_{\Ima z}S)(t)dt =\mO( h)
  \end{equation*}
and that
  \begin{align*}
   \int_{\Ima \gamma_{-}^h}^{\Ima z}(\partial_{\Ima z}S)(t)dt 
   \asymp  (  \Ima z - \Ima \gamma_{-}^h )\eta^{1/2},
  \end{align*}
where $\eta$ should be set to $1$ in case of $\dist(z,\partial\Sigma_{c,d})> 1/C$. 
Next, (\ref{eqn:Sinanewway}) and Proposition \ref{prop:DensExplicit} implies that 
   \begin{align*}
   \Theta(z;h,\delta) = &\frac{\eta^{1/2}}{\mO(1)}
   \exp\left\{-\asymp \frac{ (\Ima z - \Ima \gamma_{-}^h)\eta^{1/2}}{h}\right\}
   \notag \\
   &+ \mathcal{O}\!\left(\eta^{1/4}h^{-2}\delta + \eta^{-1/2}\delta^2 h^{-5}\right).
  \end{align*}
Here, we used that $\delta = \sqrt{h}\exp\{-\frac{\varepsilon_0(h)}{h}\}$; see 
Definition \ref{defn:CoupConstDel}. Thus, for 
$\Ima \gamma_{-}^h <\Ima z < \langle \Ima g\rangle$
  \begin{align}\label{eqn:ThetaEstimates1}
   \exp\{-\Theta(z;h,\delta)\}= 
        \left(1 
        + \mathcal{O}\!\left(\eta^{1/2}\exp\left\{-\frac{(\Ima z - \Ima \gamma_{-}^h)\eta^{1/2}}{Ch}\right\}
        +\eta^{1/4}h^{2}
        \right)\right)                         
  \end{align}
and for $\Ima z \leq \Ima \gamma_{-}^h$
  \begin{multline}\label{eqn:ThetaEstimates2}
  \frac{1}{C}\exp\left\{-C\eta^{1/2}\exp\left[-\frac{(\Ima z - \Ima \gamma_{-}^h)\eta^{1/2}}{Ch}\right]\right\} 
  \leq 
      \exp\{-\Theta(z;h,\delta)\} \\
  \leq
  C\exp\left\{-\frac{\eta^{1/2}}{C}\exp\left[-\frac{C(\Ima z - \Ima \gamma_{-}^h)\eta^{1/2}}{h}\right]\right\}.
  \end{multline}
Similarly, by Proposition \ref{prop:DensExplicit} 
\begin{equation*}
  \Psi_2(z;h,\delta) 
  \leq 
  \frac{\eta^{3/2}}{\mO(1)h^{2}}\left(1 + \mO(\eta^{-1})\e^{\Phi(z,h)}\right)
  \exp\left\{- \frac{ (\Ima z - \Ima \gamma_{-}^h)\eta^{1/2}}{Ch}\right\}.
\end{equation*}
Thus, for 
$\Ima\gamma_{-}(\Rea z)+ \alpha h\eta^{-1/2}\ln\frac{\eta^{1/2}}{h} \leq\Ima z \leq \langle\Ima g \rangle $ 
with $\alpha>0$ large enough, we see that the average density of 
eigenvalues (cf. Theorem \ref{thm:ModelFirstIntensity}) 
  \begin{equation}\label{eqn:firstapprox}
   D(z,h,\delta)L(dz)   = \frac{1}{2h}p_*(d\xi\wedge dx) + \mO(\eta^{-2})L(dz).
  \end{equation}
We then conclude the first statement of the proposition. \\ \par 
Next, recall from Proposition \ref{lem:truncation} that restricting the probability 
space to the ball $B(0,R)$ of radius $R=Ch^{-1}$ implies that $\lVert Q_{\omega}\rVert \leq C/h$ 
with probability $\geq \left(1-\e^{-\frac{1}{Ch^2}}\right)$. It follows from 
  \begin{equation*}
   \lVert (P_h^{\delta} - z)^{-1}\rVert = \left\lVert(P_h -z)^{-1} \sum_{n\geq 1} 
   (-\delta)^n\left(Q_{\omega}(P_h -z)^{-1}\right)^n\right\rVert 
  \end{equation*}
that for $z\notin \sigma(P_h)$ such that $\delta \lVert Q_{\omega}\rVert \lVert (P_h -z)^{-1}\rVert <1$ 
$z\notin \sigma(P_h^{\delta})$ with probability $\geq \left(1-\e^{-\frac{1}{Ch^2}}\right)$. 
Proposition \ref{prop:Resolvent} implies that
with probability $\geq \left(1-\e^{-\frac{1}{Ch^2}}\right)$
  \begin{align*}
   \delta \lVert Q_{\omega}\rVert \lVert (P_h -z)^{-1}\rVert \leq 
     \frac{C\left|1 - \e^{\Phi(z,h)}\right|^{-1}}
     {h^{3/2}\left(\frac{i}{2}\{p,\overline{p}\}(\rho_+)\frac{i}{2}\{\overline{p},p\}(\rho_-)\right)^{\frac{1}{4}}}
     \exp\left\{\frac{S(z)-\varepsilon_0(h)}{h}\right\}.
  \end{align*}
Since $S(z)\asymp \eta^{3/2}$, it follows that $\eta \asymp \varepsilon_0(h)^{2/3}$. 
Using the mean value theorem together with 
Lemma \ref{prop:ExZone2} implies that with probability 
$\geq \left(1-\e^{-\frac{1}{Ch^2}}\right)$ there are no eigenvalues 
of $P_h^{\delta}$ with
  \begin{align*}
   \Ima z \leq \beta_1 := 
	\Ima \gamma_-^h 
	- C\frac{h}{\varepsilon_0(h)^{1/3}}\ln \left(\frac{\varepsilon_0(h)^{1/6}}{h}\right), \quad C\gg 1.            
  \end{align*} 
  Thus, to count 
eigenvalues, it is sufficient to integrate the density 
given in Theorem \ref{thm:ModelFirstIntensity} over subsets 
of	
  \begin{equation*}
   \Sigma_{c,d}'=\left\{z\in\Sigma_{c,d}|~\beta_1 \leq
	      \Ima z \leq \langle\Ima g\rangle, ~ c <\Rea z< d\right\}.
  \end{equation*}
Similarly, for an $\alpha$ large enough as above, define 
  \begin{equation*}
   \alpha_1:=\Ima\gamma_{-}(\Rea z) + \alpha \frac{h}{\varepsilon_0^{1/3}}\ln\frac{\varepsilon_0(h)^{1/3}}{h}
  \end{equation*}
and note that (\ref{eqn:firstapprox}) implies the second statement of 
the proposition for $\Ima z \geq \alpha_1$. 
\\
\\
\textbf{Approximate Primitive } Define $d(z):=\dist(z,\partial\Sigma)$ and recall from 
(\ref{def:etaOmega}) that $\eta \asymp d(z)$. Recall that 
the density of eigenvalues given in Theorem \ref{thm:ModelFirstIntensity} is given by $\Psi_1$, 
$\Psi_2$ and $\Theta$ which are expressed explicitly in Proposition \ref{prop:DensExplicit} and 
Theorem \ref{thm:ModelFirstIntensity}. 
Since $\Ima g(x_{\pm})  = \Ima z $ and $\xi_{\pm} = \Rea z - \Rea g(x_{\pm})$ (cf. Section \ref{sec:Intro}), 
we conclude together with Proposition \ref{cor:1MomSymplVol} that for $\beta_1 \leq \Ima z\leq \alpha_1$ 
  \begin{equation*}
   \Psi_1(z;h) = \frac{1}{2h} \partial_{\Ima z}( x_-(z) -  x_+(z)) + \mathcal{O}(d(z)^{-2})
   = \frac{1}{2h}\partial_{\Ima z}^2 S(z) + \mathcal{O}(d(z)^{-2}).                        
  \end{equation*}
Next, it follows by (\ref{eqn:Psi2_constawayFromSpecLine}) and Lemma \ref{lem:DerPsiThet} that 
  \begin{align*}
   \big|2h\Psi_2 - (\partial_{\Ima z} S)(-\partial_{\Ima z}\Theta) \big|  
      = \mO\left(d(z)^{3/4}h^{1/2}\frac{\e^{-\frac{2S}{h}}}{\delta^2}\right) + 
      \mO(d(z)^{3/4}h^{-3}\delta).
  \end{align*}   
Thus, 
  \begin{align}\label{eqn:derDenstiy}
   &\frac{1+\mathcal{O}\!\left(\delta d(z)^{-1/4}h^{-3/2}\right)}{\pi }
   \left\{\Psi_1(z;h) + \Psi_2(z;h,\delta)\right\}\e^{-\Theta(z;h,\delta)} \notag \\
   &= \frac{1}{2\pi h}\partial_{\Ima z}\left[(\partial_{\Ima z} S(z))\e^{-\Theta(z;h,\delta)}\right]
   +R(z;h,\delta)\e^{-\Theta(z;h,\delta)},           
  \end{align}
where
  \begin{align*}
   R(z;h,\delta) :=  \mathcal{O}\!\left(d(z)^{-2} + d(z)^{3/4}h^{-1/2}\frac{\e^{-\frac{2S}{h}}}{\delta^2}\right).
  \end{align*}
Let $\beta_1 \leq \beta_2 \leq \alpha_1$. Let us first treat the error term $R$. Similar 
as for (\ref{eqn:ThetaEstimates1}), it follows that 
  \begin{align*}
   R(z;h,\delta) 
   =  
     \mathcal{O}\!\left(d(z)^{-2}
   + d(z)^{-3/4}h^{-1/2}
     \exp\left\{- \frac{ (\Ima z - \Ima \gamma_{-}^h)d(z)^{1/2}}{Ch}\right\}\right).
  \end{align*}
Hence,
  \begin{align}\label{eqn:final2}
     \bigg|\int_{\beta_1}^{\alpha_1} R(z;h,\delta)&\e^{-\Theta(z;h,\delta)} d(\Ima z)\bigg| 
      \leq \frac{[d(z)^{-1}]_{\beta_1}^{\alpha_1}}{\mathcal{O}\!\left(1\right)}
	     \exp\{-\Theta(\Rea z,\alpha_1 ;h,\delta)\} \notag \\
	     &+\frac{d(z)^{1/4}h^{1/2}}{\mathcal{O}\!\left(1\right)}
	     \exp\left[-\exp\left\{- \frac{ (\Ima z - \Ima \gamma_{-}^h)d(z)^{1/2}}{Ch}\right\}\right]
	     \bigg|_{\beta_1}^{\alpha_1} \notag \\ 
      & = \frac{\beta_1^{-1}}{\mathcal{O}\!\left(1\right)}
          \exp\left[-\exp\left\{- \frac{ (\alpha_1 - \Ima \gamma_{-}^h)\alpha_1 ^{1/2}}{Ch}\right\}\right]
           \notag \\
      &= \frac{\varepsilon_0(h)^{-2/3}}{\mathcal{O}\!\left(1\right)}.
  \end{align}
Next, 
  \begin{align}\label{eqn:final1}
  \frac{1}{2\pi h}\int_{\beta_2}^{\alpha_1}
    \partial_{\Ima z}\big[(\partial_{\Ima z} S(z))&\e^{-\Theta(z;h,\delta)}\big]L(\Ima z) \notag \\
   & =  \frac{1}{2\pi h}(x_-(\Ima z)-x_+(\Ima z))\e^{-\Theta(z;h,\delta)}\Big|_{\beta_2}^{\alpha_1}.
  \end{align}
Since, 
  \begin{equation*}
   \int\limits_{\substack{\Sigma_{c,d}\\ 0 \leq \Ima z \leq \alpha_1}} 
   \frac{1}{2\pi h}p_*(d\xi\wedge dx)(dz)
   = \frac{1}{2\pi h}(x_-(\alpha_1)-x_+(\alpha_1))\int_c^d d\Rea z
  \end{equation*}
we conclude by (\ref{eqn:ThetaEstimates2}) the second statement of the proposition for
  \begin{equation*}
   \beta_2 = \Ima\gamma_{-}(\Rea z)- \frac{h}{\varepsilon_0(h)^{1/3}}
	      \ln  \left(\beta\ln\frac{\varepsilon_0(h)^{1/3}}{h}\right) 
  \end{equation*} 
with $\beta>0$ large enough. The last statement of the proposition can be 
deduced similarly from (\ref{eqn:ThetaEstimates2}), (\ref{eqn:final1}) and 
(\ref{eqn:final2}).
\end{proof}
\providecommand{\bysame}{\leavevmode\hbox to3em{\hrulefill}\thinspace}
\providecommand{\MR}{\relax\ifhmode\unskip\space\fi MR }
\providecommand{\MRhref}[2]{%
  \href{http://www.ams.org/mathscinet-getitem?mr=#1}{#2}
}
\providecommand{\href}[2]{#2}

\end{document}